\def\Z{{\mathbb{Z}}}
\def\CC{{\mathbb{C}}}
\def\RR{{\mathbb{R}}}
\def\R{{\mathbb{R}}}
\def\A{{\mathcal{A}}}
\def\CR{{\mathcal R}} 
\def\MCD{{\mathcal D}} 
\def\Y{Y}
\def\s{q}
\DeclareMathOperator{\Der}{Der}
\DeclareMathOperator{\Poin}{Poin}
\DeclareMathOperator{\Hess}{Hess}
\numberwithin{equation}{section}
\theoremstyle{break}
 \newtheorem{theorem}{Theorem}[section]
 \newtheorem{prop}[theorem]{Proposition}
 \newtheorem{cor}[theorem]{Corollary}
 \newtheorem{lemma}[theorem]{Lemma}
 \newtheorem{conj}[theorem]{Conjecture}
 \theoremstyle{definition}
 \newtheorem{define}[theorem]{Definition}
 \newtheorem{rem}[theorem]{Remark}
 \newtheorem{example}[theorem]{Example}
\title{Hessenberg varieties and hyperplane arrangements}
\author{
Takuro Abe,
\address{
Institute of Mathematics for Industry, 
Kyushu University,
Fukuoka 819-0395, Japan.}
\email{abe@imi.kyushu-u.ac.jp}
Tatsuya Horiguchi,\address{Osaka City University Advanced Mathematical Institute, 3-3-138 Sugimoto, Sumiyoshi-ku, Osaka 558-8585, Japan.}
\email{tatsuya.horiguchi0103@gmail.com}
Mikiya Masuda,\address{Department of Mathematics, Osaka City University, Sumiyoshi-ku, Osaka 558-8585, Japan.}
\email{masuda@sci.osaka-cu.ac.jp}\\
Satoshi Murai,\address{
Department of Pure and Applied Mathematics,
Graduate School of Information Science and Technology,
Osaka University,
Suita, Osaka, 565-0871, Japan}
\email{s-murai@ist.osaka-u.ac.jp}
and Takashi Sato\address{Department of Mathematics, Kyoto University, Kyoto, 606-8502, Japan}
\email{t-sato@math.kyoto-u.ac.jp}
}
\date{\today} 
\begin{document}

\maketitle

\begin{abstract}

Given a semisimple complex linear algebraic group $G$ and a lower ideal $I$ in positive roots of $G$, three objects arise:
the ideal arrangement $\mathcal{A}_I$, the regular nilpotent Hessenberg variety $\mbox{Hess}(N,I)$, and the regular semisimple Hessenberg variety $\mbox{Hess}(S,I)$.
We show that 
a certain graded ring derived from the logarithmic derivation module of $\mathcal{A}_I$ is isomorphic to
$H^*(\mbox{Hess}(N,I))$ and $H^*(\mbox{Hess}(S,I))^W$, 
the invariants in $H^*(\Hess(S,I))$ under an action of the Weyl group $W$ of $G$. 
This isomorphism is shown 
for general Lie type, 
and generalizes Borel's celebrated theorem showing that the coinvariant algebra of $W$ is isomorphic to the cohomology ring of the flag variety $G/B$. 

This surprising connection between Hessenberg varieties and hyperplane
arrangements enables us to produce a number of interesting
consequences.  For instance, the surjectivity of the restriction map
$H^*(G/B)\to H^*(\mbox{Hess}(N,I))$ announced by Dale
Peterson 
and an affirmative answer to 
a conjecture of Sommers-Tymoczko are immediate consequences.  We also
give an explicit ring presentation of $H^*(\mbox{Hess}(N,I))$ in
types $B$, $C$, and $G$.  Such a presentation was already known in type
$A$ or when $\Hess(N,I)$ is the Peterson variety.  Moreover, we find
the volume polynomial of $\mbox{Hess}(N,I)$ and see that the hard
Lefschetz property and the Hodge-Riemann relations hold for
$\mbox{Hess}(N,I)$, despite the fact that it is a singular variety in general.
\end{abstract}

\section{Introduction}

In this paper we study Hessenberg varieties and hyperplane
arrangements.  Hessenberg varieties are subvarieties of a flag
variety. Their geometry and (equivariant) topology have been studied
extensively since the late 1980s (\cite{dMPS,dMS}).  This subject lies
at the intersection of, and makes connections between, many research
areas such as geometric representation theory, combinatorics,
algebraic geometry, and topology (see the references in \cite{AHHM}). 
More recently, a remarkable connection to graph theory has been found
(\cite{SW}).  Hyperplane arrangements are collections of finitely many
hyperplanes.  Although these are conceptually simple objects, it is a
subject actively studied from various 
viewpoints such as algebraic geometry,
topology, representation theory, combinatorics, statistics, and so on
(cf. for example
\cite{ABCHT,OT,T,T2,Z}).
These two objects -- Hessenberg varieties and hyperplane arrangements
-- may seem unrelated at first glance,
but our results connect their topology and algebra.  

In the
following, all cohomology groups will be taken with real coefficients
unless otherwise specified but the results actually hold with rational
coefficients under suitable modification.

We begin with a prototype of our results.
Let $G$ be a semisimple complex linear algebraic group of rank $n$, $B$ a Borel subgroup, and $T$ the maximal torus in $B$.
Let $\hat T$ be the group of characters of $T$ and $\CR$ the symmetric algebra $\mbox{Sym}(\hat T\otimes\R)$ of $\hat T\otimes \R$, where $\hat T$ is regarded as an additive group.
The Weyl group $W$ of $G$ acts on $\hat T$ and $\CR$. 
To each $\alpha\in \hat T$, one can associate a complex line bundle
$L_\alpha$ over the flag variety $G/B$; assigning its Euler class $e(L_\alpha)$ to $\alpha$ induces a ring homomorphism 
\begin{equation} \label{eq:1}
\varphi\colon \CR=\mbox{Sym}(\hat T\otimes\R) \to H^*(G/B)
\end{equation}
which doubles the grading on $\CR$.
Borel's celebrated theorem \cite{B} states that $\varphi$ is surjective and its kernel is the ideal $(\CR^W_+)$ generated by the $W$-invariants in $\CR$ with zero constant term, so that $\varphi$ induces an isomorphism 
\[ \CR/(\CR^W_+)\cong H^*(G/B). \]

The ideal $(\CR^W_+)$ also appears in the study of hyperplane arrangements.
Let $\mathfrak{t}$ denote the vector space dual to $\hat T\otimes\R$, so that $\hat T\otimes \R=\mathfrak{t}^*$.
We may think of $\mathfrak{t}$ as the Lie algebra of the maximal compact torus in $T$.
A root of $G$ is a linear function on $\mathfrak{t}$ and the hyperplane arrangement 
\[
\A_{\Phi^+}:=\{\ker\alpha\mid \alpha\in \Phi^+\} \qquad \text{($\Phi^+$: the set of positive roots of $G$)}
\]
is called the Weyl arrangement.
The logarithmic derivation $\CR$-module $D(\A_{\Phi^+})$ of
$\A_{\Phi^+}$ (geometrically speaking, this consists of polynomial vector fields on $\mathfrak{t}$ tangent to $\A_{\Phi^+}$) is defined in $\CR\otimes \mathfrak{t}$. Then 
for a $W$-invariant non-degenerate quadratic form $Q \in \CR^W$, the ideal $\{\theta(Q) \mid \theta \in D(\A_{\Phi^+})\}$ turns out to be 
the ideal $(\CR^W_+)$.
See Theorem \ref{coinv} for details.     

Our aim in this paper is to generalize the phenomenon described
above. 
For that we need one more piece of data:
a lower ideal in $\Phi^+$, that is, a lower closed subset of $\Phi^+$ with respect to the usual partial order on $\Phi^+$.
To such a lower ideal $I$ of $\Phi^+$, we associate the \textbf{ideal
  arrangement} $\A_{I}$ \textbf{of $I$}, defined to be the subarrangement  
\[
\A_{I}:=\{\ker\alpha\mid \alpha\in I\}
\] 
of $\A_{\Phi^+}$. 
Then a graded ideal of $\CR$, denoted by $\mathfrak{a}(I)$, can be defined for $\A_I$ similarly for $\A_{\Phi^+}$, 
i.e., $$\mathfrak{a}(I):=\{\theta(Q) \mid \theta \in D(\A_I)\}.$$ 
%
In particular, $\mathfrak{a}(\Phi^+)=(\CR^W_+)$.  
The ideal $\mathfrak{a}(I)$ plays a key role in our argument and will be discussed in a more general setup in \cite{AMN}. 

On the other hand, Hessenberg varieties are defined as follows.
Let $\mathfrak{g}$ (resp.\ $\mathfrak{b}$) be the Lie algebra of $G$ (resp.\ $B$) and $\mathfrak{g}_\alpha$ the root space of a root $\alpha$.
The subspace $H(I)=\mathfrak{b}\oplus (\bigoplus_{\alpha \in I} \mathfrak{g}_{-\alpha})$ of $\mathfrak{g}$ is  $\mathfrak{b}$-stable and, for $X \in \mathfrak{g}$, the \textbf{Hessenberg variety} $\Hess (X,I)$ is defined by 
\[
\Hess (X,I):=\{g B \in G/B\mid \mbox{Ad}(g^{-1})(X) \in H(I)\}.
\]
When $I$ is empty (so $H(I)=\mathfrak{b}$) and $X$ is nilpotent,
$\Hess(X,I)$ is the famous Springer variety (or Springer fiber) and
has been
studied by many people in connection with geometric representation
theory (see e.g.\ the survey article \cite{Ty4}).

The Hessenberg variety $\Hess(X,I)$ is called \textbf{regular
  nilpotent} (resp.\ \textbf{regular semisimple}) if $X$ is regular
nilpotent (resp.\ regular semisimple).  These two cases have been much
studied 
in recent research on Hessenberg varieties. 
In particular, affine pavings have been constructed in
these cases, from which it follows that their odd degree cohomology groups vanish, and
their even degree Betti numbers are well understood (\cite{dMPS,Pr1,Ty}).  However, their cohomology ring structures are
not well understood in general.
In this paper we show that the study of these cohomology rings is
closely related to the logarithmic derivation modules of hyperplane
arrangements.  This surprising connection enables us to produce a
number of interesting consequences and provides a systematic method to
give an explicit presentation of the cohomology ring of a regular
nilpotent Hessenberg variety.  Moreover, our argument is
independent of Lie type, i.e., we do not use the classification of root
systems except in the explicit computation of the ring structure for
specific Lie types.


First we treat the nilpotent case.
We denote by $N$ a regular nilpotent element of $\mathfrak{g}$.
Since the regular nilpotent Hessenberg variety $\Hess(N,I)$ is a subvariety of $G/B$, $\varphi$ in \eqref{eq:1} followed by the restriction map yields a homomorphism
\[
\varphi_I\colon \CR\to H^*(\Hess(N,I))
\] 
which doubles the grading on $\CR$.
Our first main theorem is the following.

\begin{theorem} \label{nilpotentmain}
The map $\varphi_I$ is surjective and its kernel is $\mathfrak{a}(I)$. Hence $\varphi_I$ induces an isomorphism 
\begin{equation*}\label{Borel}
\CR/\mathfrak{a}(I)\cong H^*(\Hess(N,I)).
\end{equation*}
\end{theorem}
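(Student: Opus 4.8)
The plan is to separate the two assertions—surjectivity of $\varphi_I$ and the identification $\ker\varphi_I = \mathfrak a(I)$—and to attack each by combining Borel's theorem \eqref{eq:1} with geometric input coming from the affine paving of $\Hess(N,I)$ and the structure theory of $D(\A_I)$. For surjectivity, I would first recall that the Betti numbers of $\Hess(N,I)$ are known (via the affine paving of \cite{Ty}), so that $\dim_\R H^*(\Hess(N,I))$ is a fixed combinatorial quantity depending only on $I$. Since $\varphi$ factors through $H^*(G/B)$ and $H^*(G/B)\cong\CR/(\CR^W_+)$ is generated in degree $2$, it suffices to control the image of degree-$2$ classes; but in fact a cleaner route is dimension-counting, postponed to the last step: once we show $\mathfrak a(I)\subseteq\ker\varphi_I$ and that $\dim_\R(\CR/\mathfrak a(I))$ equals the known total Betti number of $\Hess(N,I)$, surjectivity and the kernel computation follow simultaneously. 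So the real content splits into (a) $\mathfrak a(I)\subseteq\ker\varphi_I$, and (b) a Poincaré-polynomial match $\operatorname{Poin}(\CR/\mathfrak a(I),t) = \operatorname{Poin}(\Hess(N,I),t)$.

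For step (a), I would use the explicit generators of $\mathfrak a(I)$: by definition $\mathfrak a(I)=\{\theta(Q)\mid\theta\in D(\A_I)\}$, and I expect (to be verified as a preliminary lemma) that this set is actually a $\CR$-submodule, in fact the ideal generated by $\theta_1(Q),\dots,\theta_n(Q)$ for any homogeneous basis $\theta_1,\dots,\theta_n$ of $D(\A_I)$ (this uses that $\A_I$ is a free arrangement, a theorem of Abe–Barakat–Cunt(\`o)–et al., or the ideal-free theorem, which may be invoked). Thus it is enough to show $\varphi_I(\theta_j(Q))=0$ for each $j$. Here I would trace through the geometry: an element $\theta\in D(\A_I)\subseteq\CR\otimes\mathfrak t$ can be written $\theta=\sum_i f_i\,\partial_{x_i}$, and $\theta(Q)=\sum_i f_i\,\partial_{x_i}Q$; the condition $\theta\in D(\A_I)$ says $\alpha\mid\theta(\alpha)$ for all $\alpha\in I$. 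Geometrically, each root $\alpha$ of $G$ (equivalently each $\alpha\in\hat T$) gives $\varphi(\alpha)=e(L_\alpha)\in H^2(G/B)$, and the restriction of $L_\alpha$ to $\Hess(N,I)$ for $\alpha\in I$ has a section whose vanishing locus is tied to the incidence variety defining $\Hess(N,I)$ — concretely, the regular nilpotent $N$ gives, on $\Hess(N,I)$, a line-bundle map $L_{-\alpha}\to\mathcal O$ for each $\alpha\in I\cup\{\text{simple roots}\}$, forcing a divisibility/relation among the $e(L_\alpha)$ that is exactly the image of the defining relation of $D(\A_I)$. Making this precise — producing the right global sections and identifying their zero schemes with the $\varphi_I$-images of the coefficients $f_i$ — is the step I expect to be the main obstacle, since it is the heart of the "surprising connection" and requires care about which bundles extend over the (singular) variety $\Hess(N,I)$ and about signs/twists in the Euler classes.

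For step (b), the Poincaré polynomial of $\Hess(N,I)$ is $\sum_{w}t^{2\ell_I(w)}$ for a suitable statistic coming from the affine cells (\cite{Ty,Pr1}); on the algebraic side, since $\A_I$ is free with exponents, say, $d_1,\dots,d_n$ (the coexponents of $I$), one computes $\operatorname{Poin}(\CR/\mathfrak a(I),t)=\prod_{i=1}^n\frac{1-t^{2d_i}}{1-t^2}$ — this is precisely the statement that $\mathfrak a(I)$ is a complete intersection generated by $\theta_1(Q),\dots,\theta_n(Q)$ in degrees $2d_1,\dots,2d_n$, which follows once one knows these $n$ elements form a regular sequence (equivalently, their common zero locus in $\mathfrak t_\C$ is just the origin; this is where the non-degeneracy of $Q$ enters, via an argument that any common zero of all $\theta_j(Q)$ must be a fixed point of the relevant flow, forcing it to $0$). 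Finally I would match the two product formulas: both are known to equal $\prod(1-t^{2d_i})/(1-t^2)$ with the same multiset $\{d_i\}$ — on the geometric side because the affine cell dimensions are governed by the same combinatorics of $I$ that produces the exponents of $\A_I$ (this is the Sommers–Tymoczko-type statement, which we may cite or reprove). With $\mathfrak a(I)\subseteq\ker\varphi_I$ and equality of (finite) dimensions in each degree, $\varphi_I$ is forced to be an isomorphism modulo $\mathfrak a(I)$, giving both surjectivity and $\ker\varphi_I=\mathfrak a(I)$, and hence the theorem.
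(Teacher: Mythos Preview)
Your proposal has two genuine gaps, and the paper's route differs from yours in both places.

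\textbf{Step (a): the inclusion $\mathfrak a(I)\subseteq\ker\varphi_I$.} You propose to verify this directly by producing, for each basis element $\theta_j$ of $D(\A_I)$, a geometric relation in $H^*(\Hess(N,I))$ forcing $\varphi_I(\theta_j(Q))=0$, via sections of line bundles $L_{-\alpha}$. You already flag this as ``the main obstacle,'' and indeed it is: there is no evident way to read off, from the defining condition $\alpha\mid\theta(\alpha)$ for $\alpha\in I$, a cohomological relation on the singular variety $\Hess(N,I)$. The paper does \emph{not} prove this inclusion directly. It introduces, via the $\CC^*$-action of a one-parameter subgroup $S\subset T$ on $\Hess(N,I)$ and localization to fixed points, an auxiliary ideal $\mathfrak n(I)\subseteq\ker\varphi_I$, and then shows $\mathfrak n(I)\subseteq\mathfrak a(I)$ (the \emph{opposite} inclusion from the one you want). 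This last step goes by a colon-ideal recursion: one proves $\mathfrak a(I)=\mathfrak a(\Phi^+):\beta_I$ (using that $\CR/\mathfrak a(I)$ is a Poincar\'e duality algebra) and $\mathfrak n(I)\subseteq\mathfrak n(\Phi^+):\beta_I$ (using equivariant fixed-point computations), and at $I=\Phi^+$ both ideals equal $(\CR^W_+)$ by Borel. The containment $\mathfrak a(I)\subseteq\ker\varphi_I$ only emerges at the very end, as a consequence of the squeeze.

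\textbf{Step (b): the Hilbert-series match.} You assert that $\mathrm{Poin}(\Hess(N,I),\sqrt q)$ equals the product $\prod_i(1+q+\cdots+q^{d_i^I})$ and say this ``is the Sommers--Tymoczko-type statement, which we may cite or reprove.'' But that statement is precisely the Sommers--Tymoczko \emph{conjecture}, which was open in types $D$ and $E$ at the time and is proved in this paper as a \emph{corollary} of the very theorem you are trying to establish. Citing it is circular. The paper avoids this by a more delicate argument: from the chain $\mathrm{Poin}(\Hess(N,I),\sqrt q)\geq F(\CR/\mathfrak n(I),q)\geq F(\CR/\mathfrak a(I),q)$ one only needs to force equality, and for this it suffices to know equality at $q=1$ (which follows from the bijection $\mathcal W^I\leftrightarrow C(\A_I)$ together with Terao's factorization, not the full conjecture) \emph{plus} palindromicity of the two extreme series (which comes from Precup for the left side and from $\CR/\mathfrak a(I)$ being a complete intersection for the right). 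A short elementary argument then shows that two palindromic sequences of the same length and sum, one dominating the other in partial sums, must coincide.

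In short, your dimension-counting endgame is the right idea, but you have not secured either the inclusion or the full Poincar\'e-polynomial identity needed to run it; the paper replaces both ingredients with an equivariant/colon-ideal mechanism and a palindromicity trick that only requires the \emph{total} dimension match.
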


When $I=\Phi^+$, 
Theorem \ref{nilpotentmain} is nothing but Borel's isomorphism between the coinvariant ring and $H^*(G/B)$ mentioned above. 
Theorem \ref{nilpotentmain} has two important corollaries.
One is Corollary~\ref{surjection} below, which was announced by Dale Peterson 
(see \cite[Theorem 3]{BC}).  

\begin{cor} \label{surjection} 
The restriction map
  $H^*(G/B)\to H^*(\Hess (N,I))$ is surjective and $H^*(\Hess(N,I))$
  is a complete intersection, and in particular, is a Poincar\'e duality
  algebra.  Moreover, the Poincar\'e polynomial of $\Hess(N,I)$ is
  given by the product formula
\begin{equation} \label{eq:PoinHessNI}
\mbox{Poin}(\Hess(N,I),\sqrt{q})=\prod_{\alpha\in I}\frac{1-q^{ht(\alpha)+1}}{1-q^{ht(\alpha)}},
\end{equation}
where $ht(\alpha)$ denotes the sum of the coefficients of $\alpha$ over the simple roots. 
\end{cor}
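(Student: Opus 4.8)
The plan is to derive all four assertions from Theorem~\ref{nilpotentmain}, Borel's theorem, and the freeness of the ideal arrangement $\A_I$. For surjectivity, note that by construction $\varphi_I$ factors as $\varphi_I=\iota^{*}\circ\varphi$, where $\varphi\colon\CR\to H^*(G/B)$ is Borel's map and $\iota^{*}\colon H^*(G/B)\to H^*(\Hess(N,I))$ is restriction along the inclusion $\iota\colon\Hess(N,I)\hookrightarrow G/B$. Theorem~\ref{nilpotentmain} asserts that $\varphi_I$ is surjective, hence so is $\iota^{*}$; this is the first claim.

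For the complete intersection and Poincar\'e duality statements, I would set up the ring-theoretic picture as follows. By Theorem~\ref{nilpotentmain} we have a graded isomorphism $H^*(\Hess(N,I))\cong\CR/\mathfrak{a}(I)$, where $\CR=\mathrm{Sym}(\hat T\otimes\R)$ is a polynomial ring in $n=\rank G$ variables. Since $\Hess(N,I)$ is a closed subvariety of the projective variety $G/B$, it is compact, so $H^*(\Hess(N,I))$ --- hence $\CR/\mathfrak{a}(I)$ --- is a finite-dimensional $\R$-algebra, i.e.\ $\mathfrak{a}(I)$ is an Artinian homogeneous ideal. Now I invoke the freeness of ideal subarrangements of Weyl arrangements \cite{ABCHT}: $D(\A_I)$ is a free $\CR$-module with a homogeneous basis $\theta_1,\dots,\theta_n$, whose polynomial degrees $d_1,\dots,d_n$ are the exponents of $\A_I$. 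Then $\mathfrak{a}(I)=\{\theta(Q)\mid\theta\in D(\A_I)\}$ is the ideal generated by $\theta_1(Q),\dots,\theta_n(Q)$; as a derivation of polynomial degree $d_i$ sends a form of degree $m$ to one of degree $m+d_i-1$ and $Q$ is quadratic, we get $\deg\theta_i(Q)=d_i+1$. A homogeneous ideal of a polynomial ring of Krull dimension $n$ that is generated by $n$ homogeneous elements and has Artinian quotient is generated by a homogeneous system of parameters, and in the Cohen--Macaulay ring $\CR$ such a system is a regular sequence. Hence $\mathfrak{a}(I)$ is a complete intersection ideal and $H^*(\Hess(N,I))$ is a complete intersection ring; being a graded Artinian complete intersection it is Gorenstein, and a graded Artinian Gorenstein $\R$-algebra is a Poincar\'e duality algebra.

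For the Poincar\'e polynomial I would use the complete-intersection Hilbert series: if $\CR/(g_1,\dots,g_n)$ is a complete intersection with $\deg g_i=e_i$, its Hilbert series is $\prod_{i=1}^n\frac{1-q^{e_i}}{1-q}$. Since $\varphi_I$ doubles the grading, taking $e_i=d_i+1$ gives
\[
\Poin(\Hess(N,I),\sqrt{q})=\mathrm{Hilb}(\CR/\mathfrak{a}(I),q)=\prod_{i=1}^{n}\frac{1-q^{d_i+1}}{1-q}.
\]
To turn this into the product over $I$, I recall from \cite{ABCHT} that the exponent multiset $\{d_1,\dots,d_n\}$ is the conjugate partition $\lambda^{*}$ of the height distribution $\lambda=(\lambda_1\ge\lambda_2\ge\cdots)$ of $I$, where $\lambda_j=\#\{\alpha\in I\mid ht(\alpha)=j\}$. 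Applying the telescoping identity $\frac{1-q^{m+1}}{1-q}=\prod_{i=1}^{m}\frac{1-q^{i+1}}{1-q^{i}}$ to each factor and re-indexing the boxes of the Young diagram (box $(k,i)$ of $\lambda^{*}$ is box $(i,k)$ of $\lambda$, and the $i$-th column of $\lambda^{*}$ has $\lambda_i$ boxes) converts $\prod_{i=1}^{n}\frac{1-q^{d_i+1}}{1-q}$ into $\prod_{j\ge1}\bigl(\frac{1-q^{j+1}}{1-q^{j}}\bigr)^{\lambda_j}=\prod_{\alpha\in I}\frac{1-q^{ht(\alpha)+1}}{1-q^{ht(\alpha)}}$, which is the claimed formula.

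The main obstacle, and the only genuinely non-formal ingredient, is the freeness of $\A_I$: it is what forces $\mathfrak{a}(I)$ to be generated by exactly $n=\dim\CR$ elements --- hence a complete intersection once the quotient is seen to be Artinian --- and it is what pins down the degrees $d_i+1$ needed for the product formula. Everything else is formal: the surjectivity is a diagram chase, and the implications "complete intersection $\Rightarrow$ Gorenstein $\Rightarrow$ Poincar\'e duality algebra" together with the complete-intersection Hilbert series are standard commutative algebra.
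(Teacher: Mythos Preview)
Your proof is correct and follows essentially the same route as the paper: surjectivity from the factorisation $\varphi_I=\iota^*\circ\varphi$, complete intersection and the Hilbert series from the freeness of $D(\A_I)$ (the paper packages this as Proposition~\ref{CI}), and the conversion between $\prod_i(1+q+\cdots+q^{d_i^I})$ and the height product via the dual-partition description of the exponents. The only minor deviation is that you deduce Artinianness of $\CR/\mathfrak{a}(I)$ from compactness of $\Hess(N,I)$, whereas the paper obtains it algebraically from the surjection $\CR/\mathfrak{a}(\Phi^+)\twoheadrightarrow\CR/\mathfrak{a}(I)$ and finite-dimensionality of the coinvariant algebra; both are equally valid.
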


The other corollary is an affirmative answer to the first conjecture
in \cite{STy} by Sommers-Tymoczko. The ideal arrangement $\A_I$ is
known to be free, that is, the logarithmic derivation $\CR$-module
$D(\A_I)$ is free as a graded $\CR$-module. The free $\CR$-module
$D(\A_I)$ has $n$ homogeneous generators, where $n$ is the rank of
$G$, and their (polynomial) degrees are called the exponents of
$\A_I$. See \S2.1 for details.

\begin{cor} \label{Weyltype} 
Let $\mathcal{W}^I$ be the subsets of $I$
  of Weyl type (see \S 6) and let $d_1^I,\dots,d_n^I$ be the exponents
  of $\A_I$.  Then
\begin{equation}
\sum_{\Y \in \mathcal{W}^I} \s^{|\Y|}=\prod_{i=1}^n (1+\s+\cdots+\s^{d_i^I}).
\label{eq00}
\end{equation}
\end{cor}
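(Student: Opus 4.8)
The plan is to count the subsets $Y \in \mathcal{W}^I$ by their cardinality using the two descriptions of the graded ring $\CR/\mathfrak{a}(I)$ provided by Theorem \ref{nilpotentmain} and Corollary \ref{surjection}. The left-hand side of \eqref{eq00} is, essentially by the definition of ``Weyl type'' subsets given in \S 6, the Poincar\'e polynomial of $\Hess(N,I)$: the affine paving of $\Hess(N,I)$ has cells indexed by elements of the Weyl group whose inversion sets interact with $I$ in a prescribed way, and the subsets of Weyl type repackage exactly this combinatorial data so that $\sum_{Y \in \mathcal{W}^I} q^{|Y|} = \Poin(\Hess(N,I),\sqrt q)$. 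The first step is therefore to recall the definition of $\mathcal{W}^I$ from \S 6 and verify this identification; this is bookkeeping once the definitions are in place.

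Next I would compute the same Poincar\'e polynomial from the ring-theoretic side. By Theorem \ref{nilpotentmain}, $H^*(\Hess(N,I)) \cong \CR/\mathfrak{a}(I)$, and by Corollary \ref{surjection} this is a complete intersection. Concretely, $\mathfrak{a}(I) = \{\theta(Q) \mid \theta \in D(\A_I)\}$, and since $\A_I$ is free with exponents $d_1^I,\dots,d_n^I$, the module $D(\A_I)$ has homogeneous generators $\theta_1,\dots,\theta_n$ of degrees $d_1^I,\dots,d_n^I$. Applying these to the fixed $W$-invariant non-degenerate quadratic form $Q$ produces $n$ homogeneous polynomials $\theta_1(Q),\dots,\theta_n(Q)$ of degrees $d_1^I+1,\dots,d_n^I+1$ that generate $\mathfrak{a}(I)$. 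The key point is that these $n$ elements form a regular sequence in $\CR = \R[x_1,\dots,x_n]$ — equivalently, that $\CR/\mathfrak{a}(I)$ is a complete intersection of the expected codimension $n$ — which is precisely the content of Corollary \ref{surjection}. Granting this, the Hilbert series of $\CR/\mathfrak{a}(I)$ is the standard complete-intersection formula
\begin{equation*}
\operatorname{Hilb}(\CR/\mathfrak{a}(I), t) = \prod_{i=1}^n \frac{1 - t^{d_i^I+1}}{1-t}.
\end{equation*}
After the grading-doubling that relates $\CR$-degree to cohomological degree, substituting $t = q$ turns this into $\prod_{i=1}^n (1 + q + \cdots + q^{d_i^I})$, which is the right-hand side of \eqref{eq00}. (As a consistency check, this also matches \eqref{eq:PoinHessNI}, since $\prod_{\alpha \in I}(1-q^{ht(\alpha)+1})/(1-q^{ht(\alpha)}) = \prod_{i=1}^n(1+q+\cdots+q^{d_i^I})$ is a known identity for ideal arrangements relating the heights of roots in $I$ to the exponents of $\A_I$.)

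Combining the two computations of $\Poin(\Hess(N,I),\sqrt q)$ gives \eqref{eq00}. The main obstacle is the first step: carefully matching the combinatorial definition of $\mathcal{W}^I$ in \S 6 with the cells of the affine paving, so that $\sum_{Y \in \mathcal{W}^I} q^{|Y|}$ is genuinely identified with the Poincar\'e polynomial rather than merely resembling it. The ring-theoretic side is comparatively routine once Theorem \ref{nilpotentmain} and Corollary \ref{surjection} are available, since the Hilbert series of a graded complete intersection is forced by the degrees of a defining regular sequence.
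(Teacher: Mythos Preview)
Your proposal is correct and follows essentially the same route as the paper: identify $\sum_{Y\in\mathcal W^I}q^{|Y|}$ with $\Poin(\Hess(N,I),\sqrt q)$ via the affine paving and the Sommers--Tymoczko bijection $\Hess(N,I)^S\leftrightarrow\mathcal W^I$ (this is Proposition~\ref{dim} in the paper, resting on Theorems~\ref{theorem:regnilp} and~\ref{thm:SommersTymoczko}), then equate with the Hilbert series of $\CR/\mathfrak a(I)$ via Theorem~\ref{nilpotentmain} and read off the product formula from the complete-intersection structure (Proposition~\ref{CI}). The only remark is that the Hilbert series formula is already available directly as Proposition~\ref{CI}, so you need not route through Corollary~\ref{surjection}, which is itself a consequence of Proposition~\ref{CI} and Theorem~\ref{nilpotentmain}.
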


\noindent
Corollary~\ref{Weyltype} was posed as a conjecture, and verified by Sommers and Tymoczko in \cite{STy} for types 
$A,\ B,\ C$ and $G_2$ by inductive arguments, and $F_4$ and $E_6$ by 
using computer calculation (see \cite[Theorem 4.1]{STy}). Also,
Schauenburg confirmed Corollary~\ref{Weyltype} for types $D_5,\ D_6,\ D_7$ and $E_7$ by 
direct computation, and 
R\"ohrle confirmed Corollary~\ref{Weyltype} for type $D_4$, and type $E_8$ with 
some possible exceptions by combining inductive argument 
with direct computation (see \cite[Theorem 1.28]{Roe}).

We prove Corollary~\ref{Weyltype} by connecting 
Weyl type subsets to the geometry of Hessenberg varieties, which gives us a 
classification-free proof of Corollary~\ref{Weyltype}.
Explicitly, we show Corollary \ref{eq00} by computing the Poincar\'e
polynomial of $\Hess(N,I)$ in two ways.  Indeed, the left hand side of
\eqref{eq00} is obtained from the affine paving of $\Hess(N,I)$
(\cite{Pr1,Ty}), while the right hand side of \eqref{eq00}
agrees with \eqref{eq:PoinHessNI}.

Next we treat the semisimple case.  We denote by $S$ a regular
semisimple element of $\mathfrak{g}$.  The regular semisimple
Hessenberg variety $\Hess(S,I)$ has different features from
$\Hess(N,I)$.  For instance, $\Hess(S,I)$ is smooth and invariant
under the $T$-action on $G/B$, while $\Hess(N,I)$ is in general
neither. 
Moreover, the composition of the homomorphisms
\[ \psi_I\colon \CR\to H^*(\Hess(S,I)) \]
obtained from $\varphi$ in \eqref{eq:1} with the restriction
map is not surjective.  In general $\Hess(S,I)$ does not have a
natural action of the Weyl group $W$, but the cohomology
$H^*(\Hess(S,I))$ does admit a natural $W$-action, as noticed by Tymoczko \cite{Ty2} (in type
$A$).  Indeed, $\Hess(S,I)$ is a GKM manifold and its associated GKM
graph has an action of $W$, and it induces a linear action of $W$ on
$H^*(\Hess(S,I))$.  Our second main theorem is the following.

\begin{theorem} \label{ssmain}
The image of $\psi_I$ is 
the ring of $W$-invariants $H^*(\Hess(S,I))^W$ 
and the kernel of $\psi_I$ is $\mathfrak{a}(I)$. Hence $\psi_I$ induces an isomorphism 
\[ \CR/\mathfrak{a}(I)\cong H^*(\Hess (S,I))^W.\]
\end{theorem}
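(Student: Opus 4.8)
The plan is to combine Theorem~\ref{nilpotentmain} with a comparison between the regular nilpotent and regular semisimple Hessenberg varieties at the level of (equivariant) cohomology, using the $W$-action on $H^*(\Hess(S,I))$ coming from the GKM structure. First I would analyze the map $\psi_I$ via the GKM presentation: since $\Hess(S,I)$ is a smooth $T$-variety with isolated fixed points indexed by $W$ and only one-dimensional orbits, its equivariant cohomology $H^*_T(\Hess(S,I))$ embeds into $\bigoplus_{w\in W} H^*_T(\mathrm{pt})=\bigoplus_{w\in W}\CR$, cut out by the usual GKM edge congruences. The $W$-action permutes the fixed points (it acts on the GKM graph as described by Tymoczko), so taking $W$-invariants of $H^*_T(\Hess(S,I))$ and then specializing the equivariant parameters to zero produces $H^*(\Hess(S,I))^W$, and the composite $\CR\to H^*_T(\Hess(S,I))\to H^*(\Hess(S,I))$ lands in the $W$-invariants precisely because the diagonal image of $\CR$ is $W$-fixed. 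This shows $\operatorname{im}\psi_I\subseteq H^*(\Hess(S,I))^W$; the reverse inclusion, i.e.\ surjectivity of $\CR\to H^*(\Hess(S,I))^W$, is the first half of the theorem and the more substantial point.

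For surjectivity onto the invariants, the key input I would use is a dimension/Poincar\'e-series comparison together with the relationship $\mathfrak a(I)\subseteq\ker\psi_I$. On the one hand, $\dim_\R H^*(\Hess(S,I))^W$ can be computed: the affine paving of $\Hess(S,I)$ (or the known $W$-representation on $H^*(\Hess(S,I))$, which by Tymoczko's work and the $W$-action on the GKM graph decomposes the cohomology) has the property that its $W$-invariant part has Poincar\'e polynomial equal to $\mathrm{Poin}(\Hess(N,I),\sqrt q)=\prod_{\alpha\in I}\frac{1-q^{ht(\alpha)+1}}{1-q^{ht(\alpha)}}$ from \eqref{eq:PoinHessNI}. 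On the other hand, $\CR/\mathfrak a(I)$ has exactly this Hilbert series by Theorem~\ref{nilpotentmain} together with Corollary~\ref{surjection}. Hence once I show $\mathfrak a(I)\subseteq\ker\psi_I$ and that $\psi_I$ has image inside $H^*(\Hess(S,I))^W$, a surjection $\CR/\mathfrak a(I)\twoheadrightarrow \operatorname{im}\psi_I\subseteq H^*(\Hess(S,I))^W$ between spaces with the same (finite) dimension in each degree forces all inclusions to be equalities, yielding simultaneously that $\operatorname{im}\psi_I=H^*(\Hess(S,I))^W$ and $\ker\psi_I=\mathfrak a(I)$.

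It remains to establish the inclusion $\mathfrak a(I)\subseteq\ker\psi_I$. Here I would argue geometrically and functorially: there is a natural comparison, using a family degenerating a regular semisimple element to a regular nilpotent one (or, equivalently, using the fact that both $\Hess(S,I)$ and $\Hess(N,I)$ sit inside $G/B$ and the classes $\varphi(f)$ for $f\in\CR$ restrict compatibly), which shows that any relation holding in $H^*(\Hess(N,I))$ — in particular every element of $\mathfrak a(I)=\ker\varphi_I$ by Theorem~\ref{nilpotentmain} — also maps to zero under $\psi_I$. Concretely, I expect to exhibit a space on which both varieties appear as fibers, or to use the flat family $\Hess(S+tN,I)$, to transport the vanishing of $\varphi_I$ on $\mathfrak a(I)$ to vanishing of $\psi_I$ on $\mathfrak a(I)$; this uses that the line bundles $L_\alpha$ and hence the classes $\varphi(f)$ are globally defined on $G/B$ and restrict to $\psi_I(f)$ and $\varphi_I(f)$ respectively.

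The main obstacle I anticipate is precisely this last step — proving $\mathfrak a(I)\subseteq\ker\psi_I$ in a classification-free way. The definition of $\mathfrak a(I)$ is in terms of the logarithmic derivation module $D(\A_I)$ and a $W$-invariant quadratic form, which is a priori remote from the GKM description of $H^*(\Hess(S,I))$; bridging these requires either an explicit identification of the generators $\theta(Q)$, $\theta\in D(\A_I)$, with classes that manifestly vanish on $\Hess(S,I)$, or an indirect argument via the nilpotent case and a degeneration. I would therefore spend the bulk of the proof on a careful GKM-theoretic or deformation-theoretic computation showing that the relations defining $\CR/\mathfrak a(I)$ are satisfied by the restricted Euler classes $\psi_I(\alpha)$, after which the Poincar\'e-series count closes the argument.
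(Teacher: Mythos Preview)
Your proposal has two genuine gaps that make the argument circular or incomplete.

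First, the Poincar\'e polynomial of $H^*(\Hess(S,I))^W$ is not available \emph{a priori} in general Lie type; it is one of the consequences of Theorem~\ref{ssmain}, not an input to it. Tymoczko's work sets up the $W$-action via the GKM graph but does not compute the dimension of the trivial isotypic component outside of type~$A$, so your dimension count cannot close the argument without assuming what is to be proved.

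Second, the proposed inclusion $\mathfrak a(I)\subseteq\ker\psi_I$ via degeneration goes in the wrong direction. In a family degenerating a regular semisimple element to a regular nilpotent one, $\Hess(N,I)$ is the \emph{special} fiber (singular, with smaller Betti numbers) and $\Hess(S,I)$ is the generic one. Relations holding in the cohomology of the special fiber need not propagate to the generic fiber; if anything, specialization transports relations the other way. Your observation that both $\varphi_I$ and $\psi_I$ factor through $H^*(G/B)$ is correct but does not help: the two kernels of restriction from $H^*(G/B)$ carry no obvious inclusion relation without further input.

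The paper avoids both obstacles. Surjectivity of $\psi_I$ onto $H^*(\Hess(S,I))^W$ is proved directly from the GKM presentation: restriction to the identity fixed point $e\in W$ identifies $H^*_T(\Hess(S,I))^W$ with $H^*(BT)$, so the equivariant invariants form a polynomial ring on the classes $e^T(L_\alpha)$, and this descends to ordinary cohomology (Proposition~\ref{prop:W-invFinGen}). For the kernel, set $\mathfrak s(I):=\ker\psi_I$. The Gysin map for $\Hess(S,I)\hookrightarrow\Hess(S,I\cup\{\alpha\})$ is $W$-equivariant and, on the invariants, acts as multiplication by $-\alpha$ (Lemma~\ref{lemm:Com}, Proposition~\ref{prop:W-Gysin}); since both $\CR/\mathfrak s(I)$ and $\CR/\mathfrak s(I\cup\{\alpha\})$ are Poincar\'e duality algebras of the correct socle degrees (Proposition~\ref{prop:PDAHessS}), Lemma~\ref{colonideal} gives $\mathfrak s(I)=\mathfrak s(I\cup\{\alpha\}):\alpha$. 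This is exactly the recursion satisfied by $\mathfrak a(I)$ (Proposition~\ref{inj}), and the base case $\mathfrak s(\Phi^+)=(\CR^W_+)=\mathfrak a(\Phi^+)$ is Borel's theorem together with Theorem~\ref{coinv}. Downward induction on $|\Phi^+\setminus I|$ then yields $\mathfrak s(I)=\mathfrak a(I)$ for every lower ideal, with no dimension count and no degeneration required.
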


Theorems~\ref{nilpotentmain} and~\ref{ssmain} (and
Corollary~\ref{surjection}) imply that there exists an isomorphism
between $H^*(\Hess(N,I))$ and $H^*(\Hess(S,I))^W$ which makes the
following diagram commute:
\[
  \xymatrix{
  & H^*(G/B) \ar[dl] \ar[dr]& \\
  H^*(\Hess(N,I)) \ar[rr]^{\cong}& & H^*(\Hess(S,I))^W
  }
\]
where the slanting arrows are restriction maps which are surjective.
We note that when $I$ consists of all simple roots, $\Hess(N,I)$ is
called the Peterson variety and $\Hess(S,I)$ is a toric variety, and
in this case the isomorphism between $H^*(\Hess(N,I))$ and
$H^*(\Hess(S,I))^W$ can be observed from their explicit ring
presentations (\cite{HHM,K}).  The isomorphism is also known in
type $A$ for any $I$, as was shown in \cite{AHHM}.  In fact, in
\cite{AHHM} the authors find an explicit
ring presentation of $H^*(\Hess(N,I))$ in type $A$ and construct
the isomorphism explicitly using it.
In constrast, in this manuscript, our result holds for any type and any $I$, our
proofs do not depend on the classification of Lie types, 
and we do not need an explicit ring presentation of $H^*(\Hess(N,I))$
in order to show the above isomorphism. 

The graded ring $\CR/\mathfrak{a}(I)$ is a complete intersection, so
that $\mathfrak{a}(I)$ is generated by $n$ homogeneous elements that
come from an $\CR$-basis of $D(\A_I)$.  Another advantage of our
approach is
that 
one can check whether $n$ elements in $D(\A_I)$ form generators using
Saito's criterion for the freeness of logarithmic derivation modules.
In fact, we carry out this idea and find explicit generators of
$\mathfrak{a}(I)$ in types $B$, $C$, and $G$, so that we obtain an
explicit ring presentation of $H^*(\Hess(N,I))$ in these Lie types.
Such a presentation was known in type $A$ (\cite{AHHM}) or when
$\Hess(N,I)$ is the Peterson variety (\cite{fu-ha-ma,HHM}).
Our method is also applicable to these cases and reproves their
results.

The organization of this paper is as follows. In \S 2 we collect some
general results and notions on hyperplane arrangements, commutative
algebra, and equivariant cohomology.  In \S 3 we mention our setting,
give the precise definition of the ideal $\mathfrak{a}(I)$, and recall
some results on hyperplane arrangements and Hessenberg varieties
necessary for our purposes.  In \S 4 we describe the Hilbert series of
$\CR/\mathfrak{a}(I)$ in terms of the ideal exponents of $\A_I$ and
prove a key property which the ideals $\mathfrak{a}(I)$ satisfy.  In
\S 5 we study regular nilpotent Hessenberg varieties using equivariant
cohomology.  We introduce the ideal $\mathfrak{n}(I)$ for each $I$ and
show that those ideals satisfy similar properties to
$\mathfrak{a}(I)$'s.  In \S 6 we study the relation between the set
$\mathcal{W}^I$ of the Weyl type subsets of $I$ and chambers of the
ideal arrangement $\A_I$.  In \S 7 we prove Theorem
\ref{nilpotentmain} by showing that $\mathfrak{n}(I)=\mathfrak{a}(I)$
and deduce a few corollaries, especially the affirmative answer to the
conjecture by Sommers-Tymoczko. We study regular semisimple Hessenberg
varieties using GKM theory in \S 8, and prove Theorem \ref{ssmain} in
\S 9.  In \S 10 we give explicit generators of $\mathfrak{a}(I)$ in
types $B$, $C$, and $G$. Those explicit generators are analogs of
the generators in type $A$ found by \cite{AHHM}.  In \S 11 we find
the volume polynomial of $\Hess(N,I)$, i.e., the polynomial on
$\mathfrak{t}$ whose annihilator in the ring of differential operators
can be identified with $\mathfrak{a}(I)$.  In \S 12 we will see that
the hard Lefschetz property and the Hodge-Riemann relations hold for
$\Hess(N,I)$ although it is a singular variety in general.  \medskip

\noindent
\textbf{Acknowledgements}. 
We are grateful to Hiraku Abe for fruitful discussions and comments on this paper and Naoki Fujita for his stimulating question. We are grateful to Hiroaki Terao for the discussion on the free basis for 
height subarrangements of type $A$. 
We are also grateful to Megumi Harada for her comments on the paper.
The first author is partially supported by JSPS Grant-in-Aid for Scientific Research (B) 16H03924. 
The second author is partially supported by JSPS Grant-in-Aid for JSPS Fellows 15J09343. 
The third author is partially supported by JSPS Grant-in-Aid for Scientific Research (C) 16K05152.

\section{Preliminaries}

In this section we collect some results and review some notions on hyperplane arrangements, commutative algebra, and equivariant cohomology, which will be used throughout this paper.  

\subsection{Hyperplane arrangements} \label{subsect:HA}

In this subsection we  review definitions and results on hyperplane arrangements. For 
general reference, see \cite{OT}.

Let $V$ be a finite dimensional real vector space and let $\A$ be a \textbf{hyperplane arrangement} in $V$, i.e., a 
finite set of linear hyperplanes in $V$. Let $M(\A):=V \setminus \bigcup_{H \in \A} H$ and define the set of \textbf{chambers} $C(\A)$ of $\A$ by 
\[
C(\A):=\{\mbox{connected components of }M(\A)\}.
\]
The polynomial $\pi(\A,t):=\Poin(M(\A) \otimes_{\RR} \CC,t)$ is called 
the \textbf{Poincar\'e polynomial} of $\A$. It is well-known that 
$|\A|$ coincides with the coefficient of $t$ in $\pi(\A,t)$, and $\pi(\A,t)$ depends only on the combinatorial 
structure of $\A$, see \cite{OS} for details. 
Now let us introduce several results used for the proofs of main results in this paper. The following is a 
well-known counting result of chambers by Zaslavsky.


\begin{theorem}[\cite{Z}]
$|C(\A)|=\pi(\A,1)$.
\label{Zaslavsky}
\end{theorem}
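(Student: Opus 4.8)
The plan is to prove Zaslavsky's identity by induction on $|\A|$, using the deletion–restriction recursion and checking that both sides of the equality obey the same recursion and the same base case. For the base case $\A=\emptyset$ we have $M(\A)=V$, hence $\pi(\A,t)=1$, while $C(\A)=\{V\}$, so $|C(\A)|=1=\pi(\A,1)$. For the inductive step, fix a hyperplane $H_0\in\A$ and set $\A':=\A\setminus\{H_0\}$ (the deletion) and $\A'':=\{H\cap H_0\mid H\in\A',\ H\cap H_0\neq H_0\}$, an arrangement inside the hyperplane $H_0$ (the restriction). On the arrangement side, the Orlik–Solomon computation of $\pi$ (\cite{OS}, already invoked above) gives
\[
\pi(\A,t)=\pi(\A',t)+t\,\pi(\A'',t),
\]
so that $\pi(\A,1)=\pi(\A',1)+\pi(\A'',1)$. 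Hence it suffices to establish the matching combinatorial recursion $|C(\A)|=|C(\A')|+|C(\A'')|$, after which induction (applied to $\A'$ and to $\A''$, both of which have fewer hyperplanes, the latter living in a lower-dimensional space) closes the argument.

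The heart of the proof is therefore the chamber recursion. Passing from $\A'$ to $\A$ reinserts $H_0$. Each chamber $c'\in C(\A')$ is an open convex cone. If $c'\cap H_0=\emptyset$, then $c'$ survives unchanged as a chamber of $\A$. If $c'\cap H_0\neq\emptyset$, pick $x\in c'\cap H_0$; since $c'$ is open, a small ball about $x$ lies in $c'$ and meets both open half-spaces bounded by $H_0$, so by convexity $c'\setminus H_0$ has exactly two connected components, i.e.\ $c'$ splits into two chambers of $\A$. Consequently
\[
|C(\A)|=|C(\A')|+\#\{c'\in C(\A')\mid c'\cap H_0\neq\emptyset\}.
\]
It remains to identify the last cardinality with $|C(\A'')|$ via the map $c'\mapsto c'\cap H_0$. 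For $c'$ with $c'\cap H_0\neq\emptyset$, no $H\in\A'$ can contain $H_0$ (else $c'\cap H_0\subseteq c'\cap H=\emptyset$), so each $H\in\A'$ meets $H_0$ in a member of $\A''$; hence $c'\cap H_0$ is a nonempty, relatively open, convex subset of $H_0$ disjoint from $\bigcup_{K\in\A''}K$, and any path inside $H_0$ avoiding $\bigcup_{K\in\A''}K$ avoids every $H\in\A'$, so such a path cannot leave $c'$. This forces $c'\cap H_0$ to be a full chamber of $\A''$, and conversely every chamber of $\A''$ is met by a unique such $c'$; the map is thus a bijection, giving $\#\{c'\mid c'\cap H_0\neq\emptyset\}=|C(\A'')|$ and completing the induction.

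The only step that is more than bookkeeping is this last bijection: one must verify that a chamber of $\A'$ cut by $H_0$ restricts to an \emph{entire} chamber of the restricted arrangement $\A''$ (rather than merely to an open region contained in one), which is exactly the convexity/path-connectedness argument above. Granting the Orlik–Solomon recursion for $\pi$ — which is legitimate here since the combinatorial nature of $\pi$ has already been cited from \cite{OS} — the rest reduces to the elementary splitting count for chambers under addition of a hyperplane. An alternative route would be to express $\pi(\A,1)$ directly through Möbius inversion over the intersection lattice and run Zaslavsky's original inclusion–exclusion/Euler-characteristic argument, but the deletion–restriction approach above is the most transparent and keeps the two sides manifestly parallel.
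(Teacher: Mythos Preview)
The paper does not supply a proof of this statement; it simply cites Zaslavsky \cite{Z} and moves on. Your deletion--restriction argument is correct and is one of the standard modern proofs (essentially the one in Orlik--Terao \cite{OT}): the recursion $\pi(\A,t)=\pi(\A',t)+t\,\pi(\A'',t)$ is indeed part of the Orlik--Solomon package, and your chamber-counting recursion $|C(\A)|=|C(\A')|+|C(\A'')|$ via the bijection $c'\mapsto c'\cap H_0$ is handled carefully. One small remark: since $\A''$ lives in $H_0$ rather than in $V$, the induction is really over all central arrangements in all finite-dimensional real vector spaces, ordered by $|\A|$; you acknowledge this in passing, but it is worth stating the inductive hypothesis in that generality at the outset. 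Zaslavsky's original proof instead computes the Euler characteristic of $M(\A)$ by inclusion--exclusion over the intersection lattice and reads off the number of chambers via M\"obius inversion---the alternative you mention at the end---but once the deletion--restriction formula for $\pi$ is granted, your route is the most direct.
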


Let $R$ be the symmetric algebra $\mbox{Sym}(V^*)$ of $V^*$ the dual space to $V$. 
An element of $V$ is a linear function on $V^*$ and extends to a derivation on $R$:
\[
v(fg)=v(f)g+fv(g)\quad \text{for $v\in V,\ f,g\in R$}.
\]
We then define the $R$-module of derivations on $R$ by 
\[
\Der R:=R\otimes V. 
\]
Choosing a linear coordinate system $x_1,\dots, x_n$ on $V$, i.e., $x_1,\ldots,x_n$ is a basis for $V^*$, $\Der R$ can be expressed as $\bigoplus_{i=1}^n R (\partial/\partial{x_i})$. 
A nonzero element $\theta\in \Der R$ is homogeneous of (polynomial) degree $d$ if $\theta=\sum_{k=1}^{\ell}f_k\otimes v_k$ $(f_k\in R,\ v_k\in V)$ and all non-zero 
$f_k$'s are of degree $d$.   

For each $H \in \A$, let 
$\alpha_H \in V^*$ be the defining linear form of $H$. The logarithmic derivation module 
$D(\A)$ of $\A$ is a graded $R$-module defined by 
\[
D(\A):=\{\theta \in \Der R \mid 
\theta(\alpha_H) \in R \alpha_H \ (^\forall H \in \A)\}.
\]
In general $D(\A)$ is reflexive but not necessarily free.
We say that $\A$ is \textbf{free} with $\exp(\A)=
(d_1,\ldots,d_n)$ if $D(\A)$ is a free $R$-module with homogeneous 
basis $\theta_1,\ldots,\theta_n$ of degree $d_1,\ldots,d_n$.

\begin{theorem}[Terao's factorization, \cite{T2}]
\label{factorization}
Let $\A$ be free with $\exp(\A)=(d_1,\ldots,d_n)$. Then 
$\pi(\A,t)=\prod_{i=1}^n (1+d_i t)$. In particular, $|C(\A)|=\prod_{i=1}^n (1+d_i)$ by Theorem~\ref{Zaslavsky}. 
\end{theorem}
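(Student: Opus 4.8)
The identity to be proved is equivalent to a statement about the characteristic polynomial, and that is where I would start. Writing $\chi(\A,t)=\sum_{X\in L(\A)}\mu(X)\,t^{\dim X}$ for the characteristic polynomial of $\A$ and using the Orlik-Solomon formula $\pi(\A,t)=(-t)^n\chi(\A,-t^{-1})$, the claim $\pi(\A,t)=\prod_{i=1}^n(1+d_it)$ becomes the assertion that $\chi(\A,t)=\prod_{i=1}^n(t-d_i)$, i.e.\ the exponents of a free arrangement are exactly the roots of its characteristic polynomial. (Given this factorization, setting $t=1$ and invoking Theorem~\ref{Zaslavsky} yields $|C(\A)|=\prod_{i=1}^n(1+d_i)$, so it suffices to prove the polynomial identity.) One is tempted to induct on $|\A|$ using the deletion-restriction recursion $\pi(\A,t)=\pi(\A',t)+t\,\pi(\A'',t)$ and Terao's addition-deletion theorem; but this fails in general, since a free arrangement need not be inductively free, so the restriction $\A^H$ can be non-free for \emph{every} $H\in\A$.

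The two extreme coefficients are easy and come from Saito's criterion. Fix a homogeneous $R$-basis $\theta_1,\dots,\theta_n$ of $D(\A)$ with $\deg\theta_i=d_i$; Saito's criterion identifies $\det(\theta_i(x_j))_{1\le i,j\le n}$ with a nonzero scalar multiple of $\prod_{H\in\A}\alpha_H$, and comparing polynomial degrees gives $\sum_{i=1}^n d_i=|\A|$. This matches the coefficient of $t$ on both sides of the claimed identity (the constant terms both being $1$), but says nothing about the intermediate coefficients.

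For the full factorization I would invoke the Solomon-Terao formula \cite{OT}, which expresses $\pi(\A,t)$ (equivalently $\chi(\A,t)$) for an arbitrary arrangement as a universal combination of the Hilbert series of the modules $D^p(\A)\subseteq\bigwedge^p\Der R$ of logarithmic $p$-derivations, $0\le p\le n$. When $\A$ is free, the natural inclusion $\bigwedge^p D(\A)\hookrightarrow D^p(\A)$ is an isomorphism: both sides are reflexive $R$-modules and the map becomes an isomorphism after localizing at each height-one prime, hence the two agree. So $D^p(\A)$ is free of rank $\binom{n}{p}$ with generator degrees the $p$-element subset sums $d_{i_1}+\cdots+d_{i_p}$, and therefore
\[
\sum_{p=0}^{n}\mathrm{Hilb}\bigl(D^p(\A),x\bigr)\,y^p=\frac{1}{(1-x)^n}\prod_{i=1}^n\bigl(1+x^{d_i}y\bigr).
\]
Substituting this product into the Solomon-Terao formula and passing to the limit $x\to1$ — in which the factor $1+x^{d_i}y$ contributes $1+d_it$ after the relevant substitution, using $\lim_{x\to1}(1-x^{d})/(1-x)=d$ — produces $\pi(\A,t)=\prod_{i=1}^n(1+d_it)$.

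The main obstacle is the Solomon-Terao formula itself: establishing that $\pi(\A,t)$ is computed by that alternating expression in the Hilbert series of the $D^p(\A)$ is the genuinely substantive step, and it requires a cohomological computation with the logarithmic de Rham complex of $\A$. With that input in hand, together with the identification $D^p(\A)\cong\bigwedge^p D(\A)$ in the free case, the remainder is the generating-function bookkeeping indicated above.
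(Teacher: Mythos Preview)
The paper does not prove this theorem; it is quoted from \cite{T2} as a preliminary result and used as a black box. So there is no proof in the paper to compare your proposal against.

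Your outline is correct and is essentially the argument now given in the standard reference \cite{OT}: reduce to the characteristic polynomial, invoke the Solomon--Terao formula for $\chi(\A,t)$ in terms of the Hilbert series of the modules $D^p(\A)$, use the identification $D^p(\A)\cong\bigwedge^p D(\A)$ in the free case (which, as you say, follows from reflexivity plus agreement in codimension one), and carry out the generating-function limit. You are also right that the Solomon--Terao formula is where the real content lies. One small historical remark: Terao's original 1981 proof in \cite{T2} predates the Solomon--Terao formula (1987) and instead proceeds by a M\"obius-function argument on the intersection lattice together with a local analysis of $D(\A)$; the route you sketch is the cleaner, later one.
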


We finally recall the following well-known criterion for bases of logarithmic derivation modules of arbitrary hyperplane arrangement.

\begin{theorem}[Saito's criterion, \cite{S2}, see also \cite{OT}]
Let $\A$ be a hyperplane arrangement in $V$ and let $\theta_1,\ldots,\theta_n \in D(\A)$ be homogeneous derivations. Then $\theta_1,\ldots,
\theta_n$ form an $R$-basis for $D(\A)$ if and only if 
$\theta_1,\ldots,\theta_n$ are $R$-independent and $\sum_{i=1}^n \deg \theta_i=|\A|$.
\label{Saito}
\end{theorem}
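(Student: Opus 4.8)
The plan is to isolate the single technical ingredient -- that the defining polynomial of $\A$ divides the determinant of the coefficient matrix of any tuple of logarithmic derivations -- and then deduce both directions formally: the ``only if'' direction from Terao's factorization (Theorem~\ref{factorization}) and the ``if'' direction from Cramer's rule. Concretely, given $\eta_1,\dots,\eta_n\in D(\A)$, write $\eta_i=\sum_j M_{ij}\,\partial/\partial x_j$, let $M=M(\eta_1,\dots,\eta_n)$ be the matrix $(M_{ij})$, and set $Q=Q(\A):=\prod_{H\in\A}\alpha_H$. The Key Lemma is that $Q\mid\det M$ in $R$. To prove it, I would fix $H\in\A$ and choose a linear coordinate system $y_1,\dots,y_n$ of $V$ with $y_1=\alpha_H$. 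Passing from the $x$- to the $y$-coordinates replaces $M$ by $MP$ for an invertible scalar matrix $P$, so $\det M$ is merely rescaled by the nonzero constant $\det P$; and the first column of $MP$ has entries $\eta_i(y_1)=\eta_i(\alpha_H)\in R\alpha_H=Ry_1$, which are all divisible by $\alpha_H$. Hence $\alpha_H\mid\det M$, and since distinct hyperplanes yield pairwise non-associate irreducible forms $\alpha_H$ in the UFD $R$, their product $Q$ divides $\det M$.

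Granting the Key Lemma, the ``only if'' direction is immediate: if $\theta_1,\dots,\theta_n$ is a homogeneous $R$-basis of $D(\A)$ then they are $R$-independent, and $\A$ is free with $\exp(\A)=(\deg\theta_1,\dots,\deg\theta_n)$; Terao's factorization then gives $\pi(\A,t)=\prod_{i=1}^n\bigl(1+(\deg\theta_i)\,t\bigr)$, and comparing the coefficients of $t$ on the two sides -- which on the left equals $|\A|$ -- yields $\sum_{i=1}^n\deg\theta_i=|\A|$.

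Conversely, suppose $\theta_1,\dots,\theta_n\in D(\A)$ are homogeneous and $R$-independent with $\sum_i\deg\theta_i=|\A|$. Independence means $\det M(\theta_1,\dots,\theta_n)\neq0$ (as $R$ is a domain), and since $\theta_i$ is homogeneous of degree $d_i:=\deg\theta_i$ every nonzero monomial of that determinant has degree $\sum_i d_i=|\A|=\deg Q$; together with $Q\mid\det M$ from the Key Lemma this forces $\det M=c\,Q$ for a nonzero constant $c$. Now let $\theta\in D(\A)$ be arbitrary; since $\sum_i R\theta_i$ is a graded submodule of $D(\A)$ we may assume $\theta$ homogeneous. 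Writing $\theta=\sum_j b_j\,\partial/\partial x_j$ and solving $\theta=\sum_i f_i\theta_i$ over $\operatorname{Frac}(R)$, Cramer's rule gives $f_i=\det N_i/\det M$ where $N_i$ is $M$ with its $i$-th row replaced by $(b_1,\dots,b_n)$; but $N_i$ is precisely the coefficient matrix of $\theta_1,\dots,\theta_{i-1},\theta,\theta_{i+1},\dots,\theta_n$, a tuple again lying in $D(\A)$. Applying the Key Lemma once more gives $Q\mid\det N_i$, hence $f_i=\det N_i/(cQ)\in R$, so $\theta\in\sum_i R\theta_i$. Thus $\theta_1,\dots,\theta_n$ generate $D(\A)$ and, being $R$-independent, form an $R$-basis. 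The crux is the Key Lemma -- everything else is formal once it is available; the one point requiring care in the ``if'' direction is the normalization $\det M=c\,Q$, which is exactly where the hypothesis $\sum_i\deg\theta_i=|\A|$ is used.
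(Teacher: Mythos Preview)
The paper does not give its own proof of this statement: Saito's criterion is quoted in \S\ref{subsect:HA} as a known preliminary result, with references to \cite{S2} and \cite{OT}, and is used as a black box thereafter. So there is no proof in the paper to compare against.

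That said, your argument is correct and is essentially the classical one (as in \cite{OT}). The Key Lemma and the Cramer's-rule deduction of the ``if'' direction are exactly the standard route. For the ``only if'' direction you appeal to Terao's factorization (Theorem~\ref{factorization}) together with the fact, recorded just above in the paper, that the coefficient of $t$ in $\pi(\A,t)$ equals $|\A|$; this is logically sound and non-circular, since Theorem~\ref{factorization} is a statement about already-free arrangements and does not rely on Saito's criterion. The textbook treatment usually avoids Terao's factorization here and instead argues directly that $\det M(\theta_1,\dots,\theta_n)$ is a unit multiple of $Q$ whenever the $\theta_i$ form a basis (any two bases differ by an $R$-invertible matrix, and one exhibits a specific basis with determinant $Q$); this gives $\sum_i\deg\theta_i=\deg Q=|\A|$ without invoking the Poincar\'e polynomial. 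Your shortcut via Theorem~\ref{factorization} is perfectly acceptable in the context of this paper, where both results are imported from the literature.
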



\subsection{Poincar\'e duality algebras and complete intersections} \label{subsec:CP}

Here we introduce some basic algebraic properties of Poincar\'e duality algebras and complete intersections.

A graded $\R$-algebra $A=R/\mathfrak{a}$ is {\bf Artinian} if $\dim_\R A < \infty$.
Let $A=A_0 \oplus A_1 \oplus \cdots \oplus A_r$ be an Artinian graded $\R$-algebra, where $A_i$ is the homogeneous component of $A$ of degree $i$ and $A_r$ is non-zero.
The algebra $A$ is said to be a {\bf Poincar\' e duality algebra} of socle degree $r$ if $A_r \cong \R$ and the map
\[
A_i \times A_{r-i} \to A_r,
\qquad (a,b) \mapsto ab
\]
is non-degenerate.
For an ideal $\mathfrak{a} \subset R$ and $ f \in R$,
let $\mathfrak{a}:f$ be the ideal of $R$ defined by
\[
\mathfrak{a}:f=\{ g \in R \mid fg \in \mathfrak{a}\}.
\]
We notice that, for ideals $\mathfrak{a},\mathfrak{a}' \subset R$,
the multiplication map $R/ \mathfrak{a}' \stackrel{\times f} \to R/\mathfrak{a}$ is well-defined if and only if $\mathfrak{a}' \subset \mathfrak{a}:f$.

We need the following simple algebraic property of Poincar\'e duality algebras.

\begin{lemma}
\label{colonideal}
Let $\mathfrak{a},\mathfrak{a}'$ be homogeneous ideals of $R$ and $f \in R$ a homogeneous polynomial of degree $k$ with $f \not \in \mathfrak{a}$.
Suppose $\mathfrak{a}' \subset \mathfrak{a}:f$.
If $R/\mathfrak{a}'$ is a Poincar\'e duality algebra of socle degree $r$
and $R/\mathfrak{a}$ is a  Poincar\'e duality algebra of socle degree $r+k$, then $\mathfrak{a}'=\mathfrak{a}:f$.
\end{lemma}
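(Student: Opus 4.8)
The plan is to exploit the standard duality between an ideal and its colon ideal inside a Poincar\'e duality (Gorenstein Artinian) algebra. First I would observe that since $f \notin \mathfrak{a}$ and $R/\mathfrak{a}$ is a Poincar\'e duality algebra of socle degree $r+k$, multiplication by $f$ gives a well-defined map $R/(\mathfrak{a}:f) \xrightarrow{\times f} R/\mathfrak{a}$ which is \emph{injective} by the very definition of the colon ideal. Its image is the principal ideal $(f + \mathfrak{a}) \subset R/\mathfrak{a}$, a homogeneous ideal that is nonzero because $f \notin \mathfrak{a}$; concretely, $(R/(\mathfrak{a}:f))_i \cong (fR + \mathfrak{a})_{i+k}/\mathfrak{a}_{i+k}$ for all $i$. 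Since $f$ has degree $k$ and the socle of $R/\mathfrak{a}$ sits in degree $r+k$, the top nonvanishing degree of $R/(\mathfrak{a}:f)$ is at most $r$, so $R/(\mathfrak{a}:f)$ is Artinian of socle degree $\le r$.

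Next I would bring in the hypothesis $\mathfrak{a}' \subset \mathfrak{a}:f$, which yields a surjection $R/\mathfrak{a}' \twoheadrightarrow R/(\mathfrak{a}:f)$ of graded algebras. The key point is to compare Hilbert functions degree by degree. On one hand, $\dim_\R (R/(\mathfrak{a}:f))_i = \dim_\R (fR + \mathfrak{a})_{i+k} - \dim_\R \mathfrak{a}_{i+k} = \dim_\R (R/\mathfrak{a})_{i+k} - \dim_\R (R/(fR+\mathfrak{a}))_{i+k}$. On the other hand, using that $R/\mathfrak{a}$ is a Poincar\'e duality algebra of socle degree $r+k$, the pairing identifies $R/(fR+\mathfrak{a})$ in degree $j$ with the annihilator $(0 :_{R/\mathfrak{a}} f)$ in degree $r+k-j$, i.e.\ with $(\mathfrak{a}:f)/\mathfrak{a}$ in degree $r+k-j$. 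Feeding this back, one gets $\dim_\R(R/(\mathfrak{a}:f))_i = \dim_\R (R/\mathfrak{a})_{i+k} - \dim_\R ((\mathfrak{a}:f)/\mathfrak{a})_{i} $, and combined with the exact sequence $0 \to (\mathfrak{a}:f)/\mathfrak{a} \to R/\mathfrak{a} \to R/(\mathfrak{a}:f)\xrightarrow{\sim} (fR+\mathfrak{a})/\mathfrak{a} \to 0$ this forces the Hilbert function of $R/(\mathfrak{a}:f)$ to be symmetric about $r/2$. Thus $R/(\mathfrak{a}:f)$ has the same Hilbert function as a Poincar\'e duality algebra of socle degree $r$.

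Finally I would compare this with $R/\mathfrak{a}'$: it is given to be a Poincar\'e duality algebra of socle degree $r$, hence its Hilbert function is symmetric about $r/2$ with $\dim_\R(R/\mathfrak{a}')_r = 1$, and in particular $\dim_\R(R/\mathfrak{a}')_0 = 1 = \dim_\R(R/(\mathfrak{a}:f))_0$ and the two Hilbert functions, both symmetric about $r/2$, must agree in top degree. Since there is a graded surjection $R/\mathfrak{a}' \twoheadrightarrow R/(\mathfrak{a}:f)$, it suffices to show it is injective in every degree, equivalently that the two Hilbert functions coincide in every degree. Because $R/\mathfrak{a}'$ is Gorenstein, the kernel of the surjection, being a nonzero ideal if the map is not an isomorphism, would have to meet the one-dimensional socle $(R/\mathfrak{a}')_r$; but both algebras have one-dimensional degree-$r$ part and the surjection is nonzero there (it is nonzero in degree $0$ and $R/\mathfrak{a}'$ is a domain in the weak sense that multiplication into the socle is non-degenerate), so the socle is not killed, hence the kernel is zero and the map is an isomorphism. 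Therefore $\mathfrak{a}' = \mathfrak{a}:f$.

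The main obstacle I anticipate is making the socle-degree bookkeeping in the middle step fully rigorous: one must be careful that $R/(\mathfrak{a}:f)$ really has socle degree exactly $r$ (not smaller) — this uses $f \notin \mathfrak{a}$ together with non-degeneracy of the pairing on $R/\mathfrak{a}$, which guarantees $(fR+\mathfrak{a})_{r+k} = (R/\mathfrak{a})_{r+k} \ne 0$ so that $(R/(\mathfrak{a}:f))_r \ne 0$ — and that the symmetry of its Hilbert function, deduced from the Poincar\'e duality of $R/\mathfrak{a}$ via the annihilator description, is stated correctly. Once the Hilbert function of $R/(\mathfrak{a}:f)$ is pinned down to match that of the Poincar\'e duality algebra $R/\mathfrak{a}'$, the surjection $R/\mathfrak{a}' \twoheadrightarrow R/(\mathfrak{a}:f)$ is forced to be an isomorphism by a dimension count, completing the proof.
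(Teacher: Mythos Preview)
Your argument is correct and, at its core, the same as the paper's: the decisive step in both is that the map $R/\mathfrak{a}'\to R/(\mathfrak{a}:f)\hookrightarrow R/\mathfrak{a}$ is nonzero on the one-dimensional socle $(R/\mathfrak{a}')_r$ (because $f\notin\mathfrak{a}$ and Poincar\'e duality in $R/\mathfrak{a}$ produce a $g\in R_r$ with $fg\notin\mathfrak{a}$), and hence the kernel, being an ideal in the Poincar\'e duality algebra $R/\mathfrak{a}'$, must vanish. The paper phrases this last step element-wise (for $h\notin\mathfrak{a}'$ multiply up to degree $r$ and then by $f$), while you phrase it as ``a nonzero ideal in a Gorenstein Artinian ring contains the socle''; these are equivalent. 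Your middle paragraph deriving the symmetry of the Hilbert function of $R/(\mathfrak{a}:f)$ is correct (modulo the index slip where $((\mathfrak{a}:f)/\mathfrak{a})_i$ should read $((\mathfrak{a}:f)/\mathfrak{a})_{r-i}$) but entirely unnecessary, since your final socle argument already settles the lemma without ever comparing full Hilbert functions.
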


\begin{proof}
Since $\mathfrak{a}' \subset \mathfrak{a}:f$, we have a natural $R$-homomorphism
\[
R/\mathfrak{a}' \stackrel{ \times f} \longrightarrow R/\mathfrak{a}.
\]
Observe that the above map sends $(R/\mathfrak{a}')_i$ to $(R/\mathfrak{a})_{i+k}$.
We first show $\times f: (R/\mathfrak{a}')_r \to (R/\mathfrak{a})_{r+k}$ is an isomorphism.
Since $(R/\mathfrak{a}')_r \cong (R/\mathfrak{a})_{r+k} \cong \R$, we only need to prove that the map is not the zero map.
Since $f \not \in \mathfrak{a}$, $f+\mathfrak{a} \in (R/\mathfrak{a})_k$ is non-zero.
By the Poincar\'e duality of $R/\mathfrak{a}$, there is $g \in R_r$ such that $f g +\mathfrak{a} \in (R/\mathfrak{a})_{r+k}$ is non-zero in $R/\mathfrak{a}$.
This implies that the map $\times f : (R/\mathfrak{a}')_r \to (R/\mathfrak{a})_{r+k}$ is not zero since it sends $g+\mathfrak{a}' \in (R/\mathfrak{a}')_r$ to a non-zero element $fg+\mathfrak{a} \in (R/\mathfrak{a})_{r+k}$.

We now prove that $\mathfrak{a}'=\mathfrak{a}:f$.
Let $h \not \in \mathfrak{a}'$ be a homogeneous polynomial of degree $i$. What we must prove is that $hf \not \in \mathfrak{a}$, equivalently, $hf +\mathfrak{a} \in R/\mathfrak{a}$ is non-zero.
By the Poincar\'e duality of $R/\mathfrak{a}'$, there is a polynomial $g' \in R_{r-i}$ such that $hg' + \mathfrak{a}' \in (R/\mathfrak{a}')_r$ is non-zero in $R/\mathfrak{a}'$.
Then $hg'f +\mathfrak{a} \in (R/\mathfrak{a})_{r+k}$ is non-zero since $\times f:(R/\mathfrak{a}')_r \to (R/\mathfrak{a})_{r+k}$ is an isomorphism.
Since $ hg'f+\mathfrak{a} = (hf+\mathfrak{a})g' \in R/\mathfrak{a} $, the element $hf+\mathfrak{a} \in (R/\mathfrak{a})_{i+k}$ is non-zero as desired.
\end{proof}

%

A sequence of homogeneous polynomials $f_1,\dots,f_i \in R$ of positive degrees is a {\bf regular sequence} of $R$ if $f_j$ is a non-zero divisor of $R/(f_1,\dots,f_{j-1})$ for all $j=1,2,\dots,i$.
A graded $\R$-algebra $A=R/\mathfrak{a}$ is called a {\bf complete intersection} if $\mathfrak{a}$ is generated by a regular sequence.
If $A=R/(f_1,\dots,f_i)$ is a complete intersection, then its Krull dimension is $n-i$.
Hence $A$ is Artinian if and only if $i=n$.
The following facts are well-known in commutative algebra.
See \cite[Theorem 2.3.3]{BH} and \cite[p.\ 35]{St}.

\begin{lemma}
\label{completeintersection}
If a graded $\R$-algebra $A=R/\mathfrak{a}$ is Artinian and $\mathfrak{a}$ is generated by $n$ polynomials, then $A$ is a complete intersection.
\end{lemma}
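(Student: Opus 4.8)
The plan is to reduce the statement to a standard dimension/codimension count, using the fact that for a graded $\R$-algebra the notions of height, Krull dimension, and depth interact rigidly. Suppose $A = R/\mathfrak{a}$ is Artinian and $\mathfrak{a} = (f_1, \dots, f_n)$ with each $f_i$ homogeneous of positive degree (we may assume the generators are homogeneous since $\mathfrak{a}$ is a graded ideal; an arbitrary generating set can be replaced by a homogeneous one of size at most $n$, and if fewer than $n$ are needed we pad with copies). Since $A$ is Artinian, $\dim A = 0$, so the ideal $\mathfrak{a}$ has height $n = \dim R$. Thus $\mathfrak{a}$ is an ideal of height $n$ generated by exactly $n$ elements.

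The key step is to invoke Krull's height theorem (the generalized principal ideal theorem) together with the characterization of complete intersections it yields: if an ideal $\mathfrak{a}$ in a Cohen--Macaulay ring $R$ is generated by $c$ elements and has height $c$, then those generators form a regular sequence. First I would note that $R = \mathrm{Sym}(V^*)$ is a polynomial ring, hence Cohen--Macaulay (indeed regular). Then I would argue inductively: set $\mathfrak{a}_j = (f_1, \dots, f_j)$. By Krull's height theorem, $\Ht \mathfrak{a}_j \le j$; on the other hand, since $\Ht \mathfrak{a}_n = n$ and heights can increase by at most $1$ when one generator is added, we get $\Ht \mathfrak{a}_j \ge n - (n-j) = j$, so $\Ht \mathfrak{a}_j = j$ for every $j$. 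Because $R$ is Cohen--Macaulay, every minimal prime of $\mathfrak{a}_{j-1}$ has height $j-1$, so $f_j$ cannot lie in any of them (else $\Ht \mathfrak{a}_j$ would still be $j-1$); in a Cohen--Macaulay ring this means $f_j$ avoids all associated primes of $R/\mathfrak{a}_{j-1}$ (associated primes coincide with minimal primes there), hence $f_j$ is a nonzerodivisor on $R/\mathfrak{a}_{j-1}$. Therefore $f_1, \dots, f_n$ is a regular sequence and $A$ is a complete intersection by definition.

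Alternatively, and perhaps more cleanly for a short write-up, I would cite the stated references directly: the implication is exactly the content of \cite[Theorem 2.3.3]{BH}, where it is shown that for a Noetherian local (or graded) ring $R$ and an ideal generated by $\Ht(\mathfrak{a})$ elements, those elements form a regular sequence precisely when $R$ is Cohen--Macaulay; combined with $\Ht \mathfrak{a} = n$ from the Artinian hypothesis, this gives the claim. I expect the main (and only) obstacle to be purely expository: making sure the reduction ``$A$ Artinian $\Rightarrow \Ht \mathfrak{a} = n$'' and the Cohen--Macaulayness of the polynomial ring are stated with enough care that the appeal to the cited commutative-algebra facts is unambiguous. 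Since the paper explicitly allows us to assume \cite[Theorem 2.3.3]{BH} and \cite[p.\ 35]{St}, the honest proof is essentially a one-line citation, and the plan above merely records the argument those references encode.
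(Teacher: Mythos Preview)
Your proposal is correct and matches the paper exactly: the paper gives no proof of this lemma at all, merely citing \cite[Theorem 2.3.3]{BH} and \cite[p.\ 35]{St} as ``well-known in commutative algebra,'' and your argument is precisely the standard height/Cohen--Macaulay reasoning those references encode. Your closing remark that ``the honest proof is essentially a one-line citation'' is on the nose.
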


Recall that for a graded $\R$-algebra $A$,
its {\bf Hilbert series} is the formal power series
$$F(A,\s):= \sum_{i \geq 0} (\dim_\R A_i) \s^i.$$

\begin{lemma}\label{socledegree}
Let $A=R/(f_1,\dots,f_n)$ be a graded Artinian complete intersection and let $d_i = \deg f_i$ for all $i$.
Then $A$ is a Poincar\'e duality algebra of socle degree $\sum_{i=1}^n (d_i-1)$ and its Hilbert series is given by
\[
F(A,\s)= \prod_{i=1}^n (1+\s+ \cdots +\s^{d_{i}-1}).
\]
\end{lemma}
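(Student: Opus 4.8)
The plan is to prove both assertions about the Artinian complete intersection $A = R/(f_1,\dots,f_n)$ simultaneously by induction on $n$, since the Hilbert series computation and the Poincar\'e duality claim reinforce each other. The base case $n=0$ is trivial: $A = R = \R$, which is a Poincar\'e duality algebra of socle degree $0$ with Hilbert series $1$. For the inductive step, set $B = R/(f_1,\dots,f_{n-1})$ and observe that since $f_1,\dots,f_n$ is a regular sequence, $B$ is a graded complete intersection of Krull dimension $1$ and $f_n$ is a non-zero divisor on $B$ of degree $d_n$.

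First I would handle the Hilbert series. From the short exact sequence of graded $R$-modules
\[
0 \longrightarrow B(-d_n) \stackrel{\times f_n}{\longrightarrow} B \longrightarrow A \longrightarrow 0,
\]
which is exact precisely because $f_n$ is a non-zero divisor on $B$, one reads off $F(A,\s) = (1-\s^{d_n}) F(B,\s)$. The subtlety is that $B$ is not Artinian, so I cannot directly invoke the inductive hypothesis for its Hilbert series; instead I would either quote the standard fact that a graded complete intersection $R/(f_1,\dots,f_{n-1})$ has Hilbert series $\prod_{i=1}^{n-1}(1-\s^{d_i})/(1-\s)^n$ (this follows from the same short exact sequence argument applied iteratively starting from $R$, whose Hilbert series is $(1-\s)^{-n}$), or cite \cite{BH}. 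Multiplying by $(1-\s^{d_n})$ and simplifying each factor $(1-\s^{d_i})/(1-\s) = 1 + \s + \cdots + \s^{d_i-1}$ gives the claimed formula $F(A,\s) = \prod_{i=1}^n(1 + \s + \cdots + \s^{d_i-1})$. In particular the top nonzero degree of $A$ is $r := \sum_{i=1}^n(d_i-1)$, and $\dim_\R A_r = 1$, so $A_r \cong \R$.

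Next I would establish Poincar\'e duality, and this is where I expect the real work to lie. The cleanest route is again induction: assuming $B' := R/(f_1,\dots,f_{n-1})$ becomes, after killing one more generic linear-or-appropriate form, a Poincar\'e duality algebra — but $B$ itself is one-dimensional, not Artinian, so I would rather argue directly. One option is to use Lemma~\ref{colonideal}: with $\mathfrak{a}' = (f_1,\dots,f_{n-1})$ (thought of in an Artinian reduction) the colon ideal manipulations pin down the pairing. A more self-contained approach: let $A = R/\mathfrak{a}$ with $\mathfrak{a} = (f_1,\dots,f_n)$; by general theory the canonical module $\omega_A$ of a graded Artinian Gorenstein (equivalently, complete intersection) ring is isomorphic to $A(r)$ for $r$ the socle degree, and this is exactly the statement that the multiplication pairing $A_i \times A_{r-i} \to A_r \cong \R$ is perfect. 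To keep things elementary I would instead induct: from the exact sequence above, $A \cong B/f_n B$, and one shows that if $\theta: B_i \times B_{r'+1-i} \to \R$ (for the appropriate shifted degree) were the relevant pairing on the Artinian quotient, the non-zero-divisor property of $f_n$ propagates non-degeneracy. Concretely, a homogeneous $h \in R_i$ with $h \notin \mathfrak{a}$ lifts to $\bar h \neq 0$ in $B_i$; since $f_n$ is a non-zero divisor, $f_n \bar h \neq 0$, and by induction (non-degeneracy of the pairing on the Artinian complete intersection $R/(f_1,\dots,f_{n-1},\ell)$ for suitable $\ell$, or directly on $B$ using its Gorenstein-in-dimension-one structure) one finds $g$ with $\bar h \bar g$ hitting the socle — then $hg + \mathfrak{a}$ is nonzero in $A_r$.

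The main obstacle is precisely this propagation of Poincar\'e duality through the short exact sequence: one must be careful that the socle degree shifts correctly ($r_B$ has no meaning since $B$ is infinite-dimensional) and that the lifting argument genuinely uses $f_n$ being a non-zero divisor rather than merely $\mathfrak{a}$-regularity. Given the paper cites \cite[Theorem 2.3.3]{BH} and \cite[p.\ 35]{St} for these facts, the honest and efficient proof is to derive the Hilbert series formula from the iterated short exact sequence (which is short and worth including) and then invoke the cited standard result that graded Artinian complete intersections are Gorenstein, hence Poincar\'e duality algebras, with socle degree read off as the top degree of the Hilbert series, namely $\sum_{i=1}^n(d_i-1)$.
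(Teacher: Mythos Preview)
The paper does not actually prove this lemma: it is stated as a well-known fact with references to \cite[Theorem 2.3.3]{BH} and \cite[p.\ 35]{St}, and no argument is given. Your final recommendation --- derive the Hilbert series from the iterated short exact sequence and then invoke the cited Gorenstein/complete intersection result for Poincar\'e duality --- is therefore exactly what the paper does (minus the Hilbert series derivation, which the paper also leaves to the references).

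Your Hilbert series computation via the short exact sequence $0 \to B(-d_n) \to B \to A \to 0$ is correct and is the standard argument behind the cited references; including it adds value. Your attempted inductive argument for Poincar\'e duality, however, never quite lands: you cannot induct on Artinian complete intersections directly since $B=R/(f_1,\dots,f_{n-1})$ is one-dimensional, and the detour through $R/(f_1,\dots,f_{n-1},\ell)$ for a generic linear form $\ell$ is not fleshed out. You recognize this yourself. The clean statement you want is that a graded Artinian complete intersection is Gorenstein (this is \cite[Theorem 2.3.3]{BH}), hence has one-dimensional socle, and the Poincar\'e duality pairing then follows from the fact that in a zero-dimensional Gorenstein local ring the socle is the unique minimal nonzero ideal. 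Citing this, as both you and the paper do, is the right call.
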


\subsection{Equivariant cohomology} \label{subsect:EC}

We shall briefly review some facts on equivariant cohomology needed later. We use \cite{Hs} as 
a reference for the results in this subsection. All cohomology groups will be taken with real coefficients. 

Let $K$ be a $\CC^*$-torus\footnote{The same argument works for a compact torus.} of rank $m$ and let $EK\to BK$ be a universal principal $K$-bundle, where $EK$ is a contractible topological space with free $K$-action and $BK=EK/K$.
In fact, we may think of $BK$ as $(\CC P^\infty)^m$.
Therefore $H^*(BK)$ is a polynomial ring in $m$ variables of degree 2.
If $K$ acts on a topological space $X$, then the equivariant cohomology of the $K$-space $X$ is defined by 
\[
H^*_K(X):=H^*(EK\times_K X), 
\]
where $EK\times_K X$ is the orbit space of $EK\times X$ by the $K$-action defined by $k(u,x)=(uk^{-1},kx)$ for $(u,x)\in EK\times X$ and $k\in K$.
If $X$ is one point $pt$, then $EK\times_K pt=BK\times pt$; so 
\[
H^*_K(pt)=H^*(BK).
\]  
More generally, if the $K$-action on $X$ is trivial, then $H^*_K(X)=H^*(BK)\otimes H^*(X)$.  

Since the $K$-action on $EK$ is free, the first projection $\pi\colon EK\times_K X\to EK/K=BK$ yields a fibration with fiber $X$.
Therefore one can regard $H^*_K(X)$ as an $H^*(BK)$-module via $\pi^*\colon H^*(BK)\to H^*_K(X)$.  We also have the restriction map $H^*_K(X)\to H^*(X)$ since $X$ is a fiber.  

Suppose that $H^{odd}(X)$ vanishes (this is satisfied in our case treated later).
Then since $H^{odd}(BK)$ also vanishes (because $BK=(\CC P^\infty)^m$), the Serre spectral sequence of the fibration $\pi\colon EK\times_K X\to BK$ collapses and hence 
\[
H^*_K(X)\cong H^*(BK)\otimes H^*(X)\quad\text{as $H^*(BK)$-modules}
\]
and the restriction map $H^*_K(X)\to H^*(X)$ is surjective.  
In addition, under some technical hypothesis on $X$ which are satisfied by the spaces considered in this paper\footnote{For instance, it would certainly suffice if $X$ is locally contractible, compact, and Hausdorff.}, 
it follows from the localization theorem (\cite[p.40]{Hs})  that the restriction map to the $K$-fixed point set $X^K$ of $X$
\[
H^*_K(X)\to H^*_K(X^K)\cong H^*(BK)\otimes H^*(X^K)
\]
is injective.
In our case, $X^K$ consists of finitely many points and $H^*_K(X^K)$ is a direct sum of copies of the polynomial ring $H^*(BK)$.  

For an oriented $K$-vector bundle $E\to X$ ($E$ is a complex vector bundle in our case treated later), one can associate a vector bundle 
\[
EK\times_K E\to EK\times_K X
\]
with the orientation induced from $E$.
The Euler class of this oriented vector bundle is called the \textbf{equivariant Euler class} of $E$ and denoted by $e^K(E)$.
Note that $e^K(E)$ lies in $H^*_K(X)$ and the restriction map $H^{*}_K(X)\to H^{*}(X)$ sends $e^K(E)$ to the (ordinary) Euler class $e(E)$ of $E$.

\section{Setting}

In this paper, we discuss three objects:
ideal arrangements, regular nilpotent Hessenberg varieties, and regular semisimple Hessenberg varieties.
They all arise from the data of a semisimple linear algebraic group (with a fixed Borel subgroup) and a lower ideal in the set of positive roots of the group.
In this section, we give the precise definition of those three objects and related notions and recall some results on them needed later.
Throughout this paper, all (co)homology groups will be taken with real coefficients unless otherwise stated. 

Let $G$ be a semisimple linear algebraic group of rank $n$.  We fix a Borel subgroup $B$ of $G$. Then the following data is uniquely determined: 

\begin{itemize} 
\item the maximal torus $T$ of $G$ in $B$
\item the Weyl group $W=N(T)/T$ where $N(T)$ is the normalizer of $T$
\item the Lie algebras $\mathfrak{b}\subset \mathfrak{g}$ of $B$ and $G$
\item $\Phi=$ \{roots of $G$\}
\item $\Phi^+=\{$positive roots in $\Phi \}$
\item the simple roots $\Delta=\{\alpha_1,\dots,\alpha_n\}$
\item the partial order $\preceq$ on $\Phi$; $\alpha \preceq \beta$  if and only if $\beta-\alpha \in \sum_{i=1}^n \Z_{\ge 0} \alpha_i$
\item the root space $\mathfrak{g}_{\alpha}$ for a root $\alpha$
\end{itemize} 

\subsection{Some identifications and the ring $\CR$} \label{subsect:ringR}
Let $T_\RR$ be the maximal compact torus in $T$ and $\mathfrak{t}$ the Lie algebra of $T_\RR$.
Since $G$ is of rank $n$, $\mathfrak{t}$ is a real vector space of dimension $n$.
The Weyl group $W$ acts on $\mathfrak{t}$ through the differential of the conjugation map $g\to wgw^{-1}$ for $w\in W,\ g\in T$.
The $W$-action on $\mathfrak{t}$ induces the $W$-action on the dual space $\mathfrak{t}^*$ to $\mathfrak{t}$ defined by $(w(u))(v):=u(w^{-1}v)$ for $w\in W,\ u\in \mathfrak{t}^*,\ v\in\mathfrak{t}$. 

The character group $\hat T_\RR$ of $T_\RR$ determines a lattice $\mathfrak{t}^*_\Z$ through differential at the identity element of $T_\RR$.
We note that the character group $\hat T$ of $T$ is isomorphic to $\hat T_\RR$, where the isomorphism from $\hat T$ to $\hat T_\RR$ is given by restriction.
Throughout this paper we make the following identification:
\begin{equation} \label{eq:tT}
\mathfrak{t}^*_\Z=\hat T,\qquad \mathfrak{t}^*=\hat T\otimes \RR 
\end{equation}
where $\hat T$ is regarded as an additive group. We note that $\Phi$ is a subset of $\mathfrak{t}^*_\Z=\hat T$.
The Weyl group $W$ acts on $\hat T$ through conjugation, i.e., 
$w(\alpha)(g):=\alpha(w^{-1}gw)$ for $w\in W$, $\alpha\in \hat T$ and $g \in T$.
We note that the above identification preserves the $W$-actions.
We define 
\begin{equation} \label{eq:defR}
\CR:=\mbox{Sym}(\mathfrak{t}^*)=\bigoplus_{k=0}^\infty\mbox{Sym}^k(\mathfrak{t}^*),
\end{equation}
where $\mbox{Sym}(\mathfrak{t}^*)$ denotes the symmetric algebra of $\mathfrak{t}^*$ and $\mbox{Sym}^k(\mathfrak{t}^*)$ denotes the $k$-th symmetric power of $\mathfrak{t}^*$.

We shall give a different description of $\CR$ in terms of topology.
Let $ET\to BT$ be a universal principal $T$-bundle. Then $H^2(BT;\Z)$ is a free abelian group of rank $n$ and $H^*(BT;\Z)$ is a polynomial ring over $\Z$ in $n$ variables of degree 2.
Let $\CC_{\alpha}$ be the complex 1-dimensional $T$-module associated to $\alpha\in \hat T$.  
The equivariant Euler class $e^T(\CC_\alpha)$ lies in $H^2_T(pt;\Z)=H^2(BT;\Z)$.
It is known that the correspondence $\alpha\to e^T(\CC_\alpha)$ gives an isomorphism from $\hat T$ to $H^2(BT;\Z)$.
Therefore, the identification \eqref{eq:tT} is extended to 
\begin{equation} \label{eq:tTH}
\mathfrak{t}^*_\Z=\hat T=H^2(BT;\Z),\qquad \mathfrak{t}^*=\hat T\otimes \RR=H^2(BT) 
\end{equation}
and the definition \eqref{eq:defR} is to 
\begin{equation} \label{eq:defRH}
\CR:=\mbox{Sym}(\mathfrak{t}^*)=H^*(BT).
\end{equation}

\subsection{Lower ideals and Hessenberg spaces} \label{subsect:LH}
Our starting data was a semisimple linear algebraic group $G$ and its (fixed) Borel subgroup $B$. In this paper, we consider one more data, that is a lower ideal $I$ in $\Phi^+$.  
A \textbf{lower ideal} $I \subset \Phi^+$ is a collection of positive roots such that if $\alpha \in \Phi^+$ and $\beta \in I$ with $\alpha \preceq \beta$, then $\alpha \in I$. If $I$ is a lower ideal, then $H(I)=\mathfrak{b}\oplus (\bigoplus_{\alpha \in I} \mathfrak{g}_{-\alpha})$ is a $\mathfrak{b}$-submodule of $\mathfrak{g}$ containing $\mathfrak{b}$.  Conversely, one can see that any $\mathfrak{b}$-submodule of $\mathfrak{g}$ containing $\mathfrak{b}$, which is called a {\bf Hessenberg space}, is of the form $H(I)$ for some lower ideal $I$.  Therefore, the notions of lower ideals and Hessenberg spaces are equivalent.

\begin{example}\label{ex: lower ideal of A3, B3, C3}
It is convenient to visualize a lower ideal $I$.  We take types $A_3$, $B_3$, and $C_3$ to illustrate it.
We choose their simple roots as in \cite{Hum1}. We arrange their positive roots $\Phi^+_{A_3}$, $\Phi^+_{B_3}$, and $\Phi^+_{C_3}$ as follows, which is natural from the Lie-theoretical viewpoint: 
\begin{center}
\begin{picture}(430,80)
        \put(60,60){\framebox(30,20){\tiny $x_1-x_4$}}
        \put(60,40){\framebox(30,20){\tiny $x_2-x_4$}}
        \put(60,20){\framebox(30,20){\tiny $x_3-x_4$}}
        \put(30,60){\framebox(30,20){\tiny $x_1-x_3$}}
        \put(30,40){\framebox(30,20){\tiny $x_2-x_3$}}
        \put(0,60){\framebox(30,20){\tiny $x_1-x_2$}}
        
        \put(230,60){\framebox(30,20){\tiny $x_1+x_2$}}
        \put(200,60){\framebox(30,20){\tiny $x_1+x_3$}}
        \put(200,40){\framebox(30,20){\tiny $x_2+x_3$}}
        \put(170,60){\framebox(30,20){\tiny $x_1$}}
        \put(170,40){\framebox(30,20){\tiny $x_2$}}
        \put(170,20){\framebox(30,20){\tiny $x_3$}}
        \put(140,60){\framebox(30,20){\tiny $x_1-x_3$}}
        \put(140,40){\framebox(30,20){\tiny $x_2-x_3$}}
        \put(110,60){\framebox(30,20){\tiny $x_1-x_2$}}

        \put(400,60){\framebox(30,20){\tiny $2x_1$}}
        \put(370,60){\framebox(30,20){\tiny $x_1+x_2$}}
        \put(370,40){\framebox(30,20){\tiny $2x_2$}}
        \put(340,60){\framebox(30,20){\tiny $x_1+x_3$}}
        \put(340,40){\framebox(30,20){\tiny $x_2+x_3$}}
        \put(340,20){\framebox(30,20){\tiny $2x_3$}}
        \put(310,60){\framebox(30,20){\tiny $x_1-x_3$}}
        \put(310,40){\framebox(30,20){\tiny $x_2-x_3$}}
        \put(280,60){\framebox(30,20){\tiny $x_1-x_2$}}

        \put(0,0){positive roots $\Phi^+_{A_3}$}
        \put(140,0){positive roots $\Phi^+_{B_3}$}
        \put(310,0){positive roots $\Phi^+_{C_3}$}        
\end{picture}
\end{center}
\bigskip
Here in the above table, 
the elements at the left ends of rows are simple roots in each type. 
Then the partial order $\preceq$ on $\Phi^+$ can be interpreted as follows: $\alpha \preceq \beta$ if and only if $\beta$ is located northeast of $\alpha$. Thus, if $\beta$ is an element of $I$, then elements located southwest of $\beta$ must belong to $I$.  
For example, 
\[
\begin{split}
I_1&=\{x_1-x_2, x_1-x_3, x_2-x_3, x_3-x_4 \} \subset \Phi^+_{A_3},\\ 
I_2&=\{x_1-x_2, x_1-x_3, x_2-x_3, x_2, x_2+x_3, x_3 \} \subset \Phi^+_{B_3},\\ 
I_3&=\{x_1-x_2, x_1-x_3, x_1+x_3, x_2-x_3, x_2+x_3, 2x_2, 2x_3 \} \subset \Phi^+_{C_3}
\end{split}
\]
are lower ideals shown as follows:\\
\begin{center}
\begin{picture}(430,70)
        \put(60,20){\framebox(30,20){\tiny $x_3-x_4$}}
        \put(30,60){\framebox(30,20){\tiny $x_1-x_3$}}
        \put(30,40){\framebox(30,20){\tiny $x_2-x_3$}}
        \put(0,60){\framebox(30,20){\tiny $x_1-x_2$}}
        
        \put(200,40){\framebox(30,20){\tiny $x_2+x_3$}}
        \put(170,40){\framebox(30,20){\tiny $x_2$}}
        \put(170,20){\framebox(30,20){\tiny $x_3$}}
        \put(140,60){\framebox(30,20){\tiny $x_1-x_3$}}
        \put(140,40){\framebox(30,20){\tiny $x_2-x_3$}}
        \put(110,60){\framebox(30,20){\tiny $x_1-x_2$}}

        \put(370,40){\framebox(30,20){\tiny $2x_2$}}
        \put(340,60){\framebox(30,20){\tiny $x_1+x_3$}}
        \put(340,40){\framebox(30,20){\tiny $x_2+x_3$}}
        \put(340,20){\framebox(30,20){\tiny $2x_3$}}
        \put(310,60){\framebox(30,20){\tiny $x_1-x_3$}}
        \put(310,40){\framebox(30,20){\tiny $x_2-x_3$}}
        \put(280,60){\framebox(30,20){\tiny $x_1-x_2$}}

        \put(20,0){$I_1 \subset \Phi^+_{A_3}$}
        \put(150,0){$I_2 \subset \Phi^+_{B_3}$}
        \put(320,0){$I_3 \subset \Phi^+_{C_3}$}        
\end{picture}
\end{center}
\end{example}

\bigskip

\subsection{Hessenberg varieties}
For $X\in \mathfrak{g}$ and a lower ideal $I$, the \textbf{Hessenberg variety} $\Hess(X,I)$ is defined by 
\[ \Hess(X,I):=\{g B \in G/B \mid \mbox{Ad}(g^{-1})(X) \in H(I)\},\]
where $H(I)$ is the Hessenberg space defined in \S\ref{subsect:LH}. 
An element $X\in \mathfrak{g}$ is {nilpotent} if $\mbox{ad}(X)$ is nilpotent, i.e.,\ $\mbox{ad}(X)^k=0$ for some $k>0$.
An element $X\in \mathfrak{g}$ is {semisimple} if $\mbox{ad}(X)$ is semisimple, i.e.,\ $\mbox{ad}(X)$ is diagonalizable.
An element $X\in \mathfrak{g}$ is {regular} if its $G$-orbit of the adjoint action has the largest possible dimension (cf.\ \cite{Hum1,Hum2}).

\begin{rem}\label{rem:regnilpsemi}
An element $N\in \mathfrak{g}$ is regular nilpotent if and only if $N$ is a nilpotent element of the regular (or principal) nilpotent orbit which is a unique maximal nilpotent orbit. 
Fix a basis $E_\alpha$ for each root space $\mathfrak{g}_\alpha$. 
Since a nilpotent element of the form $\sum_{\alpha_i \in \Delta }E_{\alpha_i}$ is regular,
an element $N\in \mathfrak{g}$ is regular nilpotent if and only if $N$ belongs to the adjoint orbit of the regular nilpotent element of the form
$\sum_{\alpha_i \in \Delta }E_{\alpha_i}$. 
An element $S\in \mathfrak{g}$ is regular semisimple if and only if the centralizer of $S$ in $G$ is the maximal torus $T$, i.e., $C_G(S):=\{g\in G \mid \mbox{Ad}(g)(S)=S \}=T$. 
\end{rem}

The Hessenberg variety $\Hess(X,I)$ is called \textbf{regular nilpotent} (resp.\ \textbf{regular semisimple}) if $X$ is regular nilpotent (resp.\ regular semisimple).
If $X$ and $X'$ belong to the same adjoint orbit, then $\Hess(X,I)$ and $\Hess(X',I)$ are isomorphic.
From this fact together with Remark~\ref{rem:regnilpsemi}, we may assume that $N$ in the regular nilpotent Hessenberg variety $\Hess(N,I)$ is of the form $\sum_{\alpha_i \in \Delta }E_{\alpha_i}$. 

The following is a summary of some results from \cite{Pr1,Pr} about $\Hess(N,I)$ (\cite{Ty} in the classical types). 

\begin{theorem}[\cite{Pr1,Pr}]
\label{theorem:regnilp}
The regular nilpotent Hessenberg variety $\Hess(N,I)$ has no odd degree cohomology and is of dimension equal to $|I|$. 
Moreover, the Poincar\'e polynomial of $\Hess(N,I)$ is given by
\[
\Poin(\Hess(N,I), \sqrt{\s})=\sum_{w\in W \atop w^{-1}(\Delta)\subset  (-I) \cup \Phi^+} \s^{|N(w)\cap I|}
\]
where $N(w)=\{\alpha\in\Phi^+\mid w(\alpha)\prec 0\}$, 
and is palindromic, i.e., 
\[
\s^{|I|}\Poin(\Hess(N,I), {\sqrt \s}^{-1})=\Poin(\Hess(N,I), \sqrt{\s}).
\]
\end{theorem}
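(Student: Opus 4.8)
The plan is to follow Precup's construction of an affine paving of $\Hess(N,I)$ and then extract the numerical consequences. First I would recall that $G/B$ has the Bruhat decomposition into Schubert cells $C_w = B w B/B$, one for each $w\in W$, with $\dim_\CC C_w = \ell(w) = |N(w)|$, where $N(w) = \{\alpha\in\Phi^+ \mid w(\alpha)\prec 0\}$. The key geometric input (from \cite{Pr1,Pr}) is that, for $N$ a regular nilpotent element of the form $\sum_{\alpha_i\in\Delta} E_{\alpha_i}$, the intersection $\Hess(N,I)\cap C_w$ is either empty or an affine space, and it is nonempty precisely when $w^{-1}(\Delta)\subset(-I)\cup\Phi^+$; moreover in the nonempty case its complex dimension equals $|N(w)\cap I|$. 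Granting this, $\Hess(N,I)$ is paved by affines, so its odd cohomology vanishes, the classes of the cell closures form a basis of $H^*(\Hess(N,I))$, and the Poincaré polynomial is the stated generating function $\sum_{w} \s^{|N(w)\cap I|}$ over admissible $w$. The top-dimensional cell is the one corresponding to the longest element $w_0$ (which is always admissible since $w_0^{-1}(\Delta) = -\Delta \subset -I$ as $\Delta\subset I$ for any nonempty lower ideal, and trivially when $I=\emptyset$ the statement about dimension degenerates correctly), giving $\dim_\CC\Hess(N,I) = |N(w_0)\cap I| = |\Phi^+\cap I| = |I|$.

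For the palindromicity statement, the natural approach is to produce an explicit involution on the index set $\{w\in W \mid w^{-1}(\Delta)\subset(-I)\cup\Phi^+\}$ that sends $w$ to a $w'$ with $|N(w')\cap I| = |I| - |N(w)\cap I|$. The candidate is $w\mapsto w_0 w$: one checks that $N(w_0 w)\cap\Phi^+$ is the complement $\Phi^+\setminus N(w)$ (this is the standard identity $N(w_0 w) = \Phi^+\setminus N(w)$ up to the usual sign conventions), hence intersecting with $I$ gives $|N(w_0w)\cap I| = |I| - |N(w)\cap I|$. One must also verify that $w_0 w$ is again admissible, i.e.\ $(w_0w)^{-1}(\Delta) = w^{-1}(-\Delta)\subset(-I)\cup\Phi^+$; this should follow from the admissibility of $w$ together with the fact that $\Delta\subset I$ (for nonempty $I$), using that $-\Delta\subset -I$ and tracking signs. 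Summing $\s^{|N(w)\cap I|}$ over all admissible $w$ and applying this bijection yields the palindromy relation $\s^{|I|}\Poin(\Hess(N,I),\sqrt{\s}^{-1}) = \Poin(\Hess(N,I),\sqrt{\s})$.

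The main obstacle is the affine paving result itself --- that $\Hess(N,I)\cap C_w$ is an affine cell of the stated dimension with the stated nonemptiness criterion. This is genuinely the hard geometric content and I would simply cite \cite{Pr1,Pr} (and \cite{Ty} in classical types) rather than reprove it; the argument there proceeds by choosing convenient coordinates on each Bruhat cell and analyzing the equations cutting out $\Hess(N,I)$, showing the solution set is a coordinate subspace. The remaining work --- vanishing of odd cohomology from an affine paving, reading off the Poincaré polynomial, identifying the dimension as $|I|$, and checking the combinatorial involution for palindromicity --- is then routine, the only care needed being the bookkeeping of sign conventions in the definitions of $N(w)$ and of admissibility, and handling the degenerate case $I=\emptyset$ (where $\Hess(N,\emptyset)$ is a single point and all statements are trivial).
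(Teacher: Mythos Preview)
Your strategy for the odd-cohomology vanishing and the Poincar\'e polynomial formula---cite Precup's affine paving and read off the cell dimensions---is exactly what the paper does. The gap is in your arguments for the dimension and the palindromicity, both of which rest on the claim that $\Delta\subset I$ for any nonempty lower ideal. This is false: in type $A_2$ with simple roots $\alpha_1,\alpha_2$, the set $I=\{\alpha_1\}$ is a lower ideal not containing $\alpha_2$. In this example $w_0^{-1}(\Delta)=-\Delta$ and $-\alpha_2\notin(-I)\cup\Phi^+$, so $w_0$ is \emph{not} admissible and cannot supply the top cell; likewise your proposed involution $w\mapsto w_0 w$ sends the admissible element $e$ to the non-admissible $w_0$, so it does not preserve the index set. (One checks directly that the admissible elements here are $e$ and $s_1$, with $|N(w)\cap I|$ equal to $0$ and $1$; the required bijection swaps these two, and is not given by left multiplication by $w_0$.)

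The paper does not attempt a direct combinatorial argument for either point: it simply cites \cite{Pr} for $\dim_\CC\Hess(N,I)=|I|$ and for the palindromicity of the Poincar\'e polynomial. If you want a self-contained combinatorial route, one option is to pass through the Sommers--Tymoczko bijection $\eta\colon \Hess(N,I)^S\to\mathcal{W}^I$, $\eta(w)=N(w)\cap I$: the full set $I$ is trivially a Weyl type subset, which produces an admissible $w$ with $|N(w)\cap I|=|I|$ and hence the dimension statement; and the map $Y\mapsto I\setminus Y$ is an involution on $\mathcal{W}^I$ (the definition of Weyl type is symmetric in $Y$ and $I\setminus Y$) exchanging $|Y|$ and $|I|-|Y|$, which gives palindromicity. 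But this is a different mechanism from the $w_0$-based involution you sketched, which does not work in general.
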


\begin{proof}
We shall briefly explain how the theorem follows from results in \cite{Pr1} and \cite{Pr}.  
Let $X_w=BwB/B$ be the Schubert cell of the flag variety $G/B$ associated to $w\in W$.
Then the intersections $X_w \cap \Hess(N,I)$ $(w\in W)$ form a complex affine paving of $\Hess(N,I)$ (\cite[Theorem 4.10]{Pr1}), so that $\Hess(N,I)$ has no odd degree cohomology (see \cite[\S B.3, Lemma 6]{Fult} for paving).  By \cite[Corollary 4.13]{Pr1}, we have 
\[
\begin{split}
&\text{$X_w \cap \Hess(N,I)\not=\emptyset$ if and only if $w^{-1}(\Delta)\subset (-I) \cup \Phi^+$, and then}\\
&\text{ $\dim_{\CC} X_w \cap \Hess(N,I)=|N(w^{-1})\cap w(-I)|=|N(w)\cap I|$.}
\end{split}
\]
It follows that  
\[
\Poin(\Hess(N,I), \sqrt{\s})
=\sum_{w\in W \atop w^{-1}(\Delta)\subset (-I) \cup \Phi^+} \s^{|N(w)\cap I|}.
\]
One can see $\dim_\CC\Hess(N,I)=|I|$ from the above formula but it is also a special case of \cite[Corollary 2.7]{Pr}.  The palindromicity of the Poincar\'e polynomial of $\Hess(N,I)$ is \cite[Proposition 3.4]{Pr}. 
\end{proof}

The following is a summary of results from \cite{dMPS} about $\Hess(S,I)$, where $S\in\mathfrak{g}$ is regular semisimple.  Since $S$ is semisimple, the action of the maximal torus $T$ on $G/B$ leaves $\Hess(S,I)$ invariant.  

\begin{theorem}[{\cite[Theorems 6, 8 and Lemma 7]{dMPS}}] 
\label{theorem:regsemi}
The regular semisimple Hessenberg variety $\Hess(S,I)$ is smooth equidimensional of dimension equal to $|I|$ and has no odd-dimensional cohomology.
The $T$-fixed point set $\Hess(S,I)^T$ of $\Hess(S,I)$ agrees with that of $G/B$, so $\Hess(S,I)^T$ can be identified with the Weyl group $W$.  The Poincar\'e polynomial of $\Hess(S,I)$ is given by 
\[
\Poin(\Hess(S,I), \sqrt{\s})=\sum_{w\in W} \s^{|N(w)\cap I|}.
\]
Finally, the tangent space $T_w\Hess(S,I)$ of $\Hess(S,I)$ at $w\in W$ is of the form 
\begin{equation} \label{eq:TwHess}
T_w\Hess(S,I)=\bigoplus_{\alpha\in -I}\CC_{w(\alpha)} \quad \text{as $T$-modules},
\end{equation}
where $\CC_{w(\alpha)}$ denotes the complex $1$-dimensional $T$-module determined by $w(\alpha)$.  
\end{theorem}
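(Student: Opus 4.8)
The plan is to exhibit $\Hess(S,I)$ as the zero scheme of a section of a vector bundle on $G/B$ and to read off everything from that. Since conjugating $S$ by an element of $G$ changes $\Hess(S,I)$ only up to isomorphism, I would first reduce to the case $S\in\mathfrak{t}\otimes\CC$ with $\alpha(S)\neq 0$ for every root $\alpha$; with $S$ in the Cartan subalgebra the adjoint $T$-action fixes $S$, which is precisely why the $T$-action on $G/B$ restricts to $\Hess(S,I)$. Since $H(I)$ is $B$-stable, $\mathfrak{g}/H(I)$ is a $B$-module with $\mathfrak{g}/H(I)\cong\bigoplus_{\alpha\in\Phi^+\setminus I}\mathfrak{g}_{-\alpha}$, so $E_I:=G\times_B(\mathfrak{g}/H(I))$ is a $T$-equivariant bundle over $G/B$ of rank $|\Phi^+|-|I|$, and $gB\mapsto[g,\mathrm{Ad}(g^{-1})(S)\bmod H(I)]$ is a well-defined $T$-equivariant section $\sigma_S$ with $\Hess(S,I)=\sigma_S^{-1}(0)$, a closed (hence projective) subvariety of $G/B$.

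For the fixed points, $wB\in\Hess(S,I)$ holds iff $\mathrm{Ad}(w^{-1})(S)\in H(I)$, which is automatic because $\mathrm{Ad}(w^{-1})(S)\in\mathfrak{t}\otimes\CC\subseteq\mathfrak{b}\subseteq H(I)$; thus $\Hess(S,I)^T=(G/B)^T$, identified with $W$ as usual. To obtain smoothness I would use that the singular locus of $\Hess(S,I)$ is closed and $T$-stable, hence contains a $T$-fixed point if it is non-empty (Borel's fixed point theorem on the projective $T$-variety $\Hess(S,I)_{\mathrm{sing}}$); so it suffices to check smoothness at each $wB$. There $d\sigma_S$ is a $T$-equivariant map $T_{wB}(G/B)\to (E_I)_{wB}$, and in the weight decompositions $T_{wB}(G/B)=\bigoplus_{\alpha\in\Phi^+}\CC_{-w\alpha}$ and $(E_I)_{wB}=\bigoplus_{\alpha\in\Phi^+\setminus I}\CC_{-w\alpha}$, expanding $\mathrm{Ad}(g^{-1})(S)$ at $g=\exp(Y)w$ to first order in $Y$ shows that $d\sigma_S$ is block-diagonal, sending the weight-$(-w\alpha)$ line to the weight-$(-w\alpha)$ line by multiplication by a scalar proportional to $(w\alpha)(S)$. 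This scalar is non-zero because $S$ is regular and $w\alpha$ is a root, so $d\sigma_S$ is surjective at $wB$; hence $\sigma_S$ is transverse to the zero section there, $\Hess(S,I)$ is smooth at $wB$ with local dimension $\dim G/B-\rank E_I=|I|$, and $T_{wB}\Hess(S,I)=\ker d\sigma_S=\bigoplus_{\alpha\in I}\CC_{-w\alpha}=\bigoplus_{\beta\in -I}\CC_{w\beta}$, which is \eqref{eq:TwHess}. As a smooth variety has disjoint irreducible components, each of which is $T$-stable and projective and hence contains a fixed point near which it has dimension $|I|$, it follows that $\Hess(S,I)$ is equidimensional of dimension $|I|$.

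The vanishing of odd cohomology and the Poincar\'e polynomial then follow from a Bia\l ynicki--Birula decomposition: $\Hess(S,I)$ is smooth projective with a $T$-action whose fixed locus $W$ is finite, so for a generic (dominant regular) one-parameter subgroup $\lambda$ one gets an affine paving $\Hess(S,I)=\bigsqcup_{w\in W}C_w$ in which $\dim_\CC C_w$ equals the number of $\lambda$-positive $T$-weights on $T_{wB}\Hess(S,I)=\bigoplus_{\beta\in -I}\CC_{w\beta}$. Writing $\gamma=-\beta\in I$, the inequality $\langle w\beta,\lambda\rangle>0$ becomes $\langle w\gamma,\lambda\rangle<0$, i.e.\ $w\gamma\in\Phi^-$, so $\dim_\CC C_w=|\{\gamma\in I\mid w\gamma\prec 0\}|=|N(w)\cap I|$. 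Hence $\Hess(S,I)$ has no odd cohomology and $\Poin(\Hess(S,I),\sqrt{\s})=\sum_{w\in W}\s^{|N(w)\cap I|}$.

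I expect the main obstacle to be the first-order computation of $d\sigma_S$ at the fixed points: one must keep the identifications of the tangent spaces, the bundle fibers and the $W$-action mutually consistent in order to see that the map is diagonal with weights $(w\alpha)(S)$, and this is the single place where regular semisimplicity of $S$ is genuinely used. Granting that, the reduction of global smoothness to the fixed points and the Bia\l ynicki--Birula step are routine.
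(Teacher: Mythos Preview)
The paper does not prove this theorem; it simply quotes it from \cite{dMPS} (Theorems~6, 8 and Lemma~7 there) and uses it as input.  Your argument is correct and is essentially the one in \cite{dMPS}: realize $\Hess(S,I)$ as the zero locus of the $T$-equivariant section $\sigma_S$ of $G\times_B(\mathfrak g/H(I))$, verify transversality at every $T$-fixed point by the weight computation (this is exactly where regularity of $S$ enters, via $(w\alpha)(S)\neq 0$), deduce global smoothness from $T$-stability of the singular locus, read off the tangent $T$-module as $\ker d\sigma_S$, and then run Bia\l ynicki--Birula.  The one point you leave implicit is that the plus-cell decomposition of a smooth projective $\CC^*$-variety with isolated fixed points is \emph{filterable} (the cells can be ordered so that successive unions are closed), which is what actually yields the vanishing of odd cohomology and the Betti-number formula; this is standard and is how \cite{dMPS} argues as well.
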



\subsection{Ideal arrangements and ideal $\mathfrak{a}(I)$} \label{subsect:IA}
Let $I$ be a lower ideal in $\Phi^+$.
Since $\alpha\in I$ is a linear function on $\mathfrak{t}$, its kernel $\ker \alpha$ is a hyperplane in $\mathfrak{t}$.  We consider the arrangement $\A_I$ defined by  
\[\A_I:=\{\ker\alpha \mid \alpha \in I\}.\]
When $I=\Phi^+$, $\A_{\Phi^+}$ is called the \textbf{Weyl arrangement} and for a general lower ideal $I$ we call $\A_I$ the \textbf{ideal arrangement}\footnote{It is called the \emph{ideal subarrangement} of the Weyl arrangement $\A_{\Phi^+}$ in \cite{ABCHT}.} associated to $I$.

Remember that 
\begin{equation} \label{eq:DAI}
D(\A_I)=\{ \theta\in \Der\CR=\CR\otimes \mathfrak{t}\mid \theta(\alpha)\in \CR\alpha \ ({}^\forall \alpha\in I)\}.
\end{equation}


\begin{define}\label{ideala}
For a lower ideal $I$ and a $W$-invariant non-degenerate quadratic form $Q\in \mbox{Sym}^2(\mathfrak{t}^*)^W\subset \CR$ on $\mathfrak{t}$, we define 
\begin{equation} \label{eq:defaI}
\mathfrak{a}(I):=\{ \theta(Q) \mid \theta\in D(\A_I)\}.
\end{equation}
\end{define}
Since $D(\A_I)$ is an $\CR$-module, $\mathfrak{a}(I)$ is an ideal of $\CR$.
This ideal plays an important role in our argument.
If we choose a $W$-invariant inner product on $\mathfrak{t}$ and $x_1,\dots,x_n$ is an orthonormal linear system,
then we may take $Q=\sum_{i=1}^nx_i^2$, that is nothing but the chosen $W$-invariant inner product.  

\begin{rem}\label{ideala2}
The $W$-invariant non-degenerate quadratic form $Q$ on $\mathfrak{t}$ is not unique, hence it is not clear whether the ideal $\mathfrak{a}(I)$ is independent of the choice of $Q$. However,  it is independent of the choice by the following reason.
Suppose that the Lie algebra $\mathfrak{g}$ of $G$ is simple, which is equivalent to $\mathfrak{t}$ being irreducible as a $W$-module.
Then  $\mbox{Sym}^2(\mathfrak{t}^*)^W$ is of real dimension one\footnote{
An element in $\mbox{Sym}^2(\mathfrak{t}^*)^W$ determines a $W$-equivariant linear map $\mathfrak{t}\to \mathfrak{t}^*$,
so choosing a $W$-invariant inner product on $\mathfrak{t}$, we may think of $\mbox{Sym}^2(\mathfrak{t}^*)$ as the algebra $\mbox{End}(\mathfrak{t})^W$ of $W$-equivariant endomorphisms of $\mathfrak{t}$.
Since $\mathfrak{t}$ is irreducible as a $W$-module, $\mbox{End}(\mathfrak{t})^W$ is isomorphic to $\R, \CC$ or  the quaternion filed as $\R$-algebras and $\mbox{End}(\mathfrak{t})^W$ is isomorphic to $\R$ if and only if $\mathfrak{t}$ does not admit a complex structure as a $W$-module.}
because the irreducible $W$-module $\mathfrak{t}$ does not admit a  complex structure as is well-known (or easily checked).
Since $Q$ is an element of $\mbox{Sym}^2(\mathfrak{t}^*)^W$, this means that $Q$ is unique up to a nonzero scalar multiple.  
Therefore, $\mathfrak{a}(I)$ is independent of the choice of $Q$ in this case.
Suppose that the semisimple Lie algebra $\mathfrak{g}$ decomposes into a direct sum of simple Lie algebras.
Then the Lie algebra $\mathfrak{t}$, the ideal arrangement $\A_I$, and  a $W$-invariant quadratic form $Q$ on $\mathfrak{t}$ decompose accordingly.
This together with the observation of the simple case implies the independence of the choice of $Q$ for $\mathfrak{a}(I)$.    
\end{rem}

Because of the independence discussed in Remark \ref{ideala2}, 
we may take a $W$-invariant inner product on $\mathfrak{t}$ as $Q$.
Then the inner product determines an isomorphism $\mathfrak{t}\to \mathfrak{t}^*$ as $W$-modules and one can see that $\mathfrak{a}(I)$ agrees with the image of $D(\A_I)$ by the following $\CR$-module map: 
\begin{equation} \label{eq:DerRR} 
\Der\CR=\CR\otimes\mathfrak{t} \to \CR\otimes\mathfrak{t}^*\to \CR
\end{equation}
where the second map is the multiplication map (note that $\mathfrak{t}^*$ is the degree one piece of $\CR$).  

The following theorem plays a key role when we consider the ideal arrangements $\A_I$. 

\begin{theorem}[Ideal-free theorem, {\cite[Theorem 1.1]{ABCHT}}]\label{Ideal-free theorem}
Let $I \subset \Phi^+$ be a lower ideal. Then $\A_{I}$ is free. Moreover, $\exp(\A_I)$ 
coincides with the 
dual partition of the height distribution in $I$.
\label{idealfree}
\end{theorem}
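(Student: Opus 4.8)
The plan is to argue by induction on the maximal height $m:=\max\{\Ht(\alpha)\mid \alpha\in I\}$, adjoining to $\A_I$ all root hyperplanes of one height at a time and invoking Abe's Multiple Addition Theorem (MAT) at each stage. For $1\le k\le m$ put $I^{(k)}:=\{\alpha\in I\mid \Ht(\alpha)\le k\}$, $\Gamma_k:=\{\alpha\in I\mid \Ht(\alpha)=k\}$ and $a_k:=|\Gamma_k|$. Each $I^{(k)}$ is again a lower ideal, $\emptyset\neq I^{(1)}\subsetneq\cdots\subsetneq I^{(m)}=I$, and $\A_{I^{(k+1)}}=\A_{I^{(k)}}\cup\{\ker\alpha\mid\alpha\in\Gamma_{k+1}\}$. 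Since the simple roots form a basis of $\mathfrak{t}^*$, the base arrangement $\A_{I^{(1)}}=\{\ker\alpha\mid\alpha\in\Delta\cap I\}$ is Boolean, hence free with exponents $(0,\dots,0,1,\dots,1)$ ($a_1$ ones, $n-a_1$ zeros), which is the dual partition of $(a_1)$ padded by zeros. Thus it suffices to prove: if $\A_{I^{(k)}}$ is free and $\exp(\A_{I^{(k)}})$, written in increasing order, is the dual partition of $(a_1,\dots,a_k)$ padded with $n-a_1$ zeros, then the same holds for $\A_{I^{(k+1)}}$ with $(a_1,\dots,a_{k+1})$.

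For the inductive step one first disposes of the combinatorial bookkeeping. One shows that the height distribution $(a_1,a_2,\dots)$ of a lower ideal is weakly decreasing, hence a partition; granting this, the largest value of $\exp(\A_{I^{(k)}})$ is $k$, occurring with multiplicity $a_k\ge a_{k+1}$, so the top $q:=a_{k+1}$ exponents all equal $k$ and the MAT is applicable; and adding $1$ to each of these $q$ exponents produces precisely the dual partition of $(a_1,\dots,a_{k+1})$, as a direct check with the formula $\lambda^{*}_j=\#\{i\mid\lambda_i\ge j\}$ shows. It then remains to verify the geometric hypotheses of the MAT for adjoining the $q$ hyperplanes $\ker\alpha$, $\alpha\in\Gamma_{k+1}$, to $\A_{I^{(k)}}$: (i) $\Gamma_{k+1}$ is linearly independent, i.e.\ $\codim\bigcap_{\alpha\in\Gamma_{k+1}}\ker\alpha=a_{k+1}$; (ii) $\bigcap_{\alpha\in\Gamma_{k+1}}\ker\alpha\not\subseteq\ker\beta$ for every $\beta\in I^{(k)}$, i.e.\ no root of $I$ of height at most $k$ lies in the span of $\Gamma_{k+1}$; and (iii) the numerical condition that for each $\alpha\in\Gamma_{k+1}$ the restriction $\A_{I^{(k+1)}}^{\ker\alpha}$ consists of exactly $|I^{(k+1)}|-k-1$ hyperplanes, equivalently $|\A_{I^{(k+1)}}|-|\A_{I^{(k+1)}}^{\ker\alpha}|=k+1$. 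Granting (i)--(iii), the MAT gives that $\A_{I^{(k+1)}}$ is free with the asserted exponents, closing the induction; taking $k=m$ yields that $\A_I$ is free with $\exp(\A_I)$ the dual partition of the height distribution of $I$.

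The genuine content, and the main obstacle, is condition (iii), together with the partition property and (i)--(ii); these are statements purely about the root poset. Condition (iii) asserts that on restricting to the hyperplane $\ker\alpha$ of a root $\alpha$ of maximal height in $I^{(k+1)}$, the $|I^{(k+1)}|-1$ hyperplanes $\ker\beta\cap\ker\alpha$ ($\beta\in I^{(k+1)}$, $\beta\neq\alpha$) collapse onto exactly $|I^{(k+1)}|-k-1$ distinct hyperplanes of $\ker\alpha$; a coincidence $\ker\beta\cap\ker\alpha=\ker\beta'\cap\ker\alpha$ means that $\beta$ and $\beta'$ are congruent modulo $\R\alpha$, which is controlled by the $\alpha$-strings of roots and the rank-two subsystems containing $\alpha$. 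Carrying this count out uniformly -- together with proving that the height distribution of any lower ideal is a partition, that the height-$(k+1)$ roots of $I$ are linearly independent, and that no lower-height root of $I$ lies in their span -- is exactly the technical heart of \cite{ABCHT}, and uses only the combinatorics of root systems, with no recourse to the classification. As a by-product, the case $I=\Phi^{+}$ reproves, classification-free, Terao's theorem that Weyl arrangements are free with exponents equal to the exponents of $W$.
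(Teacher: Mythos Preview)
The paper does not prove this theorem at all: it is quoted verbatim from \cite{ABCHT} and used as a black box. So there is no ``paper's own proof'' to compare against; your sketch is in fact an outline of the proof in \cite{ABCHT} itself, and it captures that argument faithfully --- the induction on maximal height, the Multiple Addition Theorem at each layer, and the identification of the root-combinatorial lemmas (partition property of the height sequence, linear independence of same-height roots, the restriction count) as the technical core.

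One small correction to your formulation of hypothesis (iii): the MAT condition in \cite{ABCHT} is stated for the \emph{smaller} arrangement, namely $|\A_{I^{(k)}}|-|\A_{I^{(k)}}^{\ker\alpha}|=k$ (the top exponent of $\A_{I^{(k)}}$), not for $\A_{I^{(k+1)}}$. Your version with $\A_{I^{(k+1)}}$ and right-hand side $k+1$ is not the hypothesis one checks; passing between the two would require knowing that the $a_{k+1}-1$ intersections $\ker\alpha'\cap\ker\alpha$ for $\alpha'\in\Gamma_{k+1}\setminus\{\alpha\}$ are pairwise distinct and distinct from every $\ker\beta\cap\ker\alpha$ with $\beta\in I^{(k)}$, which is extra work. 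Since you already defer the verification of (iii) to \cite{ABCHT} this does not damage the overall sketch, but the hypothesis should be phrased for $\A_{I^{(k)}}$.
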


Here,
the \textbf{height distribution} in $I$ is a sequence $(n,i_1,i_2,\dots,i_m)$, where $i_j$ is the number of height $j$ positive roots and $m$ is the maximum of the height of positive roots in $I$.
Also, for a non-increasing sequence $(i_0,i_1,\ldots,i_m)$ of non-negative integers, its 
dual partition is given by $((0)^{i_0-i_1},(1)^{i_1-i_2},\ldots,
(m-1)^{i_{m-1}-i_{m}},(m)^{i_m})$, where $i_0 = n$ and $(i)^j$ denotes the $j$-copies of $i$.

\begin{define}
For a lower ideal $I \subset \Phi^+$, we denote by $(d_1^I,\ldots,d_n^I)$ the dual partition of the height 
distribution of the positive roots in $I$. 
\label{DPHD}
\end{define}

By Theorem \ref{idealfree}, it is clear that $\exp(\A_I)=(d_1^I,\ldots,d_n^I)$. 
The following theorem provides a starting point in our argument.

\begin{theorem}
$\mathfrak{a}(\Phi^+) =(\CR^W_+)$, where the right hand side denotes the ideal in $\CR$ generated by the $W$-invariants $\CR_+^W=\bigoplus_{k>0} \CR^W_k$.
\label{coinv}
\end{theorem}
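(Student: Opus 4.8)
The plan is to show both inclusions $\mathfrak{a}(\Phi^+) \subseteq (\CR^W_+)$ and $(\CR^W_+) \subseteq \mathfrak{a}(\Phi^+)$ by exploiting the freeness of the Weyl arrangement $\A_{\Phi^+}$ together with the classical description of $D(\A_{\Phi^+})$ in terms of basic invariants. Recall from Theorem \ref{idealfree} that $\A_{\Phi^+}$ is free with $\exp(\A_{\Phi^+}) = (d_1 - 1, \dots, d_n - 1)$, where $d_1 \le \cdots \le d_n$ are the degrees of the fundamental invariants of $W$ (the height distribution of $\Phi^+$ is dual to this sequence — this is the original theorem of Terao on the Weyl arrangement). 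Fix algebraically independent homogeneous generators $f_1, \dots, f_n$ of $\CR^W$ with $\deg f_i = d_i$, and recall the well-known fact that the Euler-type derivations $\theta_i := \sum_j \frac{\partial f_i}{\partial x_j} \frac{\partial}{\partial x_j}$, for $i = 1, \dots, n$, form an $\CR$-basis of $D(\A_{\Phi^+})$ (this is Saito's theorem / the classical freeness of reflection arrangements; $\deg \theta_i = d_i - 1$ and $\sum (d_i - 1) = |\Phi^+|$ so Saito's criterion, Theorem \ref{Saito}, confirms it once $\CR$-independence is known).

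For the inclusion $\mathfrak{a}(\Phi^+) \subseteq (\CR^W_+)$: since $D(\A_{\Phi^+})$ is generated as an $\CR$-module by $\theta_1, \dots, \theta_n$, the ideal $\mathfrak{a}(\Phi^+) = \{\theta(Q) : \theta \in D(\A_{\Phi^+})\}$ is generated by $\theta_1(Q), \dots, \theta_n(Q)$. Now I would compute $\theta_i(Q)$ directly using the chain rule. Taking $Q = \sum_j x_j^2$ in an orthonormal coordinate system, we get $\theta_i(Q) = \sum_j \frac{\partial f_i}{\partial x_j} \cdot 2 x_j = 2 E(f_i) = 2 d_i f_i$ by the Euler identity, since $f_i$ is homogeneous of degree $d_i$. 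Hence each generator $\theta_i(Q)$ is a nonzero scalar multiple of $f_i \in \CR^W_+$, giving $\mathfrak{a}(\Phi^+) = (f_1, \dots, f_n) = (\CR^W_+)$ outright. So in fact the two inclusions collapse into a single computation — the heart of the matter is precisely the identity $\theta_i(Q) = 2 d_i f_i$.

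The remaining points to nail down carefully are: (i) that the $\theta_i$ defined from the basic invariants really do lie in $D(\A_{\Phi^+})$ and form a basis — for this I would cite the classical result (Arnold–Saito–Terao; it also follows from Theorem \ref{Saito} together with the Jacobian criterion that $\det(\partial f_i/\partial x_j)$ is, up to scalar, $\prod_{\alpha \in \Phi^+} \alpha$, which forces $\theta_i(\alpha) \in \CR\alpha$); and (ii) the independence of $\mathfrak{a}(\Phi^+)$ from the choice of $Q$, which is already handled in Remark \ref{ideala2}, so I may fix the convenient orthonormal $Q$ without loss of generality. The only genuine obstacle is making sure the chain-rule computation is done in coordinates adapted to $Q$ (i.e., orthonormal for the chosen $W$-invariant inner product), so that $Q = \sum x_j^2$ and the derivation attached to $f_i$ via \eqref{eq:DerRR} is literally $\sum_j (\partial f_i / \partial x_j)\, \partial/\partial x_j$; once coordinates are aligned this way, everything is routine. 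I expect the write-up to be short.
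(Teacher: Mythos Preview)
Your proposal is correct and is essentially the same argument as the paper's: the paper's $\nabla_{P_i}$ are exactly your $\theta_i$, the Euler-identity computation $\theta_i(Q)=2d_i f_i$ is the paper's observation that $\nabla_f$ maps to $(\deg f)f$ under \eqref{eq:DerRR}, and both conclude via Saito's criterion. The only cosmetic difference is that the paper verifies $\nabla_f\in D(\A_{\Phi^+})$ by the self-contained reflection argument $\nabla_f(\alpha)=-s_\alpha(\nabla_f(\alpha))$ rather than citing the classical Jacobian formula, and uses Steinberg's nonvanishing of the Jacobian for $\CR$-independence; note that the version of Saito's criterion stated in the paper (Theorem~\ref{Saito}) already assumes $\theta_i\in D(\A)$, so your parenthetical about the Jacobian ``forcing $\theta_i(\alpha)\in\CR\alpha$'' needs the full two-sided Saito criterion rather than the one quoted.
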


\begin{proof}
This result follows from a result of K. Saito (\cite{S}, see also 
\cite{S3} and \cite{T3}) but we shall give a short proof for the sake of the reader's convenience. 

The exterior derivative $df$ of $f\in \CR$ lies in $\CR\otimes\mathfrak{t}^*$.
On the other hand, $\Der\CR$ can be thought of as $\CR\otimes\mathfrak{t}$ as explained above.  We choose a $W$-invariant inner product on $\mathfrak{t}$.
It induces a $W$-equivariant $\CR$-module isomorphism between $\CR\otimes\mathfrak{t}^*$ and $\Der\CR=\CR\otimes\mathfrak{t}$.
Then, through the map \eqref{eq:DerRR}, the element $\nabla_f$ in $\Der\CR$ corresponding to $df \in \CR\otimes\mathfrak{t}^*$, that is the gradient of $f$, maps to $(\deg f)f$ when $f$ is homogeneous.
Indeed, if we choose an orthonormal linear coordinate system $x_1,\dots,x_n$ on $\mathfrak{t}$, then ${\nabla_f=\sum_{i=1}^n\frac{\partial f}{\partial x_i}\frac{\partial }{\partial x_i}}$
because ${df=\sum_{i=1}^n\frac{\partial f}{\partial x_i}dx_i}$, so $\nabla_f$ maps to ${\sum_{i=1}^n\frac{\partial f}{\partial x_i}x_i=(\deg f)f}$ through \eqref{eq:DerRR}. 

By the discussion in the previous paragraph,
what we must prove is that $\{\nabla_f:f \in \CR^W_+\}$ generates $D(\A_{\Phi^+})$.
Since the correspondence $f\to \nabla_f$ is $W$-equivariant, for any $f \in \CR^W_+$ and $\alpha\in \Phi^+$, we have 
$$\nabla_f(\alpha)=\nabla_{s_\alpha f}(\alpha)=s_\alpha(\nabla_f(s_{\alpha}^{-1}(\alpha)))=-s_\alpha(\nabla_f(\alpha))$$
where $s_\alpha$ denotes the reflection through the hyperplane $\ker\alpha$. 
Therefore, $\nabla_f(\alpha)$ restricted to $\ker\alpha$ vanishes and this implies that $\nabla_f(\alpha)$ is divisible by $\alpha$.
Hence $\nabla_f \in D(\A_{\Phi^+})$ for any $f \in \CR^W_+$.
By Chevalley's theorem in \cite{Ch},
$\CR^W$ is generated by homogeneous $n$ polynomials $P_1,P_2,\dots,P_n$ with $\sum_{i=1}^n (\deg P_i-1)=|\Phi^+|$.
Also, it is known that the Jacobian $\det (\partial P_i/\partial x_j)_{ij}$ is non-zero (see \cite{Ste}), which implies that $\nabla_{P_1},\dots,\nabla_{P_n}$ are $\CR$-independent.
These facts and Saito's criterion (Theorem \ref{Saito}) prove that $\nabla_{P_1},\dots,\nabla_{P_n}$ is an $\CR$-basis of $D(\A_{\Phi^+})$, and hence $\mathfrak a(\Phi^+)=(\CR^W_+)$.
\end{proof}

\section{The ideal $\mathfrak{a}(I)$ and its residue algebra}

In this section we study properties of the ideal $\mathfrak{a}(I)$ and the algebra $\CR/\mathfrak{a}(I)$ for a lower ideal $I$, where 
\[
\mathfrak{a}(I)=\{\theta(Q) \mid \theta \in D(\A_I)\},
\]
see \S~\ref{subsect:IA}. Recall that $(d_1^I,\ldots,d_n^I)$ denotes the dual partition of the height distribution of positive roots in $I$ in Definition \ref{DPHD}, which coincides with $\exp(\A_I)$, see \S~\ref{subsect:HA}.

\begin{prop} [\cite{AMN}]
\label{CI}
Let $I$ be a lower ideal. Then $\CR/\mathfrak{a}(I)$ is 
a complete intersection of socle degree $|I|$ and
\[
F(\CR/\mathfrak{a}(I),\s)=\prod_{i=1}^n (1+\s+\cdots+\s^{d_i^I}).
\]
\end{prop}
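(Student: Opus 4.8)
The plan is to exploit the fact, proved in Theorem~\ref{idealfree}, that $\A_I$ is a free arrangement with $\exp(\A_I) = (d_1^I, \dots, d_n^I)$. Let $\theta_1, \dots, \theta_n$ be a homogeneous $\CR$-basis of $D(\A_I)$ with $\deg \theta_i = d_i^I$. Applying the $\CR$-module map \eqref{eq:DerRR} (i.e., contraction with the $W$-invariant form $Q$, equivalently the composite $\Der\CR = \CR\otimes\mathfrak{t} \to \CR\otimes\mathfrak{t}^* \to \CR$) to this basis produces $n$ homogeneous elements $f_i := \theta_i(Q) \in \CR$ with $\deg f_i = d_i^I + 1$, and by definition $\mathfrak{a}(I) = (f_1, \dots, f_n)$. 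So the statement reduces to two claims: first, that $\CR/(f_1,\dots,f_n)$ is Artinian (equivalently, $f_1,\dots,f_n$ is a regular sequence, by Lemma~\ref{completeintersection}), and second, the computation of the Hilbert series, which is immediate from Lemma~\ref{socledegree} once the complete intersection property is established — and which also pins down the socle degree as $\sum_{i=1}^n (d_i^I + 1 - 1) = \sum_i d_i^I = |I|$, the last equality because $\exp(\A_I)$ is the dual partition of the height distribution of $I$ and hence $\sum_i d_i^I = |I|$.

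The first step is therefore to show $\CR/\mathfrak{a}(I)$ is Artinian, i.e. $\dim_\R \CR/\mathfrak{a}(I) < \infty$, equivalently $\sqrt{\mathfrak{a}(I)} = \CR_+$. The natural approach is to analyze the variety $V(\mathfrak{a}(I)) \subseteq \mathfrak{t}\otimes\CC$ cut out by the $f_i$. Since $Q$ is non-degenerate, the contraction map $\mathfrak{t}\otimes\CC \to \mathfrak{t}^*\otimes\CC$ it induces is an isomorphism; pointwise, for $v \in \mathfrak{t}\otimes\CC$, evaluating $\theta_j(Q)$ at $v$ amounts to pairing the value $\theta_j(v) \in \mathfrak{t}\otimes\CC$ against $v$ via $Q$. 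Using Saito's criterion (Theorem~\ref{Saito}), the values $\theta_1(v), \dots, \theta_n(v)$ span $\mathfrak{t}\otimes\CC$ for $v$ outside the union of hyperplanes $\bigcup_{\alpha\in I}\ker\alpha$ (this is exactly where the determinant $\det(\theta_i(x_j))$, a nonzero multiple of $\prod_{\alpha\in I}\alpha$, is nonzero). Hence if $v \notin \bigcup_{\alpha\in I}\ker\alpha$ and all $f_j(v) = 0$, then $Q(v, \cdot)$ annihilates a spanning set, so $Q(v,v) = 0$ and more — one gets $v$ in the radical of $Q$, forcing $v = 0$. It remains to handle $v$ lying on some of the hyperplanes: here I would argue by induction on the number of such hyperplanes, or use the fact (provable from Theorem~\ref{idealfree} applied to the localizations/restrictions of $\A_I$, which are again ideal arrangements or products thereof) that the restriction of the relevant derivation module to each flat still behaves well. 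The cleanest route is probably: restrict attention to a flat $X = \bigcap_{\alpha \in J}\ker\alpha$ for $J \subseteq I$ maximal among those vanishing at $v$; the derivations in $D(\A_I)$ that are tangent to $X$ restrict to a basis-controlled family on $X$, and a dimension count using $\exp$ of the restricted arrangement shows the $f_j$ already force $v = 0$ on $X$ too.

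The main obstacle I anticipate is exactly this last point — showing $V(\mathfrak{a}(I)) = \{0\}$ including the contribution of points on the hyperplanes of $\A_I$ — since the naive pointwise argument with Saito's criterion only directly controls the open stratum $M(\A_I)$. Handling the strata on the hyperplanes requires knowing that the freeness and exponent structure of $\A_I$ is inherited compatibly by the relevant flats, which in turn leans on the full strength of Theorem~\ref{idealfree} (and likely the Orlik–Terao addition/deletion or localization machinery for free arrangements); this is presumably where the cited reference \cite{AMN} does the real work. Once Artinianness is in hand, Lemmas~\ref{completeintersection}, \ref{socledegree} finish the proof mechanically: $\CR/\mathfrak{a}(I)$ is a complete intersection, its socle degree is $\sum_i (d_i^I + 1 - 1) = |I|$, and its Hilbert series is $\prod_{i=1}^n(1 + \s + \cdots + \s^{d_i^I})$.
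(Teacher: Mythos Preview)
Your overall architecture is correct: freeness of $\A_I$ (Theorem~\ref{idealfree}) gives a homogeneous basis $\theta_1,\dots,\theta_n$ of $D(\A_I)$ with $\deg\theta_i=d_i^I$, whence $\mathfrak{a}(I)=(f_1,\dots,f_n)$ with $\deg f_i=d_i^I+1$; once Artinianness is established, Lemmas~\ref{completeintersection} and~\ref{socledegree} yield the complete intersection property, the Hilbert series, and the socle degree $\sum_i d_i^I=|\A_I|=|I|$.

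The gap is exactly where you say it is. Your argument that $V(\mathfrak{a}(I))=\{0\}$ works cleanly only on the complement $M(\A_I)$, and your sketch for points on the hyperplanes (induction on flats, restrictions of free arrangements) is not carried out. You are right that making this work in general requires nontrivial input about how freeness interacts with localization/restriction, and you have not supplied it.

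The paper sidesteps this entirely with a one-line observation you missed. Since $I\subset\Phi^+$, the tangency conditions defining $D(\A_I)$ are a subset of those defining $D(\A_{\Phi^+})$, so $D(\A_{\Phi^+})\subset D(\A_I)$ and therefore $\mathfrak{a}(\Phi^+)\subset\mathfrak{a}(I)$. This gives a surjection $\CR/\mathfrak{a}(\Phi^+)\twoheadrightarrow\CR/\mathfrak{a}(I)$. But $\mathfrak{a}(\Phi^+)=(\CR^W_+)$ by Theorem~\ref{coinv}, so $\CR/\mathfrak{a}(\Phi^+)$ is the coinvariant algebra, which is finite-dimensional. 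Hence $\CR/\mathfrak{a}(I)$ is Artinian, and you are done. No geometry of $V(\mathfrak{a}(I))$, no stratification argument, no localization of free arrangements is needed.
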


\begin{proof}
This is a special case of the result in \cite{AMN},
where the statement is proved for all free arrangements.
Here we give a different proof for the 
ideal arrangement case.
We know that there is a surjection 
$\CR/\mathfrak{a}(\Phi^+) \rightarrow \CR/\mathfrak{a}(I)$ since 
$D(\A_{\Phi^+}) \subset D(\A_I)$. Also, $\dim_\R \CR/\mathfrak{a}(\Phi^+) < \infty$ since 
it is a coinvariant algebra by Theorem~\ref{coinv}. Hence $\dim_\R \CR/\mathfrak{a}(I)< \infty$. Since $\A_I$ is free by Theorem 
\ref{idealfree}, $D(\A_I)$ is generated by $n$ elements of degrees $d^I_1,\dots,d_n^I$.
Thus $\mathfrak{a}(I)$ is generated by $n$ polynomials
of degrees $d^I_1+1,\dots,d_n^I+1$
and the desired statement follows from Lemmas \ref{completeintersection} and \ref{socledegree}.
\end{proof}

The following proposition is the key to prove our main theorems.  

\begin{prop}
\label{inj}
Let $I \subsetneq \Phi^+$ be a lower ideal and $\beta_I:= \prod_{\alpha \in \Phi^+ \setminus I} \alpha$.
Then 
\[
\mathfrak{a}(I)=\mathfrak{a}(\Phi^+):\beta_I.
\]
\end{prop}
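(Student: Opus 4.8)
The plan is to prove the two inclusions $\mathfrak{a}(I) \subseteq \mathfrak{a}(\Phi^+):\beta_I$ and $\mathfrak{a}(\Phi^+):\beta_I \subseteq \mathfrak{a}(I)$ separately, with the second (harder) inclusion handled via the Poincar\'e duality machinery of Lemma \ref{colonideal}. For the easier inclusion, I would argue at the level of logarithmic derivation modules: the point is that multiplication by $\beta_I$ carries $D(\A_I)$ into $D(\A_{\Phi^+})$, since for $\theta \in D(\A_I)$ and $\alpha \in \Phi^+$ one has $(\beta_I\theta)(\alpha) = \beta_I \cdot \theta(\alpha)$, which is divisible by $\alpha$ either because $\theta(\alpha) \in \CR\alpha$ (when $\alpha \in I$) or because $\alpha$ is one of the factors of $\beta_I$ (when $\alpha \in \Phi^+ \setminus I$). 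Applying the $\CR$-module map \eqref{eq:DerRR} (i.e.\ evaluating at $Q$), this gives $\beta_I \cdot \theta(Q) = (\beta_I\theta)(Q) \in \mathfrak{a}(\Phi^+)$ for every $\theta \in D(\A_I)$, which is exactly $\mathfrak{a}(I) \subseteq \mathfrak{a}(\Phi^+):\beta_I$.

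For the reverse inclusion I would invoke Lemma \ref{colonideal} with $\mathfrak{a}' = \mathfrak{a}(I)$, $\mathfrak{a} = \mathfrak{a}(\Phi^+)$, and $f = \beta_I$, a homogeneous polynomial of degree $k = |\Phi^+ \setminus I| = |\Phi^+| - |I|$. The lemma requires four things: (i) $\beta_I \notin \mathfrak{a}(\Phi^+)$; (ii) $\mathfrak{a}(I) \subseteq \mathfrak{a}(\Phi^+):\beta_I$, already established; (iii) $\CR/\mathfrak{a}(I)$ is a Poincar\'e duality algebra of some socle degree $r$; and (iv) $\CR/\mathfrak{a}(\Phi^+)$ is a Poincar\'e duality algebra of socle degree $r+k$. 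Conditions (iii) and (iv) are immediate from Proposition \ref{CI}: $\CR/\mathfrak{a}(I)$ is a complete intersection of socle degree $r = |I|$ and $\CR/\mathfrak{a}(\Phi^+)$ is a complete intersection of socle degree $|\Phi^+|$, and indeed $|\Phi^+| = |I| + k = r + k$, so the degrees match exactly as the lemma demands. Complete intersections are Poincar\'e duality algebras by Lemma \ref{socledegree}.

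The remaining point, condition (i), is the main obstacle: one must show $\beta_I \notin \mathfrak{a}(\Phi^+) = (\CR^W_+)$, equivalently that the image of $\beta_I = \prod_{\alpha \in \Phi^+\setminus I}\alpha$ in the coinvariant algebra $\CR/(\CR^W_+) \cong H^*(G/B)$ is nonzero. The cleanest route is a degree count: $\beta_I$ has degree $|\Phi^+| - |I| < |\Phi^+|$, while the socle degree of the coinvariant algebra is exactly $|\Phi^+|$. But degree alone does not force nonvanishing, so more is needed. I would instead use that $\prod_{\alpha \in \Phi^+}\alpha$ is (a nonzero scalar multiple of) the top class in the coinvariant algebra, hence nonzero, and that $\beta_I$ divides it: writing $\prod_{\alpha\in\Phi^+}\alpha = \beta_I \cdot \prod_{\alpha \in I}\alpha$, if $\beta_I$ mapped to zero in the coinvariant algebra then so would the product, a contradiction. (Concretely: the class of $\prod_{\alpha \in \Phi^+}\alpha$ is, up to scalar, the fundamental class of $G/B$, which pairs nontrivially, so it is not in the ideal $(\CR^W_+)$; since $(\CR^W_+)$ is an ideal and $\prod_{\alpha\in\Phi^+}\alpha = \beta_I \cdot \prod_{\alpha \in I}\alpha$, having $\beta_I \in (\CR^W_+)$ would put the product in $(\CR^W_+)$ too.) This establishes $\beta_I \notin \mathfrak{a}(\Phi^+)$, and Lemma \ref{colonideal} then yields $\mathfrak{a}(I) = \mathfrak{a}(\Phi^+):\beta_I$, completing the proof.
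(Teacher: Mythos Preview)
Your argument is correct, and the overall strategy---one inclusion via $\beta_I\,D(\A_I)\subset D(\A_{\Phi^+})$, then Lemma~\ref{colonideal} together with Proposition~\ref{CI} for the other---is exactly the paper's. The execution differs in two respects worth noting.

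First, the paper does not apply Lemma~\ref{colonideal} in one shot with $f=\beta_I$; instead it proves the single-step statement $\mathfrak{a}(I)=\mathfrak{a}(I'):\alpha$ whenever $I'=I\cup\{\alpha\}$ is a lower ideal, and iterates. The advantage is that the nonvanishing check becomes $\alpha\notin\mathfrak{a}(I')$, which has a completely elementary proof internal to the paper: if $\theta(Q)=\alpha$ for some $\theta\in D(\A_{I'})$, then $\theta$ has degree zero, hence $\theta=\frac12\nabla_\alpha$, and then $\theta(\alpha)$ is a nonzero constant, contradicting $\theta\in D(\A_{I'})$. Your route instead needs the classical fact that $\prod_{\alpha\in\Phi^+}\alpha$ is nonzero in the coinvariant algebra (equivalently, that it represents the top class of $H^*(G/B)$). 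That is certainly standard, but it is an external input rather than something proved from first principles here.

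Second, the single-step identity $\mathfrak{a}(I)=\mathfrak{a}(I'):\alpha$ is itself reused later in the paper (in the proofs of Theorems~\ref{ssmain} and~\ref{theo:volpoly}), so the inductive formulation is not merely a stylistic choice: it yields a sharper intermediate result that the global statement $\mathfrak{a}(I)=\mathfrak{a}(\Phi^+):\beta_I$ alone does not. Your proof establishes the proposition as stated, but if you were writing the paper you would still want the step-by-step version.
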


\begin{proof}
Let $\alpha \in \Phi^+ \setminus I$ be an element such that
$I':=I \cup\{\alpha\}$ is a lower ideal.
It suffices to prove that $\mathfrak{a}(I)= \mathfrak{a}(I'):\alpha.$

We first show $\alpha \not \in \mathfrak a(I')$.
Suppose contrary that $\alpha \in \mathfrak a(I')$.
Then there is $\theta \in D(\A_{I'})$ such that $\theta(Q)=\alpha$.
Since $\alpha$ is a linear form, $\theta$ has degree zero and $\theta=\nabla_\gamma$ for some linear form $\gamma \in \CR$, where $\nabla_\gamma$ denotes the gradient of $\gamma$.
Moreover $\gamma=\frac 1 2 \alpha$ since $\frac 1 2 \nabla_\gamma(Q)=\gamma$.
However, this implies that $\theta(\alpha)=\frac 1 2\nabla_\alpha(\alpha)$ is a non-zero constant, contradicting $\theta \in D(\A_{I'})$.

By definition, $\alpha \theta \in D(\A_{I'})$ for any 
$\theta \in D(\A_I)$.
This implies that $\alpha f \in \mathfrak{a}(I')$ for any $f \in \mathfrak{a}(I)$.
Since both $\CR/\mathfrak{a}(I)$ and $\CR/\mathfrak{a}(I')$
are complete intersections
and their socle degrees are $|\mathfrak{a}(I)|$ and $|\mathfrak{a}(I')|=|\mathfrak{a}(I)|+1$ respectively by 
Proposition \ref{CI}, the desired statement follows from Lemma \ref{colonideal}.
\end{proof}

\section{Regular nilpotent Hessenberg varieties} \label{sect:RegNil}

Let $G$ be a semisimple linear algebraic group and $T\subset B$ a maximal torus and a Borel subgroup of $G$ respectively as before.
To each character $\alpha\in \hat T$, one can associate a complex line bundle $L_\alpha$ over $G/B$.
Taking the Euler class $e(L_\alpha)$ of $L_\alpha$ induces a homomorphism from $\hat T$ to $H^2(G/B)$, which extends to a homomorphism 
\begin{equation} \label{eq:psi}
\varphi\colon \CR=\mbox{Sym}(\hat T\otimes\R)\to H^*(G/B).
\end{equation}
This map doubles the grading on $\CR$ and is surjective by Borel's theorem.  Composing $\varphi$ with the restriction map $H^*(G/B)\to H^*(\Hess(N,I))$, we obtain a homomorphism 
\begin{equation} \label{eq:psiI}
\varphi_I\colon \CR\to H^*(\Hess(N,I)).
\end{equation}
In this section we introduce an ideal $\mathfrak{n}(I)$ in $\CR$ associated to $I$, which is contained in the kernel of $\varphi_I$, and 
show that $\mathfrak{n}(I)$'s have similar properties 
to the ideals $\mathfrak{a}(I)$'s. 
In order to define and study $\mathfrak{n}(I)$ we will use equivariant cohomology.   

\subsection{$T$-action on $G/B$}
We begin with the study of the $T$-action on $G/B$.  As is well-known, the identity map on $T$ extends to a homomorphism from $B$ to $T$,
so any character $\alpha$ of $T$ extends to a character $\tilde \alpha$ of $B$.  We associate a complex line bundle over $G/B$ defined by
\[
L_\alpha:=G\times_B\CC_{\tilde\alpha}\to G/B
\]
where $\CC_{\tilde\alpha}$ is the complex 1-dimensional $B$-module via $\tilde\alpha$ and  $G\times_B \CC_{\tilde\alpha}$ is the quotient space of $G\times \CC_{\tilde\alpha}$ by the $B$-action given by $b(g,z)=(gb^{-1},\tilde\alpha(b)z)$ for $(g,z)\in G\times \CC_{\tilde\alpha}$ and $b\in B$.
The left $T$-action on $G$ makes $L_\alpha$ a $T$-equivariant complex line bundle.
Then the assignment $\hat{T} \ni \alpha \mapsto e^T(L_\alpha)\in H^2_T(G/B)$ induces a homomorphism 
\begin{equation} \label{eq:ringGB}
\CR\to H^*_T(G/B)
\end{equation} 
which doubles the grading on $\CR$ and agrees with the map $\varphi$ in \eqref{eq:psi} when composed with the restriction map $H^*_T(G/B)\to H^*(G/B)$.  

The fixed point set $(G/B)^T$ of the $T$-action on $G/B$ is given by 
\begin{equation} \label{eq:GBW}
(G/B)^T=\bigsqcup_{w\in W} wB.
\end{equation}
We identify $(G/B)^T$ with $W$ through the correspondence $wB\to w$.
We denote the image of $f\in H^*_T(G/B)$ under the restriction map $H^*_T(G/B)\rightarrow H^*_T(w)=H^*(BT)$ for $w\in W$ by $f|_w$. 

As explained in \S\ref{subsect:ringR}, we identify $\hat T$ with $H^2(BT;\Z)$.
Remember that the Weyl group $W$ acts on $\hat T$ by conjugation,
i.e.,\ $w(\alpha)(g)=\alpha(w^{-1}gw)$ for $w\in W$, $\alpha\in \hat T$, and $g\in T$. 

\begin{rem}
The automorphism of $T$ defined by $g\to w^{-1}gw$ for $g\in T$ and $w\in W$ induces a self-homeomorphism of $BT$ and an automorphism of $H^*(BT)$.
Therefore we obtain another $W$-action on $H^2(BT)$ but this agrees with the $W$-action introduced above through $\hat T$.  
\end{rem}

\begin{lemma}\label{lemma:LocFlag}
Let $\alpha$ be a character of $T$.  Then 
$e^T(L_\alpha )|_w=w(\alpha)$ for any $w\in W$. 
\end{lemma}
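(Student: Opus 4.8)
The plan is to compute the equivariant Euler class of the line bundle $L_\alpha$ by restricting to the fixed point $wB$ and identifying the resulting $T$-module. First I would recall the description $L_\alpha = G \times_B \CC_{\tilde\alpha}$, and observe that the fiber of $L_\alpha$ over the point $wB \in (G/B)^T$ is the complex line $\{[\dot w, z] \mid z \in \CC\}$, where $\dot w \in N(T)$ is a representative of $w$. The restriction of $L_\alpha$ to the fixed point $\{wB\}$ is then a one-dimensional $T$-module, and the equivariant Euler class $e^T(L_\alpha)|_w \in H^2(BT) = \hat T \otimes \RR$ is precisely the character by which $T$ acts on this fiber.

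Next I would compute that character. For $t \in T$ acting on the left, $t \cdot [\dot w, z] = [t\dot w, z] = [\dot w (\dot w^{-1} t \dot w), z] = [\dot w, \tilde\alpha(\dot w^{-1} t \dot w) z]$, using the $B$-action relation $b(g,z) = (gb^{-1}, \tilde\alpha(b)z)$ with $b = \dot w^{-1} t \dot w \in T \subset B$ (note $\dot w^{-1} t \dot w \in T$ since $\dot w$ normalizes $T$, and $\tilde\alpha$ restricted to $T$ is just $\alpha$). Thus $T$ acts on the fiber through the character $t \mapsto \alpha(\dot w^{-1} t \dot w)$, which by the definition of the $W$-action on $\hat T$ by conjugation ($w(\alpha)(t) = \alpha(w^{-1} t w)$) is exactly $w(\alpha)$. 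Therefore the $T$-fixed fiber is $\CC_{w(\alpha)}$, and hence $e^T(L_\alpha)|_w = e^T(\CC_{w(\alpha)}) = w(\alpha)$ under the identification $\hat T = H^2(BT;\Z)$ recalled in \S\ref{subsect:ringR}.

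The only point requiring care — the main obstacle, such as it is — is bookkeeping with the conventions: making sure the left $T$-action is transported correctly through the quotient $G \times_B \CC_{\tilde\alpha}$, that $\dot w \in N(T)$ genuinely normalizes $T$ so the conjugate $\dot w^{-1} t \dot w$ lands back in $T$ where $\tilde\alpha = \alpha$, and that the sign convention in the $W$-action ($w(\alpha)(g) = \alpha(w^{-1}gw)$) matches. Once the conventions are pinned down, the computation is a direct unwinding of definitions; no deeper input is needed beyond the identification $\alpha \mapsto e^T(\CC_\alpha)$, $\hat T \xrightarrow{\sim} H^2(BT;\Z)$.
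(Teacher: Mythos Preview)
Your proposal is correct and follows essentially the same approach as the paper: both restrict to the fiber over the fixed point $wB$, compute the $T$-action on that fiber via the conjugation $t\mapsto \dot w^{-1}t\dot w$, and identify the resulting character as $w(\alpha)$. The only cosmetic difference is that you are explicit about choosing a representative $\dot w\in N(T)$ and about why $\dot w^{-1}t\dot w\in T$, whereas the paper suppresses this and simply writes $w$ throughout.
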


\begin{proof}
Let $L_\alpha |_w$ be the fiber of the line bundle $L_\alpha$ at $w\in W$.
Since $w$ is a $T$-fixed point, $L_\alpha|_w$ is a $T$-module and 
\begin{equation}\label{eq:PropetyChernClass}
e^T(L_\alpha )|_w=e^T(L_\alpha |_w)
\end{equation}
from the naturality of Euler class.  Since $L_\alpha |_w=\{[w,x]\mid x\in \CC_\alpha \}$,
it follows from the definition of $L_\alpha$ and the $T$-action on $L_\alpha$ that 
\[
g[w,x]=[gw,x]=[w(w^{-1}gw),x]=[w,\alpha (w^{-1}gw)x]=[w,(w(\alpha ))(g)x]\ \ \text{for $g\in T$}.
\]
This shows that the $T$-module $L_\alpha|_w$ is given by the character $w(\alpha)$ and hence 
\begin{equation}\label{eq:T-acionFiber}
e^T(L_\alpha |_w)=w(\alpha).
\end{equation}
The identities \eqref{eq:PropetyChernClass} and \eqref{eq:T-acionFiber} prove the lemma.
\end{proof}

\subsection{$S$-action on $\Hess(N,I)$} \label{subsect:SA}
The $T$-action on the flag variety $G/B$ does not leave a regular nilpotent Hessenberg variety $\Hess(N,I)$ invariant in general.
However, there is a $\CC^*$-subgroup $S$ of $T$ which leaves $\Hess(N,I)$ invariant (\cite{HT}) and it plays an important role in \cite{AHHM} to study $\Hess(N,I)$ in type $A$.
The definition of the subgroup $S$ is as follows.  
We consider a homomorphism 
\[
\text{$\prod_{i=1}^n \alpha_i: T\rightarrow (\CC^*)^n$ given by $g\mapsto (\alpha_1(g),\dots,\alpha_n(g))$,}
\] 
where $\alpha_1,\dots,\alpha_n$ are simple roots of $G$.
Then the $\CC^*$-subgroup $S$ is the identity component of the preimage of the diagonal subgroup $\{(c,\cdots,c) \mid c\in \CC^* \}$ of $(\CC^*)^n$. 

\begin{prop}[{\cite[Lemma~5.1 and Proposition~5.2]{HT}}] \label{prop:FixedPoint}
The $S$-fixed point set $\Hess(N,I)^S$ is the intersection $\Hess(N,I)\cap (G/B)^T$ and under the natural identification $(G/B)^T=W$ by \eqref{eq:GBW}, we have
\[
\Hess(N,I)^S=\{w\in W \mid w^{-1}(\Delta)\subset (-I)\cup \Phi^+ \}.
\]
\end{prop}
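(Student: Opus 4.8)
\textbf{Proof proposal for Proposition~\ref{prop:FixedPoint}.}
The plan is to identify the $S$-fixed locus inside $\Hess(N,I)$ by first locating it inside $G/B$ and then imposing the Hessenberg condition at those points. Since $S \subset T$, every $T$-fixed point of $G/B$ is automatically $S$-fixed, so the inclusion $\Hess(N,I)\cap(G/B)^T \subseteq \Hess(N,I)^S$ is immediate. For the reverse inclusion the key observation is that $S$ acts on $G/B$ with the \emph{same} fixed point set as $T$: concretely, one checks that the weights of the $T$-action on the tangent space $T_{wB}(G/B)=\bigoplus_{\alpha\in\Phi^+}\CC_{w(-\alpha)}$ (the negative roots conjugated by $w$) restrict to nonzero characters of $S$, because $S$ is chosen precisely so that every simple root restricts nontrivially (in fact to the same nontrivial character $c\mapsto c$ up to the construction), and hence every root restricts nontrivially. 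Therefore no $T$-fixed point acquires a fixed tangent direction under the smaller torus $S$, and by the standard Bialynicki-Birula/Morse-theoretic argument (or directly, since $G/B$ is a smooth projective variety with an affine paving by $T$-orbits refining the $S$-orbits) one gets $(G/B)^S=(G/B)^T$. Intersecting with $\Hess(N,I)$ gives $\Hess(N,I)^S=\Hess(N,I)\cap(G/B)^T$.

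Next I would translate the Hessenberg membership condition at a point $wB$. By definition $wB\in\Hess(N,I)$ iff $\mathrm{Ad}(w^{-1})(N)\in H(I)=\mathfrak{b}\oplus\bigoplus_{\alpha\in I}\mathfrak{g}_{-\alpha}$. Taking $N=\sum_{\alpha_i\in\Delta}E_{\alpha_i}$ as we may (Remark~\ref{rem:regnilpsemi}), we have $\mathrm{Ad}(w^{-1})(N)=\sum_{\alpha_i\in\Delta}\mathrm{Ad}(w^{-1})(E_{\alpha_i})$, and $\mathrm{Ad}(w^{-1})(E_{\alpha_i})$ lies in the root space $\mathfrak{g}_{w^{-1}(\alpha_i)}$ (up to a nonzero scalar). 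Thus $\mathrm{Ad}(w^{-1})(N)\in H(I)$ iff for every simple root $\alpha_i$ the root $w^{-1}(\alpha_i)$ is either a positive root (so the corresponding summand lands in $\mathfrak{b}$) or lies in $-I$ (so it lands in $\bigoplus_{\alpha\in I}\mathfrak{g}_{-\alpha}$). That is exactly the condition $w^{-1}(\Delta)\subset(-I)\cup\Phi^+$, which yields the claimed description of $\Hess(N,I)^S$.

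The main obstacle is the first step: rigorously justifying $(G/B)^S=(G/B)^T$ rather than merely the easy inclusion. One must be careful that the identity component $S$ (rather than the full preimage of the diagonal) still has nontrivial restrictions of all roots — this uses that $\alpha_1,\dots,\alpha_n$ form a basis of $\hat T\otimes\Q$ so the diagonal map pulled back is a rank-one subtorus along which no nonzero combination of the $\alpha_i$ with nonnegative (or any) integer coefficients vanishes, hence no root $\alpha=\sum c_i\alpha_i$ (all $c_i$ of the same sign) restricts to zero on $S$. Granting this, the fixed-point coincidence follows from the general fact that for a subtorus acting on a smooth variety, a $T$-fixed point is $S$-fixed with the same local structure precisely when all tangent weights stay nonzero on $S$; alternatively one can invoke Proposition~5.2 of \cite{HT} directly as the statement already cites. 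I would present the root-restriction computation in detail and then cite \cite{HT} for the fixed-point identification to keep the argument short, since the Hessenberg-condition bookkeeping in the second and third paragraphs is the genuinely self-contained part.
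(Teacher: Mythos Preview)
The paper does not supply its own proof of this proposition; it simply records the statement with a citation to \cite[Lemma~5.1 and Proposition~5.2]{HT}. Your proposal is correct and in fact provides the self-contained argument that the paper omits: the verification that every root restricts to a nonzero character of $S$ (namely $\alpha\mapsto \mathrm{ht}(\alpha)\cdot t$), hence $(G/B)^S=(G/B)^T$, followed by the direct root-space computation of the Hessenberg condition at $wB$ using $N=\sum_{\alpha_i\in\Delta}E_{\alpha_i}$.

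One small point worth tightening in your first step: knowing that all tangent weights at $T$-fixed points are nonzero on $S$ shows these points are isolated in $(G/B)^S$, but to exclude further $S$-fixed points you should say explicitly that $(G/B)^S$ is smooth projective and $T$-stable (as $T$ is abelian), so every connected component contains a $T$-fixed point by Borel's fixed point theorem, and then the tangent computation forces each component to be a single point. This is presumably what you mean by the ``standard Bialynicki--Birula/Morse-theoretic argument,'' and writing it out is shorter than invoking a paving. With that adjustment your argument is complete, and the second half (translating $\mathrm{Ad}(w^{-1})(N)\in H(I)$ into $w^{-1}(\Delta)\subset(-I)\cup\Phi^+$) is exactly right.
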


By abuse of notation, we use the same symbol $e(L_\alpha)$ for the image of $e(L_\alpha)$ under the restriction map $H^*(G/B)\rightarrow H^*(\Hess(N,I))$. 
Similarly, we use the symbol $e^S(L_\alpha)$ for the image of $e^T(L_\alpha)$ under the restriction map $H^*_T(G/B)\rightarrow H^*_S(\Hess(N,I))$.
Let $t$ be the character of $S$ obtained as the composition of the inclusion $\prod \alpha_i:S\hookrightarrow T$ and the isomorphism from the diagonal subgroup $\{(c,\cdots,c) \mid c\in \CC^* \}$ to $\CC^*$ given by $(c,\cdots,c)\mapsto c$.
Then, through the equivariant Euler class, we have identification   
\[
H^*(BS)=\R[t].
\]
The inclusion map $\iota\colon S\hookrightarrow T$ induces a homomorphism $\iota^*: H^*(BT)\rightarrow H^*(BS)$ and it follows from the definition of $S$ that 
\begin{equation}\label{eq:STHom}
\iota^*(\alpha_i)=t \quad\text{for all $i=1,\cdots,n$.}
\end{equation}

\begin{prop} \label{prop:Svanish}
Let $I \subsetneq \Phi^+$ be a lower ideal and $\alpha \in \Phi^+ \setminus I$ an element such that $I':=I \cup\{\alpha\}$ is also a lower ideal. 
Then 
\[\text{$e^S(L_\alpha)|_v=-t$\quad for $v \in \Hess(N,I')^S \setminus \Hess(N,I)^S$.}\]
\end{prop}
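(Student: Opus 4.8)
The statement is entirely about the restriction of an equivariant Euler class to an $S$-fixed point, so the plan is to reduce everything to Lemma~\ref{lemma:LocFlag} together with the homomorphism $\iota^*\colon H^*(BT)\to H^*(BS)$ and the identity \eqref{eq:STHom}. First I would note that the restriction map $H^*_T(G/B)\to H^*_S(\Hess(N,I'))$ is compatible with restriction to fixed points, so for $v\in\Hess(N,I')^S\subset(G/B)^T=W$ we have $e^S(L_\alpha)|_v=\iota^*\big(e^T(L_\alpha)|_v\big)=\iota^*\big(v(\alpha)\big)$ by Lemma~\ref{lemma:LocFlag}. Thus the problem is purely combinatorial: compute $\iota^*(v(\alpha))$ where $v(\alpha)\in\hat T=H^2(BT;\Z)$ and $\iota^*$ sends every simple root to $t$; concretely, writing $v(\alpha)=\sum_{i=1}^n c_i\alpha_i$ in the basis of simple roots, one gets $\iota^*(v(\alpha))=\big(\sum_i c_i\big)\,t=\operatorname{ht}(v(\alpha))\,t$, where $\operatorname{ht}$ denotes the sum of simple-root coefficients (extended linearly, so that $\operatorname{ht}$ makes sense on all of $\hat T$ and $\operatorname{ht}(-\gamma)=-\operatorname{ht}(\gamma)$).

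So the crux is to show that $\operatorname{ht}(v(\alpha))=-1$ for $v\in\Hess(N,I')^S\setminus\Hess(N,I)^S$. By Proposition~\ref{prop:FixedPoint} applied to the lower ideal $I'$, membership $v\in\Hess(N,I')^S$ means $v^{-1}(\Delta)\subset(-I')\cup\Phi^+$, i.e. for every simple root $\alpha_i$, the root $v^{-1}(\alpha_i)$ is either positive or lies in $-I'=(-I)\cup\{-\alpha\}$. The condition $v\notin\Hess(N,I)^S$ means $v^{-1}(\Delta)\not\subset(-I)\cup\Phi^+$, so there is at least one simple root $\alpha_j$ with $v^{-1}(\alpha_j)\in(-I')\cup\Phi^+$ but $v^{-1}(\alpha_j)\notin(-I)\cup\Phi^+$; since $-I'=(-I)\cup\{-\alpha\}$, the only possibility is $v^{-1}(\alpha_j)=-\alpha$, i.e. $v(\alpha)=-\alpha_j$, a negative simple root. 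Then $\operatorname{ht}(v(\alpha))=\operatorname{ht}(-\alpha_j)=-1$, and plugging back in gives $e^S(L_\alpha)|_v=\iota^*(v(\alpha))=-t$, as claimed.

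I would present the argument in this order: (1) record the naturality diagram relating the two restriction-to-fixed-point maps and identify $e^S(L_\alpha)|_v$ with $\iota^*(e^T(L_\alpha)|_v)$; (2) invoke Lemma~\ref{lemma:LocFlag} to get $e^T(L_\alpha)|_v=v(\alpha)$; (3) compute $\iota^*$ on $\hat T$ using \eqref{eq:STHom}, obtaining $\iota^*(\gamma)=\operatorname{ht}(\gamma)\,t$ for all $\gamma\in\hat T$; (4) carry out the combinatorial identification above using Proposition~\ref{prop:FixedPoint} for $I'$ and for $I$ to conclude $v(\alpha)=-\alpha_j$ for a simple root $\alpha_j$; (5) conclude.

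\textbf{Expected main obstacle.} The geometry and equivariant-cohomology inputs are all already assembled (naturality of equivariant Euler classes, Lemma~\ref{lemma:LocFlag}, Proposition~\ref{prop:FixedPoint}, and \eqref{eq:STHom}), so I expect no serious difficulty there. The one step that needs genuine care is step~(4): one must check that the \emph{unique} simple root whose $v^{-1}$-image can drop out of $(-I)\cup\Phi^+$ when passing from $I'$ to $I$ is accounted for by exactly the single new root $-\alpha$, and in particular that such a simple root exists (so that $v(\alpha)$ is forced to be a negative simple root rather than merely having height $-1$ for some other reason). This hinges on the precise bookkeeping $(-I')\cup\Phi^+=\big((-I)\cup\Phi^+\big)\cup\{-\alpha\}$ together with the fact that $-\alpha\notin(-I)\cup\Phi^+$ because $\alpha\in\Phi^+\setminus I$; once that is laid out cleanly the rest is immediate.
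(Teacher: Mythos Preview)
Your proposal is correct and follows essentially the same approach as the paper: both reduce to the naturality identity $e^S(L_\alpha)|_v=\iota^*(e^T(L_\alpha)|_v)=\iota^*(v(\alpha))$ via Lemma~\ref{lemma:LocFlag}, use Proposition~\ref{prop:FixedPoint} together with the set-difference $(-I')\setminus(-I)=\{-\alpha\}$ to force $v(\alpha)=-\alpha_j$ for some simple root, and then apply~\eqref{eq:STHom}. The only cosmetic difference is that you package $\iota^*$ as the height functional $\iota^*(\gamma)=\operatorname{ht}(\gamma)\,t$, whereas the paper simply applies $\iota^*$ directly to $-\alpha_j$.
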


\begin{proof}
By Proposition~\ref{prop:FixedPoint}, $w\in \Hess(N,I)^S$ if and only if $w^{-1}(\Delta)\subset (-I)\cup \Phi^+$.
Therefore, for $v \in \Hess(N,I')^S \setminus \Hess(N,I)^S$, there exists $\alpha_i\in\Delta$ such that 
\begin{equation}\label{eq:FP}
v(\alpha)=-\alpha_i.
\end{equation}
Using the following commutative diagram
\begin{equation*}
\begin{CD}
H^{\ast}_{T}(G/B)\ \ \ \ \ @>
>> \ \ \ \ \ \ \ \ H^{\ast}_{T}(v)=H^*(BT)\\
@V{}VV @VV{\iota^*}V\\
H^{\ast}_{S}(\Hess(N,I')) \ \ \ \ \ @> 
>> H^{\ast}_{S}(v)=H^*(BS)=\R[t]
\end{CD}
\end{equation*}
where all the homomorphisms are induced from the inclusion maps, we have
\begin{align*}
e^S(L_\alpha )|_v=& \iota^* (e^T(L_\alpha )|_v)\\
=&\iota^*(v (\alpha) ) \hspace{50pt} \mbox{by Lemma \ref{lemma:LocFlag}}\\
=& \iota^*(-\alpha_i ) \hspace{52pt} \mbox{by \eqref{eq:FP}}\\
=& -t \hspace{72pt} \mbox{by \eqref{eq:STHom}},
\end{align*}
proving the proposition. 
\end{proof}

\subsection{Ideal $\mathfrak{n}(I)$} 
Recall the homomorphism $\CR\to H_T^*(G/B)$ in \eqref{eq:ringGB}.  Composing this with the restriction homomorphism $H^*_T(G/B)\to H^*_S(\Hess(N,I))$, we obtain a homomorphism 
\[
\phi_I\colon \CR\to H^*_S(\Hess(N,I))
\]
sending  $\alpha\in \hat T\subset\CR$ to $e^S(L_\alpha)$.  The map $\phi_I$ composed with the restriction map $H^*_S(\Hess(N,I))\to H^*(\Hess(N,I))$ is the map 
\begin{equation*} \label{eq:psiI2}
\varphi_I \colon \CR\to H^*(\Hess(N,I))
\end{equation*}
in \eqref{eq:psiI}, which sends $\alpha\in\hat T$ to $e(L_\alpha)\in H^2(\Hess(N,I))$. 

Through the projection $ES\times_S \Hess(N,I)\to ES/S=BS$, one can regard an element of $H^*(BS)=\R[t]$ as an element of $H^*_S(\Hess(N,I))$.  Therefore, the homomorphism $\phi_I$ naturally extends to a homomorphism 
\begin{equation} \label{eq:varphiI}
\varphi_I^S\colon \CR[t]\to H^*_S(\Hess(N,I))
\end{equation}
sending $t$ to $t$.  We define 
\begin{equation} \label{eq:hatJ}
\begin{split}
\mathfrak{n}_S(I):&=\{f(t) \in \CR[t] \mid \varphi_I^S(f(t))=0\},\\ 
\mathfrak{n}(I):&=\{ f(0)\in \CR\mid f(t)\in \mathfrak{n}_S(I)\}.
\end{split}
\end{equation}
We note that $\mathfrak{n}(I)$ is contained in the kernel of $\varphi_I$, which follows from the following commutative diagram:  
\begin{equation} \label{eq:psivarphi}
  \xymatrix{
   \CR[t] \ar[rr]^-{\varphi_I^S} \ar[d] & & H^*_S(\Hess(N,I)) \ar[d]\\ 
   \CR \ar[rr]_-{\varphi_I} \ar[urr]^-{\phi_I} & & H^*(\Hess(N,I))
  }
\end{equation}
where the left vertical map is the evaluation at $t=0$ and the right one is the restriction map. 

\begin{lemma} \label{lemm:kerpsiI}
If $\varphi_I$ is surjective, then $\mathfrak{n}(I)$ agrees with the kernel $\ker\varphi_I$ of $\varphi_I$. 
\end{lemma}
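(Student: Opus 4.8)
The inclusion $\mathfrak{n}(I)\subset\ker\varphi_I$ is already recorded via the commutative diagram \eqref{eq:psivarphi}, so the content of the lemma is the reverse inclusion under the hypothesis that $\varphi_I$ is surjective. First I would spell out the structure of $H^*_S(\Hess(N,I))$ as an $H^*(BS)=\R[t]$-module. Since $\Hess(N,I)$ has vanishing odd cohomology (Theorem~\ref{theorem:regnilp}) and $BS=\CC P^\infty$ also has vanishing odd cohomology, the Serre spectral sequence of $ES\times_S\Hess(N,I)\to BS$ collapses, so $H^*_S(\Hess(N,I))$ is a free $\R[t]$-module and the restriction map $H^*_S(\Hess(N,I))\to H^*(\Hess(N,I))$ (evaluation at $t=0$, i.e. killing $t$) is surjective with kernel exactly $t\cdot H^*_S(\Hess(N,I))$. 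This is the key structural input, taken from the discussion in \S\ref{subsect:EC}.

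\textbf{Main step.} Take $g\in\ker\varphi_I\subset\CR$; I must produce $f(t)\in\CR[t]$ with $f(0)=g$ and $\varphi_I^S(f(t))=0$. From the diagram, $\varphi_I^S(g)$ restricts to $\varphi_I(g)=0$ in $H^*(\Hess(N,I))$, so $\varphi_I^S(g)$ lies in the kernel of evaluation at $t=0$, which by the previous paragraph is $t\cdot H^*_S(\Hess(N,I))$. Write $\varphi_I^S(g)=t\cdot\eta$ for some $\eta\in H^*_S(\Hess(N,I))$. Now I use surjectivity of $\varphi_I$: the composite $\CR\xrightarrow{\phi_I}H^*_S(\Hess(N,I))\to H^*(\Hess(N,I))$ is $\varphi_I$, which is onto, and since $H^*_S(\Hess(N,I))$ is a free $\R[t]$-module with $H^*_S/tH^*_S\cong H^*(\Hess(N,I))$, a set of elements of $\CR$ mapping onto a homogeneous $\R$-basis of $H^*(\Hess(N,I))$ maps under $\phi_I$ to an $\R[t]$-module basis of $H^*_S(\Hess(N,I))$ (graded Nakayama); hence $\phi_I$ itself is surjective as a map of $\R[t]$-modules once we also allow multiplication by $t$, i.e. $\varphi_I^S\colon\CR[t]\to H^*_S(\Hess(N,I))$ is surjective. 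Therefore there is $h(t)\in\CR[t]$ with $\varphi_I^S(h(t))=\eta$; one can moreover arrange $h$ homogeneous of the appropriate degree. Then $f(t):=g-t\,h(t)$ satisfies $\varphi_I^S(f(t))=\varphi_I^S(g)-t\,\eta=0$, so $f(t)\in\mathfrak{n}_S(I)$, and $f(0)=g$, giving $g\in\mathfrak{n}(I)$.

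\textbf{Expected obstacle.} The routine but slightly delicate point is the graded-Nakayama argument that reducing mod $t$ takes a lift of a cohomology basis to an $\R[t]$-basis, hence that $\varphi_I^S$ is surjective; this needs the freeness of $H^*_S(\Hess(N,I))$ over $\R[t]$ and a degree/finiteness bookkeeping so that only finitely many basis elements appear in each degree (true since each $H^i(\Hess(N,I))$ is finite-dimensional). I expect this to be the main place where care is needed, but it follows cleanly from the collapse of the spectral sequence established in \S\ref{subsect:EC}. Everything else is a diagram chase.
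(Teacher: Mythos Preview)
Your proof is correct and follows essentially the same route as the paper's own argument: reduce to the reverse inclusion, use the vanishing of odd cohomology to see that $H^*_S(\Hess(N,I))$ is free over $\R[t]$ so that the kernel of restriction is $t\cdot H^*_S$, deduce surjectivity of $\varphi_I^S$ from that of $\varphi_I$, and then correct $g$ by $t\cdot h(t)$ to land in $\mathfrak{n}_S(I)$. The only difference is cosmetic: the paper invokes the module isomorphism $H^*_S\cong H^*(BS)\otimes H^*(\Hess(N,I))$ directly to assert surjectivity of $\varphi_I^S$, whereas you phrase this step as a graded-Nakayama argument.
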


\begin{proof}
Since we know $\mathfrak{n}(I)\subset \ker\varphi_I$, it suffices to prove $\mathfrak{n}(I)\supset \ker\varphi_I$ when $\varphi_I$ is surjective.
We note that since $H^*_S(\Hess(N,I))=H^*(\Hess(N,I))\otimes H^*(BS)$, the surjectivity of $\varphi_I$ implies the surjectivity of $\varphi_I^S$.
Let $h\in \ker\varphi_I$.
Since $H^*(BS)=\R[t]$ and $\varphi_I(h)=0$, $\varphi_I^S(h)$ is divisible by $t$, i.e., $\varphi_I^S(h)$ is of the form $t\tilde f$ with $\tilde f\in H^*_S(\Hess(N,I))$, where $h$ is regarded as an element of $\CR[t]$ in a natural way.
Since $\varphi_I^S$ is surjective, there exists an element $\tilde h\in \CR[t]$ such that $\varphi_I^S(\tilde h)=\tilde f$.
Then $\varphi_I^S(h-t\tilde h)=0$ while $h-t\tilde h$ maps to $h$ by the evaluation map at $t=0$.
This shows that $h$ is in $\mathfrak{n}(I)$.  
\end{proof}


\begin{cor} \label{coro:nPhi+}
$\mathfrak{n}(\Phi^+)=(\CR^W_+)$.  
\label{hatcoinvariant}
\end{cor}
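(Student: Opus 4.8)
The plan is to reduce the statement to the case $I = \Phi^+$ of Theorem~\ref{nilpotentmain}'s target, using the fact that $\Hess(N,\Phi^+) = G/B$ together with Borel's theorem. First I would observe that when $I = \Phi^+$, the Hessenberg space is all of $\mathfrak{g}$, so $\Hess(N,\Phi^+) = G/B$, and the $S$-action on $G/B$ is the restriction of the $T$-action along $\iota\colon S \hookrightarrow T$. In particular the map $\varphi_{\Phi^+}\colon \CR \to H^*(G/B)$ is exactly $\varphi$ from \eqref{eq:psi}, which is surjective by Borel's theorem. Hence Lemma~\ref{lemm:kerpsiI} applies and gives $\mathfrak{n}(\Phi^+) = \ker \varphi_{\Phi^+} = \ker \varphi$. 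By Borel's theorem $\ker \varphi = (\CR^W_+)$, which is the desired conclusion.

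The only point needing a little care is verifying the hypothesis of Lemma~\ref{lemm:kerpsiI}, namely surjectivity of $\varphi_{\Phi^+}$; but this is precisely Borel's theorem as recalled in the introduction (the map $\varphi$ of \eqref{eq:1}/\eqref{eq:psi} is surjective with kernel $(\CR^W_+)$). So the argument is essentially: identify $\Hess(N,\Phi^+)$ with $G/B$, identify $\varphi_{\Phi^+}$ with $\varphi$, invoke Borel for surjectivity, then invoke Lemma~\ref{lemm:kerpsiI} to identify $\mathfrak{n}(\Phi^+)$ with $\ker\varphi$, and invoke Borel again for the explicit description of $\ker\varphi$.

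I do not expect any real obstacle here; this corollary is a formal consequence of the setup in \S\ref{sect:RegNil} and Borel's classical theorem, and the role of the $S$-equivariant refinement (the ideal $\mathfrak{n}_S$ and the polynomial ring $\CR[t]$) is just to make Lemma~\ref{lemm:kerpsiI} available. If I wanted to be completely self-contained I might instead argue directly that $\mathfrak{n}_S(\Phi^+)$ can be computed via localization at the $T$-fixed points of $G/B$ — where $e^T(L_\alpha)|_w = w(\alpha)$ by Lemma~\ref{lemma:LocFlag} — but this only reproves Borel's theorem, so it is cleaner simply to cite it. Thus the whole proof is two or three lines: $\Hess(N,\Phi^+)=G/B$, so $\varphi_{\Phi^+}=\varphi$ is surjective by Borel, whence $\mathfrak{n}(\Phi^+)=\ker\varphi_{\Phi^+}=(\CR^W_+)$ by Lemma~\ref{lemm:kerpsiI} and Borel's theorem again.
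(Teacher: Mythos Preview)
Your proof is correct and follows essentially the same approach as the paper: identify $\Hess(N,\Phi^+)=G/B$ so that $\varphi_{\Phi^+}=\varphi$, invoke Borel's theorem for surjectivity and the kernel, and apply Lemma~\ref{lemm:kerpsiI}. The extra remarks about the $S$-action and localization are unnecessary but harmless.
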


\begin{proof}
When $I=\Phi^+$, we have $\Hess(N,I)=G/B$ and $\varphi_I=\varphi\colon \CR\to H^*(G/B)$.
Since $\varphi$ is surjective and its kernel is $(\CR^W_+)$ by Borel's theorem, the corollary follows from Lemma~\ref{lemm:kerpsiI}.  
\end{proof}

The above corollary corresponds to Theorem~\ref{coinv} and the following lemma corresponds to Proposition~\ref{inj} in terms of $\mathfrak{a}(I)$. 

\begin{lemma} \label{lemm:GysinNil}
Let $I \subsetneq \Phi^+$ be a lower ideal and $\beta_I=\prod_{\alpha \in \Phi^+ \setminus I} \alpha$ as in Proposition~\ref{inj}. Then $\mathfrak{n}(I)\subset \mathfrak{n}(\Phi^+):\beta_I$. 
\end{lemma}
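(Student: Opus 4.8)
The plan is to imitate the proof of Proposition~\ref{inj} on the topological side, using the Gysin/localization machinery for the $S$-action. It suffices to treat the case $I' := I \cup \{\alpha\}$ where $\alpha \in \Phi^+ \setminus I$ is chosen so that $I'$ is again a lower ideal, and to show $\mathfrak{n}(I) \subset \mathfrak{n}(I') : \alpha$; the general statement for $\beta_I$ then follows by iterating over a chain of lower ideals from $I$ up to $\Phi^+$. So fix such an $\alpha$ and let $f(0) \in \mathfrak{n}(I)$ come from some $f(t) \in \mathfrak{n}_S(I)$, i.e.\ $\varphi_{I'}^S$ restricted appropriately kills $f$; what I want is that $\alpha f(0) \in \mathfrak{n}(I')$.

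First I would set up the comparison between $H^*_S(\Hess(N,I))$ and $H^*_S(\Hess(N,I'))$ via the localization theorem of \S\ref{subsect:EC}. Since $\Hess(N,I)^S \subset \Hess(N,I')^S$ by Proposition~\ref{prop:FixedPoint}, and both equivariant cohomologies inject into the cohomology of their fixed point sets (a sum of copies of $\R[t]$ indexed by $W$-elements subject to the stated condition), an element of $H^*_S(\Hess(N,I'))$ is determined by its restrictions $(\cdot)|_v$ for $v$ ranging over $\Hess(N,I')^S$. The key input is Proposition~\ref{prop:Svanish}: for $v \in \Hess(N,I')^S \setminus \Hess(N,I)^S$ we have $e^S(L_\alpha)|_v = -t$, while for $v \in \Hess(N,I)^S$ the restriction $e^S(L_\alpha)|_v$ is some multiple of $v(\alpha)$, at any rate it lies in $\R[t]$. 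So the plan is: given $f(t) \in \mathfrak{n}_S(I)$, consider $\varphi_{I'}^S(\alpha \cdot f(t) + t \cdot f(t))$ — or more precisely an element of $\CR[t]$ built so that its image in $H^*_S(\Hess(N,I'))$ restricts to $0$ at every $v \in \Hess(N,I')^S$ and which evaluates at $t = 0$ to $\alpha f(0)$.

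Concretely: I would consider $g(t) := (\alpha - t) f(t) \in \CR[t]$ — note wait, I want $g(0) = \alpha f(0)$, so I should take $g(t) := (\alpha + t)f(t)$ and check $\varphi_{I'}^S(g(t)) = 0$. Wait, $g(0) = \alpha f(0)$ indeed. Now compute $\varphi_{I'}^S(g(t))|_v = (e^S(L_\alpha)|_v + t)\cdot \varphi_{I'}^S(f(t))|_v$. For $v \in \Hess(N,I')^S \setminus \Hess(N,I)^S$ the first factor is $-t + t = 0$, so this restriction vanishes. For $v \in \Hess(N,I)^S$, I need $\varphi_{I'}^S(f(t))|_v = 0$; this is where I must translate the hypothesis $f(t) \in \mathfrak{n}_S(I)$: since $\Hess(N,I)^S = \Hess(N,I)^S$, the restriction of $\varphi_{I'}^S(f(t))$ to these fixed points factors through $\varphi_I^S(f(t)) = 0$ (the line bundles $L_\alpha$ restrict compatibly, and the localization inclusion $H^*_S(\Hess(N,I')) \hookrightarrow H^*_S(\Hess(N,I')^S)$ composed with restriction to $\Hess(N,I)^S$ agrees with the corresponding map for $\Hess(N,I)$). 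Hence $\varphi_{I'}^S(g(t))$ restricts to $0$ at all $v \in \Hess(N,I')^S$, so by injectivity of localization $\varphi_{I'}^S(g(t)) = 0$, i.e.\ $g(t) \in \mathfrak{n}_S(I')$, whence $g(0) = \alpha f(0) \in \mathfrak{n}(I')$. This proves $\mathfrak{n}(I) \subset \mathfrak{n}(I') : \alpha$, and iterating gives $\mathfrak{n}(I) \subset \mathfrak{n}(\Phi^+) : \beta_I$.

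The main obstacle I anticipate is the bookkeeping in the second step: making precise that $\varphi_{I'}^S(f(t))$ vanishes when restricted to the $S$-fixed points lying in the smaller variety $\Hess(N,I)$, i.e.\ that the hypothesis $f(t) \in \mathfrak{n}_S(I)$ really controls the behavior of $f(t)$ inside $H^*_S(\Hess(N,I'))$ at those common fixed points. This needs the naturality of equivariant Euler classes under the inclusion $\Hess(N,I) \hookrightarrow \Hess(N,I')$ together with the localization injectivity for $\Hess(N,I)$, both available from \S\ref{subsect:EC} and Theorem~\ref{theorem:regnilp} (vanishing of odd cohomology). Everything else — the factor $(\alpha + t)$ trick, Proposition~\ref{prop:Svanish} giving the crucial $-t$, and the final colon-ideal conclusion — is then a short formal argument.
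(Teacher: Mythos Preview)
Your proposal is correct and follows essentially the same approach as the paper: reduce to the one-step case $I' = I \cup \{\alpha\}$, take $g(t) = (\alpha + t)f(t)$, use Proposition~\ref{prop:Svanish} to kill the restrictions at the new fixed points and the hypothesis $f(t) \in \mathfrak{n}_S(I)$ to kill the restrictions at the old ones, then conclude by injectivity of localization. The bookkeeping you flag (that $\varphi_{I'}^S(f(t))|_w = \varphi_I^S(f(t))|_w$ for $w \in \Hess(N,I)^S$) is exactly what the paper uses and follows from naturality of the restriction maps.
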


\begin{proof}
It suffices to show that if $\alpha$ is an element of  $\Phi^+ \setminus I$ such that $I'=I \cup\{\alpha\}$ is also a lower ideal, then $\mathfrak{n}(I)\subset \mathfrak{n}(I')\colon \alpha$, i.e., $\alpha\mathfrak{n}(I)\subset \mathfrak{n}(I')$. 

By \eqref{eq:hatJ}, any element of $\mathfrak{n}(I)$ is of the form $f(0)$ for some $f(t)\in \mathfrak{n}_S(I)$. We claim $(\alpha+t)f(t)\in \mathfrak{n}_S(I')$.  Indeed, 
\[
\text{$\varphi_{I'}^S(\alpha+t)|_v=e^S(L_\alpha)|_v+t =0$\quad for all $v\in \Hess(N,I')^S\backslash \Hess(N,I)^S$}
\]
by Proposition~\ref{prop:Svanish} while
\[
\text{$\varphi_{I'}^S(f(t))|_w=\varphi_I^S(f(t))|_w=0$\quad for all $w\in \Hess(N,I)^S$}
\] 
since $f(t)\in \mathfrak{n}_S(I)$. Therefore $\varphi_{I'}^S((\alpha+t)f(t))|_w=0$ for all $w\in \Hess(N,I')^S$ and hence $\varphi_{I'}^S((\alpha+t)f(t))=0$
because the restriction map $H^*_S(\Hess(N,I'))\to H^*_S(\Hess(N,I')^S)$ is injective since $H^{odd}(\Hess(N,I'))$ vanishes by Theorem~\ref{theorem:regnilp}.
This shows that $(\alpha+t)f(t)\in \mathfrak{n}_S(I')$ as claimed.
Therefore $\alpha f(0)$ is contained in $\mathfrak{n}(I')$.
Since $f(0)$ is an arbitrary element of $\mathfrak{n}(I)$, this proves the lemma. 
\end{proof}

\section{Weyl type subsets of ideals} \label{sect:WI}

In this section, we discuss a relation between chambers of $\A_I$, $S$-fixed points of $\Hess(N,I)$, and Weyl type subsets of $I$ defined by Sommers and Tymoczko \cite{STy}.

Let $I \subset \Phi^+$ be a lower ideal.  
A subset $\Y \subset I$ is said to be of \textbf{Weyl type} if $\alpha,\beta \in \Y$ and $\alpha + \beta \in I$, then $\alpha+\beta \in \Y$,
and if $\gamma,\delta \in I \setminus \Y$ and $\gamma + \delta \in I$, then $\gamma+\delta \in I \setminus \Y$.
Let $\mathcal{W}^I$ denote the set of the Weyl type subsets of $I$.
Sommers and Tymoczko posed the following conjecture in \cite{STy}. 

\begin{conj}[\cite{STy}]\label{conj:STy}
$\displaystyle{\sum_{\Y \in \mathcal{W}^I} \s^{|\Y|}=\prod_{i=1}^n (1+\s+\cdots+\s^{d_i^I})}$, where $d_1^I,\dots,d_n^I$ are the dual partitions of the height distributions of positive roots in $I$ (see Definition~\ref{DPHD}). 
\label{STyconj}
\end{conj}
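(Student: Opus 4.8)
The plan is to prove Conjecture~\ref{conj:STy} by computing the Poincar\'e polynomial of $\Hess(N,I)$ in two different ways and matching the two expressions. Concretely, the right-hand side $\prod_{i=1}^n(1+\s+\cdots+\s^{d_i^I})$ is already identified: by Proposition~\ref{CI} it equals $F(\CR/\mathfrak{a}(I),\s)$, and once Theorem~\ref{nilpotentmain} is available it equals $\Poin(\Hess(N,I),\sqrt{\s})$ (equivalently, this is the product formula \eqref{eq:PoinHessNI} in Corollary~\ref{surjection}, since $\exp(\A_I)=(d_1^I,\dots,d_n^I)$ and the product formula rearranges telescopically into $\prod(1+\s+\cdots+\s^{d_i^I})$ via Terao's factorization applied to the free arrangement $\A_I$). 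So the whole content is to show that the left-hand side $\sum_{\Y\in\mathcal{W}^I}\s^{|\Y|}$ is \emph{also} the Poincar\'e polynomial of $\Hess(N,I)$.

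The key step is therefore to exhibit a bijection, degree-preserving in the appropriate sense, between the Weyl type subsets $\mathcal{W}^I$ and a set indexing the affine paving of $\Hess(N,I)$. From Theorem~\ref{theorem:regnilp} the paving cells are indexed by $w\in W$ with $w^{-1}(\Delta)\subset(-I)\cup\Phi^+$, with $\dim_{\CC}(X_w\cap\Hess(N,I))=|N(w)\cap I|$; by Proposition~\ref{prop:FixedPoint} this same set is exactly $\Hess(N,I)^S$. So I would first set up a map $\Hess(N,I)^S\to\mathcal{W}^I$ by $w\mapsto N(w)\cap I$, and verify that $N(w)\cap I$ is indeed a Weyl type subset: this uses that $N(w)=\{\alpha\in\Phi^+\mid w(\alpha)\prec 0\}$ is a ``biconvex'' (or ``closed and coclosed'') subset of $\Phi^+$ in the sense that both $N(w)$ and $\Phi^+\setminus N(w)$ are closed under taking sums that land in $\Phi^+$ — which is the standard characterization of inversion sets of Weyl group elements — and then intersecting with the lower ideal $I$ preserves both closure conditions because $I$ itself is an order ideal. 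The point about connecting these to chambers of $\A_I$ (promised in the section's opening sentence) is that chambers of $\A_I$ correspond to sign vectors $(\mathrm{sgn}(\alpha))_{\alpha\in I}$, and the condition for such a sign vector to be realized by an actual point of $\mathfrak{t}$ is precisely the Weyl type condition; so $|C(\A_I)|=|\mathcal{W}^I|$, and by Zaslavsky/Terao factorization (Theorems~\ref{Zaslavsky} and~\ref{factorization}) $|C(\A_I)|=\prod_{i=1}^n(1+d_i^I)$, which is the specialization at $\s=1$ of both sides.

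The main obstacle is proving the map $w\mapsto N(w)\cap I$ is a \emph{bijection} onto $\mathcal{W}^I$, and that it respects the grading, i.e., that $|\Y|$ for $\Y=N(w)\cap I$ equals the dimension data $|N(w)\cap I|$ — the latter is tautological, so really the content is surjectivity and injectivity. Surjectivity amounts to: given a Weyl type subset $\Y\subset I$, one must produce $w\in W$ with $w^{-1}(\Delta)\subset(-I)\cup\Phi^+$ and $N(w)\cap I=\Y$. I expect this to follow from the theory relating biconvex subsets and chambers: $\Y$ together with $\Phi^+\setminus I$ forms (or can be extended to) a biconvex subset of $\Phi^+$ realizing some $w$, and the lower-ideal condition on $I$ forces the extra constraint $w^{-1}(\Delta)\subset(-I)\cup\Phi^+$. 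Injectivity should follow because $w$ is determined by its full inversion set $N(w)$, and on the relevant subset of $W$ the part of $N(w)$ outside $I$ is forced. I would lean on the combinatorial description of $\mathcal{W}^I$ in terms of sign vectors / chambers of $\A_I$ to make this clean, rather than arguing directly with root sums; once that bijection is in hand, the identity \eqref{eq00} is immediate from equating the two computations of $\Poin(\Hess(N,I),\sqrt\s)$.
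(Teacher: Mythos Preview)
Your approach is essentially the same as the paper's: equate two computations of $\Poin(\Hess(N,I),\sqrt{\s})$, one via the affine paving and the bijection $\Hess(N,I)^S\to\mathcal{W}^I$, $w\mapsto N(w)\cap I$ (this is Proposition~\ref{dim}, resting on Theorem~\ref{thm:SommersTymoczko}), and the other via Theorem~\ref{nilpotentmain} together with Proposition~\ref{CI}. The paper records this chain as Corollary~\ref{coro:theorem1.4}.

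The one point of divergence is that you attempt to \emph{prove} the bijection $\eta\colon w\mapsto N(w)\cap I$, whereas the paper simply cites it as Theorem~\ref{thm:SommersTymoczko} from \cite{STy}. Your well-definedness argument (biconvexity of $N(w)$ plus $I$ being a lower ideal) is fine, and your surjectivity idea (extend a Weyl type subset to a biconvex subset of $\Phi^+$) is the right direction. But your injectivity sketch --- ``on the relevant subset of $W$ the part of $N(w)$ outside $I$ is forced'' --- is not obvious and would need a real argument: many $w\in W$ can share the same $N(w)\cap I$, and the content of \cite{STy} is precisely that the constraint $w^{-1}(\Delta)\subset(-I)\cup\Phi^+$ singles out a unique one (in fact the Bruhat-minimal one). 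Since the paper treats this bijection as a black box, your proposal is already at the paper's level of detail; just be aware that the bijection itself is where the combinatorial work lies, and your sketch does not yet constitute a proof of it.
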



When $I=\Phi^+$, this is a well-known fact that the Poincar\'{e} polynomial 
of the flag variety coincides with the generating function of the length of $w \in W$ when $\s$ is replaced by $\s^2$.
We will prove Conjecture \ref{STyconj} in the next section as a corollary of Theorem \ref{nilpotentmain}, and here we introduce some related results.


Weyl type subsets are closely related to $S$-fixed points of $\Hess(N,I)$.
Recall that by Proposition \ref{prop:FixedPoint} we have
\[
\Hess(N,I)^S=\{ w \in W \mid w^{-1}(\Delta) \subset (-I)\cup \Phi^+\}.
\]
It is easy to see that, for any $w \in W$, the set $N(w)\cap I$ is a Weyl type subset of $I$.
The following result was proved by Sommers and Tymoczko \cite[Proposition 6.3]{STy}.

\begin{theorem}[\cite{STy}]
\label{thm:SommersTymoczko}
Let $I$ be a lower ideal.
The map $\eta: \Hess(N,I)^S \to \mathcal{W}^I$  defined by $\eta(w)=N(w)\cap I$ is a bijection.
\end{theorem}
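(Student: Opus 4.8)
The plan is to show directly that $\eta(w) = N(w) \cap I$ defines a map $\Hess(N,I)^S \to \mathcal{W}^I$, and then to establish injectivity and surjectivity separately by purely combinatorial arguments about root systems, using only the description $\Hess(N,I)^S = \{w \in W \mid w^{-1}(\Delta) \subset (-I) \cup \Phi^+\}$ from Proposition~\ref{prop:FixedPoint}. First I would verify that $\eta$ is well-defined: for arbitrary $w \in W$, the inversion set $N(w) = \{\alpha \in \Phi^+ \mid w(\alpha) \prec 0\}$ is closed and coclosed in $\Phi^+$ (i.e., if $\alpha,\beta \in N(w)$ and $\alpha+\beta \in \Phi^+$ then $\alpha+\beta \in N(w)$, and the same for the complement), because $w(\alpha+\beta) = w(\alpha)+w(\beta)$ and a sum of two negative roots cannot be positive while a sum of two positive roots cannot be negative; intersecting with the lower ideal $I$ preserves both closure conditions relative to $I$, so $N(w)\cap I \in \mathcal{W}^I$.

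For \textbf{injectivity}, suppose $w, w' \in \Hess(N,I)^S$ with $N(w)\cap I = N(w')\cap I$. The key point is that the condition $w^{-1}(\Delta)\subset(-I)\cup\Phi^+$ forces every simple root $\alpha_i$ to satisfy either $w^{-1}(\alpha_i) \in \Phi^+$ or $-w^{-1}(\alpha_i) \in I$; in the latter case $-w^{-1}(\alpha_i) \in N(w) \cap I$. I would argue by descending induction on $\ell(w)$ (or by a minimal-counterexample argument): if $w \ne w'$, pick a simple root $\alpha_i$ with, say, $w^{-1}(\alpha_i) \prec 0$, so $\beta := -w^{-1}(\alpha_i) \in N(w)\cap I = N(w')\cap I$, hence $s_{\alpha_i} w$ and $s_{\alpha_i}w'$ both have strictly smaller length and one checks they still lie in $\Hess(N,I)^S$ and have equal $\eta$-value (using that removing the simple-root inversion $\beta$ from $N(w)$ and from $N(w')$ yields $N(s_{\alpha_i}w)$ and $N(s_{\alpha_i}w')$ respectively, and $\beta \in I$ so the intersections with $I$ also agree). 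Induction then gives $s_{\alpha_i}w = s_{\alpha_i}w'$, hence $w = w'$.

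For \textbf{surjectivity}, given $\Y \in \mathcal{W}^I$ I would build $w$ with $N(w)\cap I = \Y$ by a greedy procedure: as long as $\Y$ is nonempty, it contains a simple root $\alpha_i$ (since $\Y$ is coclosed in $I$ and $I$ is a lower ideal, a minimal element of $\Y$ is simple), replace $\Y$ by $s_{\alpha_i}(\Y \setminus \{\alpha_i\})$ and record $s_{\alpha_i}$; the Weyl-type and lower-ideal conditions are preserved under this operation, and multiplying the recorded reflections reconstructs $w$ with the correct inversion set. Throughout one must check the fixed-point condition $w^{-1}(\Delta) \subset (-I)\cup\Phi^+$ is maintained. \textbf{The main obstacle} I anticipate is precisely the bookkeeping in these inductive steps: verifying that applying a simple reflection keeps us inside $\Hess(N,I)^S$ and correctly tracks $N(w)\cap I$ requires careful use of the interplay between $N(s_i w) = s_i(N(w))\,\triangle\,\{\alpha_i\}$-type identities, the lower-ideal property of $I$, and the closure/coclosure of $\Y$; this is where the argument is delicate even though each individual fact is elementary. (Alternatively, a slicker non-inductive route is to count: both sides have the same cardinality by Theorem~\ref{theorem:regnilp} and Proposition~\ref{CI}/Corollary-type Poincaré polynomial identities, so injectivity alone would suffice — but since Corollary~\ref{Weyltype} is deduced \emph{from} this theorem, one should give the direct bijection rather than a counting shortcut.)
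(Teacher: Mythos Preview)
The paper does not give its own proof of this theorem; it is quoted from Sommers--Tymoczko \cite[Proposition 6.3]{STy} and used as a black box. The Remark immediately following the statement records the mechanism behind their argument: $\eta^{-1}(Y)$ is the Bruhat-minimal element of $\{v\in W\mid N(v)\cap I=Y\}$.

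Your well-definedness step is fine, but both inductions have genuine gaps, not just bookkeeping. For injectivity, having chosen $\alpha_i$ with $w^{-1}(\alpha_i)\prec 0$ and $\beta=-w^{-1}(\alpha_i)\in Y$, you correctly get $N(s_{\alpha_i}w)=N(w)\setminus\{\beta\}$; but the parallel claim $N(s_{\alpha_i}w')=N(w')\setminus\{\beta\}$ would require $w'(\beta)=-\alpha_i$, whereas you only know $w'(\beta)\prec 0$. You also need $s_{\alpha_i}w\in\Hess(N,I)^S$, and since $(s_{\alpha_i}w)^{-1}(\alpha_k)=w^{-1}(s_{\alpha_i}\alpha_k)$ with $s_{\alpha_i}\alpha_k$ generally non-simple, the fixed-point condition for $w$ does not transfer. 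For surjectivity, $s_{\alpha_j}(Y\setminus\{\alpha_j\})$ need not lie in $I$: in type $A_2$ with $I=\{\alpha_1,\alpha_2\}=Y$ and $j=1$ one gets $s_1(\{\alpha_2\})=\{\alpha_1+\alpha_2\}\not\subset I$, so your recursion leaves $\mathcal W^I$ and terminates at $w=s_1$, which has $N(s_1)\cap I=\{\alpha_1\}\ne Y$ (the correct preimage is the longest element $s_1s_2s_1$).

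A workable route, aligned with the Remark, is to first observe directly from the definitions that $w\notin\Hess(N,I)^S$ exactly when some $s_{\alpha_i}w$ with $\ell(s_{\alpha_i}w)<\ell(w)$ satisfies $N(s_{\alpha_i}w)\cap I=N(w)\cap I$; this reduces the problem to showing that each fiber $\{v\mid N(v)\cap I=Y\}$ is nonempty with a unique length-minimal element, which is the substantive combinatorial content established in \cite{STy}.
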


\begin{rem}
Sommers and Tymoczko also showed in \cite{STy} that,
for $Y \in \mathcal{W}^I$, $w:=\eta^{-1}(Y) \in \Hess(N,I)^S$ 
is the smallest element in $\{v \in W \mid N(v)\cap I=Y\}$ with respect to the Bruhat order.
\end{rem}

Recall that $C(\A_I)$ is the set of chambers of the ideal arrangement $\A_I$ (see \S\ref{subsect:HA}).  

\begin{prop}
For any lower ideal $I$,
we have $|C(\A_I)|=|\mathcal{W}^I|$.
\label{fix}
\end{prop}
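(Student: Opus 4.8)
The plan is to combine the two enumerative results already available in the excerpt. On the arrangement side, Theorem~\ref{idealfree} tells us that $\A_I$ is free with $\exp(\A_I)=(d_1^I,\dots,d_n^I)$, and then Terao's factorization (Theorem~\ref{factorization}) together with Zaslavsky's theorem (Theorem~\ref{Zaslavsky}) gives
\[
|C(\A_I)|=\pi(\A_I,1)=\prod_{i=1}^n(1+d_i^I).
\]
On the Weyl-type side, Theorem~\ref{thm:SommersTymoczko} gives a bijection $\eta\colon \Hess(N,I)^S \to \mathcal{W}^I$, so $|\mathcal{W}^I|=|\Hess(N,I)^S|$. The natural bridge between these two counts is Proposition~\ref{CI}: the Hilbert series of the complete intersection $\CR/\mathfrak{a}(I)$ is $\prod_{i=1}^n(1+\s+\cdots+\s^{d_i^I})$, whose value at $\s=1$ is exactly $\prod_{i=1}^n(1+d_i^I)=|C(\A_I)|$. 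Thus it suffices to show $|\mathcal{W}^I|=\prod_{i=1}^n(1+d_i^I)$ as well.

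First I would observe that $|\mathcal{W}^I|=|\Hess(N,I)^S|$ by Theorem~\ref{thm:SommersTymoczko}, and that $\Hess(N,I)^S$ is a finite set of isolated fixed points by Proposition~\ref{prop:FixedPoint}. Next, since $\Hess(N,I)$ has no odd cohomology (Theorem~\ref{theorem:regnilp}), its total Betti number equals $\Poin(\Hess(N,I),1)$, which by the affine paving $\{X_w\cap\Hess(N,I)\}$ recorded in Theorem~\ref{theorem:regnilp} equals the number of nonempty cells, i.e.\ the number of $w$ with $w^{-1}(\Delta)\subset(-I)\cup\Phi^+$, which is precisely $|\Hess(N,I)^S|$. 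Therefore
\[
|\mathcal{W}^I|=|\Hess(N,I)^S|=\Poin(\Hess(N,I),1).
\]
Finally, evaluating the Poincar\'e polynomial formula of Theorem~\ref{theorem:regnilp} is awkward directly, so instead I would match $\Poin(\Hess(N,I),\sqrt{\s})$ at $\s=1$ with $F(\CR/\mathfrak{a}(I),\s)$ at $\s=1$: since both are obtained by evaluating product formulas built from $(d_1^I,\dots,d_n^I)$ — and indeed equality of these two polynomials is exactly the content used in the proof of Corollary~\ref{Weyltype} (left side from the paving, right side $\prod(1+\s+\cdots+\s^{d_i^I})$) — we get $\Poin(\Hess(N,I),1)=\prod_{i=1}^n(1+d_i^I)$. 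Combining with the arrangement count above yields $|C(\A_I)|=|\mathcal{W}^I|$.

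The one point requiring care is the logical dependency: Corollary~\ref{Weyltype} itself depends on Theorem~\ref{nilpotentmain}, which is proved only later, so I should not invoke that equality of polynomials as an established fact at this stage. A cleaner route that avoids any forward reference is: (i) $|C(\A_I)|=\prod_{i=1}^n(1+d_i^I)$ by Theorems~\ref{idealfree}, \ref{factorization}, \ref{Zaslavsky}; (ii) $|\mathcal{W}^I|=|\Hess(N,I)^S|$ by Theorem~\ref{thm:SommersTymoczko}; and (iii) $|\Hess(N,I)^S|=\prod_{i=1}^n(1+d_i^I)$, which one proves independently — e.g.\ by noting that the $S$-fixed points biject with the cells of the affine paving, hence their count is $\Poin(\Hess(N,I),1)$, and this value is known from the Betti number computations of \cite{Pr1,Pr} to be $\prod_i(1+d_i^I)$; alternatively one can cite directly that $|\Hess(N,I)^S|$ equals the product of ideal-exponents-plus-one. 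The main obstacle, then, is purely bookkeeping about which enumerative identities may be assumed at this location in the paper without circularity; the combinatorial content itself is entirely supplied by the freeness of $\A_I$ and the Sommers--Tymoczko bijection.
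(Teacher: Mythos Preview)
Your approach has a genuine circularity that you correctly flag but do not actually resolve. In this paper, Proposition~\ref{fix} is a logical \emph{input} to the proof of Theorem~\ref{theo:nilpotent}: it feeds into Proposition~\ref{dim}, which supplies the crucial identity \eqref{pfmain6}, namely $\Poin(\Hess(N,I),1)=\prod_{i=1}^n(1+d_i^I)$. The product formula for the Poincar\'e polynomial (equation~\eqref{eq:PoinHessNI}) is then deduced only \emph{afterwards}, as a corollary of Theorem~\ref{theo:nilpotent}. So the equality $\Poin(\Hess(N,I),1)=\prod_i(1+d_i^I)$ that your route~(iii) requires is precisely what Proposition~\ref{fix} is being used to establish.

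Your suggested escapes do not work. The references \cite{Pr1,Pr} give only the sum formula recorded in Theorem~\ref{theorem:regnilp} (via the affine paving) and palindromicity; they do \emph{not} prove the product formula or the value $\prod_i(1+d_i^I)$ at $\s=1$. Likewise there is no independent source cited in the paper for $|\Hess(N,I)^S|=\prod_i(1+d_i^I)$ --- indeed, the $\s=1$ specialization of Conjecture~\ref{STyconj} is already nontrivial and is settled here only as a consequence of the main theorem. The paper avoids all of this by constructing an explicit bijection $f\colon C(\A_I)\to\mathcal{W}^I$, $f(C)=\{\alpha\in I\mid \alpha(C)<0\}$: injectivity is immediate since a chamber is determined by the signs of the $\alpha\in I$ on it, and surjectivity comes from Theorem~\ref{thm:SommersTymoczko} by sending the fundamental chamber through $w^{-1}$. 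That direct argument is what you are missing.
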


\begin{proof}
For $C\in C(\A_I)$ we define 
$f(C):=\{ \alpha\in I\mid \alpha(C)<0\}$,
where $\alpha(C)<0$ means that $\alpha(x)<0$ for any point $x \in C$.
Then it is obvious that $f(C)$ is an element of $\mathcal{W}^I$.  Therefore, we obtain a map
\begin{equation} \label{eq:fbijection}
f\colon C(\A_I)\to \mathcal{W}^I.
\end{equation}
This map is injective because an element $C$ of $C(\A_I)$ is determined by the signs of the values which elements of $I$ take on $C$.  

We shall prove that $f$ is surjective.  For any $\Y \in \mathcal{W}^I$, there is an element $w \in W$ such that 
\begin{equation} \label{eq:walpha}
\Y = N(w) \cap I=\{\alpha\in I \mid w(\alpha)\prec 0\}
\end{equation}
by Theorem \ref{thm:SommersTymoczko}.
Take any point $x$ in the fundamental Weyl chamber, that is, take a point $x$ satisfying $\alpha(x)>0$ for any $\alpha \in \Phi^+$.   Then we have
\[
\alpha(w^{-1}x)=(w(\alpha))(x)\begin{cases} < 0 \quad&(\alpha\in \Y),\\
>0 \quad&(\alpha\in I\backslash \Y),\end{cases}
\]
where the inequalities follow from \eqref{eq:walpha}.  This shows that if $C$ is the element of $C(\A_I)$ which contains the point $w^{-1}x$, then $f(C)=\Y$, proving the surjectivity of $f$. 
%
\end{proof}

\begin{prop}
\label{dim}
$\Poin(\Hess (N,I),\sqrt{\s})=\sum_{\Y \in \mathcal{W}^I} \s^{|\Y|}$.  Therefore, 
$$\Poin(\Hess(N,I),1)=|\mathcal{W}^I|=|C(\A_I)|=\prod_{i=1}^n (1+d_i^I).$$
\end{prop}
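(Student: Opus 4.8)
The plan is to combine the combinatorial identities already established in this section with the theorem on the affine paving of $\Hess(N,I)$ and the Terao factorization for free arrangements. First I would recall that Theorem~\ref{theorem:regnilp} gives
\[
\Poin(\Hess(N,I),\sqrt{\s})=\sum_{w\in W,\ w^{-1}(\Delta)\subset(-I)\cup\Phi^+}\s^{|N(w)\cap I|},
\]
and that by Proposition~\ref{prop:FixedPoint} the indexing set is exactly $\Hess(N,I)^S$. By Theorem~\ref{thm:SommersTymoczko} the map $\eta(w)=N(w)\cap I$ is a bijection $\Hess(N,I)^S\to\mathcal{W}^I$, and it is length-preserving in the sense that it sends $w$ to a subset of size $|N(w)\cap I|$. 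Hence the sum over $w$ can be reindexed as a sum over $\Y\in\mathcal{W}^I$ weighted by $\s^{|\Y|}$, which gives the first assertion
\[
\Poin(\Hess(N,I),\sqrt{\s})=\sum_{\Y\in\mathcal{W}^I}\s^{|\Y|}.
\]

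For the displayed chain of equalities, I would set $\s=1$ in the identity just obtained: the left side becomes $\Poin(\Hess(N,I),1)$ and the right side becomes $|\mathcal{W}^I|$. Next, $|\mathcal{W}^I|=|C(\A_I)|$ is precisely the content of Proposition~\ref{fix}. Finally, since $\A_I$ is free with $\exp(\A_I)=(d_1^I,\dots,d_n^I)$ by Theorem~\ref{idealfree} (equivalently Definition~\ref{DPHD}), Terao's factorization theorem (Theorem~\ref{factorization}), which incorporates Zaslavsky's count $|C(\A)|=\pi(\A,1)$, yields $|C(\A_I)|=\prod_{i=1}^n(1+d_i^I)$. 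Stringing these together gives
\[
\Poin(\Hess(N,I),1)=|\mathcal{W}^I|=|C(\A_I)|=\prod_{i=1}^n(1+d_i^I),
\]
as required.

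I do not anticipate a serious obstacle here, since every ingredient is already in place in the excerpt; the only point requiring a little care is verifying that the bijection $\eta$ of Theorem~\ref{thm:SommersTymoczko} is compatible with the cardinality statistic, i.e.\ that $|\eta(w)|=|N(w)\cap I|$. But this is immediate from the definition $\eta(w)=N(w)\cap I$ itself, so the reindexing of the Poincaré polynomial sum is valid term by term. One should also note that the Betti numbers are well-defined (odd cohomology vanishes by Theorem~\ref{theorem:regnilp}), so $\Poin(\Hess(N,I),1)$ really is the total Betti number, matching the cell count from the affine paving. The argument is thus essentially a bookkeeping assembly of the results of Sections~\ref{subsect:HA}, \ref{sect:RegNil}, and the present section.
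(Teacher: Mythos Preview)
Your proof is correct and follows essentially the same approach as the paper's own proof, which cites Theorems~\ref{theorem:regnilp} and~\ref{thm:SommersTymoczko} for the first identity and then Proposition~\ref{fix}, Theorem~\ref{factorization}, and Theorem~\ref{idealfree} for the chain of equalities. Your version is simply more explicit, in particular in invoking Proposition~\ref{prop:FixedPoint} to identify the summation index set $\{w\in W\mid w^{-1}(\Delta)\subset(-I)\cup\Phi^+\}$ with $\Hess(N,I)^S$, which the paper leaves implicit.
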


\begin{proof}
The equation 
$\Poin(\Hess (N,I),\sqrt{\s})=\sum_{\Y \in \mathcal{W}^I} \s^{|\Y|}$ immediately follows from Theorems~\ref{theorem:regnilp} and \ref{thm:SommersTymoczko}.
The other equations follow from the former equation with $\s=1$ plugged, Proposition~\ref{fix}, Theorem~\ref{factorization}, and Theorem~\ref{idealfree}.
%
%
\end{proof}



\section{Proof of Theorem~\ref{nilpotentmain}}

In this section we prove Theorem~\ref{nilpotentmain} in the Introduction and deduce a few corollaries, especially  we will see that Conjecture~\ref{conj:STy} immediately follows from Theorem~\ref{nilpotentmain}. 

Remember that we have a homomorphism 
\begin{equation} \label{eq:varphiI2}
\varphi_I\colon \CR\to H^*(\Hess(N,I))
\end{equation}
sending $\alpha\in\hat T\subset \CR$ to $e(L_\alpha)\in H^2(\Hess(N,I))$, see \S\ref{sect:RegNil}.  
The following theorem implies Theorem \ref{nilpotentmain} in the Introduction. 

\begin{theorem} \label{theo:nilpotent}
The map $\varphi_I$ in \eqref{eq:varphiI2} is surjective and 
$\ker\varphi_I=\mathfrak{n}(I)=\mathfrak{a}(I).$
\end{theorem}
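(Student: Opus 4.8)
The plan is to establish the chain of equalities $\ker\varphi_I=\mathfrak{n}(I)=\mathfrak{a}(I)$ together with surjectivity of $\varphi_I$ by a simultaneous induction on $|\Phi^+\setminus I|$, using the two "colon ideal" statements already available: Proposition~\ref{inj} (which says $\mathfrak{a}(I)=\mathfrak{a}(\Phi^+):\beta_I$) and Lemma~\ref{lemm:GysinNil} (which says $\mathfrak{n}(I)\subset\mathfrak{n}(\Phi^+):\beta_I$). The base case $I=\Phi^+$ is Borel's theorem: $\varphi_{\Phi^+}=\varphi$ is surjective with kernel $(\CR^W_+)$, which equals $\mathfrak{a}(\Phi^+)$ by Theorem~\ref{coinv} and equals $\mathfrak{n}(\Phi^+)$ by Corollary~\ref{coro:nPhi+}.

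For the inductive step, first I would handle surjectivity of $\varphi_I$. One approach: since $\Hess(N,I)$ has a complex affine paving (Theorem~\ref{theorem:regnilp}) and $H^{odd}$ vanishes, the restriction map $H^*_S(\Hess(N,I))\to H^*_S(\Hess(N,I)^S)$ is injective, so surjectivity of $\varphi_I$ can be reduced to a Gysin/localization argument comparing $\Hess(N,I)$ with $\Hess(N,I')$ where $I'=I\cup\{\alpha\}$; the Euler class $e^S(L_\alpha)$ restricts to $-t\neq 0$ off $\Hess(N,I)^S$ by Proposition~\ref{prop:Svanish}, which lets one split off the extra fixed points. Alternatively, surjectivity of $\varphi_I$ already follows once we know $\ker\varphi_I=\mathfrak{n}(I)$ has the right codimension, so I would prefer to prove the kernel statement first and deduce surjectivity by a dimension count.

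The heart of the matter is the double inclusion for the kernel. Assume inductively that for $I'=I\cup\{\alpha\}$ we have $\ker\varphi_{I'}=\mathfrak{n}(I')=\mathfrak{a}(I')$ and $\varphi_{I'}$ surjective. By Lemma~\ref{lemm:GysinNil}, $\mathfrak{n}(I)\subset\mathfrak{n}(I'):\alpha=\mathfrak{a}(I'):\alpha=\mathfrak{a}(I)$, the last equality being Proposition~\ref{inj}. For the reverse inclusion $\mathfrak{a}(I)\subset\mathfrak{n}(I)$, I would compare Hilbert series: by Proposition~\ref{CI}, $\CR/\mathfrak{a}(I)$ is a complete intersection of socle degree $|I|$ with $F(\CR/\mathfrak{a}(I),\s)=\prod_{i=1}^n(1+\s+\cdots+\s^{d_i^I})$, and by Proposition~\ref{dim} (combined with Theorem~\ref{theorem:regnilp}) the Poincar\'e polynomial of $\Hess(N,I)$ evaluated appropriately equals the same product $\prod(1+d_i^I)$ after setting $\s=1$; more precisely one wants the full Poincar\'e polynomial $\Poin(\Hess(N,I),\sqrt\s)=\prod_{i=1}^n(1+\s+\cdots+\s^{d_i^I})$. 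Granting this, $\dim_\R\CR/\mathfrak{a}(I)=\dim_\R H^*(\Hess(N,I))$; since $\mathfrak{a}(I)\subset\ker\varphi_I$ would force the surjection $\CR/\mathfrak{a}(I)\twoheadrightarrow\im\varphi_I$ to be an isomorphism by finite-dimensionality, both $\mathfrak{a}(I)=\ker\varphi_I$ and surjectivity of $\varphi_I$ drop out at once, and then $\ker\varphi_I=\mathfrak{n}(I)$ by Lemma~\ref{lemm:kerpsiI}.

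The main obstacle is pinning down that $\Poin(\Hess(N,I),\sqrt\s)$ really equals $\prod_{i=1}^n(1+\s+\cdots+\s^{d_i^I})$ as a \emph{polynomial} identity, not just at $\s=1$: Theorem~\ref{theorem:regnilp} gives the Poincar\'e polynomial as $\sum_{\Y\in\mathcal W^I}\s^{|\Y|}$ via Theorem~\ref{thm:SommersTymoczko}, and Proposition~\ref{dim} records this, but matching it termwise with the product requires the graded refinement of Terao's factorization together with palindromicity. I would therefore structure the argument so that the graded comparison is not needed: use instead that $\mathfrak{a}(I)\subset\ker\varphi_I$ gives a graded surjection $\CR/\mathfrak{a}(I)\to H^*(\Hess(N,I))$, both sides have equal total dimension $\prod(1+d_i^I)=|\mathcal W^I|$ by Proposition~\ref{dim} and Proposition~\ref{CI}, hence it is an isomorphism in each degree; this only uses the $\s=1$ specialization, which is exactly what Proposition~\ref{dim} supplies. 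That reduces the whole proof to: (i) verify $\mathfrak{a}(I)\subset\ker\varphi_I$ directly from the definition of $\varphi_I$ via $H^*_T(G/B)$ and the behavior of the line bundles $L_\alpha$ — the derivation condition $\theta(\alpha)\in\CR\alpha$ translates, after applying $\varphi$ into $H^*(G/B)$ and restricting, into a relation that becomes trivial on $\Hess(N,I)$ — and (ii) assemble the dimension count. Step (i) is where the real content lies, and I expect it to require care in relating the algebraic ideal $\mathfrak{a}(I)=\{\theta(Q):\theta\in D(\A_I)\}$ to geometric vanishing on the Hessenberg variety; the cleanest route is probably to route everything through the inductive colon-ideal machinery above so that (i) is never needed in the form "$\mathfrak{a}(I)\subset\ker\varphi_I$" but only as the already-established $\mathfrak{n}(I)\subset\ker\varphi_I$ plus $\mathfrak{n}(I)=\mathfrak{a}(I)$.
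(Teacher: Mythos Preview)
Your proposal is circular. You correctly establish $\mathfrak{n}(I)\subset\mathfrak{a}(I)$ (via Lemma~\ref{lemm:GysinNil}, Corollary~\ref{hatcoinvariant}, Theorem~\ref{coinv}, Proposition~\ref{inj}) and you correctly note $\dim_\R\CR/\mathfrak{a}(I)=\dim_\R H^*(\Hess(N,I))$ from Propositions~\ref{CI} and~\ref{dim}. But to close the loop you invoke one of three statements: (a) $\varphi_I$ is surjective, (b) $\mathfrak{a}(I)\subset\ker\varphi_I$, (c) $\mathfrak{n}(I)=\mathfrak{a}(I)$; and in each case you derive it from another of the three. Your final sentence makes the circularity explicit: you propose to avoid proving (b) by using ``the already-established \dots\ $\mathfrak{n}(I)=\mathfrak{a}(I)$'', which is precisely (c) and is what the whole argument is trying to show. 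The vague Gysin/localization sketch for surjectivity is not a substitute: knowing $e^S(L_\alpha)|_v=-t$ on the extra fixed points does not by itself produce surjectivity of $\varphi_I$, and the restriction $H^*(\Hess(N,I'))\to H^*(\Hess(N,I))$ being surjective is a \emph{consequence} of the theorem, not an input.

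The paper breaks this circle by an argument you are missing. From the injection $\CR[t]/\mathfrak{n}_S(I)\hookrightarrow H^*_S(\Hess(N,I))$ one gets a coefficient-wise inequality of Hilbert series; since $\CR[t]/\mathfrak{n}_S(I)$ sits inside a free $\R[t]$-module and $\R[t]$ is a PID, it is itself free, so $t$ is a non-zero divisor and $F(\CR/\mathfrak{n}(I),q)=(1-q)F(\CR[t]/\mathfrak{n}_S(I),q)$. This yields the \emph{partial-sum} inequalities $\sum_{i\le k}a_i\ge\sum_{i\le k}b_i$ between $\Poin(\Hess(N,I),\sqrt q)=\sum a_iq^i$ and $F(\CR/\mathfrak{a}(I),q)=\sum b_iq^i$. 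Now both sequences $(a_i)$ and $(b_i)$ are palindromic of the same length $|I|$ (Theorem~\ref{theorem:regnilp} and Proposition~\ref{CI}) and have the same total (Proposition~\ref{dim}); combining palindromicity with the partial-sum inequalities forces $a_i=b_i$ for every $i$. Equality then propagates back: $\hat\varphi_I^S$ is an isomorphism (hence $\varphi_I$ surjective) and $F(\CR/\mathfrak{n}(I),q)=F(\CR/\mathfrak{a}(I),q)$ (hence $\mathfrak{n}(I)=\mathfrak{a}(I)$). The $q=1$ count alone, which is all your dimension argument uses, does not suffice; the free $\R[t]$-module step and the palindromicity trick are the missing ingredients.
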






\begin{proof}
Since $H^{odd}(\Hess(N,I))$ vanishes by Theorem \ref{theorem:regnilp}, we have
\[
H^*_S(\Hess(N,I))\cong H^*(BS)\otimes H^*(\Hess(N,I)) \quad \text{as $H^*(BS)$-modules}
\]
where $H^*(BS)=\R[t]$.  
Moreover, it follows from the definition \eqref{eq:hatJ} of $\mathfrak{n}_S(I)$ that the map $\varphi_I^S\colon \CR[t]\to H_S^*(\Hess(N,I))$ in \eqref{eq:varphiI} induces an injective $\R[t]$-homomorphism 
\begin{equation} \label{eq:varphihat}
\hat\varphi_I^S\colon \CR[t]/\mathfrak{n}_S(I) \hookrightarrow H_S^*(\Hess (N,I)). 
\end{equation}
Hence we have
\begin{align}
\label{pfmain1}
\frac 1 {1-\s} \Poin(\Hess (N,I),\sqrt{\s})=F(H_S^*(\Hess (N,I)),\sqrt{\s}) \geq F(\CR[t]/\mathfrak{n}_S(I),\s).
\end{align}
Here, for two formal power series $F(\s)=\sum a_i \s^i$ and $G(\s)=\sum b_i \s^i$, $F(\s) \ge G(\s)$ means that $a_i \ge b_i$ for all $i$. 

Since $H_S^*(\Hess (N,I))$ is a free $\R[t]$-module and $\R[t]$ is PID, the $\R[t]$-submodule $\CR[t]/\mathfrak{n}_S(I)$ is also a free $\R[t]$-module.
Thus $t$ is a nonzero divisor of $\CR[t]/\mathfrak{n}_S(I)$ and since
\[
\CR/\mathfrak{n}(I) \cong (\CR[t]/\mathfrak{n}_S(I))/ t (\CR[t]/\mathfrak{n}_S(I)),
\]
which follows from the definition \eqref{eq:hatJ}, we have
\begin{align}
\label{pfmain2}
F(\CR/\mathfrak{n}(I),\s)
=F (\CR[t]/\mathfrak{n}_S(I),\s) -\s \ \! F (\CR[t]/\mathfrak{n}_S(I),\s).
\end{align}
It follows from \eqref{pfmain1} and \eqref{pfmain2} that  
\begin{align}
\label{pfmain4-1}
\frac 1 {1-\s} \Poin(\Hess (N,I),\sqrt{\s})
 \geq F(\CR[t]/\mathfrak{n}_S(I),\s)=\frac 1 {1-\s} F (\CR/\mathfrak{n}(I),\s).
\end{align}
On the other hand, it follows from Lemma~\ref{lemm:GysinNil}, Corollary~\ref{hatcoinvariant},  Theorem \ref{coinv}, and Proposition~\ref{inj} that 
\begin{equation} \label{eq:nIaI}
\mathfrak{n}(I) \subset \mathfrak{n}(\Phi^+):\beta_I=\mathfrak{a}(\Phi^+):\beta_I=\mathfrak{a}(I).
\end{equation}
Therefore  
\begin{align}
\label{pfmain3}
F (\CR/\mathfrak{n}(I),\s) \geq F (\CR/\mathfrak{a}(I),\s).
\end{align} 
Thus, we finally get  
\begin{align}
\label{pfmain4}
\frac 1 {1-\s} \Poin(\Hess (N,I),\sqrt{\s})\geq \frac 1 {1-\s} F (\CR/\mathfrak{a}(I),\s)
\end{align}
from \eqref{pfmain4-1} and \eqref{pfmain3}.

We claim that we actually have equality in \eqref{pfmain4}. 
We have $\dim_\CC\Hess(N,I)=|I|$ by Theorem~\ref{theorem:regnilp} and the socle degree of $\CR/\mathfrak{a}(I)$ is also $|I|$ by Proposition \ref{CI}.   
Set $m=|I|$.  Then one can write 
\[
\Poin(\Hess (N,I),\sqrt{\s})=\sum_{i=0}^m a_i\s^i,\qquad F(\CR/\mathfrak{a}(I),\s)=\sum_{i=0}^m b_i \s^i
\]
with non-negative integers $a_i,b_i$.
These Hilbert series are palindromic, that is,
\begin{align}
\label{pfmain5}
a_i=a_{m-i} \quad \text{ and }\quad b_i=b_{m-i}\quad \text{ for }0 \leq i \leq m
\end{align}
by Theorem \ref{theorem:regnilp} for the symmetry of $a_k$, and by Proposition \ref{CI} for $b_k$.
Moreover, it follows from Propositions~\ref{dim} and~\ref{CI} that   
\begin{align}
\label{pfmain6}
\sum_{i=0}^m a_i =\Poin(\Hess(N,I),1)=\prod_{i=1}^n(1+d_i^I)=F(\CR/\mathfrak{a}(I),1)=\sum_{i=0}^m b_i.
\end{align}
Since the coefficient of $\s^k$ in the formal power series $\frac 1 {1-\s} (\sum_{i=0}^m c_i \s^i)$ is $\sum_{i=0}^k c_i$ for $0 \leq k \leq m$,
the inequality \eqref{pfmain4} says
\begin{equation} \label{eq:ageb}
\sum_{i=0}^k a_i \geq \sum_{i=0}^k b_i\qquad\text{for $0 \leq k \leq m$.}
\end{equation}
On the other hand, it follows from \eqref{pfmain5}, \eqref{pfmain6}, and \eqref{eq:ageb} that we get the opposite inequality to \eqref{eq:ageb}, indeed, we can set $p=\sum_{i=0}^m a_i =\sum_{i=0}^m b_i$ by \eqref{pfmain6} and have
$$\sum_{i=0}^k a_i = \sum_{i=m-k}^m a_i = p - \!\!\sum_{i=0}^{m-k-1} a_{i} \leq p -\!\! \sum_{i=0}^{m-k-1} b_{i}= \sum_{i=m-k}^m b_i = \sum_{i=0}^k b_i\quad\text{for $0 \leq k \leq m$}.$$
Hence $\sum_{i=0}^k a_i = \sum_{i=0}^k b_i$ for all $k$, i.e.,\ $a_k=b_k$ for all $k$, and $\Poin(\Hess (N,I),\sqrt{\s})=F(\CR/\mathfrak{a}(I),\s)$ which means that equality holds in \eqref{pfmain4}.
Thus equality must hold for both \eqref{pfmain1} and \eqref{pfmain3}.

The equality in \eqref{pfmain1} implies that the map $\hat\varphi_I^S$ in \eqref{eq:varphihat} is an isomorphism and hence the map $\varphi_I^S\colon \CR[t]\to H_S^*(\Hess(N,I))$ in \eqref{eq:varphiI} is surjective,
so the map $\varphi_I$ in the theorem is also surjective because both vertical maps in the commutative diagram \eqref{eq:psivarphi} are surjective.
Therefore, $\ker\varphi_I=\mathfrak{n}(I)$ by Lemma~\ref{lemm:kerpsiI}.
The equality in \eqref{pfmain3} implies $\mathfrak{n}(I)=\mathfrak{a}(I)$ since we know $\mathfrak{n}(I)\subset \mathfrak{a}(I)$ by \eqref{eq:nIaI}. 
%
%
\end{proof}

We shall mention a few corollaries which will immediately follow from Theorem~\ref{theo:nilpotent} (i.e.,\  Theorem~\ref{nilpotentmain}).  The first one is Corollary~\ref{surjection} in the Introduction, which was announced by Dale Peterson (see \cite[Theorem 3]{BC}) but his proof is not given unfortunately. 

\begin{cor}
The restriction map $H^*(G/B)\to H^*(\Hess (N,I))$ is surjective and $H^*(\Hess(N,I))$ is a complete intersection, in particular, a Poincar\'e duality algebra.  Moreover, the Poincar\'e polynomial of $\Hess(N,I)$ is given by the product formula
\begin{equation} \label{eq:7-12}
\mbox{Poin}(\Hess(N,I),\sqrt{q})=\prod_{\alpha\in I}\frac{1-q^{ht(\alpha)+1}}{1-q^{ht(\alpha)}},
\end{equation}
where $ht(\alpha)$ denotes the sum of the coefficients of $\alpha$ over the simple roots. 
\end{cor}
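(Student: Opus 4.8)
The plan is to derive everything in this corollary directly from Theorem~\ref{theo:nilpotent}, which identifies $H^*(\Hess(N,I))$ with $\CR/\mathfrak{a}(I)$ via $\varphi_I$. First I would observe that since $\varphi_I$ factors through $\varphi\colon \CR\to H^*(G/B)$ followed by the restriction map (see \S\ref{sect:RegNil}), the surjectivity of $\varphi_I$ established in Theorem~\ref{theo:nilpotent} forces the restriction map $H^*(G/B)\to H^*(\Hess(N,I))$ to be surjective as well. Next, because $\A_I$ is free by Theorem~\ref{idealfree}, the module $D(\A_I)$ has $n$ homogeneous generators, so $\mathfrak{a}(I)=\{\theta(Q)\mid\theta\in D(\A_I)\}$ is generated by $n$ homogeneous polynomials; since $\CR/\mathfrak{a}(I)$ is Artinian (it is finite-dimensional by Proposition~\ref{CI}), Lemma~\ref{completeintersection} shows it is a complete intersection, and hence by Lemma~\ref{socledegree} it is a Poincar\'e duality algebra. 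Transporting this along the isomorphism of Theorem~\ref{theo:nilpotent} gives the corresponding statements for $H^*(\Hess(N,I))$.

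For the product formula, the main point is to compute the Hilbert series of $\CR/\mathfrak{a}(I)$ and compare it with the right-hand side of \eqref{eq:7-12}. By Proposition~\ref{CI} we have
\[
F(\CR/\mathfrak{a}(I),\s)=\prod_{i=1}^n(1+\s+\cdots+\s^{d_i^I})=\prod_{i=1}^n\frac{1-\s^{d_i^I+1}}{1-\s},
\]
where $(d_1^I,\dots,d_n^I)$ is the dual partition of the height distribution in $I$. The remaining task is the purely combinatorial identity
\[
\prod_{i=1}^n\frac{1-\s^{d_i^I+1}}{1-\s}=\prod_{\alpha\in I}\frac{1-\s^{\Ht(\alpha)+1}}{1-\s^{\Ht(\alpha)}},
\]
after which \eqref{eq:7-12} follows from Theorem~\ref{theo:nilpotent} by setting $\s=q$ and using that $\varphi_I$ doubles the grading. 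This identity is a bookkeeping exercise: if the height distribution of $I$ is $(n,i_1,i_2,\dots,i_m)$ with $i_0:=n$, then by definition of the dual partition the multiset $\{d_i^I\}$ contains the value $j$ with multiplicity $i_j-i_{j+1}$ (for $0\le j\le m$, with $i_{m+1}:=0$), while $I$ contains exactly $i_j$ roots of height $j$ for $1\le j\le m$. I would verify the identity by induction on $m$ (peeling off the top layer of roots of height $m$), or equivalently by an Abel-summation-type rearrangement showing both sides equal $\prod_{j\ge1}\bigl(\tfrac{1-\s^{j+1}}{1-\s^{j}}\bigr)^{i_j}$.

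I do not expect any serious obstacle here: the corollary is essentially a formal consequence of Theorem~\ref{theo:nilpotent} together with the commutative-algebra facts assembled in \S\ref{subsec:CP} and the freeness result Theorem~\ref{idealfree}. The only place requiring a small amount of care is the combinatorial identity between the two product formulas, and even that is standard once one writes out the height distribution and its dual partition explicitly. I would therefore present the proof as: (1) surjectivity of the restriction map from surjectivity of $\varphi_I$; (2) complete intersection and Poincar\'e duality via Lemmas~\ref{completeintersection} and~\ref{socledegree}; (3) the Poincar\'e polynomial formula via Proposition~\ref{CI} and the combinatorial identity above.
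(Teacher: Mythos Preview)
Your proposal is correct and follows essentially the same approach as the paper. The paper's proof is slightly more economical: it invokes Proposition~\ref{CI} directly for both the complete intersection property and the Hilbert series (rather than re-deriving these via Lemmas~\ref{completeintersection} and~\ref{socledegree}), and it dismisses the combinatorial identity between $\prod_{i=1}^n(1+\s+\cdots+\s^{d_i^I})$ and $\prod_{\alpha\in I}\frac{1-\s^{\Ht(\alpha)+1}}{1-\s^{\Ht(\alpha)}}$ with a one-line remark that it ``easily follows from the definition of the exponents $d_1^I,\dots,d_n^I$'', whereas you outline the telescoping argument explicitly.
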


\begin{proof}
The map $\varphi\colon \CR\to H^*(G/B)$ followed by the restriction map $H^*(G/B)\to H^*(\Hess(N,I))$ is the map $\varphi_I\colon \CR\to H^*(\Hess(N,I))$ by definition and $\varphi_I$ is surjective by Theorem~\ref{theo:nilpotent}. 
This shows the surjectivity of the restriction map $H^*(G/B)\to H^*(\Hess(N,I))$.  

It easily follows from the definition of the exponents $d_1^I,\dots,d_n^I$ (see Definition~\ref{DPHD}) that the right hand side of \eqref{eq:7-12} agrees with the right hand side of the identity in Proposition~\ref{CI}.  Therefore, the remaining two assertions in the corollary follow from Proposition~\ref{CI} since $H^*(\Hess(N,I))\cong \CR/\mathfrak{a}(I)$ by Theorem~\ref{nilpotentmain}.  
%
\end{proof}

The following corollary is Corollary~\ref{Weyltype} in the Introduction and answers  Conjecture \ref{STyconj} affirmatively. 

\begin{cor} \label{coro:theorem1.4}
For any lower ideal $I$,
$$ \sum_{\Y \in \mathcal{W}^I} \s^{|\Y|}=\Poin(\Hess (N,I),\sqrt{\s})=F(\CR/\mathfrak{a}(I),\s)=\prod_{i=1}^n (1+\s+\cdots+\s^{d_i^I}).$$
\end{cor}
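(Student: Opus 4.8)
The plan is to simply chain together identities that have already been established in the paper, so no genuinely new work is required. The statement asserts a four-fold equality; I would verify it as three separate equalities, reading left to right.

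First, the equality $\sum_{\Y \in \mathcal{W}^I} \s^{|\Y|}=\Poin(\Hess(N,I),\sqrt{\s})$ is exactly the content of Proposition~\ref{dim}, whose proof in turn combines the affine-paving description of the Poincar\'e polynomial of $\Hess(N,I)$ in Theorem~\ref{theorem:regnilp} with the Sommers--Tymoczko bijection $\eta\colon \Hess(N,I)^S \to \mathcal{W}^I$ from Theorem~\ref{thm:SommersTymoczko}; under this bijection the statistic $|N(w)\cap I|$ on $\Hess(N,I)^S$ matches the statistic $|\Y|$ on $\mathcal{W}^I$. Second, the equality $\Poin(\Hess(N,I),\sqrt{\s})=F(\CR/\mathfrak{a}(I),\s)$ was proved as part of the argument for Theorem~\ref{theo:nilpotent} (equivalently Theorem~\ref{nilpotentmain}): since $\varphi_I$ is surjective with kernel $\mathfrak{a}(I)$, one has a graded ring isomorphism $\CR/\mathfrak{a}(I)\cong H^*(\Hess(N,I))$, and comparing Hilbert series (with the grading doubled on the $\CR$ side) gives precisely this identity. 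Third, the equality $F(\CR/\mathfrak{a}(I),\s)=\prod_{i=1}^n (1+\s+\cdots+\s^{d_i^I})$ is the Hilbert series formula in Proposition~\ref{CI}, which rests on the freeness of the ideal arrangement $\A_I$ (Theorem~\ref{idealfree}), so that $\mathfrak{a}(I)$ is a complete intersection generated in degrees $d_1^I+1,\dots,d_n^I+1$, together with Lemma~\ref{socledegree}.

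Concretely, I would write: by Proposition~\ref{dim} the first equality holds; by Theorem~\ref{nilpotentmain} (proved in this section as Theorem~\ref{theo:nilpotent}) the graded isomorphism $\CR/\mathfrak{a}(I)\cong H^*(\Hess(N,I))$ yields the second equality after taking Hilbert series and accounting for the doubling of degrees; and by Proposition~\ref{CI} the third equality holds. Chaining these gives the displayed four-fold identity.

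There is no real obstacle here; the corollary is a bookkeeping consequence of the main theorem, and the only thing to be careful about is the grading convention — $\varphi_I$ doubles degrees, so a class of cohomological degree $2i$ corresponds to a polynomial of degree $i$, which is exactly why $\Poin(\Hess(N,I),\sqrt{\s})$ (rather than $\Poin(\Hess(N,I),\s)$) matches $F(\CR/\mathfrak{a}(I),\s)$. I would state this once explicitly to avoid any ambiguity and then conclude.
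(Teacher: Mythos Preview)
Your proposal is correct and follows exactly the same approach as the paper's proof, which simply cites Proposition~\ref{dim} for the first equality, Theorem~\ref{theo:nilpotent} for the second, and Proposition~\ref{CI} for the third. Your additional remarks on the grading convention and the ingredients behind each cited result are accurate and helpful, but add nothing essential beyond the paper's one-line argument.
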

\begin{proof}
The first identity is Proposition \ref{dim}, the second is Theorem~\ref{theo:nilpotent}, and the last is Proposition~\ref{CI}.  
\end{proof}

Corollary~\ref{coro:theorem1.4} (i.e., Corollary \ref{Weyltype}) implies an interesting application to free arrangement theory. 
Recall that the generating function of the length of $w \in W$ coincides with the 
Poincar\'{e} polynomial of the flag variety and also the generating function of the number of 
reflecting hyperplanes which separates a chamber and the fundamental chamber.
For some free arrangements, this 
formula is known to hold, i.e., for some free arrangement $\A$ with $\exp(\A)=(d_1,\ldots,d_n)$, 
there is a chamber $C_0 \in C(\A)$ such that 
\begin{equation} \label{eq:super}
\sum_{C \in C(\A)} \s^{d(C,C_0)}=\prod_{i=1}^n (1+\s+\cdots+\s^{d_i}),
\end{equation}
where $d(C,C_0)$ denotes the number of hyperplanes in $\A$ which separates $C$ and $C_0$.
The class of certain free arrangements above contains, e.g., 
a supersolvable arrangement. We say that $\A$ is supersolvable if there is a filtration 
$\A_1 \subset \A_{2} \subset \cdots \subset \A_n=\A$ such that 
$\bigcap_{H \in \A_i} H$ is of codimension $i$, and 
for any distinct $H_1, H_2 \in \A_i \setminus \A_{i-1}$, there is $L \in \A_{i-1}$ such that 
$H_1 \cap H_2 \subset L$. For details, see \cite{BEZ}.


Ideal arrangements are not necessarily supersolvable. For example, in type $D$ case, 
$\A_{\Phi^+}$ itself is not supersolvable. 
As for the ideal arrangement $\A_I$, we take $C_0$ to be the element of $C(\A_I)$ which contains the fundamental Weyl chamber in $C(\A_{\Phi^+})$.
Recall the bijection $f\colon C(\A_I)\to \mathcal{W}^I$ in \eqref{eq:fbijection} defined as $f(C)=\{\alpha\in I\mid \alpha(C)<0\}$.
Since $d(C,C_0)$ agrees with $|f(C)|$ as is easily observed, the following follows from Corollary~\ref{coro:theorem1.4}. 

\begin{cor}
The identity \eqref{eq:super} holds for ideal arrangements with $C_0\in C(\A_I)$ containing the fundamental Weyl chamber in $C(\A_{\Phi^+})$.   
\label{generating}
\end{cor}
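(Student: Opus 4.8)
The plan is to deduce \eqref{eq:super} for $\A_I$ directly from Corollary~\ref{coro:theorem1.4} by transporting the sum along the bijection $f\colon C(\A_I)\to\mathcal{W}^I$ of \eqref{eq:fbijection}, $f(C)=\{\alpha\in I\mid \alpha(C)<0\}$.

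First I would record that the distinguished chamber $C_0$, chosen to be the chamber of $\A_I$ containing the fundamental Weyl chamber of $\A_{\Phi^+}$, satisfies $\alpha(C_0)>0$ for every $\alpha\in\Phi^+$, and in particular for every $\alpha\in I$; thus $f(C_0)=\emptyset$. Next comes the only elementary point to verify: for $\alpha\in I$, the hyperplane $\ker\alpha$ separates a chamber $C$ from $C_0$ if and only if $\alpha$ takes opposite signs on $C$ and $C_0$, i.e.\ (since $\alpha$ is positive on $C_0$) if and only if $\alpha(C)<0$. Because distinct positive roots of a reduced root system define distinct hyperplanes, the map $\alpha\mapsto\ker\alpha$ restricts to a bijection from $\{\alpha\in I\mid \alpha(C)<0\}$ onto the set of hyperplanes of $\A_I$ separating $C$ from $C_0$; therefore $d(C,C_0)=|f(C)|$.

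With this in hand, using that $f$ is a bijection and then Corollary~\ref{coro:theorem1.4}, I obtain
\[
\sum_{C\in C(\A_I)}\s^{d(C,C_0)}=\sum_{\Y\in\mathcal{W}^I}\s^{|\Y|}=\prod_{i=1}^n(1+\s+\cdots+\s^{d_i^I}),
\]
and since $\exp(\A_I)=(d_1^I,\dots,d_n^I)$ by Theorem~\ref{idealfree}, this is precisely \eqref{eq:super} for the ideal arrangement $\A_I$.

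There is no substantial obstacle here: essentially all of the depth is already packed into Corollary~\ref{coro:theorem1.4} (which itself rests on computing $\Poin(\Hess(N,I),\sqrt{\s})$ in two ways) and into the bijection of Proposition~\ref{fix}. The only step requiring a moment's care is the identity $d(C,C_0)=|f(C)|$, which depends on the particular choice of $C_0$ and on distinct positive roots defining distinct hyperplanes.
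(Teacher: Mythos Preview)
Your proof is correct and follows exactly the approach sketched in the paper: use the bijection $f\colon C(\A_I)\to\mathcal{W}^I$ from \eqref{eq:fbijection}, observe that $d(C,C_0)=|f(C)|$ for the specified $C_0$, and apply Corollary~\ref{coro:theorem1.4}. The paper simply records the identity $d(C,C_0)=|f(C)|$ as ``easily observed,'' whereas you have spelled out the reasoning (including the remark that distinct positive roots define distinct hyperplanes), so your write-up is, if anything, slightly more complete.
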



\section{Regular semisimple Hessenberg varieties}

Unlike the regular nilpotent case in Section~\ref{sect:RegNil}, the action of the maximal torus $T$ on the flag variety $G/B$ leaves a regular semisimple Hessenberg variety $\Hess(S,I)$ invariant.
The variety $\Hess(S,I)$ is smooth projective, has finitely many $T$-fixed points, has finitely many one-dimensional $T$-orbits and has no odd degree cohomology,
so that its (equivariant) cohomology ring can be described combinatorially in terms of the associated so-called GKM graph (\cite{GKM}).
The variety $\Hess(S,I)$ may not admit an action of the Weyl group $W$ in general but the associated GKM graph always does and one can define an action of $W$ on $H^*(\Hess(S,I))$ through the GKM graph as observed by Tymoczko \cite{Ty2}.
We study the ring $H^*(\Hess(S,I))^W$ of $W$-invariants.
This ring is studied in \cite{K} when $\Hess(S,I)$ is a toric variety.   

\subsection{GKM theory} \label{subsect:GKM theory}

Let $X$ be a complex projective smooth variety with an algebraic action of a $\CC^*$-torus  $T$ which satisfies the following three conditions:
\begin{enumerate}
\item[(i)] $X$ has finitely many $T$-fixed points, 
\item[(ii)] $X$ has finitely many one-dimensional $T$-orbits, 
\item[(iii)] $X$ has no odd degree cohomology. 
\end{enumerate}
Then the restriction map 
\begin{equation}\label{eq:LocalizationandGKM}
H^*_T(X)\hookrightarrow H^*_T(X^T)= \bigoplus_{x\in X^T} H^*(BT).
\end{equation}
is injective by condition (iii) and Goresky-Kottwitz-MacPherson (\cite{GKM}) gave an explicit description of the image of the restriction map, which we shall explain.
The closure ${\bar O}$ of a one-dimensional $T$-orbit $O$ is diffeomorphic to $\CC P^1$ and $\bar{O} \setminus O$ consists of exactly two $T$-fixed points, denoted by $n_O$ and $s_O$ and called the {north and south poles} of the orbit $O$.
If the weight of the $T$-action on the tangent space of $\bar O$ at the point $n_O$ is $\alpha$, then the $T$-weight on the tangent space at the point $s_O$ is $-\alpha$.
Therefore, the $T$-weight at the fixed points in the closure $\bar{O}$ is determined up to sign.
We call it the {$T$-weight} on the orbit $O$.
We regard the weight $\alpha$ as an element of $H^2(BT)$ as before.  

\begin{theorem}[\cite{GKM}]\label{theorem:GKMcondX}
The image of the restriction map in \eqref{eq:LocalizationandGKM} is given by 
\begin{align*}
\left\{(f_x) \in \bigoplus_{x\in X^T} H^*(BT) \left |  
\begin{array}{ll}
f_{n_O}-f_{s_O}\in (\alpha) \ \text{\rm for each one-dimensional $T$-orbit $O$} \\
\mbox{\rm with poles $n_O$ and $s_O$ and $T$-weight $\alpha$}\end{array}
\right\}.
\right.
\end{align*}
\end{theorem}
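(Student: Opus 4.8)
\medskip
\noindent\textbf{Proof proposal.}
The plan is to identify the image of the restriction map with the displayed subring $B \subseteq \bigoplus_{x\in X^T} H^*(BT)$ by establishing the two inclusions separately, with essentially all the work in the surjectivity. I would begin by recording the two standard consequences of the hypotheses. By (iii) the Serre spectral sequence of $ET\times_T X\to BT$ degenerates, so $H^*_T(X)\cong H^*(BT)\otimes H^*(X)$ is a \emph{free} $H^*(BT)$-module; and by the localization theorem recalled in \S\ref{subsect:EC} (using (i)) the restriction map $\rho\colon H^*_T(X)\to H^*_T(X^T)=\bigoplus_{x\in X^T}H^*(BT)$ is injective. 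Write $A=\rho\big(H^*_T(X)\big)$; the goal is $A=B$.

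For the inclusion $A\subseteq B$, I would fix a one-dimensional orbit $O$ with poles $n_O,s_O$ and $T$-weight $\alpha$, and restrict along $\bar O\hookrightarrow X$. Since $\bar O$ is $T$-equivariantly a projective line carrying a linear $T$-action, a direct computation shows that $H^*_T(\bar O)$ is free of rank two over $H^*(BT)$ and that $H^*_T(\bar O)\to H^*_T(\{n_O,s_O\})=H^*(BT)\oplus H^*(BT)$ is injective with image $\{(g,h):g-h\in(\alpha)\}$. Applying this to $\rho(f)$ for every $O$ yields the GKM congruences in the theorem, so $A\subseteq B$.

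The inclusion $B\subseteq A$ is the crux, and I expect it to be the only genuine obstacle. I would derive it from the Chang--Skjelbred lemma: because $H^*_T(X)$ is free over $H^*(BT)$, the Atiyah--Bredon complex attached to the filtration of $X$ by $T$-orbit dimension is exact, whence $A=\ker\!\big(H^*_T(X_0)\to H^{*+1}_T(X_1,X_0)\big)$, where $X_0=X^T$ and $X_1$ is the union of the orbit closures $\bar O$. Using (ii) and $\bar O\cong\CC P^1$, one identifies $H^{*+1}_T(X_1,X_0)$ with $\bigoplus_O H^*(BT)/(\alpha_O)$ up to a degree shift (again via the rank-two computation of the previous step) and the connecting homomorphism with $(f_x)\mapsto\big(f_{n_O}-f_{s_O}\bmod\alpha_O\big)_O$; its kernel is exactly $B$, giving $A=B$. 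A more self-contained alternative is a Bia\l ynicki--Birula/Morse argument: choose a generic one-parameter subgroup with $X^{\CC^*}=X^T$, list the fixed points $p_1,\dots,p_\ell$ compatibly with the closure order of the attracting cells, construct equivariant Thom classes $\tau_k\in H^*_T(X)$ with $\tau_k|_{p_j}=0$ for $j<k$ and $\tau_k|_{p_k}=\prod(\text{negative weights at }p_k)\neq 0$ (these form an $H^*(BT)$-basis of $H^*_T(X)$ by equivariant formality), and then show by downward induction on $k$ that every $(f_x)\in B$ lies in $\sum_k H^*(BT)\,\rho(\tau_k)$; the induction step rests on the divisibility of the relevant residual entry by $\prod(\text{negative weights at }p_k)$, which follows from the GKM congruences along the edges emanating ``upward'' from $p_k$ together with the fact that those edge-weights are precisely the negative tangent weights at $p_k$. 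Either way, the real content is the Chang--Skjelbred-type exactness; the other ingredients used are only equivariant formality (from (iii)) and the local model $\bar O\cong\CC P^1$ (from (ii) and smoothness).
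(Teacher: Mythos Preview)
The paper does not give its own proof of this statement: Theorem~\ref{theorem:GKMcondX} is quoted from \cite{GKM} and used as a black box, so there is nothing in the paper to compare your argument against.

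That said, your outline is the standard proof and is essentially correct. One point worth making explicit in your second (Bia\l ynicki--Birula) approach: the passage from ``divisible by each edge-weight $\alpha_O$'' to ``divisible by the product $\prod(\text{negative weights at }p_k)$'' requires that the tangent weights at each fixed point be pairwise non-proportional in $H^2(BT)$. This is not stated among hypotheses (i)--(iii), but it is a consequence of (ii) together with smoothness: if two tangent weights at a fixed point were proportional, the corresponding $2$-plane in the tangent space would carry a one-parameter family of one-dimensional $T$-orbits, contradicting (ii). You should say this explicitly, since otherwise the divisibility step is a gap. The Chang--Skjelbred route you sketch first avoids this issue entirely and is the cleaner of the two.
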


In this paper we call the condition in Theorem~\ref{theorem:GKMcondX} the \textbf{GKM condition} for $X$.
The GKM condition for $X$ can be visualized by a graph, called a GKM graph.
The {\bf GKM graph} for $X$ is the graph with vertices corresponding to the $T$-fixed points and edges corresponding to one-dimensional $T$-orbits.
Additionally, we equip each edge with the $T$-weight of the corresponding one-dimensional $T$-orbit, see Example~\ref{exam:GKM graph} below.  

\subsection{GKM condition for $\Hess(S,I)$}

We return to our previous setting.
The regular semisimple Hessenberg variety $\Hess(S,I)$ is smooth projective and satisfies the conditions (i), (ii), (iii) in \S\ref{subsect:GKM theory} 
(see Theorem~\ref{theorem:regsemi} and \cite{dMPS} for more details).
In fact, the $T$-fixed point set $\Hess(S,I)^T$ agrees with $(G/B)^T=\sqcup_{w\in W}wB$ and we make the natural identification 
\[
\Hess(S,I)^T=W
\]
as before throughout this section.
Since $H^{odd}(\Hess(S,I))$ vanishes, the restriction map 
\begin{equation} \label{eq:HessSIRes}
H^*_T(\Hess(S,I))\hookrightarrow H^*_T(\Hess(S,I)^T)=\bigoplus_{w\in W}H^*(BT)
\end{equation}
is injective.  
In this subsection we analyze the GKM condition for $\Hess(S,I)$ and prove the following. 

\begin{prop}[GKM condition for $\Hess(S,I)$]\label{prop:GKM}
\begin{align*}
&H^*_T(\Hess(S,I))\\
\cong& \{ (f_w)_{w\in W} \in \bigoplus _{w\in W}H^*(BT) \mid f_w-f_v \in (w(\alpha)) \ {\rm if} \ v=w s_{\alpha} \ {\rm for \ some} \ \alpha \in I \}
\end{align*}
where $s_\alpha$ is the reflection corresponding to $\alpha$. 
\end{prop}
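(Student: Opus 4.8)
The plan is to obtain Proposition~\ref{prop:GKM} by specializing the general GKM description of Theorem~\ref{theorem:GKMcondX} to $X=\Hess(S,I)$. We already know that $\Hess(S,I)$ is smooth and projective and satisfies conditions (i)--(iii) of \S\ref{subsect:GKM theory} (see Theorem~\ref{theorem:regsemi}), so the restriction map \eqref{eq:HessSIRes} is injective with image the set of tuples $(f_w)_{w\in W}$ satisfying the GKM condition. Thus everything reduces to reading off the GKM graph of $\Hess(S,I)$: its vertices, its edges, and the $T$-weight on each edge.

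The vertices are $W=\Hess(S,I)^T$. For the edges and weights I would combine two facts. First, the classical GKM graph of $G/B$: two $T$-fixed points $w,w'$ are joined by a one-dimensional $T$-orbit if and only if $w'=ws_\alpha$ for some $\alpha\in\Phi^+$; the $T$-weight of that orbit is $w(\alpha)$ up to sign; and its tangent line at $w$ is the weight line for $-w(\alpha)$ inside $T_w(G/B)=\bigoplus_{\gamma\in\Phi^+}\CC_{-w(\gamma)}$. Second, the tangent space formula \eqref{eq:TwHess}, which says $T_w\Hess(S,I)=\bigoplus_{\beta\in -I}\CC_{w(\beta)}$; since the root system is reduced, the $|I|$ weights $\{w(\beta)\mid\beta\in -I\}=\{-w(\alpha)\mid\alpha\in I\}$ are pairwise non-proportional, so $\Hess(S,I)$ satisfies the GKM hypotheses in the strict sense and therefore has exactly $|I|$ one-dimensional $T$-orbits with $w$ as a pole.

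Now I would match these. Since $\Hess(S,I)$ is a smooth $T$-stable closed subvariety of $G/B$, each of its one-dimensional $T$-orbits is one of $G/B$ --- say the one joining $w$ and $ws_\alpha$ for some $\alpha\in\Phi^+$ --- and, as $\Hess(S,I)$ is smooth, the tangent line of this orbit at $w$ lies in $T_w\Hess(S,I)$. By the two displays above this forces $-w(\alpha)\in\{-w(\delta)\mid\delta\in I\}$, hence $\alpha\in I$. So every one-dimensional $T$-orbit of $\Hess(S,I)$ through $w$ joins $w$ to some $ws_\alpha$ with $\alpha\in I$; since there are exactly $|I|$ such orbits and distinct $\alpha\in I$ give distinct $ws_\alpha$, these orbits are precisely the ones joining $w$ and $ws_\alpha$, $\alpha\in I$, each carrying $T$-weight $w(\alpha)$ (up to sign). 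Plugging this graph into Theorem~\ref{theorem:GKMcondX}, and using that $(w(\alpha))=(-w(\alpha))$ as ideals of $\CR$, gives exactly the condition $f_w-f_v\in(w(\alpha))$ whenever $v=ws_\alpha$ for some $\alpha\in I$, which is the assertion.

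The step requiring care is the passage from the tangent data \eqref{eq:TwHess} to the actual one-dimensional orbits. It rests on two standard but essential inputs: the explicit GKM graph of $G/B$ (which reflections $s_\alpha$ contribute an edge at $w$, and that this edge has weight $w(\alpha)$), and the general fact that a smooth projective $T$-variety with isolated fixed points, at each of which the tangent weights have multiplicity one and are pairwise non-proportional, has exactly one one-dimensional $T$-orbit tangent to each tangent weight line. An alternative avoiding the dimension count is to verify directly --- working inside the minimal parabolic subgroup associated with $\alpha$ --- that the $T$-invariant $\CC P^1$ through $w$ and $ws_\alpha$ is contained in $\Hess(S,I)$ exactly when $\alpha\in I$; this is more computational but still independent of Lie type.
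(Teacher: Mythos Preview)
Your argument is correct and reaches the same conclusion as the paper, but by a different route. The paper proves Proposition~\ref{prop:GKM} computationally: it first records (Proposition~8.2) that the one-dimensional $T$-orbits of $G/B$ are the curves $U_\alpha w\cup U_\alpha v$ with $v=s_\alpha w$ and weight $\alpha$, and then checks directly (Lemma~8.4), by writing $w^{-1}u_\alpha(c)^{-1}=u_{w^{-1}(\alpha)}(d)w^{-1}$ and evaluating $\mbox{Ad}$ on $S$, that such an orbit lies in $\Hess(S,I)$ precisely when $w^{-1}(\alpha)\in(-I)\cup\Phi^+$ and $v^{-1}(\alpha)\in(-I)\cup\Phi^+$; a short manipulation then turns this into the condition $v=ws_\beta$ with $\beta\in I$. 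You instead read the edges off from the tangent space formula~\eqref{eq:TwHess}: the weights $-w(\alpha)$, $\alpha\in I$, are pairwise non-proportional (the root system is reduced), so the general GKM structure theory gives exactly $|I|$ one-dimensional $T$-orbits through $w$, and since each such orbit is already one of the $G/B$-orbits $w\leftrightarrow ws_\alpha$ with tangent weight $-w(\alpha)$, matching weights forces $\alpha\in I$ and the count shows all $\alpha\in I$ occur. Your approach is more conceptual and avoids the explicit root-subgroup calculation, at the cost of invoking the standard (but nontrivial) fact that in a smooth projective GKM variety each non-proportional tangent weight at a fixed point is realized by exactly one $T$-invariant $\CC P^1$; the paper's approach is more hands-on and self-contained. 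The alternative you sketch at the end --- verifying directly that the $\CC P^1$ through $w$ and $ws_\alpha$ lies in $\Hess(S,I)$ exactly when $\alpha\in I$ --- is essentially what the paper does in Lemma~8.4.
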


Proposition~\ref{prop:GKM}  is known in type $A$ (\cite{Tef,Ty2}) and the proof in other 
types is essentially same as type $A$.
We shall give a proof for the reader's convenience.
For each root $\alpha$ there exists a morphism of algebraic groups $u_{\alpha}: \CC\rightarrow G$, which induces 
an isomorphism onto $u_{\alpha}(\CC)$ such that $gu_{\alpha}(c)g^{-1}=u_{\alpha}(\alpha(g)c)$ for all $g\in T$ and $c\in \CC$.
The root subgroup $U_{\alpha}$ is defined by the image of $u_\alpha$ (cf.\ \cite[Theorem 8.17 and Definition 8.18]{MT}). 

\begin{prop}{\rm (\cite{C}, \cite[Proposition 4.6]{Ty3}\rm ).}
\label{prop:1-dim orbit}
Under the identification $(G/B)^T=W$,  there is a  one-dimensional $T$-orbit with poles $w$ and $v$ $(w, v\in W)$ if and only if $v=s_\alpha w$ for some $\alpha \in \Phi^+$.
If $v=s_\alpha w$, then the one-dimensional $T$-orbit is given by $U_\alpha w\cup U_\alpha v$ and the $T$-weight on the orbit is $\alpha$.
\end{prop}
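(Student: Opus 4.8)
The plan is to give the classical argument — this is exactly \cite[Proposition 4.6]{Ty3}, going back to \cite{C} — reducing everything to rank-one ($SL_2$) computations together with the Bruhat decomposition of $G/B$.

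\smallskip\noindent\textbf{Rank-one input and the ``if'' direction.} For $\gamma\in\Phi^+$ let $\phi_\gamma\colon SL_2(\CC)\to G$ be the homomorphism of algebraic groups restricting to $u_\gamma$ (resp.\ $u_{-\gamma}$) on the upper (resp.\ lower) unitriangular subgroup and carrying the diagonal torus into $T$. From the elementary fact that $\lim_{c\to\infty}\left(\begin{smallmatrix}1&0\\c&1\end{smallmatrix}\right)\cdot[1:0]=[0:1]$ in $SL_2/B_{SL_2}=\CC P^1$, i.e.\ $\lim_{c\to\infty}u_{-\gamma}(c)B=\dot{s_\gamma}B$ in $G/B$, and then translating by a representative $\dot w$ of $w\in W$ and using $\dot w^{-1}U_\alpha\dot w=U_{w^{-1}\alpha}$, I would deduce: if $w^{-1}\alpha\in\Phi^-$ then $C_{\alpha,w}:=\overline{U_\alpha\dot wB}$ is a $T$-stable curve isomorphic to $\CC P^1$, equal as a set to $U_\alpha\dot wB\cup\{s_\alpha wB\}$, with exactly the two $T$-fixed points $wB$ (the parameter value $c=0$) and $s_\alpha wB$ ($c=\infty$); moreover, since $tu_\alpha(c)t^{-1}=u_\alpha(\alpha(t)c)$ and $t\dot wB=\dot wB$, the $T$-action on the local coordinate $c$ near $wB$ is multiplication by $\alpha(t)$, so the $T$-weight of the one-dimensional orbit $C_{\alpha,w}\setminus\{wB,s_\alpha wB\}$ is $\alpha$; while if $w^{-1}\alpha\in\Phi^+$ then $U_\alpha\dot wB=\{wB\}$. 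Now, given $w,v\in W$ with $v=s_\alpha w$ for some $\alpha\in\Phi^+$, one has $v^{-1}\alpha=-w^{-1}\alpha$, so exactly one of $w^{-1}\alpha,v^{-1}\alpha$ is negative; after possibly interchanging $w$ and $v$ I may assume $w^{-1}\alpha\in\Phi^-$, and then $C_{\alpha,w}$ is the desired orbit closure: it joins the poles $wB$ and $vB=s_\alpha wB$, has $T$-weight $\alpha$, and equals $U_\alpha w\cup U_\alpha v$ set-theoretically (the second piece being the single point $vB$, since $v^{-1}\alpha\in\Phi^+$). This settles ``if''.

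\smallskip\noindent\textbf{The ``only if'' direction.} Let $O$ be any one-dimensional $T$-orbit and pick $p\in O$. By the Bruhat decomposition $p$ lies in a unique cell $BwB/B$, and the standard parametrization gives a $T$-equivariant isomorphism $\prod_{\gamma\in R_w}U_\gamma\stackrel{\sim}{\longrightarrow}BwB/B$, $(u_\gamma(c_\gamma))_\gamma\mapsto\bigl(\prod_\gamma u_\gamma(c_\gamma)\bigr)\dot wB$, where $R_w=\{\gamma\in\Phi^+\mid w^{-1}\gamma\in\Phi^-\}$ and $T$ acts diagonally with weight $\gamma$ on the $\gamma$-th factor (immediate from $tu_\gamma(c)t^{-1}=u_\gamma(\gamma(t)c)$). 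Writing $p\leftrightarrow(c_\gamma)_\gamma$ and $S=\{\gamma\in R_w\mid c_\gamma\neq0\}$, the orbit $Tp$ is $\{(\gamma(t)c_\gamma)_\gamma\mid t\in T\}$, whose dimension equals the rank of the subgroup of $\hat T$ generated by $S$. Since $\Phi$ is reduced, the only roots proportional to a given root are its positive and negative multiples $\pm\alpha$, so distinct positive roots are $\Q$-linearly independent in pairs; hence that rank is $0$, $1$, or $\ge 2$ according as $|S|$ is $0$, $1$, or $\ge 2$. As $\dim Tp=1$ we get $|S|=1$, say $S=\{\alpha\}$ with $w^{-1}\alpha\in\Phi^-$; thus $p=u_\alpha(c_\alpha)\dot wB$ with $c_\alpha\neq0$, whence $O=Tp=C_{\alpha,w}\setminus\{wB,s_\alpha wB\}$, and by the rank-one analysis the two poles of $O$ are $wB$ and $s_\alpha wB$. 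Therefore $v=s_\alpha w$, as claimed.

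\smallskip\noindent\textbf{Main obstacle.} The crux is the first step: showing that $\overline{U_\alpha\dot wB}$ is a smooth rational $T$-stable curve with precisely the stated two poles and weight $\alpha$. This is where the reduction through $\phi_\gamma$ to $\CC P^1$ does the work, and where one must be careful about the sign picked up in $\dot w^{-1}U_\alpha\dot w=U_{w^{-1}\alpha}$ and about which base point keeps the $SL_2$-picture valid ($w^{-1}\alpha\in\Phi^-$ versus $\in\Phi^+$). A secondary but essential point is the use of reducedness of $\Phi$ in the orbit-dimension count: without it a torus could in principle have higher-dimensional orbits degenerating to dimension one, and the clean dichotomy forcing $|S|=1$ would fail. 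As the statement is classical, I would present the above compactly, citing \cite{C} and \cite{Ty3}.
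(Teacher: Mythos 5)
Your proof is correct. The paper itself gives no proof of Proposition~\ref{prop:1-dim orbit} --- it is quoted from Carrell and from Tymoczko --- and your argument (the rank-one $SL_2$ reduction and weight computation via $tu_\alpha(c)t^{-1}=u_\alpha(\alpha(t)c)$ for the ``if'' direction, plus the $T$-equivariant Bruhat-cell coordinates and the orbit-dimension count using reducedness of $\Phi$ for the ``only if'' direction) is precisely the standard one in those references; in particular your observation that $U_\alpha \dot wB$ collapses to the point $wB$ when $w^{-1}(\alpha)\in\Phi^+$ matches the remark the paper places immediately after the proposition.
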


\begin{rem}
If $w^{-1}(\alpha)$ is a positive root, 
then $U_\alpha w$ is a one point set in $G/B$.
This is because $w^{-1} U_{\alpha} w =U_{w^{-1}(\alpha)} \subset B$
for such $w$ and $\alpha$.
\end{rem}

Proposition \ref{prop:1-dim orbit} together with Theorem~\ref{theorem:GKMcondX} shows that the GKM condition for $G/B$ is given by
\begin{align*}
\{ (f_w)_{w\in W} \in \bigoplus _{w\in W}H^*(BT) \mid f_w-f_v \in (\alpha) \ {\rm if} \ v=s_{\alpha} w  \ {\rm for \ some} \ \alpha \in \Phi^+ \}.
\end{align*}
Using the equality $w^{-1}s_{\alpha}w=s_{w^{-1}(\alpha)}$, we can rewrite the above GKM condition for $G/B$ as follows:
\begin{align*}
\{ (f_w)_{w\in W} \in \bigoplus _{w\in W}H^*(BT) \mid f_w-f_v \in (w(\alpha)) \ {\rm if} \ v=w s_{\alpha} \ {\rm for \ some} \ \alpha \in \Phi^+ \}.
\end{align*}
This description is exactly the right hand side for $I=\Phi^+$ in Proposition~\ref{prop:GKM}.

\begin{example} \label{exam:GKM graph}
As mentioned in \S\ref{subsect:GKM theory}, a GKM condition can be visualized in terms of a GKM graph.
For example, the GKM graph associated to the flag variety $G/B$ of type $A_2$ is the following labeled graph (cf. \cite{Ty2}): \\
\begin{center}
\begin{picture}(300,90)
        \put(50,14){\circle{5}}
        \put(50,84){\circle{5}}
        \put(85,31){\circle{5}}
        \put(85,66){\circle{5}}
        \put(15,31){\circle{5}}
        \put(15,66){\circle{5}}

        \put(82.5,32){\line(-2,1){66}}
        \put(47.5,15){\line(-2,1){30}}
        \put(82.5,67){\line(-2,1){30}}
        \put(15.5,33){\line(2,1){66}}
        \put(17.5,31){\line(2,1){66}}
        \put(51.5,16){\line(2,1){30}}
        \put(52.5,14){\line(2,1){30}}
        \put(16.5,68){\line(2,1){30}}
        \put(17.5,66){\line(2,1){30}}
        \put(50,16){\line(0,1){5}}
        \put(50,26){\line(0,1){5}}
        \put(50,36){\line(0,1){5}}
        \put(50,46){\line(0,1){5}}
        \put(50,56){\line(0,1){5}}
        \put(50,66){\line(0,1){5}}
        \put(50,76){\line(0,1){5}}
        \put(15,33){\line(0,1){4}}
        \put(15,42){\line(0,1){4}}
        \put(15,51){\line(0,1){4}}
        \put(15,60){\line(0,1){4}}
        \put(85,33){\line(0,1){4}}
        \put(85,42){\line(0,1){4}}
        \put(85,51){\line(0,1){4}}
        \put(85,60){\line(0,1){4}}

        \put(47,3){$e$}
        \put(35,91){$s_1s_2s_1$}
        \put(90,28){$s_2$}
        \put(90,64){$s_2s_1$}
        \put(0,28){$s_1$}
        \put(-10,64){$s_1s_2$}

\put(170,90){{\rm labels}}        
\put(150,70){\line(1,0){40}}
\put(200,66.5){= $\alpha_1$}
\put(150,52){\line(1,0){40}}
\put(150,48){\line(1,0){40}}
\put(200,46.5){= $\alpha_2$}
\put(150,30){\line(1,0){5}}
\put(158.5,30){\line(1,0){5}}
\put(167,30){\line(1,0){5}}
\put(175.5,30){\line(1,0){5}}
\put(184,30){\line(1,0){5}}
\put(200,26.5){= $\alpha_1+\alpha_2$}

\end{picture}
\end{center}
where $e$ is the identity element and $s_1$, $s_2$ are the simple reflections corresponding to simple roots $\alpha_1$, $\alpha_2$ respectively.
The GKM condition says that the collection of polynomials $(f_w)_{w\in W}$ satisfies the following condition:
if $w$ and $v$ are connected by an edge labeled by $\alpha$ in the GKM graph, then the difference $f_w-f_v$ must be divisible by the polynomial $\alpha$.
For example, the following collection of polynomials satisfies the GKM condition:\\
 \begin{center}
\begin{picture}(100,90)
        \put(50,14){\circle{5}}
        \put(50,84){\circle{5}}
        \put(85,31){\circle{5}}
        \put(85,66){\circle{5}}
        \put(15,31){\circle{5}}
        \put(15,66){\circle{5}}

        \put(82.5,32){\line(-2,1){66}}
        \put(47.5,15){\line(-2,1){30}}
        \put(82.5,67){\line(-2,1){30}}
        \put(15.5,33){\line(2,1){66}}
        \put(17.5,31){\line(2,1){66}}
        \put(51.5,16){\line(2,1){30}}
        \put(52.5,14){\line(2,1){30}}
        \put(16.5,68){\line(2,1){30}}
        \put(17.5,66){\line(2,1){30}}
        \put(50,16){\line(0,1){5}}
        \put(50,26){\line(0,1){5}}
        \put(50,36){\line(0,1){5}}
        \put(50,46){\line(0,1){5}}
        \put(50,56){\line(0,1){5}}
        \put(50,66){\line(0,1){5}}
        \put(50,76){\line(0,1){5}}
        \put(15,33){\line(0,1){4}}
        \put(15,42){\line(0,1){4}}
        \put(15,51){\line(0,1){4}}
        \put(15,60){\line(0,1){4}}
        \put(85,33){\line(0,1){4}}
        \put(85,42){\line(0,1){4}}
        \put(85,51){\line(0,1){4}}
        \put(85,60){\line(0,1){4}}

        \put(47,3){$\alpha_1$}
        \put(40,91){$-\alpha_2$}
        \put(90,28){$\alpha_1+\alpha_2$}
        \put(90,64){$-(\alpha_1+\alpha_2)$}
        \put(-10,28){$-\alpha_1$}
        \put(-2,64){$\alpha_2$}


\end{picture}
\end{center}
\end{example}

To describe the GKM condition for $\Hess(S,I)$, we need to investigate the condition $U_\alpha w \cup U_\alpha v\subset \Hess(S,I)$.

\begin{lemma}\label{lemma:GKMcond}
The condition $U_\alpha w\cup U_\alpha v\subset \Hess(S,I)$ is equivalent to the condition 
\[
w^{-1}(\alpha)\in (-I)\cup \Phi ^+ \text{ and } v^{-1}(\alpha)\in (-I)\cup \Phi ^+.
\]
\end{lemma}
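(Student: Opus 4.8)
The plan is to reduce the statement to a point-set computation on $G/B$ and then use the known description of $\Hess(S,I)$. First I would recall from Proposition~\ref{prop:1-dim orbit} that the one-dimensional $T$-orbit with poles $w$ and $v=s_\alpha w$ (for $\alpha\in\Phi^+$) is precisely $O=U_\alpha w\cup U_\alpha v$ inside $G/B$. Since $\Hess(S,I)$ is $T$-invariant and closed in $G/B$, the orbit $O$ is contained in $\Hess(S,I)$ if and only if \emph{some} point of $O$ lies in $\Hess(S,I)$; but actually it is cleaner to check membership at the two points $U_\alpha w$ and $U_\alpha v$ — if both these (not-necessarily-single) subsets lie in $\Hess(S,I)$, then by $T$-invariance and the orbit closure being $\overline{O}\cong\CC P^1$, all of $O$ does, and conversely. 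So the claim becomes: $U_\alpha w\subset\Hess(S,I)$ if and only if $w^{-1}(\alpha)\in(-I)\cup\Phi^+$, and similarly for $v$.

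Next I would analyze $U_\alpha w\subset\Hess(S,I)$ directly from the definition $\Hess(S,I)=\{gB\mid \mathrm{Ad}(g^{-1})(S)\in H(I)\}$. Writing a generic element of $U_\alpha w$ as $u_\alpha(c)wB$ and conjugating $w^{-1}u_\alpha(-c)\,S\,u_\alpha(c)w$, one uses that $S$ is regular semisimple so (after choosing $S\in\mathfrak{t}$ with all root values nonzero) $\mathrm{Ad}(g^{-1})(S)$ lands in $H(I)=\mathfrak{b}\oplus\bigoplus_{\gamma\in I}\mathfrak{g}_{-\gamma}$ exactly when no "forbidden" negative root space $\mathfrak{g}_{-\gamma}$ with $\gamma\notin I$ appears. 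Tracking which root spaces can be hit after the $u_\alpha(c)$-conjugation and the $w$-twist, the condition reduces to: the root $w^{-1}(\alpha)$, if negative, must have $-w^{-1}(\alpha)\in I$, i.e. $w^{-1}(\alpha)\in(-I)\cup\Phi^+$. (When $w^{-1}(\alpha)\in\Phi^+$ the remark after Proposition~\ref{prop:1-dim orbit} tells us $U_\alpha w$ is a single point $wB$, and then the condition $w^{-1}(\alpha)\in(-I)\cup\Phi^+$ holds automatically, consistent with $wB\in(G/B)^T=\Hess(S,I)^T$.) Doing the same with $v=s_\alpha w$ in place of $w$ gives the second half of the displayed condition.

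The main obstacle I anticipate is the explicit Lie-theoretic bookkeeping in the conjugation computation: showing precisely that $w^{-1}u_\alpha(-c)\,S\,u_\alpha(c)w\in H(I)$ (for all $c$, or equivalently for one generic $c$) is governed solely by the single root $w^{-1}(\alpha)$ and not by lower-height root spaces that might appear from commutators. This is where one must use that $U_\alpha$ is one-parameter and that $\mathrm{ad}$-nilpotent conjugation of the semisimple element $S$ produces only the root line $\mathfrak{g}_{w^{-1}(\alpha)}$ as the new component (the coefficient being $\alpha(S)\neq 0$ since $S$ is regular), plus the $\mathfrak{b}$-part which is always allowed. Once this is pinned down the equivalence is immediate, and I expect the argument to be essentially the type $A$ computation of Tymoczko~\cite{Ty2} carried out in general type, as the paper already signals.
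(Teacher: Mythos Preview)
Your proposal is correct and follows essentially the same route as the paper: reduce to the one-sided claim that $U_\alpha w\subset\Hess(S,I)$ iff $w^{-1}(\alpha)\in(-I)\cup\Phi^+$, then compute $\mathrm{Ad}((u_\alpha(c)w)^{-1})(S)$ and observe that it lands in $\mathfrak{t}\oplus\mathfrak{g}_{w^{-1}(\alpha)}$. The paper dispatches your anticipated obstacle in one line via the identity $w^{-1}U_\alpha w=U_{w^{-1}(\alpha)}$, which immediately gives $\mathrm{Ad}(u_{w^{-1}(\alpha)}(d))(\mathrm{Ad}(w^{-1})S)\in\mathfrak{t}\oplus\mathfrak{g}_{w^{-1}(\alpha)}$ with no commutator bookkeeping needed; your opening detour through $T$-invariance and orbit closures is unnecessary since the statement already decomposes as a union of the two pieces.
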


\begin{proof}
It is enough to prove that the condition
$U_\alpha w\subset \Hess(S,I)$
is equivalent to the condition 
$w^{-1}(\alpha)\in (-I)\cup \Phi ^+$.
Let $x$ be an arbitrary element of $U_\alpha w$.
Then it is enough to prove that $x\in \Hess(S,I)$ if and only if $w^{-1}(\alpha)\in (-I)\cup \Phi ^+$.
Let $u_\alpha : \CC \rightarrow G$ be a morphism such that $U_\alpha={\rm im}(u_\alpha)$ and write $x=u_\alpha(c)w$ with some $c\in \CC$.
Then
\begin{equation} \label{eq:GKM1}
x\in \Hess(S,I) \Leftrightarrow \mbox{Ad}((u_\alpha (c)w)^{-1})(S)\in H(I)=\mathfrak{b}\oplus (\bigoplus_{\alpha \in I} \mathfrak{g}_{-\alpha}).
\end{equation}
Since $w^{-1}U_\alpha w=U_{w^{-1}(\alpha)}$, we have $w^{-1}u_\alpha (c)^{-1}=u_{w^{-1}(\alpha)} (d) w^{-1}$ for some $d\in \CC$.
Therefore, 
\begin{equation} \label{eq:GKM2}
\mbox{Ad}((u_\alpha (c)w)^{-1})(S)=\mbox{Ad}(u_{w^{-1}(\alpha)} (d))(\mbox{Ad}(w^{-1})(S))\in \mathfrak{t}\oplus \mathfrak{g}_{w^{-1}(\alpha)}
\end{equation}
where the last assertion $\in$ is because $S\in \mathfrak{t}$ and $w\in W$.  It follows from \eqref{eq:GKM1} and \eqref{eq:GKM2} that 
\begin{align*}
x\in \Hess(S,I) \Leftrightarrow w^{-1}(\alpha)\in (-I)\cup \Phi ^+
\end{align*}
and we are done.
\end{proof}

\begin{proof}[Proof of Proposition~\ref{prop:GKM}] 
From Proposition \ref{prop:1-dim orbit} and Lemma~\ref{lemma:GKMcond}, we obtain the following GKM condition for $\Hess(S,I)$: 
\begin{equation*} 
\left\{ (f_w)_{w\in W} \in \bigoplus _{w\in W}H^*(BT) \left |
\begin{array}{ll} f_w-f_v \in (\alpha) \ {\rm if} \ v=s_{\alpha}w, w^{-1}(\alpha )\in (-I)\cup \Phi ^+ \ \\
\text{ and } v^{-1}(\alpha )\in (-I)\cup \Phi ^+ \ {\rm for \ some} \ \alpha \in \Phi ^+ 
\end{array}
\right\}.
\right.
\end{equation*}
When $v=s_\alpha w$, we have 
\[
v^{-1}(\alpha)=(s_\alpha w)^{-1}(\alpha)=w^{-1}s_\alpha(\alpha)=-w^{-1}(\alpha).
\]
Therefore, the conditions $w^{-1}(\alpha )\in (-I)\cup \Phi ^+$ and $v^{-1}(\alpha )\in (-I)\cup \Phi ^+$ above  are equivalent to the condition $w^{-1}(\alpha)\in I\cup (-I)$ when $v=s_\alpha w$.   
We put $\beta=w^{-1}(\alpha)$ when $w^{-1}(\alpha)\in I$, and $\beta=-w^{-1}(\alpha)$ when $w^{-1}(\alpha)\in -I$.  Then $\beta\in I$, $w(\beta)=(\alpha)$ and 
\[
s_\alpha w=w (w^{-1} s_\alpha w)=w s_{w^{-1}(\alpha)}=w s_{\beta}.
\]
Therefore, the GKM condition 
above 
coincides with the GKM condition in Proposition~\ref{prop:GKM}.
\end{proof}


\subsection{$W$-action on $H^*(\Hess(S,I))$}

In this subsection, we define a $W$-action on $H^*(\Hess(S,I))$ using the GKM condition for $\Hess(S,I)$. It is the dot action introduced by Tymoczko in type $A$ (\cite{Ty2}). 

Through the restriction map in \eqref{eq:HessSIRes}, we regard $H^*_T(\Hess(S,I))$ as a submodule of $\bigoplus _{w\in W}H^*(BT)$ and we first define a $W$-action on $\bigoplus _{w\in W}H^*(BT)$ as follows:
\begin{align} \label{eq:W-act}
(u\cdot f)_w:=u(f_{u^{-1}w}) \quad\text{for  $u\in W$ and $f\in \bigoplus _{w\in W}H^*(BT)$}
\end{align}
where $f_w$ is the $w$-component of $f$ and the $W$-action on $H^*(BT)$ is the one induced from the $W$-action on the character $\hat T$ of $T$, see \S\ref{sect:RegNil}.

\begin{lemma}
The $W$-action in \eqref{eq:W-act} preserves $H^*_T(\Hess(S,I))$.
\end{lemma}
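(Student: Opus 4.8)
The plan is to work entirely with the combinatorial model of $H^*_T(\Hess(S,I))$ furnished by Proposition~\ref{prop:GKM}. Via the injection \eqref{eq:HessSIRes} we identify $H^*_T(\Hess(S,I))$ with the subset of $\bigoplus_{w\in W}H^*(BT)$ consisting of tuples $(f_w)_{w\in W}$ satisfying the GKM condition: $f_w-f_v\in (w(\alpha))$ whenever $v=ws_\alpha$ for some $\alpha\in I$. Thus it suffices to check that $f\mapsto u\cdot f$, as defined in \eqref{eq:W-act}, carries this subset into itself for every $u\in W$.

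First I would recall that the $W$-action on $\hat T$ extends to a graded ring automorphism of $H^*(BT)=\mbox{Sym}(\hat T\otimes\R)$, so each $u\in W$ acts on $H^*(BT)$ by a ring automorphism; in particular $u$ maps the principal ideal $(\gamma)$ onto $(u(\gamma))$ for any $\gamma\in\hat T$. Next, fix $f$ satisfying the GKM condition, fix $u\in W$, and take $w,v\in W$ with $v=ws_\alpha$ for some $\alpha\in I$. Using \eqref{eq:W-act} one computes
\[
(u\cdot f)_w-(u\cdot f)_v=u(f_{u^{-1}w})-u(f_{u^{-1}v})=u\bigl(f_{u^{-1}w}-f_{u^{-1}v}\bigr).
\]
Since $u^{-1}v=(u^{-1}w)s_\alpha$, the pair $(u^{-1}w,u^{-1}v)$ is again an edge of the GKM graph of $\Hess(S,I)$ with the same label $\alpha\in I$, so the GKM condition for $f$ gives $f_{u^{-1}w}-f_{u^{-1}v}\in\bigl((u^{-1}w)(\alpha)\bigr)$. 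Applying the ring automorphism $u$ turns this into $\bigl(u(u^{-1}w)(\alpha)\bigr)=(w(\alpha))$, whence $(u\cdot f)_w-(u\cdot f)_v\in(w(\alpha))$. This is exactly the GKM condition for $u\cdot f$, so $u\cdot f\in H^*_T(\Hess(S,I))$, completing the argument.

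There is no serious obstacle; the only point requiring care is the bookkeeping of left versus right translation. The GKM condition in Proposition~\ref{prop:GKM} is phrased using right multiplication $v=ws_\alpha$, whereas the $W$-action \eqref{eq:W-act} acts by left translation on the index $w$, so the two operations commute and the label $\alpha\in I$ of the edge is unaffected by $u$; it is only the attached weight $w(\alpha)$ that gets moved, and it is moved in precisely the way matched by the automorphism $u$ of $H^*(BT)$. (That this $H^*(BT)$-semilinear $W$-action descends to a linear $W$-action on $H^*(\Hess(S,I))$ is a separate remark, belonging to the discussion following the lemma.)
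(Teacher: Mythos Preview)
Your proof is correct and follows essentially the same approach as the paper's own proof: both verify the GKM condition of Proposition~\ref{prop:GKM} for $u\cdot f$ by computing $(u\cdot f)_w-(u\cdot f)_{ws_\alpha}=u\bigl(f_{u^{-1}w}-f_{(u^{-1}w)s_\alpha}\bigr)$ and then applying the ring automorphism $u$ to the containment $f_{u^{-1}w}-f_{(u^{-1}w)s_\alpha}\in\bigl((u^{-1}w)(\alpha)\bigr)$. Your write-up is slightly more explicit about why $u$ carries the principal ideal $\bigl((u^{-1}w)(\alpha)\bigr)$ onto $(w(\alpha))$, but the arguments are otherwise identical.
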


\begin{proof}
Let $f\in H^*_T(\Hess(S,I))$ and $u\in W$. 
We denote the restriction image of $f$ to $H^*_T(w)=H^*(BT)$ $(w\in W=\Hess(S,I)^T)$ by $f|_w$ as before.  
Because of Proposition~\ref{prop:GKM}, it is enough to prove that if $\alpha\in I$, then
\[
(u\cdot f)|_w-(u\cdot f)|_{w s_\alpha}\in (w(\alpha)).
\]
Since $f\in H^*_T(\Hess(S,I))$, we have
\[
f|_{u^{-1}w}-f|_{(u^{-1}w) s_\alpha} \in (u^{-1}w(\alpha))
\]
by Proposition~\ref{prop:GKM}.
These together with \eqref{eq:W-act} show 
\begin{align*}
(u\cdot f)|_w-(u\cdot f)|_{w s_\alpha}&=u(f|_{u^{-1}w})-u(f|_{u^{-1}w s_\alpha}) \\
                                      &=u(f|_{u^{-1}w}-f|_{u^{-1}w s_\alpha})\in (w(\alpha))
\end{align*}
and we are done.
\end{proof}

Remember that since $H^{odd}(\Hess(S,I))$ vanishes, 
\[
H^*_T(\Hess(S,I))=H^*(BT)\otimes H^*(\Hess(S,I)) \quad\text{as $H^*(BT)$-modules}. 
\] 
This means that 
\[
H^*(\Hess(S,I))=H^*_T(\Hess(S,I))/H^{>0}(BT)H^*_T(\Hess(S,I))
\]
where $H^{>0}(BT)$ denotes the positive degree part of $H^*(BT)$. 
Since $H^*_T(\Hess(S,I))$ is regarded as an $H^*(BT)$-module through the projection map $ET\times_T \Hess(S,I)\to ET/T=BT$, $H^*(BT)$ in $H^*_T(\Hess(S,I))$ maps to the diagonal part of $\bigoplus _{w\in W}H^*(BT)$ by the restriction map in \eqref{eq:HessSIRes}.  
On the other hand, the $W$-action in \eqref{eq:W-act} preserves the diagonal part.
Therefore, the $W$-action on $H^*_T(\Hess(S,I))$ induces a $W$-action on $H^*(\Hess(S,I))$.

Remember that we have an element $e^T(L_\alpha)\in H^2_T(G/B)$, see \S\ref{sect:RegNil}.
Through the restriction map $H^2_T(G/B)\to H^2_T(\Hess(S,I))$, we may think of $e^T(L_\alpha)$ as an element of $H^2_T(\Hess(S,I))$.
They are same if we regard them as elements of $\bigoplus_{w\in W}H^2(BT)$ through the restriction map in \eqref{eq:HessSIRes}.  

\begin{lemma}\label{lemma:W-inv}
The element $e^T(L_\alpha)\in H^2_T(\Hess(S,I))\subset \bigoplus_{w\in W}H^2(BT)$ is $W$-invariant.  Hence $e(L_\alpha)\in H^2(\Hess(S,I))$ is also $W$-invariant.
\end{lemma}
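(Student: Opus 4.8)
The plan is to verify $W$-invariance of $e^T(L_\alpha)$ by working entirely with its restriction image $(e^T(L_\alpha)|_w)_{w\in W}$ inside $\bigoplus_{w\in W}H^2(BT)$, which is legitimate since the restriction map \eqref{eq:HessSIRes} is injective and the $W$-action on $H^*_T(\Hess(S,I))$ is defined precisely through this embedding. First I would recall the explicit formula for the restriction image: by Lemma~\ref{lemma:LocFlag} (applied in $G/B$ and then restricted, which does not change the image in $\bigoplus_{w\in W}H^2(BT)$ as noted in the paragraph preceding the statement) we have $e^T(L_\alpha)|_w = w(\alpha)$ for every $w\in W$.

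Next I would simply compute the $W$-action. For $u\in W$, formula \eqref{eq:W-act} gives
\[
(u\cdot e^T(L_\alpha))|_w = u\big(e^T(L_\alpha)|_{u^{-1}w}\big) = u\big((u^{-1}w)(\alpha)\big) = (u u^{-1} w)(\alpha) = w(\alpha) = e^T(L_\alpha)|_w,
\]
where the middle equality uses that the $W$-action on $H^2(BT)=\hat T\otimes\RR$ is a group action, so $u\circ(u^{-1}w) = w$ as operators on $\hat T$. Hence $u\cdot e^T(L_\alpha) = e^T(L_\alpha)$ for all $u\in W$, proving the first assertion.

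For the second assertion, recall from the discussion just before the lemma that the $W$-action on $H^*_T(\Hess(S,I))$ descends to the quotient $H^*(\Hess(S,I)) = H^*_T(\Hess(S,I))/H^{>0}(BT)H^*_T(\Hess(S,I))$, and that $e(L_\alpha)\in H^2(\Hess(S,I))$ is exactly the image of $e^T(L_\alpha)$ under this quotient map, which is $W$-equivariant. Since $e^T(L_\alpha)$ is $W$-invariant, so is its image $e(L_\alpha)$.

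There is essentially no obstacle here: the only point requiring a moment's care is the bookkeeping of why it suffices to check the identity on restriction images (injectivity of \eqref{eq:HessSIRes}, which holds because $H^{odd}(\Hess(S,I))=0$ by Theorem~\ref{theorem:regsemi}) and the observation that the formula $e^T(L_\alpha)|_w = w(\alpha)$ is inherited from $G/B$ via the restriction $H^2_T(G/B)\to H^2_T(\Hess(S,I))$. Everything else is a one-line computation exploiting that $W$ acts on $\hat T$ by group automorphisms.
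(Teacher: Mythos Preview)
Your proof is correct and follows essentially the same approach as the paper's: both verify $W$-invariance by the direct computation $(u\cdot e^T(L_\alpha))|_w = u(e^T(L_\alpha)|_{u^{-1}w}) = u(u^{-1}w(\alpha)) = w(\alpha) = e^T(L_\alpha)|_w$ using Lemma~\ref{lemma:LocFlag} and \eqref{eq:W-act}. Your write-up simply makes explicit the justifications (injectivity of the restriction map, the descent to ordinary cohomology for the second claim) that the paper leaves implicit.
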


\begin{proof}
From \eqref{eq:W-act} and Lemma~\ref{lemma:LocFlag}, we have
\[
(u\cdot e^T(L_\alpha))|_w=u(e^T(L_\alpha)|_{u^{-1}w})=u(u^{-1}w(\alpha))=w(\alpha)=e^T(L_\alpha)|_w
\]
for all $u, w \in W$, so $u\cdot e^T(L_\alpha)=e^T(L_\alpha)$ for all $u\in W$.
\end{proof}

\begin{rem} \label{rema:WonFlag}
Since $e(L_{\alpha})\in H^2(G/B)$ for $\alpha\in\hat T$ generate $H^*(G/B)$ as an 
$\R$-algebra, we have $H^*(G/B)^W=H^*(G/B)$ from Lemma~\ref{lemma:W-inv}.
However, the $W$-action on $H^*(\Hess(S,I))$ is nontrivial in general.  
\end{rem}

We conclude this subsection with the following proposition. 

\begin{prop}\label{prop:W-invFinGen}
The ring $H^*_T(\Hess(S,I))^W$ of $W$-invariants is a polynomial ring over $\R$ generated by $e^T(L_{\alpha})$ for $\alpha\in \hat T$.  Moreover  
$H^*(\Hess(S,I))^W$ is generated by $e(L_{\alpha})$ for $\alpha\in \hat T$ as an $\R$-algebra.
\end{prop}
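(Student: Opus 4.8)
The plan is to first pin down $H^*_T(\Hess(S,I))^W$ explicitly from the GKM description in Proposition~\ref{prop:GKM}, and then deduce the non-equivariant statement by averaging over $W$.

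\textbf{Step 1 (the equivariant statement).} I would work inside the injection $H^*_T(\Hess(S,I))\hookrightarrow\bigoplus_{w\in W}H^*(BT)$ together with the action $(u\cdot f)_w=u(f_{u^{-1}w})$ of \eqref{eq:W-act}. A tuple $f=(f_w)_{w\in W}$ is $W$-invariant precisely when $f_w=u(f_{u^{-1}w})$ for all $u,w$; setting $u=w$ gives $f_w=w(f_e)$, so a $W$-invariant element is completely determined by its value $f_e\in H^*(BT)$ at the identity. Conversely, for arbitrary $g\in H^*(BT)$ the tuple $(w(g))_{w\in W}$ is visibly $W$-invariant, and I would check it satisfies the GKM condition of Proposition~\ref{prop:GKM}: if $v=ws_\alpha$ with $\alpha\in I$, then $w(g)-ws_\alpha(g)=w\bigl(g-s_\alpha(g)\bigr)$, and $g-s_\alpha(g)$ vanishes on $\ker\alpha$ (since $s_\alpha$ fixes that hyperplane pointwise), hence is divisible by $\alpha$, so $w(g)-ws_\alpha(g)$ is divisible by $w(\alpha)$. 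Thus $g\mapsto(w(g))_{w\in W}$ is a graded ring isomorphism $\CR=H^*(BT)\stackrel{\sim}{\longrightarrow}H^*_T(\Hess(S,I))^W$; in particular the latter is a polynomial ring over $\R$. By Lemma~\ref{lemma:LocFlag} the class $e^T(L_\alpha)$ restricts to $w(\alpha)$ at $w$, so it corresponds to $\alpha\in H^2(BT)$ under this isomorphism; since the elements $\alpha$ ($\alpha\in\hat T$) generate $H^*(BT)$, the ring $H^*_T(\Hess(S,I))^W$ is generated by the $e^T(L_\alpha)$, $\alpha\in\hat T$.

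\textbf{Step 2 (the non-equivariant statement).} The restriction-to-fiber map $q\colon H^*_T(\Hess(S,I))\to H^*(\Hess(S,I))$ is a surjective ring homomorphism (established in \S\ref{subsect:EC}, since $H^{odd}(\Hess(S,I))=0$), and it is $W$-equivariant because the $W$-action on $H^*(\Hess(S,I))$ is by construction induced from that on $H^*_T(\Hess(S,I))$ through $q$. Given $\bar c\in H^*(\Hess(S,I))^W$, I would lift it to some $c\in H^*_T(\Hess(S,I))$ and replace $c$ by the average $\frac1{|W|}\sum_{u\in W}u\cdot c\in H^*_T(\Hess(S,I))^W$; because $\bar c$ is $W$-invariant and $q$ is $W$-equivariant, this average still maps to $\bar c$ under $q$. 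By Step 1 the average is a polynomial in the $e^T(L_\alpha)$, and since $q(e^T(L_\alpha))=e(L_\alpha)$, applying $q$ shows $\bar c$ is a polynomial in the $e(L_\alpha)$, $\alpha\in\hat T$, which is the second assertion.

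No step here is genuinely hard: the only point requiring care is the verification that the ``twisted diagonal'' tuples $(w(g))_{w\in W}$ actually lie in $H^*_T(\Hess(S,I))$, which reduces to the elementary divisibility fact $\alpha\mid(g-s_\alpha g)$ already used in the proof of Theorem~\ref{coinv}; the averaging argument in Step 2 is routine because $|W|$ is invertible over $\R$.
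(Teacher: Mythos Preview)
Your proof is correct and follows essentially the same route as the paper: both show that restriction to the identity fixed point gives an isomorphism $H^*_T(\Hess(S,I))^W\cong H^*(BT)$, and both deduce the non-equivariant statement by averaging over the finite group $W$. The only minor difference is in the surjectivity half of Step~1: the paper observes that the already-known $W$-invariants $e^T(L_\alpha)$ (Lemma~\ref{lemma:W-inv}) restrict to generators $\alpha$ of $H^*(BT)$, whereas you construct the inverse map $g\mapsto(w(g))_{w\in W}$ explicitly and verify the GKM condition by hand---a small extra check that the paper bypasses by reusing Lemma~\ref{lemma:W-inv}, but which is equally valid.
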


\begin{proof}
Let $e$ be the identity element in $W$.
We will prove that the restriction map to the point $e$
\[
H^*_T(\Hess(S,I))^W\subset H^*_T(\Hess(S,I))\rightarrow H^*_T(e)=H^*(BT)
\]
is an isomorphism.
First, we prove the injectivity. Let $f\in H^*_T(\Hess(S,I))^W$ such that $f|_e=0$. From \eqref{eq:W-act},
\[
f|_w=(u\cdot f)|_w=u(f|_{u^{-1}w})\quad\text{for all $u, w\in W$.}
\]
Taking $u=w$, we have 
\[
f|_w=w(f|_{e})=0\quad\text{for all $w\in W$},
\]
so we obtain $f=0$. 
Next, we prove the surjectivity.
We note that $e^T(L_\alpha)$ lies in $H^2(\Hess(S,I))^W$ by Lemma~\ref{lemma:W-inv} and $e^T(L_\alpha)|_e=\alpha$ by Lemma~\ref{lemma:LocFlag}.
Since $H^*(BT)$ is generated by $\alpha\in \hat T$ through the identification $H^2(BT;\Z)=\hat T$, this proves the surjectivity. 

Since the restriction map $H^*_T(\Hess(S,I))\to H^*(\Hess(S,I))$ is surjective and $W$ is a finite group, its restriction to the $W$-invariants is also surjective.
This together with the former statement implies the latter statement in the proposition. 
\end{proof}

\subsection{Gysin map}
Equivariant Gysin map plays an important role in the following argument and we first review basic facts on it needed later.
We refer the reader to \cite{F} and \cite{Ka} for more details.
The reference \cite{F} discusses in the category of algebraic geometry while \cite{Ka} discusses in the category of topology but both are essentially same.  
We follow the description of \cite{F}.  

Let  $G$ be a linear algebraic group acting on nonsingular algebraic varieties $X$ and $Y$, and let $h\colon X\to Y$ be a proper $G$-equivariant morphism
(since $X$ and $Y$ are regular semisimple Hessenberg varieties or a point in our case, they are compact and hence the properness of $h$ is satisfied for any $h$).
Then there is an {\bf equivariant Gysin map}
\[
h_!^G\colon H^{*}_G(X)\to H^{*+2d}_G(Y)
\]
where $d=\dim_\CC Y-\dim_\CC X$.  
Here are two properties of the equivariant Gysin map used later. 
\begin{enumerate}
\item[(P1)] If $h$ is a closed $G$-equivariant embedding, then its normal bundle $\nu$ becomes a $G$-equivariant vector bundle on $X$,
and the composition $h^*\circ h_!^G\colon H_G^*(X)\to H_G^{*+2d}(X)$ is multiplication by the equivariant Euler class $e^G(\nu)$ of $\nu$.  
\item[(P2)] The map $h_!^G$ reduces to the (ordinary) Gysin map $h_!\colon H^*(X)\to H^{*+2d}(Y)$ through the restriction map from equivariant cohomology to ordinary cohomology
and $h_!$ sends the cofundamental class of $X$ to that of $Y$.\footnote{This is because $h_!$ is defined as the composition $D_Y^{-1}\circ h_*\circ D_X$ where $D_X$ and $D_Y$ are Poincar\'e duality maps of $X$ and $Y$ and $h_*\colon H_*(X)\to H_*(Y)$.}
\end{enumerate}

Now we return to our previous setting. 
Let $\alpha\in \Phi^+\backslash I$ such that $I\cup\{\alpha\}$ is a lower ideal.
Then $\Hess(S,I)$ is a $T$-invariant complex codimension one submanifold of $\Hess(S,I\cup\{\alpha\})$
and the inclusion map $j_\alpha\colon \Hess(S,I)\hookrightarrow \Hess(S,I\cup\{\alpha\})$ induces an equivariant Gysin map  
\[
{j_\alpha^T}_!\colon H^*_T(\Hess(S,I))\to H_T^{*+2}(\Hess(S,I\cup\{\alpha\}))
\]
which raises cohomology degrees by two.
On the other hand, we have a complex $T$-line bundle $L_\alpha$ over $G/B$ (see \S\ref{sect:RegNil})
and regard its equivariant Euler class $e^T(L_\alpha)$ as an element of $\bigoplus_{w\in W}H^2(BT)$ through the restriction to the $T$-fixed point set $W$.
We consider the following diagram:
\begin{equation} \label{eq:GysinEuler}
\begin{CD}
H^*_T(\Hess(S,I)) @>{j_\alpha^T}_!>> H^{*+2}_T(\Hess(S,I\cup\{\alpha\}))\\
@VVV  @VVV\\
\bigoplus_{w\in W}H^*(BT)@>\times e^T(L_{-\alpha}) >> \bigoplus_{w\in W}H^{*+2}(BT)
\end{CD} 
\end{equation}
where the vertical maps are restrictions to the $T$-fixed point set $W$ (so they are injective) and the bottom horizontal map is multiplication by $e^T(L_{-\alpha})=-e^T(L_{\alpha})\in \bigoplus_{w\in W}H^2(BT)$. 

\begin{lemma} \label{lemm:Com}
The diagram \eqref{eq:GysinEuler} is commutative.  Hence the equivariant Gysin map ${j_\alpha^T}_!$ is $W$-equivariant since $e^T(L_{-\alpha})$ is $W$-invariant by Lemma~\ref{lemma:W-inv}.  
\end{lemma}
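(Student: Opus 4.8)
The plan is to check commutativity of \eqref{eq:GysinEuler} after restricting to each $T$-fixed point $w\in W$, since the vertical maps are injective (as $H^{odd}$ vanishes for regular semisimple Hessenberg varieties by Theorem~\ref{theorem:regsemi}). So I must compare, for each $w\in W$, the two ways of going around the square evaluated at $w$. Going down-then-right sends $f\in H^*_T(\Hess(S,I))$ to $-e^T(L_\alpha)|_w\cdot f|_w = -w(\alpha)\, f|_w$ by Lemma~\ref{lemma:LocFlag}. Going right-then-down sends $f$ to $\big({j_\alpha^T}_!(f)\big)|_w$, so the content of the lemma is the localization formula
\[
\big({j_\alpha^T}_!(f)\big)\big|_w = -w(\alpha)\, f|_w \qquad (w\in W).
\]

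First I would record the relevant geometry: $\Hess(S,I)$ is a smooth $T$-invariant divisor in $\Hess(S,I\cup\{\alpha\})$, $j_\alpha$ is a closed $T$-equivariant embedding, and its normal bundle $\nu_\alpha$ is a $T$-equivariant complex line bundle on $\Hess(S,I)$. I claim $\nu_\alpha \cong L_{-\alpha}|_{\Hess(S,I)}$ as $T$-equivariant bundles. The cleanest way to see this is fiberwise over the $T$-fixed points: at $w\in W$, the tangent space $T_w\Hess(S,I\cup\{\alpha\})=\bigoplus_{\gamma\in -(I\cup\{\alpha\})}\CC_{w(\gamma)}$ by \eqref{eq:TwHess}, and $T_w\Hess(S,I)=\bigoplus_{\gamma\in -I}\CC_{w(\gamma)}$, so the normal space at $w$ is the $T$-module $\CC_{w(-\alpha)}$, which is exactly the fiber of $L_{-\alpha}$ at $w$ by Lemma~\ref{lemma:LocFlag}. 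Since an equivariant line bundle on a GKM space is determined by its fixed-point data together with the GKM compatibility (which both sides satisfy, being restrictions of globally defined classes), $e^T(\nu_\alpha) = e^T(L_{-\alpha})|_{\Hess(S,I)}$ in $H^2_T(\Hess(S,I))$; alternatively one argues directly that the normal bundle of $\Hess(S,I)$ in $\Hess(S,I\cup\{\alpha\})$ is pulled back from the corresponding line bundle because the whole family is cut out inside $G/B$ by the root-space condition indexed by $\alpha$.

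Next I would invoke property (P1) of the equivariant Gysin map: $j_\alpha^*\circ {j_\alpha^T}_! = \times\, e^T(\nu_\alpha) = \times\, e^T(L_{-\alpha})$ on $H^*_T(\Hess(S,I))$. Restricting this identity to a fixed point $w$ and using $j_\alpha^*(g)|_w = g|_w$ gives precisely $\big({j_\alpha^T}_!(f)\big)|_w = e^T(L_{-\alpha})|_w\cdot f|_w = -w(\alpha)\,f|_w$, which is the desired formula — hence the square commutes. The second sentence of the lemma is then immediate: the bottom map of \eqref{eq:GysinEuler} is multiplication by $e^T(L_{-\alpha})$, which is $W$-invariant by Lemma~\ref{lemma:W-inv}, so it commutes with the diagonal $W$-action of \eqref{eq:W-act}; since the vertical restriction maps are injective and $W$-equivariant, ${j_\alpha^T}_!$ is $W$-equivariant.

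The main obstacle is the identification $\nu_\alpha\cong L_{-\alpha}|_{\Hess(S,I)}$ (equivalently, the computation of $e^T(\nu_\alpha)$ at the fixed points and the argument that fixed-point data plus GKM compatibility pins down the equivariant Euler class). Everything else is a formal consequence of (P1) and the injectivity of localization. A subtle point I should be careful about is orientations/signs: one must confirm that the complex orientations make the normal bundle $L_{-\alpha}$ rather than $L_{\alpha}$, which is exactly what the tangent-space computation via \eqref{eq:TwHess} delivers, consistently with the $-w(\alpha)$ appearing on the bottom arrow.
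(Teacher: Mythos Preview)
Your proposal is correct and follows essentially the same route as the paper: both invoke property (P1) to get $j_\alpha^*\circ{j_\alpha^T}_! = \times\, e^T(\nu_\alpha)$, compute the normal fiber at each $w\in W$ as $\CC_{w(-\alpha)}$ via \eqref{eq:TwHess}, and match it with $L_{-\alpha}$ using Lemma~\ref{lemma:LocFlag}. The only cosmetic difference is that the paper concludes $e^T(\nu_\alpha)=e^T(L_{-\alpha})$ globally from the fixed-point equalities (using injectivity of restriction to $W$) and then reads off commutativity, whereas you verify commutativity directly at each fixed point; these are the same computation phrased two ways, so no further justification (GKM compatibility for line bundles, etc.) is actually needed beyond the injectivity of the localization map, which you already have.
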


\begin{proof}
It follows from Property (P1) of equivariant Gysin maps that   
\begin{equation} \label{eq:normal}
{j_\alpha}^*({j_\alpha^T}_!(x))=xe^T(\nu_\alpha) \quad\text{for $x\in H^*_T(\Hess(S,I))$}
\end{equation}
where ${j_\alpha}^*\colon H^*_T(\Hess(S,I\cup\{\alpha\}))\to H^*_T(\Hess(S,I))$ is the restriction homomorphism and $\nu_\alpha$ is the $T$-equivariant normal bundle of $\Hess(S,I)$ in $\Hess(S,I\cup\{\alpha\})$.
It follows from \eqref{eq:TwHess} that $\nu_\alpha$ restricted to $w$ is the $T$-module determined by $w(-\alpha)$ and this agrees with the restriction of $L_{-\alpha}$ to $w$ by Lemma~\ref{lemma:LocFlag}.
Therefore, $e^T(\nu_\alpha)|_w=e^T(L_{-\alpha})|_w$ for all $w\in W$ and hence $e^T(\nu_\alpha)=e^T(L_{-\alpha})$.
This together with \eqref{eq:normal} and the injectivity of $j_\alpha^*$ 
implies the commutativity of the diagram \eqref{eq:GysinEuler}.   
\end{proof}

The inclusion map $j_\alpha$ also induces a Gysin map in ordinary cohomology: 
\begin{equation} \label{eq:GysinOrd}
{j_\alpha}_!\colon H^*(\Hess(S,I))\to H^{*+2}(\Hess(S,I\cup\{\alpha\})
\end{equation}
which also raises cohomology degrees by two. 

\begin{prop}\label{prop:W-Gysin}
The Gysin map ${j_\alpha}_!$ in \eqref{eq:GysinOrd} is $W$-equivariant.  In particular, it maps $H^*(\Hess(S,I))^W$ to $H^{*+2}(\Hess(S,I\cup \{\alpha\}))^W$. 
\end{prop}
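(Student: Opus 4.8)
The plan is to deduce this from the $W$-equivariance of the \emph{equivariant} Gysin map ${j_\alpha^T}_!$ proved in Lemma~\ref{lemm:Com}, by passing from equivariant to ordinary cohomology. Write $I':=I\cup\{\alpha\}$, and let
\[
\rho_I\colon H^*_T(\Hess(S,I))\to H^*(\Hess(S,I)),\qquad
\rho_{I'}\colon H^*_T(\Hess(S,I'))\to H^*(\Hess(S,I'))
\]
be the restriction maps from equivariant to ordinary cohomology. By Property~(P2) of equivariant Gysin maps, the square
\begin{equation*}
\begin{CD}
H^*_T(\Hess(S,I)) @>{j_\alpha^T}_!>> H^{*+2}_T(\Hess(S,I'))\\
@V{\rho_I}VV @VV{\rho_{I'}}V\\
H^*(\Hess(S,I)) @>{j_\alpha}_!>> H^{*+2}(\Hess(S,I'))
\end{CD}
\end{equation*}
commutes.

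First I would record the two properties of $\rho_I$ and $\rho_{I'}$ that are needed. Both maps are surjective, because $H^{odd}(\Hess(S,I))$ and $H^{odd}(\Hess(S,I'))$ vanish (Theorem~\ref{theorem:regsemi}). Both maps are $W$-equivariant: by construction the $W$-action on $H^*(\Hess(S,I))$ is the one induced from the $W$-action \eqref{eq:W-act} on $H^*_T(\Hess(S,I))$ under the identification $H^*(\Hess(S,I))=H^*_T(\Hess(S,I))/H^{>0}(BT)H^*_T(\Hess(S,I))$, so $\rho_I$ is the quotient by a $W$-submodule; the same holds for $\rho_{I'}$.

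Then the argument is a short diagram chase. Given $x\in H^*(\Hess(S,I))$ and $u\in W$, pick $\tilde x\in H^*_T(\Hess(S,I))$ with $\rho_I(\tilde x)=x$, using surjectivity of $\rho_I$. Then
\[
u\cdot{j_\alpha}_!(x)=u\cdot\rho_{I'}\big({j_\alpha^T}_!\tilde x\big)
=\rho_{I'}\big(u\cdot{j_\alpha^T}_!\tilde x\big)
=\rho_{I'}\big({j_\alpha^T}_!(u\cdot\tilde x)\big)
={j_\alpha}_!\big(\rho_I(u\cdot\tilde x)\big)
={j_\alpha}_!(u\cdot x),
\]
where the first and fourth equalities use commutativity of the square, the second and fifth use the $W$-equivariance of $\rho_{I'}$ and of $\rho_I$, and the third uses Lemma~\ref{lemm:Com}. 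Hence ${j_\alpha}_!$ is $W$-equivariant, and consequently a $W$-invariant class $x$ is sent to a class with $u\cdot{j_\alpha}_!(x)={j_\alpha}_!(u\cdot x)={j_\alpha}_!(x)$ for all $u\in W$, i.e.\ ${j_\alpha}_!$ maps $H^*(\Hess(S,I))^W$ into $H^{*+2}(\Hess(S,I'))^W$.

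There is essentially no real obstacle here; all the substance sits in Lemma~\ref{lemm:Com} and in the construction of the $W$-action on ordinary cohomology. The only point to be careful about is that the restriction maps $\rho_I,\rho_{I'}$ are $W$-equivariant, which is a formal consequence of how that $W$-action was defined. Alternatively, one could run the whole argument at the level of $\bigoplus_{w\in W}H^*(BT)$: the commutative diagram \eqref{eq:GysinEuler} already exhibits ${j_\alpha^T}_!$ as multiplication by the $W$-invariant class $e^T(L_{-\alpha})$, and passing to the quotient by $H^{>0}(BT)$ gives the same conclusion.
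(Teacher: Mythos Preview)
Your proof is correct and follows exactly the same approach as the paper's: use Property~(P2) to pass from the $W$-equivariance of ${j_\alpha^T}_!$ (Lemma~\ref{lemm:Com}) to that of ${j_\alpha}_!$ via the restriction maps to ordinary cohomology. The paper's version is simply terser---it asserts in one sentence what you have carefully unpacked (surjectivity of $\rho_I,\rho_{I'}$, their $W$-equivariance, and the diagram chase)---but the content is identical.
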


\begin{proof}
As mentioned in Property (P2) of equivariant Gysin maps,
${j_\alpha^T}_!$ reduces to the (ordinary) Gysin map ${j_\alpha}_!$ through the restriction map from equivariant cohomology to ordinary cohomology.
Therefore, ${j_\alpha}_!$ is $W$-equivariant because so is ${j_\alpha^T}_!$ by Lemma~\ref{lemm:Com}. 
\end{proof}

\subsection{Poincar\'e duality on $H^*(\Hess(S,I))^W$}
We will prove that $H^*(\Hess(S,I))^W$ is a Poincar\'e duality algebra of socle degree $2|I|$ where $|I|=\dim_\CC\Hess(S,I)$ by Theorem~\ref{theorem:regsemi}.  

We have a pairing defined by cup product composed with the evaluation map on the fundamental class of $\Hess(S,I)$: 
\begin{equation}\label{PairHessS0}
H^{2k}(\Hess(S,I))\times H^{2|I|-2k}(\Hess(S,I))\rightarrow H^{2|I|}(\Hess(S,I))\to \R
\end{equation}
This pairing is non-degenerate since $\Hess(S,I)$ is a compact smooth equidimensional oriented manifold.  
If the $W$-action on $H^*(\Hess(S,I))$ is induced from a $W$-action on $\Hess(S,I)$ preserving the orientation,
then the fundamental class of $\Hess(S,I)$ is invariant under the induced $W$-action so that the evaluation map in \eqref{PairHessS0} is $W$-invariant.
This means that the pairing \eqref{PairHessS0} is $W$-invariant and its restriction to $H^*(\Hess(S,I))^W$ is still non-degenerate
and this will imply that $H^*(\Hess(S,I))^W$ is a Poincar\'e duality algebra.   
However, since our $W$-action on $H^*(\Hess(S,I))$ is defined algebraically using GKM theory, this argument does not work and we need to check the $W$-invariance of the evaluation map in a different way.  

\begin{prop}\label{prop:PDAHessS}
$H^{*}(\Hess(S,I))^W$ is a Poincar\'e duality algebra of socle degree $2|I|$.
\end{prop}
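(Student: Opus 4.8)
The plan is to obtain Poincar\'e duality on $H^*(\Hess(S,I))^W$ from the ordinary Poincar\'e duality of the compact oriented manifold $\Hess(S,I)$ by an averaging argument, the one nontrivial input being that the cup-product pairing is $W$-invariant. Since the $W$-action on $H^*(\Hess(S,I))$ is by graded $\R$-algebra automorphisms --- the formula \eqref{eq:W-act} is multiplicative and descends to ordinary cohomology --- the $W$-invariance of the pairing is equivalent to the $W$-invariance of the evaluation map $\epsilon_I:={p_I}_!\colon H^{2|I|}(\Hess(S,I))\to\R$, where $p_I\colon\Hess(S,I)\to\mathrm{pt}$; indeed $\epsilon_I(u\cdot(a\cup b))=\langle u\cdot a,u\cdot b\rangle$ and $u\cdot 1=1$. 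Because our $W$-action is defined combinatorially through the GKM graph rather than by an action on the variety, this invariance cannot be read off from invariance of a fundamental class, and establishing it is the heart of the matter.

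I would prove the $W$-invariance of $\epsilon_I$ by downward induction on $|\Phi^+\setminus I|$. When $I=\Phi^+$ we have $\Hess(S,\Phi^+)=G/B$ and the $W$-action on $H^*(G/B)$ is trivial by Remark~\ref{rema:WonFlag}, so there is nothing to prove. For $I\subsetneq\Phi^+$ choose $\alpha$ minimal in $\Phi^+\setminus I$; then $I':=I\cup\{\alpha\}$ is again a lower ideal with $|\Phi^+\setminus I'|<|\Phi^+\setminus I|$, and $\Hess(S,I)$ is a complex codimension-one submanifold of $\Hess(S,I')$. From $p_{I'}\circ j_\alpha=p_I$ and the functoriality of Gysin maps we get $\epsilon_I={p_{I'}}_!\circ{j_\alpha}_!=\epsilon_{I'}\circ{j_\alpha}_!$, where ${j_\alpha}_!\colon H^{2|I|}(\Hess(S,I))\to H^{2|I'|}(\Hess(S,I'))$. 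Since ${j_\alpha}_!$ is $W$-equivariant by Proposition~\ref{prop:W-Gysin} and $\epsilon_{I'}$ is $W$-invariant by the inductive hypothesis, for all $a$ and all $u\in W$ we obtain
\[
\epsilon_I(u\cdot a)=\epsilon_{I'}\big({j_\alpha}_!(u\cdot a)\big)=\epsilon_{I'}\big(u\cdot{j_\alpha}_!(a)\big)=\epsilon_{I'}\big({j_\alpha}_!(a)\big)=\epsilon_I(a),
\]
which closes the induction.

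Granted this, the pairing $H^{2k}(\Hess(S,I))\times H^{2|I|-2k}(\Hess(S,I))\to H^{2|I|}(\Hess(S,I))\xrightarrow{\epsilon_I}\R$ is non-degenerate and $W$-invariant, so (as $W$ is finite) its restriction to the $W$-invariants is still non-degenerate: if $a\in H^{2k}(\Hess(S,I))^W$ pairs to zero with every $b'\in H^{2|I|-2k}(\Hess(S,I))^W$, then for arbitrary $b$ we have $\langle a,b\rangle=\big\langle a,\tfrac1{|W|}\sum_{u\in W}u\cdot b\big\rangle=0$ because $\tfrac1{|W|}\sum_u u\cdot b$ is $W$-invariant, hence $a=0$, and symmetrically in the other variable. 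Finally, the left translation action of $W$ on $\Hess(S,I)^T=W$ preserves the edges $\{w,ws_\alpha\}$ $(\alpha\in I)$, hence extends to an action by automorphisms of the GKM graph of $\Hess(S,I)$ that is transitive on vertices, therefore transitive on the connected components of the graph, which are in bijection with those of $\Hess(S,I)$; thus $H^0(\Hess(S,I))^W\cong\R$, and taking $k=0$ in the previous paragraph forces $\dim_\R H^{2|I|}(\Hess(S,I))^W=1$. As $H^j(\Hess(S,I))=0$ for $j>2|I|$ and $H^*(\Hess(S,I))^W$ is a finite-dimensional graded $\R$-algebra, the two facts just proved say exactly that $H^*(\Hess(S,I))^W$ is a Poincar\'e duality algebra of socle degree $2|I|$.

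The main obstacle is the passage from the geometry to the combinatorially defined $W$-action: the $W$-invariance of $\epsilon_I$ is not visible on the space and must be transported from $G/B$ through the Gysin maps ${j_\alpha}_!$, which forces one to use their $W$-equivariance (Proposition~\ref{prop:W-Gysin}), resting in turn on the identification $e^T(\nu_\alpha)=e^T(L_{-\alpha})$ of the equivariant Euler class of the normal bundle. A secondary subtlety, unavoidable because $\Hess(S,I)$ is typically disconnected, is the transitivity of the $W$-action on its set of connected components; without it $H^*(\Hess(S,I))^W$ would not be local and hence not a Poincar\'e duality algebra.
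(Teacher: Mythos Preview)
Your argument is correct and takes a genuinely different route from the paper. The paper establishes the $W$-invariance of the evaluation map directly, by writing the equivariant pushforward $\rho^T_!\colon H^*_T(\Hess(S,I))\to H^{*-2|I|}(BT)$ via the Atiyah--Bott--Berline--Vergne localization formula
\[
\rho^T_!(f)=\sum_{w\in W}\frac{f|_w}{\prod_{\alpha\in -I}w(\alpha)}
\]
and checking by a change of summation index that $\rho^T_!(u\cdot f)=u(\rho^T_!(f))$; the ordinary $\rho_!$ then lands in $H^0(pt)=\R$ where $W$ acts trivially. You instead transport invariance down from $G/B$ by downward induction on $|\Phi^+\setminus I|$, using $\epsilon_I=\epsilon_{I'}\circ{j_\alpha}_!$ together with the $W$-equivariance of ${j_\alpha}_!$ (Proposition~\ref{prop:W-Gysin}) and the trivial base case (Remark~\ref{rema:WonFlag}). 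Your method is more economical in that it recycles results already in place and introduces no new tool, whereas the paper's approach is a one-shot computation that does not rely on the chain of Gysin maps but brings in the ABBV formula for this single purpose. For the fact that $\dim_\R H^0(\Hess(S,I))^W=1$, the paper simply cites Proposition~\ref{prop:W-invFinGen}; your GKM-graph transitivity argument reproves the degree-zero part of that proposition by hand and is fine, though the appeal to the bijection between graph components and topological components is unnecessary: it suffices that $H^0_T$ is identified with functions on $W$ constant along GKM edges, on which $W$ acts by left translation.
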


\begin{proof}
The map $\rho : \Hess(S,I)\rightarrow \{pt\}$ induces the $T$-equivariant Gysin map 
\[
\rho^T_{!}: H^*_T(\Hess(S,I))\rightarrow H^{*-2|I|}_T(pt)=H^{*-2|I|}(BT)
\]
which decreases cohomology degrees by $2|I|=\dim_\R \Hess(S,I)$. The Atiyah-Bott-Berline-Vergne formula (\cite{AB}) tells us that 
\begin{align*}
\rho^T_{!}(f)=\sum_{w\in W} \frac{f|_w}{e^T(T_w \Hess(S,I))}
\end{align*}
where $T_w \Hess(S,I)$ denotes the tangent space of $\Hess(S,I)$ at $w$.  It follows from \eqref{eq:TwHess} that 
\begin{align*}
e^T(T_w \Hess(S,I))=\prod_{\alpha\in -I} w(\alpha),
\end{align*}
so we have 
\[
\rho^T_{!}(u\cdot f)=\sum_{w\in W} u(f|_{u^{-1}w})\prod_{\alpha\in -I} \frac{1}{w(\alpha)}
                   =\sum_{v\in W} u(f|_v)\prod_{\alpha\in -I} \frac{1}{u(v(\alpha))}
                   =u(\rho^T_{!}(f))
\]
for all $u\in W$, proving the $W$-equivariance of $\rho^T_{!}$.  

Since $\rho^T_!$ reduces to the (ordinary) Gysin map 
\[
\rho_{!}: H^*(\Hess(S,I))\rightarrow H^{*-2|I|}(pt)=H^{*-2|I|}(BT)/H^{>0}(BT)
\]
through the map from equivariant cohomology to ordinary cohomology and the $W$-action on $H^*(pt)$ is trivial, $\rho_!$ is $W$-invariant.  
Since $H^{*-2|I|}(pt)=0$ unless $*=2|I|$, $\rho_!$ can be nontrivial only when $*=2|I|$.  In fact, $\rho_!$ is the evaluation map on the fundamental class of $\Hess(S,I)$ as is well-known.  Therefore the non-degenerate pairing \eqref{PairHessS0} is $W$-invariant. 
%

Since $\dim_\R H^0(\Hess(S,I))^W=1$ by Proposition~\ref{prop:W-invFinGen}, the existence of the $W$-invariant non-degenerate pairing \eqref{PairHessS0} implies that  $\dim_\R H^{2|I|}(\Hess(S,I))^W=1$  
and $H^*(\Hess(S,I))^W$ is a Poincar\'e duality algebra of socle degree $2|I|$.  
\end{proof}

\section{Proof of Theorem \ref{ssmain}}

Remember that $\CR=\mbox{Sym}(\hat T\otimes \R)$ (see \S\ref{subsect:ringR}).  The map 
\begin{equation}\label{eq:psiI3}
\psi_I\colon \CR\to H^*(\Hess(S,I))^W
\end{equation}
sending $\alpha\in\hat T$ to $e(L_\alpha)\in H^2(\Hess(S,I))^W$ is surjective by Proposition~\ref{prop:W-invFinGen}.  We define 
\[
\mathfrak{s}(I):=\ker\psi_I,
\]
so that we have 
\begin{equation} \label{eq:siso}
\CR/\mathfrak{s}(I)\cong H^*(\Hess(S,I))^W.
\end{equation}
The following theorem implies Theorem~\ref{ssmain} in the Introduction. 

\begin{theorem}
$\mathfrak{s}(I)=\mathfrak{a}(I)$ for any lower ideal $I$. 
\end{theorem}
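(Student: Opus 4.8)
The strategy is to run exactly the same inductive scheme that was used for $\mathfrak{n}(I)$ in the proof of Theorem~\ref{theo:nilpotent}, now applied to the semisimple side. By Theorem~\ref{coinv} and Corollary~\ref{hatcoinvariant} we know that when $I=\Phi^+$ the Hessenberg variety is $G/B$ itself, on which the $W$-action from GKM theory is trivial (Remark~\ref{rema:WonFlag}), so $H^*(\Hess(S,\Phi^+))^W=H^*(G/B)$ and hence $\mathfrak{s}(\Phi^+)=\ker\varphi=(\CR^W_+)=\mathfrak{a}(\Phi^+)$. This is the base case. For the inductive step I would show, for a lower ideal $I\subsetneq\Phi^+$ and $\alpha\in\Phi^+\setminus I$ with $I':=I\cup\{\alpha\}$ a lower ideal, the inclusion $\mathfrak{s}(I)\subset\mathfrak{s}(I'):\alpha$, i.e.\ $\alpha\,\mathfrak{s}(I)\subset\mathfrak{s}(I')$. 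This is the analog of Lemma~\ref{lemm:GysinNil}/Proposition~\ref{inj}, and the tool is the $W$-equivariant Gysin map ${j_\alpha}_!\colon H^*(\Hess(S,I))^W\to H^{*+2}(\Hess(S,I'))^W$ of Proposition~\ref{prop:W-Gysin} together with the projection formula: for $f\in H^*(\Hess(S,I))^W$ one has ${j_\alpha}_!(f)={j_\alpha}_!(1\cdot f)$, and pairing against $e(L_\alpha)$ via the projection formula ${j_\alpha}_!({j_\alpha}^*(e(L_\alpha))\cdot f)=e(L_\alpha)\cdot {j_\alpha}_!(f)$ (or the analogous statement with $L_{-\alpha}$, matching the normal bundle identification $\nu_\alpha=L_{-\alpha}$ from Lemma~\ref{lemm:Com}) shows that any $h\in\CR$ with $\psi_I(h)=0$ satisfies $\psi_{I'}(\alpha h)=\pm\,{j_\alpha}_!(\psi_I(h))=0$. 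Hence $\mathfrak{s}(I)\subset\mathfrak{s}(I'):\alpha$.

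The next step is to upgrade this chain of colon-inclusions to equalities. I would use Lemma~\ref{colonideal}: both $\CR/\mathfrak{s}(I)\cong H^*(\Hess(S,I))^W$ and $\CR/\mathfrak{s}(I')\cong H^*(\Hess(S,I'))^W$ are Poincar\'e duality algebras, of socle degrees $2|I|$ and $2|I'|=2|I|+2$ respectively, by Proposition~\ref{prop:PDAHessS}; and $\alpha=e(L_\alpha)\notin\mathfrak{s}(I')$ because $e(L_\alpha)|_v\neq 0$ at a suitable $v$ — equivalently, using the surjectivity of $\psi_{I'}$ and the fact that $H^2(\Hess(S,I'))^W\neq 0$ as soon as $I'\neq\emptyset$, one checks the degree-$1$ piece of $\psi_{I'}$ is injective (it agrees with $\alpha\mapsto w(\alpha)$ at each fixed point $w$, and these do not all vanish). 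But there is a subtlety: the grading on $\CR$ is doubled by $\psi_I$, so when applying Lemma~\ref{colonideal} I must keep track of the factor $2$ throughout; concretely, reindex $\CR$ by half-degree, or equivalently note that $\CR/\mathfrak{s}(I)$ as a PD algebra has socle degree $|I|$ in the $\CR$-grading, and $\alpha\in\CR$ has degree $1$ there. With this bookkeeping, Lemma~\ref{colonideal} yields $\mathfrak{s}(I)=\mathfrak{s}(I'):\alpha$ for each single-step extension, and then by induction on $|\Phi^+\setminus I|$, telescoping through $\Phi^+=I_m\supset I_{m-1}\supset\cdots\supset I_0=I$,
\[
\mathfrak{s}(I)=\mathfrak{s}(\Phi^+):\beta_I=\mathfrak{a}(\Phi^+):\beta_I=\mathfrak{a}(I),
\]
where the last equality is Proposition~\ref{inj}.

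Alternatively — and this may be cleaner — I would short-circuit the colon-ideal bookkeeping exactly as in the nilpotent proof: first establish the inclusion $\mathfrak{s}(I)\subset\mathfrak{a}(I)$ by the one-step Gysin argument above combined with Proposition~\ref{inj} (so $\mathfrak{s}(I)\subset\mathfrak{s}(\Phi^+):\beta_I=\mathfrak{a}(I)$, giving a surjection $\CR/\mathfrak{s}(I)\twoheadrightarrow\CR/\mathfrak{a}(I)$), and then compare Hilbert series: by Theorem~\ref{theorem:regsemi} and Theorem~\ref{theo:nilpotent} (the commuting triangle of restriction maps, or directly Proposition~\ref{dim}) one has $F(H^*(\Hess(S,I))^W,\sqrt{\s})=F(\CR/\mathfrak{a}(I),\s)$ — indeed both equal $\prod_i(1+\s+\cdots+\s^{d_i^I})$, the first because $H^*(\Hess(S,I))^W\cong H^*(\Hess(N,I))$ has already been identified, or can be computed directly from the fact that $\psi_I$ factors through $\varphi_I$ on invariants. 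The surjection between two graded algebras of equal finite dimension is an isomorphism, forcing $\mathfrak{s}(I)=\mathfrak{a}(I)$.

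The main obstacle, and the step I would spend the most care on, is the $W$-equivariant projection formula for the Gysin map ${j_\alpha}_!$ and the correct sign/twist coming from the normal bundle: I must verify that $\psi_{I'}(\alpha)$ (or $\psi_{I'}(-\alpha)$) is precisely the class by which one multiplies after ${j_\alpha}_!$, using that $\nu_\alpha|_w$ is the $T$-module $w(-\alpha)$ from \eqref{eq:TwHess} and Lemma~\ref{lemm:Com}, and that $j_\alpha^*(e(L_\alpha))=e(L_\alpha)$ restricts compatibly. Everything else — Poincar\'e duality of the invariants, surjectivity of $\psi_I$, palindromicity/Hilbert-series matching — is already in hand from Propositions~\ref{prop:W-invFinGen}, \ref{prop:W-Gysin}, \ref{prop:PDAHessS} and the results of the nilpotent section, so the semisimple case really does reduce to transporting the nilpotent argument across the GKM description, with the Gysin projection formula playing the role that Proposition~\ref{prop:Svanish} played before.
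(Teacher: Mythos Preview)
Your primary approach is correct and is exactly the paper's: establish the base case $\mathfrak{s}(\Phi^+)=\mathfrak{a}(\Phi^+)$ via Borel's theorem and Theorem~\ref{coinv}, then prove $\mathfrak{s}(I)=\mathfrak{s}(I'):\alpha$ for each one-step extension using the $W$-equivariant Gysin map, Proposition~\ref{prop:PDAHessS}, and Lemma~\ref{colonideal}, and conclude by Proposition~\ref{inj}.

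Two small points where the paper is cleaner than your write-up. First, no projection-formula detour is needed: Lemma~\ref{lemm:Com} (the commutative diagram~\eqref{eq:GysinEuler}) says directly that the Gysin map, read through the isomorphisms $\CR/\mathfrak{s}(\cdot)\cong H^*(\Hess(S,\cdot))^W$, \emph{is} multiplication by $-\alpha$; well-definedness of this map is the inclusion $\mathfrak{s}(I)\subset\mathfrak{s}(I'):\alpha$. Second, the hypothesis $\alpha\notin\mathfrak{s}(I')$ of Lemma~\ref{colonideal} is checked in the paper not by a fixed-point computation but simply by observing that ${j_\alpha}_!$ sends the cofundamental class of $\Hess(S,I)$ to that of $\Hess(S,I')$ (Property~(P2)), both of which lie in the $W$-invariants, so the multiplication-by-$(-\alpha)$ map is nonzero.

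Your alternative route via Hilbert-series comparison is circular as written: the identification $H^*(\Hess(S,I))^W\cong H^*(\Hess(N,I))$ is a \emph{consequence} of Theorems~\ref{nilpotentmain} and~\ref{ssmain} together, not an input available before proving the present theorem, and there is no independent computation of $\dim_\R H^*(\Hess(S,I))^W$ in the paper.
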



\begin{proof}
When $I=\Phi^+$, $\Hess(S,I)=G/B$ and $H^*(G/B)^W=H^*(G/B)$ by Remark~\ref{rema:WonFlag}. Hence
$\mathfrak{s}(\Phi^+)=(\CR^W_+)$ by Borel's theorem while $\mathfrak{a}(\Phi^+)=(\CR^W_+)$ by Theorem~\ref{coinv}, so we have $\mathfrak{s}(\Phi^+)=\mathfrak{a}(\Phi^+)$. Therefore, it suffices to prove 
\begin{equation} \label{eq:scolon}
\mathfrak{s}(I)= \mathfrak{s}(I'):\alpha
\end{equation}
for $\alpha \in \Phi^+ \setminus I$ such that $I'=I \cup\{\alpha\}$ is a lower ideal because the ideals $\mathfrak{a}(I)$'s satisfy the same identities.   

The Gysin map ${j_\alpha}_!$ sends $H^*(\Hess(S,I))^W$ to $H^{*+2}(\Hess(S,I'))^W$ by Proposition~\ref{prop:W-Gysin} and is nontrivial
since it maps the cofundamental class of $\Hess(S,I)$ to that of $\Hess(S,I')$ and those cofundamental classes are in the $W$-invariants.
The Gysin map ${j_\alpha}_!$ can be regarded as a map from $\CR/\mathfrak{s}(I)$ to $\CR/\mathfrak{s}(I')$ by \eqref{eq:siso}
and the commutativity of the diagram \eqref{eq:GysinEuler} implies that the map is just multiplication by $-\alpha$.
Here both $\CR/\mathfrak{s}(I)$ and $\CR/\mathfrak{s}(I')$ are Poincar\'e duality algebras by Proposition~\ref{prop:PDAHessS},
so the desired fact \eqref{eq:scolon} follows from Lemma~\ref{colonideal}. 
\end{proof}



\section{Explicit description of $D(\A_I)$ and $\mathfrak a(I)$ for types $A,B,C$ and $G$}

Throughout this section,
$V$ is an $n$-dimensional real vector space with an inner product, $x_1,\dots,x_n$ form an  orthonormal basis of $V^*$, $R:=\mathrm{Sym}(V^*)=\RR[x_1,\dots,x_n]$, and $\partial_i=\partial/\partial x_i$ for all $i$.
It is an interesting but challenging problem to find an explicit description of cohomology rings of all (regular) Hessenberg varieties as quotients of polynomial rings.
While to obtain an explicit description of a cohomology ring is difficult in general,
Theorem \ref{nilpotentmain} makes this problem quite tractable in the case of regular nilpotent Hessenberg varieties.
Indeed, it guarantees that, to find such a description, it suffices to compute a basis of $D(\A_I)$ and to find an explicit description of $\CR/\mathfrak a(I)$. 
Recall that $\A_I$ is an hyperplane arrangement in $\mathfrak t$, $\mathfrak a(I)$ is an ideal of $\CR=\mathrm{Sym}(\mathfrak t^*)$ and the set of positive roots $\Phi^+$ lives in $\CR$ (see Section 3).
In this section,
we will identify $\mathfrak t$ with (a subspace of) $V$, and find an explicit description of the ring $\mathcal R/\mathfrak a(I)$ as a quotient of the polynomial ring $R=\R[x_1,\dots,x_n]$ for types $A,B,C$ and $G$.

We refer the readers to \cite[III.\ \S 12]{Hum1} for a concrete construction of a root system, 
which will be used in this section. 

\subsection{Background}

We recall the result in \cite{AHHM}
which gives an explicit description of the cohomology rings of regular nilpotent Hessenberg varieties in type $A$.
A Hessenberg function (of type $A_{n-1}$) is a function $h: \{1,2,\ldots , n \} \to \{1,2,\ldots , n \}$ satisfying the following two conditions:
\begin{enumerate}
\item $i \leq h(i)$\ \ for $i=1,2,\ldots, n$, and
\item $h(1)\leq h(2)\leq \ldots \leq h(n)$.
\end{enumerate}
To a Hessenberg function $h$, the lower ideal $I$ in the positive roots $\Phi^+_{A_{n-1}}$ of 
type $A_{n-1}$ is defined by 
\[
I=\{x_i-x_j \mid 1\leq i \leq n-1,\ i < j \leq h(i) \}
\]
and this correspondence gives a bijection between Hessenberg functions of type $A_{n-1}$ and lower ideals 
in $\Phi^+_{A_{n-1}}$.

\begin{rem}
In type $A_{n-1}$, the regular nilpotent element $N$ can be regarded as the nilpotent matrix of size $n$ with one Jordan block and if $h$ is the Hessenberg function associated to a lower ideal $I$, then one can see that the regular nilpotent Hessenberg variety $\Hess(N,I)$ consists of the following full flags in $\CC^n$:
$$ \{(V_1 \subsetneq V_2 \subsetneq \cdots \subsetneq V_n=\CC^n) \mid NV_i \subset V_{h(i)} \ {\rm for} \ i=1,2,\ldots,n \}.$$
\end{rem}

For any non-negative integers $i,j$ with $ 1 \leq i \leq j \leq n$,
we define the polynomials $ f^A_{i,j} \in R=\R[x_1,\dots,x_n]$ by
\begin{equation*}
\textstyle
f^A_{i,j} := \sum_{k=1}^i \left ( \prod_{\ell=i+1}^{j} (x_k-x_\ell) \right) x_k 
\end{equation*}
with the convention $\prod_{\ell=i+1}^{i} (x_k-x_\ell)=1$.
(Note that $\check{f}_{i,j}$ in \cite{AHHM} is our $f_{j,i}^A$.)

\begin{theorem}[\cite{AHHM}]
\label{typeAcohomology_murai}
Let $I\subset \Phi^+_{A_{n-1}}$ be a lower ideal and $h$ the corresponding Hessenberg function. Then 
$$
H^*(\Hess(N,I)) \cong \R[x_1,\cdots,x_n]/(f^A_{1,h(1)},\dots,f^A_{n,h(n)}).$$
\end{theorem}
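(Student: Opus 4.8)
The plan is to combine the general isomorphism $H^*(\Hess(N,I))\cong \CR/\mathfrak a(I)$ of Theorem~\ref{nilpotentmain} with an explicit computation of a basis of $D(\A_I)$ in type $A_{n-1}$. First I would set up the concrete root system: take $\mathfrak t$ to be the hyperplane $\{\sum x_i = 0\}$ in $V=\R^n$ (or equivalently work in $R=\R[x_1,\dots,x_n]$ modulo the line $\R(x_1+\cdots+x_n)$), so that the ideal arrangement $\A_I$ consists of the hyperplanes $\ker(x_i-x_j)$ for $x_i-x_j\in I$, and $\CR/\mathfrak a(I)$ is, by \eqref{eq:DerRR} and the description just below Remark~\ref{ideala2}, the image of $D(\A_I)$ under the map $\theta\mapsto\theta(Q)$ for $Q=\sum x_i^2$ (up to the standard issue of whether one works with the full $R$ or the quotient by $(x_1+\cdots+x_n)$, which only shifts things by one obvious generator). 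The main work is then to exhibit derivations $\theta_{i,h(i)}$ with $\theta_{i,h(i)}(Q)=f^A_{i,h(i)}$ (up to scalar) and to verify that they form an $R$-basis of $D(\A_I)$.

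The key steps, in order, would be: (1) For the Hessenberg function $h$ write down explicit candidate derivations; the natural guess is $\theta_{i,j}=\tfrac12\sum_{k=1}^i\bigl(\prod_{\ell=i+1}^{j}(x_k-x_\ell)\bigr)\,\partial_k$ (so that $\theta_{i,j}(Q)=f^A_{i,j}$ since $\partial_k(Q)=2x_k$). (2) Check that each $\theta_{i,h(i)}$ actually lies in $D(\A_I)$, i.e.\ that $\theta_{i,h(i)}(x_a-x_b)\in R\,(x_a-x_b)$ whenever $x_a-x_b\in I$; this is where the partial order / the shape of the ideal $I$ (equivalently the monotonicity of $h$) gets used — the product factor $\prod_{\ell=i+1}^{h(i)}(x_k-x_\ell)$ is tailored to vanish on exactly the hyperplanes of $I$ that it needs to. (3) Apply Saito's criterion (Theorem~\ref{Saito}): verify $\sum_i \deg\theta_{i,h(i)} = \sum_i(h(i)-i) = |I| = |\A_I|$, and verify $R$-independence, e.g.\ by computing the coefficient determinant $\det\bigl(\text{coeff of }\partial_k\text{ in }\theta_{i,h(i)}\bigr)$ and showing it is a nonzero polynomial — one can do this by specializing or by a triangularity/leading-term argument on the $x_k$. (4) Conclude $D(\A_I)=\bigoplus_i R\,\theta_{i,h(i)}$, hence $\mathfrak a(I)=(f^A_{1,h(1)},\dots,f^A_{n,h(n)})$ inside $\CR$, and invoke Theorem~\ref{nilpotentmain}. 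One should also note that this reproves freeness of $\A_I$ in type $A$ with the correct exponents, consistent with Theorem~\ref{idealfree}.

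The main obstacle I expect is step (3), specifically the $R$-independence (nonvanishing of the Saito determinant): the degree bookkeeping $\sum(h(i)-i)=|I|$ is immediate from the bijection between Hessenberg functions and lower ideals, but showing the coefficient matrix of the $\theta_{i,h(i)}$ is nonsingular over the fraction field requires a genuine argument. The natural approach is to order the variables and read off a "leading" monomial of each $f^A_{i,h(i)}$ (or of each coefficient vector) so that the matrix becomes triangular after a suitable specialization $x_k\mapsto$ generic scalars; alternatively one can argue inductively on $n$, peeling off the last variable. A secondary technical point is reconciling the ambient ring: the paper's $\CR$ is $\mathrm{Sym}(\mathfrak t^*)$ with $\dim\mathfrak t = n-1$, whereas $f^A_{i,j}$ live in $\R[x_1,\dots,x_n]$; this is handled by observing that $f^A_{n,h(n)}=f^A_{n,n}=x_1+\cdots+x_n$ precisely cuts the extra dimension, so $\R[x_1,\dots,x_n]/(f^A_{1,h(1)},\dots,f^A_{n,h(n)}) \cong \CR/\mathfrak a(I)$, matching the socle degree $|I|$ from Proposition~\ref{CI}. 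Once these points are dispatched, Theorem~\ref{nilpotentmain} does the rest.
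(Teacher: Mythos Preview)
Your proposal is correct and essentially identical to the paper's approach (Proposition~\ref{typeAbasis_murai} and its corollary): define the candidate derivations, verify membership in $D(\A_I)$ via the case split on $i<p$, $p\le i<q$, $q\le i$, apply Saito's criterion, and invoke Theorem~\ref{nilpotentmain}; the only cosmetic difference is that the paper works in $\CR$ with $n-1$ derivations $\psi^A_{i,h(i)}=\sum_{k=1}^i\bigl(\prod_{\ell=i+1}^{h(i)}(x_k-x_\ell)\bigr)(\partial_k-\tfrac1n\overline\partial)$ rather than $n$ derivations in $R$, which is exactly the ambient-ring reconciliation you describe. Your main worry, the $R$-independence in step (3), is in fact immediate: the coefficient of $\partial_k$ in $\theta_{i,h(i)}$ vanishes for $k>i$ and equals the nonzero product $\prod_{\ell=i+1}^{h(i)}(x_i-x_\ell)$ for $k=i$, so the coefficient matrix is already lower triangular---no specialization or leading-term argument is needed, and this is precisely the one-line justification the paper gives.
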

The polynomials $f^A_{i,j}$ $(1\le i\le j\le n)$ were originally defined recursively. 
Indeed, they satisfy the initial conditions
\begin{equation}\label{eq:fi,i}
f_{i,i}^A = x_1+\cdots+x_i \quad {\rm for} \ i=1,\ldots, n, 
\end{equation}
and the recursive formula
\begin{equation}\label{eq:fi,j}
f_{i,j}^A= f_{i-1,j-1}^A +(x_i - x_j) f_{i,j-1}^A \quad \text{ for $1 \leq i < j \leq n$},
\end{equation}
where we take the convention $f_{0,*} = 0$ for any $*$. 
We put $f_{i,j}^A$ in the $(i,j)$ entry of an $n\times n$ matrix. 
Then \eqref{eq:fi,j} means that $f_{i,j}^A$ is determined by ``its northwest and west'':

\smallskip
\begin{center}
\begin{picture}(80,40)
\put(0,0){\framebox(40,20){$f_{i,j-1}^A$}}
\put(0,20){\framebox(40,20){$f_{i-1,j-1}^A$}}
\put(40,0){\framebox(40,20){$f_{i,j}^A$}}
\put(40,20){\framebox(40,20){$\cdots$}}
\end{picture}
\end{center}

\smallskip
\noindent
The polynomial $x_i-x_j$ in \eqref{eq:fi,j} can be regarded as a positive root in $\Phi^+_{A_{n-1}}$. As observed in Example \ref{ex: lower ideal of A3, B3, C3}, it is natural to arrange elements in $\Phi^+_{A_{n-1}}$ in a strict upper triangular $n\times n$ matrix shown below, so $x_i-x_j$ is naturally associated to the $(i,j)$ entry from the Lie-theoretical viewpoint:

\begin{center}
\begin{picture}(350,150)
\put(300,120){\framebox(50,20){\tiny $x_1-x_n$}} 
\put(170,127){$\cdots$}
\put(220,127){$\cdots$} 
\put(50,120){\framebox(50,20){\tiny $x_1-x_3$}}
\put(0,120){\framebox(50,20){\tiny $x_1-x_2$}} 
\put(300,100){\framebox(50,20){\tiny $x_2-x_n$}} 
\put(220,107){$\cdots$} 
\put(100,100){\framebox(50,20){\tiny $x_2-x_4$}}
\put(50,100){\framebox(50,20){\tiny $x_2-x_3$}}

\put(120,85){$\ddots$} 
\put(170,85){$\ddots$} 

\put(300,60){\framebox(50,20){\tiny $x_i-x_n$}}
\put(270,67){$\cdots$} 
\put(200,60){\framebox(50,20){\tiny $x_i-x_{i+2}$}}
\put(150,60){\framebox(50,20){\tiny $x_i-x_{i+1}$}}
\put(220,45){$\ddots$} 
\put(270,45){$\ddots$} 

\put(325,45){$\vdots$} 
\put(325,85){$\vdots$} 

\put(300,20){\framebox(50,20){\tiny $x_{n-2}-x_n$}}
\put(250,20){\framebox(50,20){\tiny $x_{n-2}-x_{n-1}$}} 
\put(300,0){\framebox(50,20){\tiny $x_{n-1}-x_n$}}
\end{picture}
\end{center}
\medskip
It turns out that this observation applied to types $B$, $C$, and $G$ with a little modification produces the desired results.

\subsection{Type $A_{n-1}$}
We first consider type $A_{n-1}$. 
We will identify $\mathfrak t$ with the hyperplane in $V$ defined by the linear form $x_1+ \cdots+x_n$.
In this setting,
$\CR=\mathrm{Sym}(\mathfrak t^*)=R/(x_1+\cdots+x_n)$
and 
$$\Phi_{A_{n-1}}^+=\{x_i-x_j \in \CR\mid 1 \leq i <j \leq n\}.$$
Note that we identify $f \in R$ with its image $f$ modulo $\sum_{i=1}^n x_i$ in $\CR$, and 
the same $f$ denotes both. 

The proof of Theorem~\ref{typeAcohomology_murai} 
given in \cite{AHHM} is based on a careful analysis of the structure of $H^*_S(\Hess(N,h))$
and needs some technical computations.
Below we show that Theorem \ref{typeAcohomology_murai} can be proved quite easily
if we use Theorem~\ref{nilpotentmain}.

Let $\overline \partial=\partial_1+\cdots+\partial_n$.
For $ 1 \leq i \leq j \leq n$,
define
$$\psi^A_{i,j} := \sum_{k=1}^i \left( \prod_{\ell=i+1}^{j} (x_k-x_\ell)\right)\big(\partial_k-\frac 1 n \overline \partial \big) \in \Der \CR=\CR \otimes \mathfrak{t},$$
where
$\mathfrak{t}$ is identified with the $\R$-linear space $\{\sum_{i=1}^n a_i \partial_i\mid \sum_{i=1}^n a_i=0\}$.

\begin{prop}
\label{typeAbasis_murai}
Let $I \subset \Phi^+_{A_{n-1}}$ be a lower ideal and $h$ be the corresponding Hessenberg function.
Then $\psi_{1,h(1)}^A,\dots,\psi_{n-1,h(n-1)}^A$
is an $\CR$-basis of $D(\A_I)$.
\end{prop}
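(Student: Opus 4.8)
The plan is to check that $\psi^A_{1,h(1)},\dots,\psi^A_{n-1,h(n-1)}$ meet the hypotheses of Saito's criterion (Theorem~\ref{Saito}) for the arrangement $\A_I$ inside the $(n-1)$-dimensional space $\mathfrak t$, namely: each $\psi^A_{i,h(i)}$ is a homogeneous element of $D(\A_I)$, the $\psi^A_{i,h(i)}$ are $\CR$-independent, and $\sum_{i=1}^{n-1}\deg\psi^A_{i,h(i)}=|\A_I|=|I|$. The degree bookkeeping is immediate: every factor $x_k-x_\ell$ appearing in $\psi^A_{i,j}$ is linear and there are $j-i$ of them, so $\psi^A_{i,j}$ is homogeneous of degree $j-i$; since $I$ contains, for each $i$, exactly the $h(i)-i$ roots $x_i-x_{i+1},\dots,x_i-x_{h(i)}$, we get $\sum_{i=1}^{n-1}\deg\psi^A_{i,h(i)}=\sum_{i=1}^{n-1}(h(i)-i)=|I|$.

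For membership I would evaluate $\psi^A_{i,j}$ on an arbitrary positive root $x_a-x_b$ ($a<b$). Since $\partial_k(x_a-x_b)=\delta_{ka}-\delta_{kb}$ while $\overline\partial(x_a-x_b)=0$, only the summands with $k=a$ or $k=b$ contribute, and three cases result: if $b\le i$, then $\psi^A_{i,j}(x_a-x_b)=P(x_a)-P(x_b)$ with $P(t)=\prod_{\ell=i+1}^{j}(t-x_\ell)$, which is divisible by $x_a-x_b$; if $a\le i<b$, then $\psi^A_{i,j}(x_a-x_b)=\prod_{\ell=i+1}^{j}(x_a-x_\ell)$, which is divisible by $x_a-x_b$ exactly when $b\le j$; and if $i<a$, the value is $0$. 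Hence $\psi^A_{i,j}\in D(\A_I)$ provided every root $x_a-x_b\in I$ with $a\le i<b$ satisfies $b\le j$; taking $j=h(i)$, a root $x_a-x_b\in I$ with $a\le i$ has $b\le h(a)\le h(i)$ by the monotonicity of the Hessenberg function $h$, so $b\le h(i)=j$ as needed. This monotonicity step --- the only place where it is used that $I$ is a \emph{lower} ideal --- is the one point that requires a little care; the rest is bookkeeping.

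For $\CR$-independence I would note that each $\psi^A_{i,h(i)}$ involves only $\partial_k-\tfrac1n\overline\partial$ with $k\le i\le n-1$, so, written in the $\R$-basis $\partial_1-\tfrac1n\overline\partial,\dots,\partial_{n-1}-\tfrac1n\overline\partial$ of $\mathfrak t$, the family $\psi^A_{1,h(1)},\dots,\psi^A_{n-1,h(n-1)}$ has coefficient matrix $M=(M_{ik})_{1\le i,k\le n-1}$ with $M_{ik}=\prod_{\ell=i+1}^{h(i)}(x_k-x_\ell)$ for $k\le i$ and $M_{ik}=0$ for $k>i$. Thus $M$ is lower triangular with diagonal entries $M_{ii}=\prod_{\ell=i+1}^{h(i)}(x_i-x_\ell)$, each a product of nonzero linear forms (positive roots) in the integral domain $\CR$, hence nonzero; therefore $\det M\ne 0$ and the $\psi^A_{i,h(i)}$ are $\CR$-independent. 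Together with the degree identity and the membership, Saito's criterion (Theorem~\ref{Saito}) then gives that $\psi^A_{1,h(1)},\dots,\psi^A_{n-1,h(n-1)}$ is an $\CR$-basis of $D(\A_I)$. (I note that since $\psi^A_{i,j}$ applied to the $W$-invariant quadratic form $\sum_m x_m^2$ equals $2f^A_{i,j}$, this proposition together with Theorem~\ref{nilpotentmain} immediately recovers Theorem~\ref{typeAcohomology_murai}.)
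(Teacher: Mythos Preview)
Your proof is correct and follows essentially the same route as the paper: both arguments verify the three hypotheses of Saito's criterion (Theorem~\ref{Saito}) by (i) reading off the degree identity $\sum_i(h(i)-i)=|I|$, (ii) computing $\psi^A_{i,h(i)}(x_a-x_b)$ via the same three-case analysis and invoking the monotonicity $h(a)\le h(i)$ of the Hessenberg function at the key step, and (iii) deducing $\CR$-independence from the lower-triangular shape of the coefficient matrix in the basis $\partial_k-\tfrac1n\overline\partial$. Your exposition is slightly more explicit about the triangular determinant, but there is no substantive difference in strategy.
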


\begin{proof}
Since each $\psi_{i,h(i)}^A$ is an $\CR$-linear combination of $\partial_{1},\dots,\partial_{i}$ 
and the coefficient of $\partial_{i}$ in $\psi_{i,h(i)}^A$ is non-zero modulo $\overline \partial$,
$\psi^A_{1,h(1)},\dots,\psi^A_{n-1,h(n-1)}$ are $\CR$-independent.
Also $\sum_{i=1}^{n-1} \deg \psi_{i,h(i)}^A=\sum_{i=1}^{n-1} (h(i)-i)=|I|=|\A_I|$.
Then, by Saito's criterion (Theorem \ref{Saito}),
what we must prove is that each $\psi_{i,h(i)}^A$ is contained in $D(\mathcal A_I)$.

Let $x_p-x_q \in I$, so $p<q\le h(p)$. Observe that $\overline \partial (x_p-x_q)=0$.
We prove
\begin{align}
\label{typeAeq_murai}
\psi_{i,h(i)}^A (x_p-x_q ) \in (x_p-x_q ) \qquad\text{for all $1 \leq i \leq n-1$.}
\end{align}
One can easily see that 
\[
\psi_{i,h(i)}^A(x_p-x_q)=\begin{cases} 0 \quad &\text{if $i<p$},\\
\prod_{\ell=i+1}^{h(i)} (x_p-x_\ell) \quad &\text{if $p\le i<q$},\\
\prod_{\ell=i+1}^{h(i)} (x_p-x_\ell) -\prod_{\ell=i+1}^{h(i)} (x_q-x_\ell)\quad &\text{if $q\le i$}.\end{cases}
\]
It is obvious that \eqref{typeAeq_murai} holds in the first and third cases above. In the second case, we have $i+1\le q\le h(i)$ since $i< q\le h(p)\le h(i)$ (note that $p\le i$ implies $h(p)\le h(i)$ by condition (2) in the definition of the Hessenberg function $h$); so \eqref{typeAeq_murai} holds in any case. Therefore $\psi_{1,h(1)}^A,\dots,\psi_{n-1,h(n-1)}^A \in D(\A_I)$ as desired.
\end{proof}

Observe that $f^A_{n,h(n)}=f^A_{n,n}=x_1+\cdots+x_n$, and 
 in $\CR$, $f^A_{i,j}=\frac 1 2 \psi^A_{i,j}(x_1^2+ \cdots +x_n^2) 
$ for all $i,j$.
Then,
by Theorem~\ref{nilpotentmain}, 
the following corollary which immediately follows from Proposition~\ref{typeAbasis_murai} proves Theorem \ref{typeAcohomology_murai}.

\begin{cor}
With the same notation as in
Proposition \ref{typeAbasis_murai},
$\mathfrak a(I)$ is the ideal of $\CR=R/(x_1+\cdots+x_n)$ generated by $f^A_{1,h(1)},\dots,f^A_{n-1,h(n-1)}$.
\end{cor}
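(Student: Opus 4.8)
The plan is to deduce the corollary directly from Proposition~\ref{typeAbasis_murai} together with the description of $\mathfrak a(I)$ as the image of $D(\A_I)$ under the map \eqref{eq:DerRR}. Recall that once a $W$-invariant inner product is fixed on $\mathfrak t$, we have $\mathfrak a(I)=\{\theta(Q)\mid \theta\in D(\A_I)\}$ where $Q$ can be taken to be (the image in $\CR$ of) $x_1^2+\cdots+x_n^2$; see Remark~\ref{ideala2} and the discussion around \eqref{eq:DerRR}. Since $D(\A_I)$ is a free $\CR$-module with basis $\psi^A_{1,h(1)},\dots,\psi^A_{n-1,h(n-1)}$ by Proposition~\ref{typeAbasis_murai}, the ideal $\mathfrak a(I)$ is generated by the $n-1$ elements $\psi^A_{i,h(i)}(Q)$ for $i=1,\dots,n-1$.

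First I would compute $\psi^A_{i,j}(x_1^2+\cdots+x_n^2)$ explicitly. Applying the derivation $\partial_k-\tfrac1n\overline\partial$ to $x_1^2+\cdots+x_n^2$ gives $2x_k-\tfrac2n(x_1+\cdots+x_n)$, which is $2x_k$ in $\CR=R/(x_1+\cdots+x_n)$. Hence in $\CR$,
\[
\tfrac12\psi^A_{i,j}(x_1^2+\cdots+x_n^2)=\sum_{k=1}^i\Big(\prod_{\ell=i+1}^{j}(x_k-x_\ell)\Big)x_k=f^A_{i,j},
\]
exactly as asserted in the paragraph preceding the corollary. Therefore $\mathfrak a(I)$ is generated in $\CR$ by $f^A_{1,h(1)},\dots,f^A_{n-1,h(n-1)}$, which is precisely the statement. (Alternatively one can simply invoke the sentence ``in $\CR$, $f^A_{i,j}=\tfrac12\psi^A_{i,j}(x_1^2+\cdots+x_n^2)$ for all $i,j$'' already stated in the excerpt, and the fact that scaling $Q$ by $2$ does not change $\mathfrak a(I)$.)

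There is no real obstacle here; the only point requiring a sentence of care is the passage from the basis of $D(\A_I)$ to a generating set of $\mathfrak a(I)$: the map $\theta\mapsto\theta(Q)$ in \eqref{eq:DerRR} is an $\CR$-module homomorphism, so it carries an $\CR$-generating set of $D(\A_I)$ (in particular an $\CR$-basis) to an $\CR$-generating set of its image $\mathfrak a(I)$. Combined with the explicit evaluation above, this gives the corollary, and then Theorem~\ref{nilpotentmain} immediately yields the presentation $H^*(\Hess(N,I))\cong \CR/\mathfrak a(I)=R/(x_1+\cdots+x_n,\,f^A_{1,h(1)},\dots,f^A_{n-1,h(n-1)})$, recovering Theorem~\ref{typeAcohomology_murai} once one notes $f^A_{n,h(n)}=f^A_{n,n}=x_1+\cdots+x_n$.
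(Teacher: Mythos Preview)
Your proof is correct and matches the paper's approach exactly: the paper simply observes that $f^A_{i,j}=\tfrac12\psi^A_{i,j}(x_1^2+\cdots+x_n^2)$ in $\CR$ and declares the corollary an immediate consequence of Proposition~\ref{typeAbasis_murai}. Your write-up merely makes explicit the one-line justification (that $\theta\mapsto\theta(Q)$ is an $\CR$-module map, so an $\CR$-basis of $D(\A_I)$ maps to generators of $\mathfrak a(I)$) which the paper leaves implicit.
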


\begin{rem}
When the lower ideal $I$ consists of the hyperplanes $\ker \alpha$ such that 
$\mbox{ht}(\alpha) \le k$, the corresponding ideal arrangement $\A_{\le k}$ is called a 
\textbf{height subarrangement} of the Weyl arrangement. In 2012, the first author 
learned from Terao the construction of the basis of $D(\A_{\le k})$ for all $k$ when $\Phi$ is 
of type $A$, which 
coincides with that in Proposition \ref{typeAbasis_murai}. At that time no 
relations were found between $D(\A_{\le k})$ and Hessenberg varieties.  
\end{rem}

\subsection{Type $B_n$}
We consider type $B_n$.
Here we identify $\mathfrak t$ with $V$ (in particular, $\CR$ is identified with $R$)
and take
$$\Phi^+_{B_n}=\{ x_i \pm x_j \mid 1 \leq i < j \leq n\} \cup \{ x_i \mid i=1,2,\dots,n\}$$
as the set of positive roots of type $B_n$. 
As observed in Example \ref{ex: lower ideal of A3, B3, C3}, we arrange positive roots in $\Phi^+_{B_n}$ as follows: 
\begin{center}
\begin{picture}(440,120)

\put(400,100){\framebox(40,20){\tiny $x_1+x_2$}}
\put(335,105){$\cdots$}
\put(160,100){\framebox(40,20){\tiny $x_1-x_n$}} 
\put(200,100){\framebox(40,20){\tiny $x_1$}} 
\put(240,100){\framebox(40,20){\tiny $x_1+x_n$}} 
\put(95,105){$\cdots$}
\put(0,100){\framebox(40,20){\tiny $x_1-x_2$}} 
\put(55,85){$\ddots$}
\put(218,85){$\vdots$}
\put(375,85){$\cdot$} 
\put(380,87.5){$\cdot$}
\put(385,90){$\cdot$}

\put(320,60){\framebox(40,20){\tiny $x_i+x_{i+1}$}} 
\put(160,60){\framebox(40,20){\tiny $x_i-x_n$}}
\put(295,65){$\cdots$} 
\put(200,60){\framebox(40,20){\tiny $x_i$}}
\put(135,65){$\cdots$} 
\put(240,60){\framebox(40,20){\tiny $x_i+x_n$}}
\put(80,60){\framebox(40,20){\tiny $x_i-x_{i+1}$}}
\put(135,45){$\ddots$}
\put(218,45){$\vdots$}
\put(295,45){$\cdot$} 
\put(300,47.5){$\cdot$}
\put(305,50){$\cdot$}
\put(240,20){\framebox(40,20){\tiny $x_{n-1}+x_n$}}
\put(200,20){\framebox(40,20){\tiny $x_{n-1}$}}
\put(160,20){\framebox(40,20){\tiny $x_{n-1}-x_n$}}
\put(200,0){\framebox(40,20){\tiny $x_n$}}
\end{picture}
\end{center}

\smallskip
We regard these positive roots as being arranged in the region 
\begin{equation} \label{eq:triangledown}
\triangledown_n:=\{(i,j) \mid 1 \leq i \leq n, \ i+1 \leq j \leq 2n+1-i \}
\end{equation}
of an $n \times 2n$ matrix and denote by $\alpha_{i,j}$ the positive root in the $(i,j)$ entry. Then the $i$-th row  $\alpha_{i,i+1},\alpha_{i,i+2},\dots,\alpha_{i,2n+1-i}$ are 
\[
x_i-x_{i+1},
\dots,
x_i-x_{n},
x_i,
x_i+x_{n},
\dots,
x_i + x_{i+1}.
\]
Note that 
$\alpha\preceq\beta$ for $\alpha,\beta\in\Phi^+_{B_n}$ if and only if $\beta$ is located northeast of $\alpha$, that is, 
\begin{align*}
\alpha_{i,j}\preceq \alpha_{p,q} \ {\rm if \ and \ only \ if} \ i \geq p \ {\rm and} \ j \leq q.
\end{align*}

We define a Hessenberg function (of type $B_n$) to be a function $h:\{1,\dots, n \} \to \{1,\dots, 2n \}$ satisfying the following three conditions:
\begin{enumerate}
\item $i \leq h(i) \leq 2n+1-i$ \ for \ $i=1,2,\ldots, n,$ 
\item if $h(i)\neq 2n+1-i$, then $h(i) \leq h(i+1)$, and 
\item if $h(i)= 2n+1-i$, then $h(k)=2n+1-k$ for $k > i$.
\end{enumerate}
To a Hessenberg function $h$ (of type $B_n$), the lower ideal $I$ in $\Phi^+_{B_n}$ is defined by 
$$I=\{\alpha_{i,j} \mid 1\leq i \leq n, i < j \leq h(i) \}$$
and this correspondence gives a bijection between Hessenberg functions of type $B_n$ and lower ideals in $\Phi^+_{B_{n}}$.

%
%

Motivated by \eqref{eq:fi,i} and \eqref{eq:fi,j}, we define derivations $\psi_{i,j}^B\in\CR\otimes\mathfrak{t}$ by the following initial conditions and recursive formula: 
\begin{align*}
&\psi^B_{i,i} = \partial_1+\cdots+\partial_i \ \ \ \ \ \ \ \ \ \ \ \ \ \ \ {\rm for} \ i=1,\ldots, n, \\
&\psi^B_{i,j} = \psi^B_{i-1,j-1} +\alpha_{i,j} \psi^B_{i,j-1} \ \ \ \ \ \ {\rm for} \ (i,j)\in \triangledown_n,
\end{align*}
where we take the convention $\psi_{0,*} = 0$ for any $*$. Then an elementary computation shows that
\begin{align*}
\textstyle
\psi_{i,j}^B=
\sum_{k=1}^{i} \left( \prod_{\ell=i+1}^j \alpha_{k,\ell} \right) \partial_{{k}}\qquad \text{for $(i,j)\in \triangledown_n \cup \{(i,i) \mid 1 \leq i \leq n \}$}
\end{align*}
with the convention $\prod_{\ell=i+1}^i\alpha_{k,\ell}=1$. 

\begin{rem}
The initial terms $\psi_{1,1}^B,\psi_{2,2}^B,\dots,\psi_{n,n}^B$ are dual to the simple roots $\alpha_{1,2},\alpha_{2,3},\dots,\alpha_{n,n+1}$, i.e., $\psi_{i,i}^B(\alpha_{j,j+1})=\delta_{ij}$ where $\delta_{ij}$ is the Kronecker delta. 
\end{rem}

\begin{example} \label{exam:B3}
In type $B_3$, we have
$$\Phi^+_{B_3}=\{x_1-x_2,
x_1-x_3,
x_1,
x_1+x_3,
x_1+x_2,
x_2-x_3,
x_2,
x_2+x_3,
x_3\}.$$
\begin{center}
\begin{picture}(280,65)

\put(0,00){\framebox(40,20)}
\put(0,20){\framebox(40,20)} 
\put(0,40){\framebox(40,20){$\psi^B_{1,1}$}}
\put(40,00){\framebox(40,20)}
\put(40,20){\framebox(40,20){$\psi^B_{2,2}$}}
\put(40,40){\framebox(40,20){$\psi^B_{1,2}$}}
\put(80,00){\framebox(40,20){$\psi^B_{3,3}$}} 
\put(80,20){\framebox(40,20){$\psi^B_{2,3}$}} 
\put(80,40){\framebox(40,20){$\psi^B_{1,3}$}} 
\put(120,00){\framebox(40,20){$\psi^B_{3,4}$}} 
\put(120,20){\framebox(40,20){$\psi^B_{2,4}$}} 
\put(120,40){\framebox(40,20){$\psi^B_{1,4}$}} 
\put(160,00){\framebox(40,20)} 
\put(160,20){\framebox(40,20){$\psi^B_{2,5}$}} 
\put(160,40){\framebox(40,20){$\psi^B_{1,5}$}} 
\put(200,00){\framebox(40,20)} 
\put(200,20){\framebox(40,20)} 
\put(200,40){\framebox(40,20){$\psi^B_{1,6}$}} 

\end{picture}
\end{center}
Then $\psi^B_{i,j}$ are as follows:  
\begin{align*}
\psi^B_{1,1} &=\partial_{1},\quad
\psi^B_{2,2} =\partial_{1}+\partial_{2},\quad
\psi^B_{3,3} =\partial_{1}+\partial_{2}+\partial_{3},\\
\psi^B_{1,2} &= (x_1-x_2)\partial_{1},\\
\psi^B_{1,3} &= (x_1-x_2)(x_1-x_3)\partial_{1},\\
\psi^B_{1,4} &= (x_1-x_2)(x_1-x_3)x_1\partial_1,\\
\psi^B_{1,5} &= (x_1-x_2)(x_1-x_3)x_1(x_1+x_3)\partial_{1},\\
\psi^B_{1,6} &= (x_1-x_2)(x_1-x_3)x_1(x_1+x_3)(x_1+x_2)\partial_{1},\\
\psi^B_{2,3} &= (x_1-x_3)\partial_{1}+(x_2-x_3)\partial_{2},\\
\psi^B_{2,4} &= (x_1-x_3)x_1\partial_1+(x_2-x_3)x_2\partial_2,\\
\psi^B_{2,5} &= (x_1-x_3)x_1(x_1+x_3)\partial_{1}+(x_2-x_3)x_2(x_2+x_3)\partial_{2},\\
\psi^B_{3,4} &= x_1\partial_1 + x_2\partial_2+x_3\partial_3.
\end{align*}
\end{example}



Recall that, for a lower ideal $I \subset \Phi^+_{B_n}$,
$\A_I$ is the hyperplane arrangement in $V$ defined by linear forms in $I$.
We first prove that $\psi^B_{i,2n+1-i}$ is an element of $D(\mathcal A_{I})$ for any ideal $I \subset \Phi^+_{B_n}$.

\begin{lemma}\label{lemma:DerCondTop}
\label{genflag}
Let $1 \leq i \leq n$. Then
$\psi_{i,2n+1-i}^B (\alpha) \in (\alpha)$ for any $\alpha \in \Phi^+_{B_n}$.
\end{lemma}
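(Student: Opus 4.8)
I want to show that the derivation $\psi^B_{i,2n+1-i}$ sends every positive root $\alpha \in \Phi^+_{B_n}$ into the ideal $(\alpha)$. The key observation is that the index $j = 2n+1-i$ is the \emph{largest} possible column index in row $i$ of the triangular region $\triangledown_n$, so the product $\prod_{\ell=i+1}^{2n+1-i}\alpha_{k,\ell}$ runs over \emph{all} the entries strictly to the right of position $(k,k)$ in row $k$ (for $k \le i$). Concretely, unwinding the definition of the $\alpha_{k,\ell}$, that product is
\[
\prod_{\ell=i+1}^{2n+1-i}\alpha_{k,\ell} = \Bigl(\prod_{\ell=i+1}^{n}(x_k-x_\ell)\Bigr)\, x_k \,\Bigl(\prod_{\ell=i+1}^{n}(x_k+x_\ell)\Bigr) = x_k\prod_{\ell=i+1}^{n}(x_k^2-x_\ell^2),
\]
so that
\[
\psi^B_{i,2n+1-i} = \sum_{k=1}^{i} x_k\Bigl(\prod_{\ell=i+1}^{n}(x_k^2-x_\ell^2)\Bigr)\partial_k.
\]
Write $P_i(t) := t\prod_{\ell=i+1}^{n}(t - x_\ell^2)$, a polynomial in $t$ with coefficients in $\R[x_{i+1},\dots,x_n]$; then $\psi^B_{i,2n+1-i} = \sum_{k=1}^{i} P_i(x_k^2)\,\partial_k$. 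This form makes the derivation manifestly ``symmetric'' in $x_1,\dots,x_i$ and even in each of $x_1,\dots,x_i$, which is exactly what one needs to handle the $B_n$ roots.

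\textbf{Case analysis on $\alpha$.} There are three shapes of positive roots in $\Phi^+_{B_n}$: short roots $x_p$, and long roots $x_p \pm x_q$ with $p < q$. First, if $\alpha$ involves only variables with index $> i$ (i.e.\ $p > i$, or $p,q > i$), then $\psi^B_{i,2n+1-i}(\alpha) = 0 \in (\alpha)$ trivially, since each $\partial_k$ with $k \le i$ kills such $\alpha$. So assume $\alpha$ involves at least one of $x_1,\dots,x_i$. For $\alpha = x_p$ with $p \le i$: $\psi^B_{i,2n+1-i}(x_p) = P_i(x_p^2)$, and $P_i(t) = t\prod_{\ell=i+1}^n(t-x_\ell^2)$ is divisible by $t$, hence $P_i(x_p^2)$ is divisible by $x_p^2$, a fortiori by $x_p$. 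For $\alpha = x_p - x_q$ with $p \le i$: if $q > i$ then $q \in \{i+1,\dots,n\}$, so $x_p^2 - x_q^2$ is one of the factors of $P_i(x_p^2)$, giving $(x_p-x_q)\mid P_i(x_p^2) = \psi^B_{i,2n+1-i}(x_p-x_q)$; if $q \le i$ then $\psi^B_{i,2n+1-i}(x_p - x_q) = P_i(x_p^2) - P_i(x_q^2)$, and since $P_i(x_p^2) - P_i(x_q^2)$ is a polynomial in $x_p^2,x_q^2$ vanishing when $x_p = x_q$, it is divisible by $x_p - x_q$. The case $\alpha = x_p + x_q$ is symmetric: $\psi^B_{i,2n+1-i}(x_p+x_q) = P_i(x_p^2) + P_i(x_q^2)$ (note the $\partial_q$ contributes $+P_i(x_q^2)$ since $\partial_q(x_p+x_q)=1$), and setting $x_p = -x_q$ makes $x_p^2 = x_q^2$, so $P_i(x_p^2) + P_i(x_q^2) = 2P_i(x_q^2)$; to get divisibility by $x_p + x_q$ I instead note $P_i(x_p^2) + P_i(x_q^2) \equiv 2P_i(x_q^2) \pmod{x_p+x_q}$ is not obviously zero — so more carefully, $P_i(x_p^2)+P_i(x_q^2)$ as a function vanishes modulo $x_p+x_q$ only after using that $P_i$ is a function of the square; the clean statement is that $P_i(x_p^2)+P_i(x_q^2)$ lies in $(x_p+x_q)$ iff it vanishes on $x_p = -x_q$, and there it equals $2P_i(x_q^2) \ne 0$ in general. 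Hence this naive computation is \emph{wrong}, and I need to recompute: in fact for $\alpha = x_p + x_q$, $\partial_p(\alpha) = \partial_q(\alpha) = 1$, so $\psi^B_{i,2n+1-i}(x_p+x_q) = P_i(x_p^2) + P_i(x_q^2)$ if both $p,q \le i$ — but this must be divisible by $x_p+x_q$, which forces me to revisit whether $P_i$ really only depends on squares. It does (by the displayed formula), so the obstruction is real and signals I should double-check the sign conventions in $\alpha_{k,\ell}$ for $\ell$ in the ``$+$'' range of row $k$ versus row $i$ — the product $\prod_{\ell=i+1}^{2n+1-i}\alpha_{k,\ell}$ uses the \emph{row-$i$} column labels but \emph{row-$k$} root entries, and for $k < i$ these ranges differ, so the symmetric rewriting above needs rechecking.

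\textbf{Expected main obstacle.} The real subtlety, and where I expect to spend effort, is exactly pinning down $\psi^B_{i,2n+1-i}$ in closed form: one must be careful that in $\psi^B_{i,j} = \sum_{k=1}^i (\prod_{\ell=i+1}^j \alpha_{k,\ell})\partial_k$ the product uses the entries $\alpha_{k,i+1},\dots,\alpha_{k,j}$ of \emph{row $k$}, and when $j = 2n+1-i$ these are $\alpha_{k,i+1} = x_k - x_{i+1}, \dots$ up through $\alpha_{k,2n+1-i}$, which — because row $k$ has its ``diagonal'' at column $k$ and its center (the short root $x_k$) at column $n$ — are indeed $x_k - x_{i+1},\dots,x_k - x_n, x_k, x_k + x_n, \dots, x_k + x_{i+1}$, i.e.\ the $2(n-i)+1$ roots $x_k$, $x_k \pm x_\ell$ for $\ell = i+1,\dots,n$. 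So $\prod_{\ell=i+1}^{2n+1-i}\alpha_{k,\ell} = x_k \prod_{\ell=i+1}^n (x_k^2 - x_\ell^2)$ after all, and my formula stands; the apparent contradiction above means the assertion $\psi^B_{i,2n+1-i}(x_p+x_q) \in (x_p+x_q)$ must be checked by actually evaluating $\partial_p$ and $\partial_q$ correctly — and here $\partial_q(x_p+x_q) = 1$ while $\partial_q$ hits $P_i(x_q^2)\partial_q$ only if $q \le i$; if $q \le i$ and $p \le i$, then indeed I get $P_i(x_p^2) + P_i(x_q^2)$, which vanishes mod $x_p+x_q$ iff $P_i(x_p^2) = -P_i(x_q^2)$ there — false. \emph{Therefore} the correct reading must be that when $q \le i$ the relevant derivations also include the dependence of the \emph{row-$q$} factors on $x_q$; i.e.\ $\psi^B_{i,2n+1-i}(x_p+x_q)$ gets contributions both from $\partial_p$ acting on the $\partial_p$-coefficient and from $\partial_q$, but crucially the $\partial_p$-coefficient $P_i(x_p^2)$ depends \emph{only} on $x_p^2$ and on $x_{i+1},\dots,x_n$ — not on $x_q$ for $q \le i$, $q \ne p$ — so there is no cross term, and the computation genuinely gives $P_i(x_p^2) + P_i(x_q^2)$. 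The resolution, then, is that I must not have the closed form right: re-examining, when $k < i$, row $k$'s entry in \emph{column} $\ell$ (for $\ell$ ranging as dictated by row $i$'s column indices $i+1,\dots,2n+1-i$) — but row $k$'s own valid columns are $k+1,\dots,2n+1-k$, a strictly wider range, and in that row column $n$ holds $x_k$, columns $n+1,\dots,2n+1-k$ hold $x_k+x_n,\dots,x_k+x_{k+1}$. So column $i+1$ of row $k$ (with $k<i$) holds $x_k - x_{i+1}$, correct; column $2n+1-i$ of row $k$ holds $x_k + x_{i+1}$ (since $2n+1-i = n + (n+1-i)$ and row $k$ at offset $n+1-i$ past center has $x_k + x_{i+1}$). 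Good — so the closed form is confirmed once more. Given this genuine tension, the honest plan is: (1) establish the closed form $\psi^B_{i,2n+1-i} = \sum_{k\le i} x_k\prod_{\ell=i+1}^n(x_k^2-x_\ell^2)\,\partial_k$ rigorously by induction from the recursion (this is the routine-but-delicate part); (2) for each root shape, compute the image and exhibit the divisor, \emph{paying special attention to $x_p+x_q$ with $p,q\le i$} where I must use that $x_k \prod(x_k^2 - x_\ell^2)$ is an odd function of $x_k$, so $P_i(x_p^2)\cdot x_p^{-1}\cdot x_p + P_i(x_q^2)$ — i.e., writing $\psi^B_{i,2n+1-i} = \sum_k x_k Q_i(x_k^2)\partial_k$ with $Q_i(t) = \prod_{\ell=i+1}^n(t-x_\ell^2)$, the image on $x_p+x_q$ is $x_p Q_i(x_p^2) + x_q Q_i(x_q^2)$, and \emph{this} vanishes when $x_p = -x_q$ because $x_p Q_i(x_p^2) = (-x_q)Q_i(x_q^2) = -x_q Q_i(x_q^2)$. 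That is the point — the oddness in $x_k$ is what makes the $x_p + x_q$ case work, not evenness. So the main obstacle is organizational: keeping the parity bookkeeping straight so that both $x_p - x_q$ (needs the function to agree at $x_p = x_q$) and $x_p + x_q$ (needs oddness, so it flips sign at $x_p = -x_q$) are handled, together with the $x_p$ case (needs divisibility by $x_p$, immediate from the overall factor $x_k$). Once the closed form is in hand, each verification is a one-line divisibility check.
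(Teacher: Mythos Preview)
Your eventual plan is correct and matches the paper's proof almost exactly: write the coefficient of $\partial_k$ as $x_k\prod_{\ell=i+1}^n(x_k^2-x_\ell^2)$, then do a short case analysis on the shape of $\alpha$, using that the coefficient is odd in $x_k$ and contains the factors $x_k$ and $x_k^2-x_\ell^2$ for $\ell>i$.

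However, the proposal as written takes a long detour through a self-inflicted error. Your definition $P_i(t):=t\prod_{\ell=i+1}^n(t-x_\ell^2)$ followed by the claim $\psi^B_{i,2n+1-i}=\sum_k P_i(x_k^2)\partial_k$ is wrong: that would give coefficient $x_k^2\prod(x_k^2-x_\ell^2)$, one extra factor of $x_k$. This mistake is precisely what made the $x_p+x_q$ case look impossible (an even function of $x_p$ cannot vanish at $x_p=-x_q$ unless it also vanishes at $x_p=x_q$). You eventually fix this by switching to $Q_i(t)=\prod_{\ell=i+1}^n(t-x_\ell^2)$ and writing the coefficient as $x_kQ_i(x_k^2)$, which is odd in $x_k$ --- and then the argument goes through in one line, exactly as in the paper. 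All the paragraphs about ``row-$i$ column labels but row-$k$ root entries'' and rechecking sign conventions are chasing a ghost created by the bad $P_i$.

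Two smaller remarks. First, the general closed form $\psi^B_{i,j}=\sum_{k=1}^i(\prod_{\ell=i+1}^j\alpha_{k,\ell})\partial_k$ is already established in the paper before this lemma, so your step~(1) (prove it by induction from the recursion) is unnecessary here --- you may simply substitute $j=2n+1-i$ and expand. Second, once you have the correct coefficient $c_k:=x_k\prod_{\ell=i+1}^n(x_k^2-x_\ell^2)$, the whole case analysis compresses to: $c_k$ is divisible by $x_k$ and by $x_k\pm x_\ell$ for every $\ell>i$ (handling all roots touching an index $>i$), and $c_p\pm c_q$ vanishes under $x_p\mapsto \mp x_q$ (handling $x_p\mp x_q$ with $p,q\le i$). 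That is the paper's proof.
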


\begin{proof}
We note that 
\[
\psi^B_{i,2n+1-i}=\sum_{k=1}^{i} \left( \prod_{\ell=i+1}^{2n+1-i} \alpha_{k,\ell} \right) \partial_{{k}}
=\sum_{k=1}^i\left(\prod_{\ell=i+1}^n(x_k-x_\ell)(x_k+x_\ell)\right)x_k\partial_k.
\]
If $\alpha=x_u \pm x_v$ with $v \geq i+1$ or $\alpha= x_u$, then
$\psi^B_{i,2n+1-i}(\alpha)$ is either zero or $\prod_{\ell=i+1}^{n} (x_{u}-x_\ell)(x_{u}+x_\ell) x_{u}$,
which is contained in the ideal $(\alpha)$.
Suppose $\alpha =x_u \pm x_v$ with $u,v \leq i$.
Then
\begin{align*}
&\psi^B_{i,2n+1-i}(x_u\pm x_v)\\
&= \left( \prod_{\ell=i+1}^n (x_u-x_\ell)(x_u + x_\ell) \right) x_u \pm \left( \prod_{\ell=i+1}^n (x_v-x_\ell)(x_v + x_\ell) \right) x_v.
\end{align*}
The right-hand side of the above equation is contained in $(x_u \pm x_v)$ since if we replace $x_u$ by $\pm x_v$ in $\left( \prod_{\ell=i+1}^n (x_u-x_\ell)(x_u + x_\ell) \right) x_u$, then we obtain the polynomial 
$\pm ( \prod_{\ell=i+1}^n (x_v-x_\ell)(x_v + x_\ell)) x_v$.
\end{proof}

We now prove the main result of this subsection.

\begin{theorem}
\label{BasisTypeBC}
Let $I \subset \Phi^+_{B_n}$ be a lower ideal and let $h$ be the corresponding Hessenberg function.
Then $\psi^B_{1,h(1)},\psi^B_{2,h(2)},\dots,\psi^B_{n,h(n)}$ is an $\CR$-basis of $D(\mathcal A_I)$.
\end{theorem}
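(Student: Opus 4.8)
The plan is to apply Saito's criterion (Theorem~\ref{Saito}). Two of its three requirements are cheap. First, $\deg\psi^B_{i,h(i)}=h(i)-i$, so $\sum_{i=1}^n\deg\psi^B_{i,h(i)}=\sum_{i=1}^n(h(i)-i)=|I|=|\A_I|$, since $I=\{\alpha_{i,j}\mid 1\le i\le n,\ i<j\le h(i)\}$. Second, $\psi^B_{i,h(i)}=\sum_{k=1}^i\big(\prod_{\ell=i+1}^{h(i)}\alpha_{k,\ell}\big)\partial_k$ involves only $\partial_1,\dots,\partial_i$, and the coefficient of $\partial_i$ in it is $\prod_{\ell=i+1}^{h(i)}\alpha_{i,\ell}$, a nonzero product of pairwise distinct linear forms; so a relation $\sum_{i=1}^n g_i\psi^B_{i,h(i)}=0$ with $g_i\in\CR$ forces $g_n=0$ (read off the $\partial_n$-component), then $g_{n-1}=0$, and so on, giving $\CR$-independence. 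Hence the theorem reduces to showing $\psi^B_{i,h(i)}\in D(\A_I)$, i.e.\ $\psi^B_{i,h(i)}(\alpha)\in(\alpha)$ for every $\alpha\in I$ and every $i$.

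Fix $i$. If row $i$ is full, that is $h(i)=2n+1-i$, then $\psi^B_{i,h(i)}=\psi^B_{i,2n+1-i}$ lies in $D(\A_{\Phi^+_{B_n}})\subset D(\A_I)$ by Lemma~\ref{genflag}, and there is nothing more to do. So assume $h(i)<2n+1-i$, write $c_k:=\prod_{\ell=i+1}^{h(i)}\alpha_{k,\ell}$ for the coefficient of $\partial_k$ in $\psi^B_{i,h(i)}$ (with the convention $c_k=0$ for $k>i$), and fix $\alpha=\alpha_{p,j}\in I$, so $p<j\le h(p)$. I would then distinguish three cases according to the position of $p,j$ relative to $i$. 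If $p>i$, then $\psi^B_{i,h(i)}(\alpha)=0$, since $\alpha$ is a linear form in $x_p$ and at most one further variable $x_c$ with $c>p>i$, all killed by $\partial_1,\dots,\partial_i$. If $p\le i$ and $j\le i$ (so $\alpha=x_p-x_j$), then $\psi^B_{i,h(i)}(\alpha)=c_p-c_j$; substituting $x_p=x_j$ turns each factor $\alpha_{p,\ell}$ $(i+1\le\ell\le h(i))$ into $\alpha_{j,\ell}$ (the possible second variable occurring in $\alpha_{p,\ell}$ has index $>i\ge j>p$, where one uses $h(i)<2n+1-i$), so $c_p-c_j$ vanishes modulo $x_p-x_j$ and hence lies in $(\alpha)$. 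Finally, if $p\le i<j$, then $\psi^B_{i,h(i)}(\alpha)=c_p$, and the crucial point is that $j\le h(i)$; granting this, $\alpha=\alpha_{p,j}$ is one of the factors of $c_p=\prod_{\ell=i+1}^{h(i)}\alpha_{p,\ell}$, so $c_p\in(\alpha)$.

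The heart of the matter, and the step I expect to need the most care, is the inequality $j\le h(i)$ in the last case, together with checking that this is really the only subtle situation. If no row among $p,p+1,\dots,i-1$ is full, condition~(2) of a Hessenberg function of type $B_n$ gives $h(p)\le h(p+1)\le\dots\le h(i)$, hence $j\le h(p)\le h(i)$. If some row $r$ with $p\le r<i$ is full, condition~(3) forces $h(i)=2n+1-i$, contradicting the standing assumption, so this alternative does not arise. The roots that need attention are those of the form $x_p+x_m$ (with $p<m$): if $m>i$, then this root is $\alpha_{p,2n+2-m}$ with $2n+2-m>i$, so it falls under the last case and is handled by $j\le h(i)$; if $m\le i$, then from $x_p+x_m\in I$ one sees that all of row $m$ lies below $x_p+x_m$ in the partial order, so row $m$ is full, whence condition~(3) makes row $i$ full as well — again contradicting $h(i)<2n+1-i$, so this never occurs under the standing assumption (and when $h(i)$ is full it is already subsumed in the Lemma~\ref{genflag} case). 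Putting the three cases together shows $\psi^B_{i,h(i)}\in D(\A_I)$ for all $i$, and Saito's criterion then yields the theorem.
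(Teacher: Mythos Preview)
Your proof is correct and follows essentially the same route as the paper's: Saito's criterion, triangularity for independence, the degree count, reduction to $h(i)<2n+1-i$ via Lemma~\ref{genflag}, and the same three-case analysis of $\psi^B_{i,h(i)}(\alpha_{p,j})$. The only cosmetic difference is in how you exclude the problematic subcase $\alpha=x_p+x_m$ with $m\le i$: the paper first derives $j\le h(p)\le h(i)<2n+1-i$ and reads off that the second variable has index $>i$, whereas you argue directly from the lower ideal property that $x_p+x_m\in I$ with $m\le i$ forces row $m$ (hence row $i$) to be full---logically equivalent, just packaged differently.
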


\begin{proof}
Since each $\psi_{i,h(i)}^B$ is an $\CR$-linear combination of $\partial_{1},\dots,\partial_{i}$ 
and the coefficient of $\partial_{i}$ in $\psi_{i,h(i)}^B$ is non-zero,
$\psi^B_{1,h(1)},\dots,\psi^B_{n,h(n)}$ are $\CR$-independent.
Also $\sum_{i=1}^{n} \deg \psi_{i,h(i)}^B=\sum_{i=1}^{n} (h(i)-i)=|I|=|\A_I|$.
Then, by Saito's criterion (Theorem \ref{Saito}),
what we must prove is that each $\psi_{i,h(i)}^B$ is contained in $D(\mathcal A_I)$.

Let $\alpha_{p,q} \in I$, so 
\begin{equation} \label{eq:st}
p<q\le h(p).
\end{equation} 
We prove
\begin{align}
\label{9-2_murai}
\psi^B_{i,h(i)}(\alpha_{p,q}) \in (\alpha_{p,q}) \ \ \mbox{ for all } 1 \leq i \leq n.
\end{align}
We fix $i$ and may assume $h(i) < 2n+1-i$ by Lemma \ref{lemma:DerCondTop}.
Then one can see that 
\begin{equation} \label{eq:psiBst}
\psi_{i,h(i)}^B(\alpha_{p,q})=\begin{cases} 0 \quad &\text{if $i<p$},\\
\prod_{\ell=i+1}^{h(i)} \alpha_{p,\ell} \quad &\text{if $p\le i<q$},\\
\prod_{\ell=i+1}^{h(i)} \alpha_{p,\ell} -\prod_{\ell=i+1}^{h(i)} \alpha_{q,\ell}\quad &\text{if $q\le i$}.\end{cases}
\end{equation}
Indeed, \eqref{eq:psiBst} is obvious in the first and third cases.
In the second case, since $p\le i$ and $h(i) < 2n+1-i$, it follows from the definition of Hessenberg function of type $B_n$ that we have $h(p)\neq 2n+1-p$ by condition (3) and hence $h(p)\le h(i)$ by condition (2). Therefore, we obtain
$i<q \leq h(p) \leq h(i)< 2n+1-i,$ where the second inequality follows from \eqref{eq:st}. 
The obtained inequality $i<q < 2n+1-i$ means that $\alpha_{p,q}=x_p\pm x_r$ with $r\ge i+1$ or $x_p$. Since $\psi_{i,h(i)}(x_r)=0$ and $p\le i$, the second identity in \eqref{eq:psiBst} follows. 

We have $i<q \leq h(i)$ in the second case and we note that $\alpha_{p,\ell}-\alpha_{q,\ell}=\alpha_{p,q}$ in the third case, so \eqref{9-2_murai} holds in any case. Therefore $\psi_{1,h(1)}^B,\dots,\psi_{n,h(n)}^B \in D(\A_I)$ as desired.
\end{proof}



\begin{cor}
Let $f_{i,j}^B=\frac 1 2 \psi^B_{i,j}(x_1^2+ \cdots +x_n^2)$. Then, with the same notation as in Theorem \ref{BasisTypeBC},
we have 
$\mathfrak a(I)=(f^B_{1,h(1)},f^B_{2,h(2)},\dots,f^B_{n,h(n)})$,
and hence 
$$
H^*(\Hess(N,I)) \cong \R[x_1,\dots,x_n]/(f^B_{1,h(1)},f^B_{2,h(2)},\dots,f^B_{n,h(n)}).
$$
\end{cor}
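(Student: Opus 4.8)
The plan is to read off the corollary directly from Theorem~\ref{BasisTypeBC}, the definition of $\mathfrak a(I)$, and Theorem~\ref{nilpotentmain}; essentially all of the real work has already been done in the proof of Theorem~\ref{BasisTypeBC}. First I would recall from Definition~\ref{ideala} and the discussion around \eqref{eq:DerRR} that, for a $W$-invariant non-degenerate quadratic form $Q$ on $\mathfrak t$, one has $\mathfrak a(I)=\{\theta(Q)\mid \theta\in D(\A_I)\}$, and that by Remark~\ref{ideala2} this ideal is independent of the choice of $Q$. In type $B_n$ the Weyl group acts on $V=\mathfrak t$ by signed permutations of the orthonormal coordinates $x_1,\dots,x_n$, so $Q:=x_1^2+\cdots+x_n^2$ is $W$-invariant and non-degenerate; this is precisely the form used in the statement to define $f^B_{i,j}=\frac{1}{2}\psi^B_{i,j}(Q)$. (This small compatibility check, together with the fact that the identification $\mathfrak t=V$ is the same one used to write down $\A_I$ and $\Phi^+_{B_n}$, is the only routine point one must not skip.)

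Next I would observe that $\theta\mapsto\theta(Q)$ defines an $\CR$-module homomorphism $D(\A_I)\to\CR$, since $(f\theta)(Q)=f\cdot\theta(Q)$ for all $f\in\CR$ and $\theta\in D(\A_I)$. Therefore the image of any $\CR$-generating set of $D(\A_I)$ generates the image $\mathfrak a(I)$ as an $\CR$-module, hence as an ideal of $\CR$. By Theorem~\ref{BasisTypeBC}, $\psi^B_{1,h(1)},\dots,\psi^B_{n,h(n)}$ is an $\CR$-basis of $D(\A_I)$, and by definition $\psi^B_{i,h(i)}(Q)=2f^B_{i,h(i)}$; since $2$ is a unit this yields
\[
\mathfrak a(I)=\bigl(\psi^B_{1,h(1)}(Q),\dots,\psi^B_{n,h(n)}(Q)\bigr)=\bigl(f^B_{1,h(1)},\dots,f^B_{n,h(n)}\bigr).
\]

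Finally, since in type $B_n$ we identified $\mathfrak t$ with $V$ so that $\CR=\mathrm{Sym}(\mathfrak t^*)=R=\R[x_1,\dots,x_n]$, Theorem~\ref{nilpotentmain} gives
\[
H^*(\Hess(N,I))\cong\CR/\mathfrak a(I)=\R[x_1,\dots,x_n]/(f^B_{1,h(1)},\dots,f^B_{n,h(n)}),
\]
which is the second assertion. I do not expect any genuine obstacle at this stage: the nontrivial content — that each $\psi^B_{i,h(i)}$ lies in $D(\A_I)$ and that these $n$ derivations form a basis — is exactly Theorem~\ref{BasisTypeBC}, proved via Saito's criterion (Theorem~\ref{Saito}) using Lemma~\ref{lemma:DerCondTop} and the three defining conditions on Hessenberg functions of type $B_n$. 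The corollary is then just the translation of that basis statement, through the $\CR$-linear map $\theta\mapsto\theta(Q)$ and Theorem~\ref{nilpotentmain}, into a ring presentation.
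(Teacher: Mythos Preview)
Your argument is correct and is exactly the intended one: the paper states this corollary without proof, regarding it as immediate from Theorem~\ref{BasisTypeBC}, the definition of $\mathfrak a(I)$, and Theorem~\ref{nilpotentmain}. Your write-up spells out precisely those implicit steps (choice of $Q$, $\CR$-linearity of $\theta\mapsto\theta(Q)$, and the identification $\CR=R$), and nothing more is needed.
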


\begin{example}
Consider type $B_3$. Let $$I=\{x_1-x_2,x_1-x_3,x_2-x_3,x_2,x_2+x_3,x_3\}.$$
Then
$(h(1),h(2),h(3))=(3,5,4)$, and it follows from Example~\ref{exam:B3} that $\CR/\mathfrak a(I) \cong H^*(\Hess (N,I))$ is isomorphic to the quotient of $\R[x_1,x_2,x_3]$ by the ideal generated by the following three polynomials
\begin{align*}
f^B_{1,3}&=(x_1-x_2)(x_1-x_3)x_1,\\
f^B_{2,5}&=(x_1-x_3)(x_1+x_3)x_1^2+(x_2-x_3)(x_2+x_3)x_2^2,\\
f^B_{3,4}&=x_1^2+x_2^2+x_3^2.
\end{align*}
\end{example}

\subsection{Type $C_n$}

Next, we consider type $C_n$.
We again identify $\mathfrak t$ with $V$,
and take
$$\Phi^+_{C_n}=\{ x_i \pm x_j\mid 1 \leq i < j \leq n\} \cup \{ 2 x_i\mid i=1,2,\dots,n\}
$$
as the set of positive roots of type $C_n$.
As observed in Example \ref{ex: lower ideal of A3, B3, C3}, we arrange positive roots in $\Phi^+_{C_n}$ as follows: \\
\begin{center}
\begin{picture}(440,100)

\put(400,100){\framebox(40,20){\tiny $2x_1$}}
\put(295,105){$\cdots$}
\put(160,100){\framebox(40,20){\tiny $x_1-x_n$}} 
\put(200,100){\framebox(40,20){\tiny $x_1+x_n$}} 
\put(360,100){\framebox(40,20){\tiny $x_1+x_2$}} 
\put(95,105){$\cdots$}
\put(0,100){\framebox(40,20){\tiny $x_1-x_2$}} 
\put(55,85){$\ddots$}
\put(178,85){$\vdots$}
\put(218,85){$\vdots$}
\put(375,85){$\cdot$} 
\put(380,87.5){$\cdot$}
\put(385,90){$\cdot$}
\put(335,85){$\cdot$} 
\put(340,87.5){$\cdot$}
\put(345,90){$\cdot$}

\put(320,60){\framebox(40,20){\tiny $2x_i$}} 
\put(160,60){\framebox(40,20){\tiny $x_i-x_n$}}
\put(255,65){$\cdots$}
\put(200,60){\framebox(40,20){\tiny $x_i+x_n$}}
\put(135,65){$\cdots$}
\put(280,60){\framebox(40,20){\tiny $x_i+x_{i+1}$}}
\put(80,60){\framebox(40,20){\tiny $x_i-x_{i+1}$}}
\put(135,45){$\ddots$}
\put(178,45){$\vdots$}
\put(218,45){$\vdots$}
\put(295,45){$\cdot$} 
\put(300,47.5){$\cdot$}
\put(305,50){$\cdot$}
\put(255,45){$\cdot$} 
\put(260,47.5){$\cdot$}
\put(265,50){$\cdot$}
\put(240,20){\framebox(40,20){\tiny $2x_{n-1}$}}
\put(200,20){\framebox(40,20){\tiny $x_{n-1}+x_n$}}
\put(160,20){\framebox(40,20){\tiny $x_{n-1}-x_n$}}
\put(200,0){\framebox(40,20){\tiny $2x_n$}}
\end{picture}
\end{center}
\medskip
Similarly to type $B_n$ case, we regard these positive roots as being arranged in the region $\triangledown_n$ (see \eqref{eq:triangledown}). We define $\alpha_{i,j}$ for $(i,j) \in \triangledown_n$ by
\begin{align*}
\alpha_{i,j}=
\begin{cases}
\text{the $(i,j)$ entry} & \mbox{ if } j \ne 2n+1-i,\\
x_i & \mbox{ if } j=2n+1-i,
\end{cases}
\end{align*}
%
%
%
and derivations $\psi_{i,j}^C\in\CR\otimes\mathfrak{t}$ by the following initial conditions and recursive formula: 
\begin{align*}
&\psi^C_{i,i} = \partial_1+\cdots+\partial_i \ \ \ \ \ \ \ \ \ \ \ \ \ \ \ {\rm for} \ i=1,\ldots, n, \\
&\psi^C_{i,j} = \psi^C_{i-1,j-1} +\alpha_{i,j} \psi^C_{i,j-1} \ \ \ \ \ \ {\rm for} \ (i,j)\in \triangledown_n,
\end{align*}
where we take the convention $\psi_{0,*} = 0$ for any $*$ as before. Then an elementary computation shows that 
\begin{align*}
\psi^C_{i,j}=
\begin{cases}
\sum_{k=1}^{i} \left( \prod_{\ell=i+1}^j \alpha_{k,\ell} \right) \partial_{{k}} & \mbox{ if } j \ne 2n+1-i,\\
\sum_{k=1}^{i} \left ( \prod_{\ell=i+1}^n (x_{k}-x_\ell)(x_{k}+x_\ell)\right) x_{k} \partial_{{k}}
& \mbox{ if } j=2n+1-i.
\end{cases}
\end{align*}
for $(i,j)\in \triangledown_n \cup \{(i,i) \mid 1 \leq i \leq n \}$, with the convention $\prod_{\ell=i+1}^i *=1$.

\begin{rem}
(1) $\psi^C_{i,2n+1-i} = \psi^B_{i,2n+1-i}$ for $i=1,\dots,n$. 

(2) The initial terms $\psi^C_{1,1}, \psi^C_{2,2},\dots,\psi^C_{n,n}$ are dual to $\alpha_{1,2},\alpha_{2,3},\dots,\alpha_{n,n+1}$ similarly to type $B_n$ case, but $\alpha_{n,n+1}=x_n$ is not a simple root although the others are simple roots. 
\end{rem}

\begin{example}
In type $C_3$, we have
$$\Phi_{C_3}^+=\{x_1-x_2,
x_1-x_3,
x_1+x_3,
x_1+x_2,
2 x_1,
x_2-x_3,
x_2+x_3,
2 x_2,
2 x_3\}.$$

\begin{center}
\begin{picture}(280,65)

\put(0,00){\framebox(40,20)}
\put(0,20){\framebox(40,20)} 
\put(0,40){\framebox(40,20){$\psi^C_{1,1}$}}
\put(40,00){\framebox(40,20)}
\put(40,20){\framebox(40,20){$\psi^C_{2,2}$}}
\put(40,40){\framebox(40,20){$\psi^C_{1,2}$}}
\put(80,00){\framebox(40,20){$\psi^C_{3,3}$}} 
\put(80,20){\framebox(40,20){$\psi^C_{2,3}$}} 
\put(80,40){\framebox(40,20){$\psi^C_{1,3}$}} 
\put(120,00){\framebox(40,20){$\psi^C_{3,4}$}} 
\put(120,20){\framebox(40,20){$\psi^C_{2,4}$}} 
\put(120,40){\framebox(40,20){$\psi^C_{1,4}$}} 
\put(160,00){\framebox(40,20)} 
\put(160,20){\framebox(40,20){$\psi^C_{2,5}$}} 
\put(160,40){\framebox(40,20){$\psi^C_{1,5}$}} 
\put(200,00){\framebox(40,20)} 
\put(200,20){\framebox(40,20)} 
\put(200,40){\framebox(40,20){$\psi^C_{1,6}$}} 

\end{picture}
\end{center}
Then 
\begin{align*}
\psi^C_{1,4} &= (x_1-x_2)(x_1-x_3)(x_1+x_3)\partial_1,\\
\psi^C_{1,5} &= (x_1-x_2)(x_1-x_3)(x_1+x_3)(x_1+x_2)\partial_{1},\\
\psi^C_{2,4} &= (x_1-x_3)(x_1+x_3)\partial_1+(x_2-x_3)(x_2+x_3)\partial_2,
\end{align*}
and other $\psi_{i,j}^C$ are same as $\psi_{i,j}^B$ in Example~\ref{exam:B3}. 
\end{example}


We define Hessenberg functions of type $C_n$ to be the same as type $B_n$. Since $\psi_{i,2n+1-i}^C=\psi_{i,2n+1-i}^B$ and $\Phi^+_{C_n}$ agrees with $\Phi^+_{B_n}$ up to constant multiples, Lemma \ref{genflag} holds for $\psi_{i,2n+1-i}^C$ and the following theorem can be proved exactly in the same way as in the proof of Theorem \ref{BasisTypeBC}.

\begin{theorem}
\label{BasisTypeC}
Let $I \subset \Phi^+_{C_n}$ be a lower ideal and $h$ be the corresponding Hessenberg function.
Then $\psi^C_{1,h(1)},\psi^C_{2,h(2)},\dots,\psi^C_{n,h(n)}$ is an $\CR$-basis of $D(\mathcal A_I)$.
\end{theorem}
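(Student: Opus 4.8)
The plan is to verify the three hypotheses of Saito's criterion (Theorem~\ref{Saito}), transcribing the proof of Theorem~\ref{BasisTypeBC} almost word for word. The two routine points I would dispatch first: each $\psi^C_{i,h(i)}$ is an $\CR$-linear combination of $\partial_1,\dots,\partial_i$ in which the coefficient of $\partial_i$ is a nonzero polynomial (namely $\prod_{\ell=i+1}^{h(i)}\alpha_{i,\ell}$, or $\big(\prod_{\ell=i+1}^{n}(x_i-x_\ell)(x_i+x_\ell)\big)x_i$ when $h(i)=2n+1-i$), so the list $\psi^C_{1,h(1)},\dots,\psi^C_{n,h(n)}$ is $\CR$-independent by triangularity; and $\sum_{i=1}^n\deg\psi^C_{i,h(i)}=\sum_{i=1}^n\big(h(i)-i\big)=|I|=|\A_I|$. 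It then remains only to show that each $\psi^C_{i,h(i)}$ lies in $D(\A_I)$, i.e.\ that $\psi^C_{i,h(i)}(\alpha_{p,q})\in(\alpha_{p,q})$ for every $\alpha_{p,q}\in I$.

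For this I would fix $i$ and $\alpha_{p,q}\in I$ (so $p<q\le h(p)$) and split according to whether $h(i)=2n+1-i$ or $h(i)<2n+1-i$. In the first case $\psi^C_{i,h(i)}=\psi^C_{i,2n+1-i}=\psi^B_{i,2n+1-i}$, and since $\Phi^+_{C_n}$ coincides with $\Phi^+_{B_n}$ except that $2x_u$ replaces $x_u$ --- and $(2x_u)=(x_u)$ --- Lemma~\ref{genflag} immediately gives $\psi^C_{i,2n+1-i}(\alpha)\in(\alpha)$ for all $\alpha\in\Phi^+_{C_n}$. In the second case only genuine roots occur in the formula for $\psi^C_{i,h(i)}$ (no antidiagonal entry is reached, since $\ell\le h(i)<2n+1-i\le 2n+1-k$ whenever $k\le i$), and a direct computation parallel to~\eqref{eq:psiBst} gives
\[
\psi^C_{i,h(i)}(\alpha_{p,q})=
\begin{cases}
0 & \text{if } i<p,\\
\prod_{\ell=i+1}^{h(i)}\alpha_{p,\ell} & \text{if } p\le i<q,\\
\prod_{\ell=i+1}^{h(i)}\alpha_{p,\ell}-\prod_{\ell=i+1}^{h(i)}\alpha_{q,\ell} & \text{if } q\le i.
\end{cases}
\]
The first line is trivial, and the third follows from $\alpha_{p,\ell}-\alpha_{q,\ell}=\alpha_{p,q}$ for the relevant columns $\ell$. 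For the middle line I would argue exactly as in Theorem~\ref{BasisTypeBC}: from $p\le i$ and $h(i)\ne 2n+1-i$ one gets $h(p)\ne 2n+1-p$ by condition (3), hence $h(p)\le h(i)$ by condition (2), so $i<q\le h(p)\le h(i)<2n+1-i$; in particular $q<2n+1-p$, so $\alpha_{p,q}$ has the form $x_p\pm x_r$ with $r\ge i+1$ (it is not $2x_p$, which occupies column $2n+1-p$), whence $\psi^C_{i,h(i)}$ annihilates $x_r$, the value $\prod_{\ell=i+1}^{h(i)}\alpha_{p,\ell}$ results, and it is divisible by $\alpha_{p,q}$ since $i+1\le q\le h(i)$. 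Thus $\psi^C_{i,h(i)}\in D(\A_I)$ in every case, and Saito's criterion completes the argument.

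I do not expect a genuine obstacle, since the whole proof is a direct transcription of the type $B_n$ case and the finitely many identities needed among the $\alpha_{i,j}$ are immediate from their layout in the region $\triangledown_n$. The only point demanding a little care is the bookkeeping at the antidiagonal column $j=2n+1-i$, where the root appearing in the recursion is taken to be $x_i$ rather than the true positive root $2x_i$; but this is harmless because $(x_i)=(2x_i)$, and the single situation in which such a modified entry actually enters, namely $h(i)=2n+1-i$, is settled wholesale by Lemma~\ref{genflag}.
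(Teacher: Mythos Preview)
Your proposal is correct and follows exactly the approach the paper indicates: the paper's proof of Theorem~\ref{BasisTypeC} is a one-line reference back to Theorem~\ref{BasisTypeBC}, noting that $\psi^C_{i,2n+1-i}=\psi^B_{i,2n+1-i}$ and that $\Phi^+_{C_n}$ agrees with $\Phi^+_{B_n}$ up to scalar multiples, so Lemma~\ref{genflag} applies and the rest goes through verbatim. You have simply written this out in full, including the careful bookkeeping at the antidiagonal column, which is precisely what the paper leaves implicit.
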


\begin{cor}
Let $f_{i,j}^C=\frac 1 2 \psi^C_{i,j}(x_1^2+ \cdots +x_n^2)$. Then, with the same notation as in Theorem \ref{BasisTypeC},
we have $\mathfrak a(I)=(f^C_{1,h(1)},f^C_{2,h(2)},\dots,f^C_{n,h(n)})$, 
and hence 
$$
H^*(\Hess(N,I)) \cong \R[x_1,\dots,x_n]/(f^C_{1,h(1)},f^C_{2,h(2)},\dots,f^C_{n,h(n)}).
$$
\end{cor}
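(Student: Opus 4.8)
The plan is to derive the corollary directly from Theorem~\ref{BasisTypeC} and Theorem~\ref{nilpotentmain}, with only routine identifications in between. Recall from Definition~\ref{ideala} that $\mathfrak a(I)=\{\theta(Q)\mid\theta\in D(\A_I)\}$ for a $W$-invariant non-degenerate quadratic form $Q$, and that by Remark~\ref{ideala2} this ideal is independent of the choice of $Q$. In type $C_n$ the Weyl group acts on $V$ by signed permutations of the coordinates $x_1,\dots,x_n$, so I would take $Q:=x_1^2+\cdots+x_n^2$, which is $W$-invariant and non-degenerate; moreover under the identification $\mathfrak t=V$ made in this subsection one has $\CR=\mathrm{Sym}(\mathfrak t^*)=R=\R[x_1,\dots,x_n]$. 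Thus it suffices to compute $\{\theta(Q)\mid\theta\in D(\A_I)\}$ for this particular $Q$.

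Next I would invoke Theorem~\ref{BasisTypeC}, which tells us that $D(\A_I)$ is a free $\CR$-module with basis $\psi^C_{1,h(1)},\dots,\psi^C_{n,h(n)}$. The evaluation map $D(\A_I)\to\CR$, $\theta\mapsto\theta(Q)$, is $\CR$-linear since $(g\theta)(Q)=g\cdot\theta(Q)$ for every $g\in\CR$ and every derivation $\theta$; hence its image $\mathfrak a(I)$ is generated as an ideal by the elements $\psi^C_{i,h(i)}(Q)$, $i=1,\dots,n$. Substituting $Q=\sum_m x_m^2$ into the closed-form expressions for $\psi^C_{i,j}$ recorded just before the statement gives $\psi^C_{i,j}(Q)=2f^C_{i,j}$ in both cases $j\ne 2n+1-i$ and $j=2n+1-i$, so $\mathfrak a(I)=(f^C_{1,h(1)},\dots,f^C_{n,h(n)})$. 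Finally, Theorem~\ref{nilpotentmain} gives $\CR/\mathfrak a(I)\cong H^*(\Hess(N,I))$, which together with the computation of $\mathfrak a(I)$ yields the asserted presentation $H^*(\Hess(N,I))\cong\R[x_1,\dots,x_n]/(f^C_{1,h(1)},\dots,f^C_{n,h(n)})$.

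The only genuine content beyond this bookkeeping lies in Theorem~\ref{BasisTypeC}: one must check that each $\psi^C_{i,h(i)}$ actually lies in $D(\A_I)$, and then apply Saito's criterion (Theorem~\ref{Saito}) — the $\CR$-independence and the degree count $\sum_i(h(i)-i)=|I|=|\A_I|$ being immediate. As indicated in the excerpt, this verification runs exactly as the proof of Theorem~\ref{BasisTypeBC}, using that $\psi^C_{i,2n+1-i}=\psi^B_{i,2n+1-i}$ (so Lemma~\ref{genflag} applies verbatim) and that $\Phi^+_{C_n}$ agrees with $\Phi^+_{B_n}$ up to scalar multiples, so the case analysis of \eqref{eq:psiBst} showing $\psi^C_{i,h(i)}(\alpha_{p,q})\in(\alpha_{p,q})$ for all $\alpha_{p,q}\in I$ goes through unchanged. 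That case analysis — in particular the step using conditions (2) and (3) in the definition of a type-$C_n$ Hessenberg function to bound $h(p)\le h(i)$ when $p\le i$ — is where I expect whatever mild difficulty there is; everything else is formal.
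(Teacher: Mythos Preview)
Your proposal is correct and follows exactly the approach the paper intends: the corollary is stated without proof because it is an immediate consequence of Theorem~\ref{BasisTypeC} (giving the $\CR$-basis of $D(\A_I)$), the definition $\mathfrak a(I)=\{\theta(Q)\mid\theta\in D(\A_I)\}$ with $Q=\sum x_i^2$, and Theorem~\ref{nilpotentmain}. Your last paragraph really belongs to the proof of Theorem~\ref{BasisTypeC} rather than to this corollary, but that is harmless.
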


\subsection{Type $G_2$}
We finally consider type $G_2$.
Let $V$ be the real vector space of dimension $3$ with an inner product,
$x,y,z$ an orthonormal basis of $V^*$, and 
we identify $\mathfrak t$ with 
the hyperplane in $V$ defined by the linear form $x+y+z$.
Then the positive roots $\Phi^+_{G_2}$ of type $G_2$ can be taken as the images of the following polynomials in $\CR=\mbox{Sym}(\mathfrak{t}^*)=\R[x,y,z]/(x+y+z)$: 
$$ x-y,\ \ -x+z,\ \ -y+z,\ \ x-2y+z,\ \ -x-y+2z,\ \ -2x+y+z. $$
We arrange these polynomials in the region 
\begin{equation*} 
\triangledown:=\{(1,2), (1,3), (1,4), (1,5), (1,6), (2,3) \}
\end{equation*}
as follows:
\begin{center}
\begin{picture}(250,40)
\put(0,20){\framebox(50,20){\tiny $x-y$}} 

\put(50,00){\framebox(50,20){\tiny $-2x+y+z$}}
\put(50,20){\framebox(50,20){\tiny $-x+z$}}

\put(100,20){\framebox(50,20){\tiny $-y+z$}} 

\put(150,20){\framebox(50,20){\tiny $x-2y+z$}} 

\put(200,20){\framebox(50,20){\tiny $-x-y+2z$}} 

\end{picture}
\end{center}
\medskip
The images of these entries in $\CR$ are positive roots in $\Phi^+_{G_2}$ and we denote the image of the $(i,j)$ entry by $\alpha_{i,j}$. 
Then 
\[\alpha_{i,j}\preceq \alpha_{p,q} \ {\rm if \ and \ only \ if} \ i \geq p \ {\rm and} \ j \leq q\]
as before.  

We define a Hessenberg function (of type $G_2)$ to be a function $h: \{1,2 \} \to \{1,2,3,4,5,6 \}$ satisfying the following conditions:
\begin{enumerate}
\item $1 \leq h(1) \leq 6$ \ and \ $2 \leq h(2) \leq 3,$ and
\item if $h(1)\geq 3$, then $h(2)=3$.
\end{enumerate}
To a Hessenberg function $h$ (of type $G_2$), the lower ideal $I$ in $\Phi^+_{G_2}$ is defined by 
$$
I=\{\alpha_{i,j} \mid 1 \leq i \leq 2, i<j \leq h(i) \}
$$
and this correspondence gives a bijection between Hessenberg functions of type $G_2$ and lower ideals in $\Phi^+_{G_{2}}$.

Let $\overline \partial=\partial_x+\partial_y+\partial_z$. 
We identify $\mathfrak{t}$ with the $\R$-linear space 
\[
\{a_x\partial_x+a_y\partial_y+a_z\partial_z\mid a_x,a_y,a_z\in\R,\ a_x+a_y+a_z=0\}
\]
and define derivations $\psi^G_{i,j}\in \CR\otimes \mathfrak{t}$ by the following initial conditions and recursive formula:
\begin{align*}
&\psi^G_{1,1} = -\partial_y+\partial_z, \qquad \psi^G_{2,2} = \partial_z- \textstyle \frac 1 3 \overline \partial,  \\
&\psi^G_{i,j} = \psi^G_{i-1,j-1} +\alpha_{i,j} \psi^G_{i,j-1} \ \ \ \ \ \ {\rm for} \ (i,j)\in \triangledown
\end{align*}
with the convention $\psi_{0,*}= 0$ for any $*$. 

\begin{rem}
The initial terms $\psi_{1,1}^G,\psi_{2,2}^G$ are dual to the simple roots $\alpha_{1,2},\alpha_{2,3}$.
\end{rem}

An elementary computation shows that
\begin{align*}
\textstyle
\psi_{i,j}^G=
\begin{cases}
\left(\prod_{\ell=2}^j \alpha_{1,\ell}\right) (-\partial_y+\partial_z) &{\rm if} \ 1=i<j, \\
x\partial_x+y\partial_y+z\partial_z -\frac 1 3(x+y+z)\overline \partial 
&{\rm if} \ (i,j)=(2,3),
\end{cases}
\end{align*}
where the images of $x,y,z$ in $\CR$ are denoted by the same notation respectively.

\begin{theorem}
\label{BasisG2}
Let $I\subset \Phi^+_{G_2}$ be a lower ideal and $h$ be the corresponding Hessenberg function. 
Then $\psi_{1,h(1)}^G, \psi_{2,h(2)}^G$ is an $\CR$-basis of $D(\A_I)$. 
\end{theorem}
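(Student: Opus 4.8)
The plan is to invoke Saito's criterion (Theorem~\ref{Saito}), exactly as in the proofs of Theorems~\ref{typeAbasis_murai} and~\ref{BasisTypeBC}. Since $\A_I$ is free of rank two by Theorem~\ref{idealfree}, it suffices to establish three things: that $\psi^G_{1,h(1)}$ and $\psi^G_{2,h(2)}$ are $\CR$-independent, that $\deg\psi^G_{1,h(1)}+\deg\psi^G_{2,h(2)}=|\A_I|$, and that both derivations lie in $D(\A_I)$.

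First I would dispatch the two easy items using the closed formula for $\psi^G_{i,j}$ recorded just before the theorem. The derivation $\psi^G_{1,h(1)}=\bigl(\prod_{\ell=2}^{h(1)}\alpha_{1,\ell}\bigr)(-\partial_y+\partial_z)$ lies in the rank-one $\CR$-submodule $\CR\cdot(-\partial_y+\partial_z)$ of $\Der\CR$, whereas $\psi^G_{2,h(2)}$ — which equals $\partial_z-\frac13\overline\partial$ when $h(2)=2$ and $x\partial_x+y\partial_y+z\partial_z-\frac13(x+y+z)\overline\partial$ when $h(2)=3$ — has a nonzero $\partial_x$-component modulo $\overline\partial$; hence no nontrivial $\CR$-linear relation can hold between them. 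For the degree count, $\deg\psi^G_{1,h(1)}+\deg\psi^G_{2,h(2)}=(h(1)-1)+(h(2)-2)=|I|$, and $|I|=|\A_I|$ because distinct positive roots of $G_2$ determine distinct hyperplanes.

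The substance of the argument, and the only place where the conditions defining a Hessenberg function of type $G_2$ are used, is the verification that $\psi^G_{i,h(i)}(\alpha_{p,q})\in(\alpha_{p,q})$ for every $\alpha_{p,q}\in I$, i.e.\ for every $(p,q)$ with $p\in\{1,2\}$ and $p<q\le h(p)$. Because $|\Phi^+_{G_2}|=6$ this is a short finite check. For $i=2$: if $h(2)=3$ then $\psi^G_{2,3}(\gamma)=\gamma$ for every linear form $\gamma\in\CR$ (as $\overline\partial$ annihilates $\CR_1$), so the condition is automatic; if $h(2)=2$, then condition~(2) in the definition forces $h(1)\le2$, whence $I\subseteq\{\alpha_{1,2}\}$, and $\psi^G_{2,2}(\alpha_{1,2})=0$. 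For $i=1$: if $p=1$ and $q\le h(1)$ then $\alpha_{1,q}$ is one of the factors of $\prod_{\ell=2}^{h(1)}\alpha_{1,\ell}$, so the divisibility is clear; the only remaining root is $\alpha_{2,3}$ (whose membership in $I$ forces $h(2)=3$), and there $\psi^G_{1,h(1)}(\alpha_{2,3})=\bigl(\prod_{\ell=2}^{h(1)}\alpha_{1,\ell}\bigr)(-\partial_y+\partial_z)(\alpha_{2,3})=0$ since the $y$- and $z$-coefficients of $\alpha_{2,3}=-2x+y+z$ are equal. Together these give $\psi^G_{1,h(1)},\psi^G_{2,h(2)}\in D(\A_I)$, and Saito's criterion concludes.

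I do not expect a genuine obstacle here: the root system is tiny and there are only eight Hessenberg functions of type $G_2$, so even a brute-force enumeration would settle the matter. The only care required is bookkeeping — correctly reading off the positive root in each entry $\alpha_{i,j}$ of the displayed array, and observing that $-\partial_y+\partial_z$ kills $\alpha_{2,3}$ while $\psi^G_{2,2}$ kills $\alpha_{1,2}$, which is exactly the feature that lets the "long" derivations $\psi^G_{1,h(1)}$ and the short derivation $\psi^G_{2,2}$ land in $D(\A_I)$.
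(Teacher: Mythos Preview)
Your proposal is correct and follows exactly the same approach as the paper: apply Saito's criterion after verifying $\CR$-independence, the degree count, and membership in $D(\A_I)$. The paper's own proof is a two-line sketch that dismisses the verification $\psi^G_{i,h(i)}(\alpha)\in(\alpha)$ as ``routine work,'' whereas you have written out this routine check in full; the content is the same.
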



\begin{proof}
It is a routine work to check that $\psi^G_{i,h(i)}(\alpha) \in (\alpha)$ for any $\alpha \in I$.
Since $\psi_{1,h(1)}^G$ and $\psi_{2,h(2)}^G$ are $\CR$-independent and $\deg \psi^G_{1,h(1)}+\deg \psi^G_{2,h(2)}=|\A_I|$, Theorem \ref{Saito} proves the desired statement.
\end{proof}

\begin{cor}
Let $f^G_{i,j}=\frac1 2 \psi^G_{i,j}(x^2+y^2+z^2)$.
Then, with the same notation as in Theorem \ref{BasisG2},
we have $\mathfrak{a}(I)=(f_{1,h(1)}^G, f_{2,h(2)}^G)$, and hence 
$$
H^*(\Hess(N,I)) \cong \R[x,y,z]/(x+y+z, f_{1,h(1)}^G, f_{2,h(2)}^G).
$$
\end{cor}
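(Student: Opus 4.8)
The plan is to deduce this corollary directly from Theorem~\ref{BasisG2}, Definition~\ref{ideala}, and Theorem~\ref{nilpotentmain}; essentially no new computation is needed.

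First I would record the general principle that the map $D(\A_I)\to\CR$ given by $\theta\mapsto\theta(Q)$ is $\CR$-linear, so that $\mathfrak a(I)=\{\theta(Q)\mid\theta\in D(\A_I)\}$ equals the ideal generated by $\theta_1(Q),\dots,\theta_k(Q)$ for any $\CR$-generating set $\theta_1,\dots,\theta_k$ of $D(\A_I)$. Applying this to the $\CR$-basis $\psi^G_{1,h(1)},\psi^G_{2,h(2)}$ of $D(\A_I)$ furnished by Theorem~\ref{BasisG2} gives $\mathfrak a(I)=(\psi^G_{1,h(1)}(Q),\psi^G_{2,h(2)}(Q))$.

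Next I would choose $Q$ conveniently. Under the identification of $\mathfrak t$ with the hyperplane $\{x+y+z=0\}\subset V$, the $W$-action on $\mathfrak t$ is by orthogonal transformations of $V$, so the image $Q$ of $x^2+y^2+z^2$ in $\CR=\R[x,y,z]/(x+y+z)$ is $W$-invariant and its restriction to $\mathfrak t$ is non-degenerate. By Remark~\ref{ideala2} the ideal $\mathfrak a(I)$ is independent of the choice of such $Q$, so we may use this one. (One should also note that each $\psi^G_{i,j}$ annihilates $x+y+z$ and hence descends to a derivation of $\CR$, and that $f^G_{i,j}=\tfrac12\psi^G_{i,j}(x^2+y^2+z^2)$ is to be read as the resulting element of $\CR$.) Then $\psi^G_{i,h(i)}(Q)=2f^G_{i,h(i)}$, and since scaling by the nonzero constant $2$ does not change a generated ideal, $\mathfrak a(I)=(f^G_{1,h(1)},f^G_{2,h(2)})$.

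Finally, Theorem~\ref{nilpotentmain} gives $H^*(\Hess(N,I))\cong\CR/\mathfrak a(I)$; combining this with $\CR=\R[x,y,z]/(x+y+z)$ and the description of $\mathfrak a(I)$ just obtained yields the stated presentation. I do not expect any real obstacle here: the only points needing care are the routine verification that the explicit derivations $\psi^G_{i,j}$ are tangent to the hyperplane $x+y+z=0$ and the bookkeeping between $\theta(Q)$ and $f^G_{i,j}$, all the substance having been absorbed into Theorem~\ref{BasisG2}. The corollaries for types $A$, $B$, $C$ follow from the identical argument, using Proposition~\ref{typeAbasis_murai}, Theorem~\ref{BasisTypeBC}, and Theorem~\ref{BasisTypeC} in place of Theorem~\ref{BasisG2}.
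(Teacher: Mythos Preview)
Your proposal is correct and is exactly the intended argument: the paper states this corollary without proof, as it follows immediately from Theorem~\ref{BasisG2}, the definition of $\mathfrak a(I)$, and Theorem~\ref{nilpotentmain} in the way you describe. The only content is the $\CR$-linearity of $\theta\mapsto\theta(Q)$ and the bookkeeping you outline, all of which you handle correctly.
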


\section{Volume polynomial}

In this section, we will give another type of description of the ideal $\mathfrak{a}(I)$.
In fact, $\mathfrak{a}(I)$ can be described by one homogeneous polynomial 
and we will find it.   
  
For a homogeneous polynomial $P$ of degree $r$ in $n$ variables $x_1,\dots,x_n$, we define the annihilator of $P$ by 
\[
\mbox{Ann}(P):=\{ f\in \R[x_1,\dots,x_n] \mid f(\partial/\partial x_1,\dots,\partial/\partial x_n)(P)=0\}.
\]
One can check that two homogeneous polynomials have the same annihilator if and only if they agree up to a nonzero scalar multiple.
It is well-known (and indeed easy to check) that $\mbox{Ann}(P)$ is a graded ideal of the polynomial ring $\R[x_1,\dots,x_n]$ and the quotient 
\begin{equation*} \label{eq:A(P)}
A(P):=\R[x_1,\dots,x_n]/\mbox{Ann}(P)
\end{equation*}
is a Poincar\'e duality algebra of socle degree $r$.

\begin{rem}
Conversely, any Poincar\'e duality algebra $A$ generated by degree one elements is obtained this way up to isomorphism.
Indeed, if $A$ is of socle degree $r$ and generated by degree one elements $\xi_1,\dots, \xi_n$, then $P_A$ defined by 
\begin{equation} \label{eq:volpoly}
P_A(x_1,\dots,x_n):= \frac{1}{r!}\int_A(x_1\xi_1+\dots+x_n\xi_n)^r,
\end{equation}
is the desired homogeneous polynomial, where $\int_A$ denotes an isomorphism from the $r$-th graded piece of $A$ to $\R$.
Under some situation, the right hand side of \eqref{eq:volpoly} gives the volume of a polytope associated to the element $x_1\xi_1+\dots+x_n\xi_n$ (see \cite[Section 5.3]{F3}, \cite{Kav}). 
\end{rem}

In order to make things clear, we take a coordinate free approach in our case.  
Remember that $G$ is a semisimple linear algebraic group, $\mathfrak{t}$ is the Lie algebra of a compact maximal torus $T_\RR$ of $G$, the Weyl group $W$ of $G$ acts on $\mathfrak{t}$ and its dual $\mathfrak{t}^*$, and 
$\CR=\mbox{Sym}(\mathfrak{t}^*)$.
We set 
\[\MCD:=\mbox{Sym}(\mathfrak{t}).\]  
We regard $\mathfrak{t}$ as derivations on $\CR$ and the algebra $\MCD$ as differential operators on $\CR$ in a natural way.  

We choose a $W$-invariant inner product on $\mathfrak{t}^*$ or $\mathfrak{t}$ which determines an isomorphism between $\mathfrak{t}^*$ and  $\mathfrak{t}$ as $W$-modules.
This isomorphism extends to an equivariant isomorphism between $\CR$ and $\MCD$ as graded algebras with $W$-actions.
Through this isomorphism, we denote the element in $\MCD$ corresponding to $f\in \CR$ by $\partial_f$.
If we choose an orthonormal coordinate system on $\mathfrak{t}$, say $(x_1,\dots,x_n)$, then $f$ can be expressed as a polynomial $f(x_1,\dots,x_n)$ in $x_1,\dots,x_n$ and $\partial_f$ agrees with $f(\partial/\partial x_1,\dots,\partial/\partial x_n)$.
Using this description, one can easily see that
\begin{equation} \label{eq:positive}
\partial_f(f)>0 \quad \text{for any nonzero $f\in \CR$.}
\end{equation} 
For $g\in \CR$ we define
\[\mbox{Ann}(g):=\{ f\in \CR\mid\partial_f(g)=0\}.\]

The roots of $G$ are degree one elements of $\CR$ and we put 
\[P:=\prod_{\alpha\in\Phi^+}\alpha.\]
The following proposition is known as a theorem of Kostant (see \cite{Kav}) and a proof can be found in \cite[Proposition 8.19]{lef}. 

\begin{prop} \label{prop:volG/B}
${\rm Ann}(P)=(\CR_+^W)$.
\end{prop}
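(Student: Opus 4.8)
The plan is to show $\mathrm{Ann}(P) = (\CR_+^W)$ by proving the two inclusions separately, using the established fact (Theorem~\ref{coinv}) that $(\CR_+^W) = \mathfrak{a}(\Phi^+)$, together with the freeness of the Weyl arrangement. First I would recall that $P = \prod_{\alpha \in \Phi^+}\alpha$ is a homogeneous polynomial of degree $|\Phi^+|$, so that $A(P) = \CR/\mathrm{Ann}(P)$ is a Poincar\'e duality algebra of socle degree $|\Phi^+|$ by the general discussion preceding the proposition. On the other hand, $\CR/(\CR_+^W) = \CR/\mathfrak{a}(\Phi^+)$ is, by Proposition~\ref{CI} (with $I = \Phi^+$), a complete intersection of socle degree $|\Phi^+|$, hence also a Poincar\'e duality algebra of the same socle degree. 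So it suffices to establish one inclusion between the two ideals: if $(\CR_+^W) \subseteq \mathrm{Ann}(P)$, then since both quotients are Poincar\'e duality algebras of the same socle degree and finite dimension, the surjection $\CR/(\CR_+^W) \twoheadrightarrow \CR/\mathrm{Ann}(P)$ (well-defined by the inclusion) is forced to be an isomorphism by a dimension count in each graded degree (Poincar\'e duality pins down the top degree, and a graded surjection of PD algebras with equal socle degree is an isomorphism), giving equality of the ideals.

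The heart of the matter is therefore to prove $(\CR_+^W) \subseteq \mathrm{Ann}(P)$, i.e.\ that $\partial_f(P) = 0$ for every homogeneous $W$-invariant $f \in \CR$ of positive degree. The key observation is that $P$ is the lowest-degree nonzero $W$-anti-invariant (alternating) polynomial: for every $w \in W$ one has $w(P) = (\det w)\, P = (-1)^{\ell(w)} P$, and any alternating polynomial is divisible by $P$, so $P$ spans the alternating polynomials in its degree and everything alternating of lower degree vanishes. Now if $f \in \CR^W$ is homogeneous of positive degree $d$, then $\partial_f$ is a $W$-equivariant differential operator (this is where the $W$-invariant inner product identifying $\mathfrak{t}^* \cong \mathfrak{t}$ is used: $f \mapsto \partial_f$ intertwines the $W$-actions, and $f$ being invariant makes $\partial_f$ commute with $W$). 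Hence $\partial_f(P)$ is again alternating: $w(\partial_f(P)) = \partial_{w(f)}(w(P)) = (\det w)\,\partial_f(P)$. But $\partial_f(P)$ is homogeneous of degree $|\Phi^+| - d < |\Phi^+|$, so it is an alternating polynomial of degree strictly below that of $P$, forcing $\partial_f(P) = 0$. This shows $\CR_+^W \subseteq \mathrm{Ann}(P)$ and, since $\mathrm{Ann}(P)$ is an ideal, $(\CR_+^W) \subseteq \mathrm{Ann}(P)$.

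I expect the main (though modest) obstacle to be the bookkeeping around $W$-equivariance of the map $f \mapsto \partial_f$ and the alternating character of $\partial_f(P)$ — one must be careful that the chosen inner product is genuinely $W$-invariant so that the identification $\CR \cong \MCD$ is $W$-equivariant as graded algebras, and that $\partial_{w(f)} = w \circ \partial_f \circ w^{-1}$ as operators. Once that is in place, the alternating-polynomial argument is clean. A secondary point to spell out is the final PD-algebra comparison: a surjective graded homomorphism $A \twoheadrightarrow B$ of finite-dimensional Poincar\'e duality $\R$-algebras with the same socle degree $r$ is an isomorphism, because the induced map on the one-dimensional top pieces $A_r \to B_r$ is either zero or an isomorphism, it cannot be zero (surjectivity in degree $r$), hence it is an isomorphism, and then nondegeneracy of the PD pairings forces injectivity in every degree. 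Assembling these pieces yields $\mathrm{Ann}(P) = (\CR_+^W)$.
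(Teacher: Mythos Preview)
Your argument is correct and complete. Note, however, that the paper does not supply its own proof of this proposition: it simply records it as ``a theorem of Kostant'' and points to \cite{Kav} and \cite[Proposition~8.19]{lef} for a proof. So there is no in-paper argument to compare against.

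For the record, the proof you outline is essentially the classical one. The two ingredients you identify are exactly the right ones: (i) $P=\prod_{\alpha\in\Phi^+}\alpha$ is anti-invariant and is (up to scalar) the unique nonzero anti-invariant of minimal degree $|\Phi^+|$, since any anti-invariant vanishes on every reflecting hyperplane and hence is divisible by each $\alpha$, so by $P$; and (ii) the $W$-equivariance $w(\partial_f(g))=\partial_{w(f)}(w(g))$, which follows from the $W$-invariance of the inner product and reduces (by Leibniz and multiplicativity) to the degree-one check $w\circ\partial_v\circ w^{-1}=\partial_{w(v)}$. These give $(\CR_+^W)\subseteq\mathrm{Ann}(P)$ immediately. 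Your closing step---that a graded surjection of Poincar\'e duality $\R$-algebras with the same socle degree is an isomorphism---is also fine: the top-degree map is a surjection of one-dimensional spaces, hence an isomorphism, so the kernel $K$ satisfies $K_r=0$; if $0\ne a\in K_i$ then by PD in the source there is $b$ with $0\ne ab\in K_r$, a contradiction. Invoking Theorem~\ref{coinv} and Proposition~\ref{CI} to identify $\CR/(\CR_+^W)$ as a PD algebra of socle degree $|\Phi^+|$ is a legitimate (non-circular) use of earlier results in the paper, though of course one could cite the classical coinvariant-algebra facts directly instead.
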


For a lower ideal $I\subset \Phi^+$ we put  
\begin{equation} \label{eq:defPI}
P_I:=\partial_{\beta_I}(P)\qquad \text{where}\quad \beta_I=\prod_{\alpha\in\Phi^+\backslash I}\alpha.
\end{equation}
The following is our main result in this section. 

\begin{theorem} \label{theo:volpoly}
${\rm Ann}(P_I)=\mathfrak{a}(I)$ for any lower ideal $I$. 
\end{theorem}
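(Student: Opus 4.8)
The plan is to show that $\mathrm{Ann}(P_I)$ satisfies the same recursion that characterized $\mathfrak{a}(I)$ in Proposition~\ref{inj}, and then invoke the Poincar\'e duality machinery of Lemma~\ref{colonideal}. The base case $I=\Phi^+$ is exactly Kostant's theorem (Proposition~\ref{prop:volG/B}), which gives $\mathrm{Ann}(P_{\Phi^+})=\mathrm{Ann}(P)=(\CR_+^W)=\mathfrak{a}(\Phi^+)$ by Theorem~\ref{coinv}. So, just as in the proof of Proposition~\ref{inj}, it suffices to prove that if $\alpha\in\Phi^+\setminus I$ is such that $I':=I\cup\{\alpha\}$ is again a lower ideal, then $\mathrm{Ann}(P_I)=\mathrm{Ann}(P_{I'}):\alpha$.

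The key structural observation is the relation $P_I=\partial_{\beta_I}(P)$ and $\beta_I=\alpha\cdot\beta_{I'}$, hence $P_I=\partial_\alpha(P_{I'})$. So the recursion amounts to the purely algebraic statement: for a homogeneous polynomial $g\in\CR$ and a linear form $\alpha$ with $\partial_\alpha(g)\neq 0$, one has $\mathrm{Ann}(\partial_\alpha g)=\mathrm{Ann}(g):\alpha$. The inclusion $\mathrm{Ann}(\partial_\alpha g)\supseteq\mathrm{Ann}(g):\alpha$ is immediate: if $f\alpha\in\mathrm{Ann}(g)$ then $\partial_f\partial_\alpha(g)=\partial_{f\alpha}(g)=0$, so $f\in\mathrm{Ann}(\partial_\alpha g)$. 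For the reverse inclusion I would use Lemma~\ref{colonideal}: I need $A(P_I)=\CR/\mathrm{Ann}(P_I)$ and $A(P_{I'})=\CR/\mathrm{Ann}(P_{I'})$ to be Poincar\'e duality algebras (which they are, of socle degrees $|I|$ and $|I|+1=|\Phi^+\setminus I|$ corrected — precisely $\deg P_I = |\Phi^+|-|\Phi^+\setminus I| = |I|$ and $\deg P_{I'}=|I'|=|I|+1$), that $\alpha\notin\mathrm{Ann}(P_{I'})$ (equivalently $P_I=\partial_\alpha P_{I'}\neq 0$), and that $\mathrm{Ann}(P_I)\subseteq\mathrm{Ann}(P_{I'}):\alpha$ — this last being the inclusion $\mathrm{Ann}(\partial_\alpha g)\subseteq\mathrm{Ann}(g):\alpha$, which is exactly what must be proved. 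So Lemma~\ref{colonideal} does not directly give the reverse inclusion; rather, once I know $\mathrm{Ann}(P_I)\subseteq\mathrm{Ann}(P_{I'}):\alpha$ holds, Lemma~\ref{colonideal} upgrades it to equality. Thus the real task is the one-line-looking but substantive inclusion $\mathrm{Ann}(\partial_\alpha g)\subseteq\mathrm{Ann}(g):\alpha$.

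To prove $\mathrm{Ann}(\partial_\alpha g)\subseteq\mathrm{Ann}(g):\alpha$, take $f$ homogeneous with $\partial_f\partial_\alpha(g)=0$; I want $\partial_{f\alpha}(g)=0$, i.e.\ $\partial_\alpha\partial_f(g)=0$. Set $h:=\partial_f(g)$. I know $\partial_\alpha(h)=0$ and want $h$ itself to lie in the annihilator-complement so that... no — I want to conclude $\partial_\alpha(h)=\partial_f\partial_\alpha g$ directly, which is true since differential operators commute: $\partial_\alpha\partial_f = \partial_f\partial_\alpha$ as operators on $\CR$ (both $\MCD$ and $\CR$ are polynomial rings and all the $\partial$'s are polynomial differential operators, which commute). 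Therefore $\partial_{f\alpha}(g)=\partial_\alpha(\partial_f(g))=\partial_f(\partial_\alpha(g))=\partial_f(\partial_\alpha g)=0$. So in fact $\mathrm{Ann}(\partial_\alpha g)=\mathrm{Ann}(g):\alpha$ holds unconditionally by commutativity of the $\MCD$-action — no Poincar\'e duality is even needed for this algebraic identity. The remaining ingredients are: $P_I\neq 0$ for every lower ideal $I$ (needed so that $A(P_I)$ is genuinely a PD algebra and so that $\alpha\notin\mathrm{Ann}(P_{I'})$), which follows because $\partial_\alpha$ applied to the product $\prod_{\beta\in\Phi^+\setminus I}\beta\cdot\prod_{\gamma\in I}\gamma$ iteratively cannot annihilate it — concretely, one checks $\partial_{\beta_I}(P)\neq 0$ by induction peeling off one root at a time, using $\partial_\alpha(\text{nonzero poly divisible by }\alpha)\neq 0$ when the cofactor is nonzero, together with a degree/genericity argument (the cleanest route: $P_I$ restricted to the appropriate subspace is a nonzero multiple of $\prod_{\gamma\in I}\gamma$ up to lower-order corrections, or simply invoke that $\mathrm{Ann}(P_I)$ has finite colength equal to $\dim_\R\CR/(\mathfrak{a}(\Phi^+):\beta_I)>0$). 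I expect the main obstacle to be cleanly verifying $P_I\neq 0$ and pinning down $\deg P_I=|I|$; everything else is the commutativity identity above plus a direct appeal to Proposition~\ref{prop:volG/B}, Proposition~\ref{inj}, Theorem~\ref{coinv}, and Lemma~\ref{colonideal}.

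\begin{proof}[Proof plan, to be expanded]
By Proposition~\ref{prop:volG/B} and Theorem~\ref{coinv}, $\mathrm{Ann}(P_{\Phi^+})=\mathrm{Ann}(P)=(\CR_+^W)=\mathfrak{a}(\Phi^+)$. Arguing by downward induction on $|I|$ exactly as in Proposition~\ref{inj}, it suffices to fix $\alpha\in\Phi^+\setminus I$ with $I':=I\cup\{\alpha\}$ a lower ideal and show $\mathrm{Ann}(P_I)=\mathrm{Ann}(P_{I'}):\alpha$. Since $\beta_I=\alpha\,\beta_{I'}$ we have $P_I=\partial_\alpha(P_{I'})$. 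For any homogeneous $g\in\CR$ and any linear form $\alpha$, commutativity of the differential operators $\partial_f$ $(f\in\CR)$ on $\CR$ gives $\partial_{f\alpha}(g)=\partial_f\partial_\alpha(g)=\partial_\alpha\partial_f(g)$, whence $f\in\mathrm{Ann}(\partial_\alpha g)$ if and only if $f\alpha\in\mathrm{Ann}(g)$; that is, $\mathrm{Ann}(\partial_\alpha g)=\mathrm{Ann}(g):\alpha$. Applying this with $g=P_{I'}$ yields the claim, provided $P_{I'}\neq 0$; this, together with $\deg P_I=|I|$, follows by induction from $P_{\Phi^+}=P\neq 0$ and the fact that applying $\partial_\alpha$ to a nonzero homogeneous polynomial of positive degree whose annihilator has finite colength cannot yield $0$ — indeed $\CR/\mathrm{Ann}(P_{I'})\cong\CR/(\mathfrak{a}(\Phi^+):\beta_{I'})\cong\CR/\mathfrak{a}(I')$ is a nonzero complete intersection of socle degree $|I'|$ by Proposition~\ref{CI} once the induction is set up, forcing $P_{I'}\neq 0$ and $\deg P_{I'}=|I'|$. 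Finally, $A(P_I)$ and $A(P_{I'})$ are Poincar\'e duality algebras of socle degrees $|I|$ and $|I|+1$, and $\alpha\notin\mathrm{Ann}(P_{I'})$ since $\partial_\alpha P_{I'}=P_I\neq 0$; Lemma~\ref{colonideal} is therefore available but in fact the identity $\mathrm{Ann}(P_I)=\mathrm{Ann}(P_{I'}):\alpha$ is already established directly. Combining with Theorem~\ref{coinv}, Proposition~\ref{inj}, and Theorem~\ref{nilpotentmain} gives $\mathrm{Ann}(P_I)=\mathfrak{a}(\Phi^+):\beta_I=\mathfrak{a}(I)$, as desired.
\end{proof}
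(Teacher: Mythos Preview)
Your core argument is correct and in fact simpler than the paper's. The key observation you make---that commutativity of $\MCD$ gives $\mathrm{Ann}(\partial_\alpha g)=\mathrm{Ann}(g):\alpha$ as an \emph{equality}, not just an inclusion---is exactly right: $f\in\mathrm{Ann}(g):\alpha \Leftrightarrow \partial_{f\alpha}(g)=0 \Leftrightarrow \partial_f(\partial_\alpha g)=0 \Leftrightarrow f\in\mathrm{Ann}(\partial_\alpha g)$. Iterating gives $\mathrm{Ann}(P_I)=\mathrm{Ann}(P):\beta_I$ directly, and then Proposition~\ref{prop:volG/B}, Theorem~\ref{coinv}, and Proposition~\ref{inj} finish the job. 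The paper, by contrast, only records the inclusion $\mathrm{Ann}(P_I)\subset\mathrm{Ann}(P_{I\cup\{\alpha\}}):\alpha$ and then invokes Lemma~\ref{colonideal}, which in turn requires the nonvanishing $P_I\neq 0$; that nonvanishing is obtained via the positivity $\partial_P(P)>0$. Your route bypasses both Lemma~\ref{colonideal} and the nonvanishing argument entirely.

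That said, your write-up should be trimmed substantially. All of your discussion of $P_I\neq 0$, of socle degrees, and of the Poincar\'e duality of $A(P_I)$ is unnecessary for your argument---you realized this yourself midway through (``no Poincar\'e duality is even needed''), but then reintroduced it. Worse, the step where you write $\CR/\mathrm{Ann}(P_{I'})\cong\CR/\mathfrak{a}(I')$ to deduce $P_{I'}\neq 0$ is the statement you are proving, applied to $I'$; if you actually needed it, this would be circular. You don't need it, so just delete that paragraph. Finally, drop the reference to Theorem~\ref{nilpotentmain}: nothing in the argument uses it.
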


\begin{proof}
$\mbox{Ann}(P)=(\CR_+^W)$ by Proposition~\ref{prop:volG/B} while $\mathfrak{a}(\Phi^+)=(\CR_+^W)$ by Theorem~\ref{coinv}, so $\mbox{Ann}(P_I)=\mathfrak{a}(I)$ when $I=\Phi^+$.  Therefore it suffices to prove that if $I\cup\{\alpha\}$ is a lower ideal for $\alpha\in \Phi^+\backslash I$, then 
\begin{equation} \label{eq:Anncolon}
\mbox{Ann}(P_I)=\mbox{Ann}(P_{I\cup\{\alpha\}})\colon \alpha
\end{equation}
because $\mathfrak{a}(I)$'s satisfy the same identities by Proposition~\ref{inj}.

For any element $f\in \mbox{Ann}(P_I)$, we have 
\[\partial_{\alpha f}(P_{I\cup\{\alpha\}})=\partial_f(\partial_\alpha(P_{I\cup\{\alpha\}}))=\partial_f(P_I)=0,\]
where the second identity follows from \eqref{eq:defPI}.
This proves the inclusion relation $\subset$ in \eqref{eq:Anncolon}.  

To prove the equality in \eqref{eq:Anncolon}, Lemma~\ref{colonideal} says that it suffices to check $\alpha\notin \mbox{Ann}(P_{I\cup\{\alpha\}})$.
Moreover, since $\partial_\alpha(P_{I\cup\{\alpha\}})=P_I$ from \eqref{eq:defPI}, it suffices to prove $P_I\not=0$.  If we set $\alpha_I=\prod_{\alpha\in I}\alpha$, then 
\begin{equation} \label{eq:PInonzero}
\partial_{\alpha_I}(P_I)=\partial_{\alpha_I}(\partial_{\beta_I}(P))=\partial_P(P)>0
\end{equation}
where the first identity follows from \eqref{eq:defPI}, the second is because $\alpha_I\beta_I=P$ and the last positivity follows from \eqref{eq:positive}.
The desired fact $P_I\not=0$ follows from \eqref{eq:PInonzero}. 
\end{proof}

\begin{example}
Let $G$ be a simple linear algebraic group of type $A_2$ and let $\alpha_1,\alpha_2$ be its simple roots.
Then $P=\alpha_1\alpha_2(\alpha_1+\alpha_2)$.
We take  $I=\{\alpha_1,\alpha_2\}$, so $\Phi^+\backslash I=\{\alpha_1+\alpha_2\}$.  We choose a usual $W$-invariant inner product $(\ ,\ )$ on $\mathfrak{t}^*$ such that  $(\alpha_i,\alpha_i)=2$ for $i=1,2$ and $(\alpha_i,\alpha_j)=-1$ for $i\not=j$.
Then it follows from the definition of $\partial_{\alpha_i}$ that 
\[
\partial_{\alpha_i}(\alpha_i)=(\alpha_i,\alpha_i)=2\quad\text{for $i=1,2$},\qquad \partial_{\alpha_i}(\alpha_j)=(\alpha_i,\alpha_j)=-1\quad\text{for $i\not=j$}
\]
and an elementary computation yields  
\[
P_I=\partial_{\alpha_1+\alpha_2}(P)=\partial_{\alpha_1}(P)+\partial_{\alpha_2}(P)=\alpha_1^2+\alpha_2^2+4\alpha_1\alpha_2.
\]
A further elementary computation shows
\[
\mbox{Ann}(P_I)=(\alpha_1^2-\alpha_2^2,\ 2\alpha_1^2+\alpha_1\alpha_2).
\]
\end{example}

\begin{rem}
The polynomial $P_I$ is related to the volume of a Newton-Okounkov body of the regular nilpotent Hessenberg variety $\Hess(N,I)$ in type $A$, see \cite{ADGH}.  
\end{rem}

\section{Hard Lefschetz property and Hodge-Riemann relations}

In this section we observe that the hard Lefschetz property and the Hodge-Riemann relations hold for any regular nilpotent Hessenberg variety $\Hess(N,I)$ of positive dimension, despite the fact that it is a singular variety in general.  
 
Since the regular semisimple Hessenberg variety $\Hess(S,I)$ is non-singular and projective, it is a compact K\"ahler manifold;
so the hard Lefschetz property and the Hodge-Riemann relations hold for the K\"ahler form $\omega$ on it if $|I|=\dim_\CC \Hess(S,I)\ge 1$.
Namely the cohomology class in $H^2(\Hess(S,I))$ determined by $\omega$, denoted $[\omega]$, satisfies the following for $2q\le |I|$:

\medskip
\noindent
{\bf Hard Lefschetz property}. The multiplication by $[\omega]^{|I|-2q}$ defines an isomorphism 
\[H^{2q}(\Hess(S,I))\to H^{2(|I|-q)}(\Hess(S,I)). \]

\medskip
\noindent
{\bf Hodge-Riemann relations}. The multiplication by $[\omega]^{|I|-2q}$ defines a symmetric bilinear form
\[H^{2q}(\Hess(S,I))\times H^{2q}(\Hess(S,I))\to \R,\quad (\xi_1,\xi_2)\mapsto (-1)^q\int[\omega]^{|I|-2q}\cup \xi_1\cup\xi_2\]
that is positive definite when restricted to the kernel of the multiplication map $[\omega]^{|I|-2q+1}\colon H^{2q}(\Hess(S,I))\to H^{2(|I|-q+1)}(\Hess(S,I))$, where $\int$ denotes the evaluation on the fundamental class of $\Hess(S,I)$.  

\medskip
The flag variety $G/B$ is also a non-singular projective variety, so it is a compact K\"ahler manifold as well.  Since $\Hess(S,I)$ is a complex submanifold of $G/B$, the K\"ahler form on $G/B$ restricted to $\Hess(S,I)$ becomes a K\"ahler form on $\Hess(S,I)$.
Therefore one can take $[\omega]$ above as the restriction image of an element in $H^2(G/B)$.
As remarked in Remark~\ref{rema:WonFlag}, the $W$-action on $H^*(G/B)$ is trivial, so that such $[\omega]$ is invariant under the $W$-action.
Therefore, the hard Lefschetz property and the Hodge-Riemann relations above still hold when restricted to the $W$-invariants $H^*(\Hess(S,I))^W$ for such $[\omega]$.
Since $H^*(\Hess(S,I))^W$ is isomorphic to $H^*(\Hess(N,I))$ by Theorem \ref{ssmain}, we obtain the following. 

\begin{theorem}
Any regular nilpotent Hessenberg variety $\Hess(N,I)$ of positive dimension has a non-zero element in $H^2(\Hess(N,I))$ which satisfies the hard Lefschetz property and the Hodge-Riemann relations.  
\end{theorem}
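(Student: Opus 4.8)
The plan is to transport the hard Lefschetz property and the Hodge--Riemann relations from $H^*(\Hess(S,I))^W$ to $H^*(\Hess(N,I))$ along the ring isomorphism of Theorem~\ref{ssmain}, once one checks that these two properties survive passage to the $W$-invariant part of $H^*(\Hess(S,I))$. First I would fix a K\"ahler class on $G/B$ and let $[\omega]\in H^2(\Hess(S,I))$ be its restriction to the complex submanifold $\Hess(S,I)$; this is again a K\"ahler class, so the classical hard Lefschetz theorem and Hodge--Riemann relations hold for $[\omega]$ on $H^*(\Hess(S,I))$ when $|I|=\dim_\CC\Hess(S,I)\ge 1$. By Remark~\ref{rema:WonFlag} the $W$-action on $H^*(G/B)$ is trivial, hence $[\omega]$ lies in $H^2(\Hess(S,I))^W$.

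The key point is that all the data entering the Lefschetz package for $[\omega]$ are $W$-equivariant: the operator $L=\cup[\omega]$ commutes with the $W$-action because $[\omega]$ is $W$-invariant, and the evaluation map $\int\colon H^{2|I|}(\Hess(S,I))\to\R$ is $W$-invariant, since it is the ordinary Gysin map $\rho_!$ of $\Hess(S,I)\to\{pt\}$, whose $W$-invariance was established in the proof of Proposition~\ref{prop:PDAHessS}. Using that $W$ is finite, so that $H^*(\Hess(S,I))$ is a semisimple $W$-module and the invariants are a direct summand, one deduces: for $2q\le|I|$ the isomorphism $L^{|I|-2q}\colon H^{2q}\to H^{2(|I|-q)}$ is $W$-equivariant, hence restricts to an isomorphism $(H^{2q})^W\to(H^{2(|I|-q)})^W$; the primitive subspace $P^{2q}=\ker\bigl(L^{|I|-2q+1}\colon H^{2q}\to H^{2(|I|-q+1)}\bigr)$ is a $W$-subrepresentation, so $P^{2q}\cap(H^{2q})^W$ is exactly the primitive subspace of $H^{2q}(\Hess(S,I))^W$ for $[\omega]$; and the bilinear form $\xi_1,\xi_2\mapsto(-1)^q\int[\omega]^{|I|-2q}\cup\xi_1\cup\xi_2$ is $W$-invariant and positive definite on $P^{2q}$, hence its restriction to $P^{2q}\cap(H^{2q})^W$ is still positive definite. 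Thus $[\omega]$, viewed as an element of the subalgebra $H^*(\Hess(S,I))^W$, satisfies both the hard Lefschetz property and the Hodge--Riemann relations for this subalgebra, which is a Poincar\'e duality algebra of socle degree $2|I|$ by Proposition~\ref{prop:PDAHessS}.

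Finally, Theorem~\ref{ssmain} supplies a graded ring isomorphism $H^*(\Hess(S,I))^W\cong\CR/\mathfrak{a}(I)\cong H^*(\Hess(N,I))$, coming from the maps $\psi_I$ and $\varphi_I$ which double the grading on $\CR$; I would let $[\omega]'\in H^2(\Hess(N,I))$ be the image of $[\omega]$ under this isomorphism, which by the commuting triangle of restriction maps from $H^*(G/B)$ is just the restriction to $\Hess(N,I)$ of the chosen ample class on $G/B$. Since $[\omega]^{|I|}\neq 0$ in the one-dimensional space $H^{2|I|}(\Hess(S,I))^W$, the class $[\omega]'$ is non-zero, and transporting the two properties through the ring isomorphism yields the theorem. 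The only step requiring genuine care is the middle one: that the $W$-equivariance of $L$ together with the (already proven) $W$-invariance of $\int$ forces hard Lefschetz and Hodge--Riemann to descend to the invariant summand; everything else is formal.
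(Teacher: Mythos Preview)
Your proposal is correct and follows essentially the same route as the paper: restrict a K\"ahler class from $G/B$ to $\Hess(S,I)$, use that it is $W$-invariant by Remark~\ref{rema:WonFlag}, observe that the Lefschetz package therefore descends to $H^*(\Hess(S,I))^W$, and transport via the isomorphism with $H^*(\Hess(N,I))$ from Theorems~\ref{nilpotentmain} and~\ref{ssmain}. Your write-up is in fact more careful than the paper's own discussion on the one nontrivial point you flag---namely, why $W$-equivariance of $L$ and $W$-invariance of $\int$ (from the proof of Proposition~\ref{prop:PDAHessS}) force hard Lefschetz and Hodge--Riemann to restrict to the invariant summand---whereas the paper simply asserts this.
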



\begin{thebibliography}{24}


\bibitem{ADGH}
H. Abe, L. DeDue, F. Galetto, and M. Harada, 
Geometry of Hessenberg varieties with applications to Newton-Okounkov bodies, in preparation. 

\bibitem{AHHM}
H. Abe, M. Harada, T. Horiguchi, and M. Masuda, The cohomology rings of 
regular nilpotent Hessenberg varieties in Lie type A, arXiv:1512.09072. 





\bibitem{ABCHT} 
T. Abe, M. Barakat, M. Cuntz, T. Hoge, and H. Terao, 
The freeness of ideal subarrangements of Weyl arrangements,
\textit{J. European Math. Soc.} \textbf{18} (2016), 1339--1348.


\bibitem{AMN} 
T. Abe, T. Maeno, and Y. Numata,
Solomon-Terao algebras and freeness of hyperplane arrangements,
in preparation.

\bibitem{AB}
M. F. Atiyah and R. Bott,
   The moment map and equivariant cohomology,
   \textit{Topology} \textbf{23} (1984), no. 1, 1--28. 



\bibitem{BEZ}
A. Bj\"{o}rner, H. Edelman, and G. Ziegler, 
Hyperplane arrangements with a lattice of regions,
\textit{Discrete Comput. Geom}.\ \textbf{5}  (1990),  no. 3, 263--288. 

\bibitem{B}
A. Borel, 
Sur la cohomologie des espaces fibr\'es principaux et des espaces homog\'enes de groupes de Lie compacts, 
\textit{Ann. of Math.} (2) \textbf{57} (1953), 115--207.

\bibitem{BC}
M. Brion and J. Carrell, 
The equivariant cohomology ring of regular varieties, 
\textit{Michigan Math. J}.  \textbf{52}  (2004),  no. 1, 189--203.

\bibitem{BH} W. Bruns and J. Herzog,
\textit{Cohen--Macaulay rings}, Revised Edition, Cambridge University Press, Cambridge, 1998.


\bibitem{C}
J. Carrell, \emph{The Bruhat graph of a Coxeter group, a conjecture of Deodhar, and rational smoothness
of Schubert varieties}, Proc. Sympos. Pure Math., vol. \textbf{56}, Amer. Math. Soc., Providence,
RI, 1994.

\bibitem{Ch}
C. Chevalley, 
Invariants of finite groups generated by reflections, \textit{Amer. J. Math}. \textbf{77}  (1955), 778--782.

\bibitem{dMPS}
F. De Mari, C. Procesi, and M. A. Shayman, Hessenberg varieties, \textit{Trans. Amer. Math. Soc.} \textbf{332} (1992), no. 2, 529--534.

\bibitem{dMS}
F.~De Mari and M.~Shayman, Generalized Eulerian numbers and the topology of the Hessenberg variety of a matrix, 
\textit{Acta Appl. Math.} \textbf{12} (1988), no. 3, 213--235. 

\bibitem{fu-ha-ma}
  Y. Fukukawa, M. Harada and M. Masuda, \emph{The equivariant cohomology rings of Peterson varieties}, 
  J. Math. Soc. Japan \textbf{67} (2015), no. 3, 1147-1159. 

\bibitem{Fult}
W. Fulton, 
\textit{Young tableaux}, 
With applications to representation theory and geometry, London Mathematical Society Student Texts, 35, Cambridge University Press, Cambridge, 1997.

\bibitem{F}
W. Fulton, \textit{Equivariant cohomology in algebraic geometry, Lecture three: more basics, first examples}, Notes by Dave Anderson,\\
https://people.math.osu.edu/anderson.2804/eilenberg/lecture3.pdf


\bibitem{F3}
W. Fulton, \textit{Introduction to toric varieties}, Princeton
University Press, 1993.



\bibitem{GKM} 
   M. Goresky, R. Kottwitz, and R. MacPherson,
Equivariant cohomology, Koszul duality, and the localization theorem, 
   \textit{Invent. Math.} \textbf{131} (1998), no. 1, 25--83. 







\bibitem{HHM}
   M. Harada, T. Horiguchi, and M. Masuda, The equivariant cohomology rings of Peterson varieties in all Lie types, 
   \textit{Canad. Math. Bull.} \textbf{58} (2015), no. 1, 80--90. 

\bibitem{HT}
M. Harada and J. Tymoczko, Poset pinball, GKM-compatible
      subspaces, and Hessenberg varieties, to appear in \textit{J. of Math. Soc. of Japan}, arXiv:1007.2750.


\bibitem{lef}
T. Harima, 
T. Maeno, 
H. Morita, 
Y. Numata, 
A. Wachi,
and J. Watanabe, 
\textit{The Lefschetz Properties},
Lecture Notes in Math. \textbf{2080},
Springer, Heidelberg, 2013.

\bibitem{Hs}
 W. Hsiang, \emph{Cohomology Theory of Topological Transformation Groups},
Ergebnisse der Mathematik und ihrer Grenzgebiete, Band \textbf{85}. Springer-Verlag, New York-Heidelberg, 1975.

\bibitem{Hum1}
J. E. Humphreys, \textit{Introduction to Lie algebras and representation theory}, Graduate Texts in Mathematics, Vol. \textbf{9}, Springer-Verlag, New York-Berlin, 1972.

\bibitem{Hum2}
J. E. Humphreys, \textit{Conjugacy classes in semisimple algebraic groups}, Mathematical Surveys and Monographs, \textbf{43}, American Mathematical Society, Providence, RI, 1995.


\bibitem{Kav}
K. Kaveh, Note on cohomology rings of spherical varieties and volume polynomial, 
\textit{J. Lie Theory} \textbf{21} (2011), 263--283. 

\bibitem{Ka}
K. Kawakubo, Equivariant Riemann-Roch theorems, localization and formal group law, \textit{Osaka J. Math.} \textbf{17} (1980), 531--571. 

\bibitem{K}
   A. Klyachko,
   Orbits of a maximal torus on a flag space, 
   \textit{Functional Anal. Appl.} \textbf{19} (1985),  no. 2, 65--66. 



\bibitem{MT}
G. Malle and D. Testerman,
\textit{Linear algebraic groups and finite groups of Lie type},
Cambridge Studies in Advanced Mathematics, \textbf{133}, Cambridge University Press, Cambridge, 2011.



\bibitem{OS} P. Orlik and L. Solomon, 
Combinatorics and topology of complements of hyperplanes, 
\textit{Invent. Math.} \textbf{ 56} (1980), no. 2, 167--189.


\bibitem{OT} P. Orlik and H. Terao, \textit{Arrangements of hyperplanes},
Grundlehren der Mathematischen Wissenschaften, 
\textbf{300}, Springer-Verlag, Berlin, 1992.

\bibitem{Pr1}
M. Precup,
Affine pavings of Hessenberg varieties for semisimple groups,
\textit{Selecta Math. (N.S.)} \textbf{19} (2013), 
no. 4, 903--922.



\bibitem{Pr}
M. Precup,
The Betti numbers of regular Hessenberg varieties are palindromic,
arXiv:1603.07662.

\bibitem{Roe}
G. R\"ohrle, Arrangements of ideal type, 
arXiv:1606.00617.


\bibitem{S}
K. Saito, On the uniformization of complements of discriminant
loci, AMS Summer Institute, Williams college, 1975, \textit{RIMS Kokyuroku}
\textbf{287} (1977), 117--137.


\bibitem{S2} 
K. Saito, 
Theory of logarithmic differential forms and logarithmic vector fields, 
\textit{J. Fac. Sci. Univ. Tokyo Sect.IA Math}. \textbf{27} (1980), 265--291.


\bibitem{S3}
K. Saito, 
Uniformization of the orbifold of a finite reflection group, 
\textit{Frobenius Manifolds}, 
\textbf{36} (2004), Aspects of Mathematics, 
265--320.

\bibitem{SW}
J. Shareshian and M. L. Wachs, Chromatic quasisymmetric
  functions, \textit{Adv. Math.} \textbf{295} (2016), 497--551.



\bibitem{STy}
E. Sommers and J. Tymoczko, 
Exponents of $B$-stable ideals, \textit{Trans. Amer. Math. Soc}. 
\textbf{358} (2006), no. 8, 3493--3509.

\bibitem{St} R. P. Stanley,
\textit{Combinatorics and commutative algebra},
Second edition,
Progr. Math., vol. 41,
Birkh\"auser, Boston, 1996.


\bibitem{Ste}
R. Steinberg,
Invariants of finite reflection groups,
\textit{Canad. J. Math.} \textbf{12} (1960), 616--618.
 
 
\bibitem{Tef}
N. Teff, 
Representations on Hessenberg varieties and Young's rule, 23rd International Conference on Formal Power Series and Algebraic Combinatorics (FPSAC 2011), 903-914, Discrete Math. Theor. Comput. Sci. Proc., AO, Assoc. Discrete Math. Theor. Comput. Sci., Nancy, 2011.

\bibitem{T}
H. Terao, Arrangements of hyperplanes and their freeness I, II,
\textit{J. Fac. Sci. Univ. Tokyo} \textbf{27} (1980), 293--320.  

\bibitem{T2}
H. Terao, Generalized exponents of a free arrangement of hyperplanes and Shepherd-Todd-Brieskorn formula, \textit{Invent. Math}. \textbf{63}  (1981), no. 1, 159--179.

\bibitem{T3}
H. Terao, 
Multiderivations of Coxeter arrangements, 
\textit{Invent. Math.} \textbf{148} (2002), 659--674.




\bibitem{Ty}
J. Tymoczko, 
Paving Hessenberg varieties by affines, \textit{Selecta Math. (N.S.)} \textbf{13} (2007), 
353--367.  

\bibitem{Ty2}
J. Tymoczko, Permutation actions on equivariant cohomology of flag varieties,
\textit{Contemp. Math.} \textbf{460} (2008), 365--384.

\bibitem{Ty3}
J. Tymoczko, Permutation representations on Schubert varieties, \textit{Amer. J. Math.} \textbf{130} (2008), no. 5, 1171--1194.

\bibitem{Ty4}
J. Tymoczko, The geometry and combinatorics of Springer fibers, arXiv:1606.02760. 



\bibitem{Z}
T. Zaslavsky, 
Facing up to arrangements: face-count formulas for partitions of space by hyperplanes, 
\textit{Mem. Amer. Math. Soc}. \textbf{1} (1975), issue 1, no. 154, vii+102.

\end{thebibliography}
\end{document}